\tikzset{
every picture/.style={line width=0.8pt, >=stealth,
                       baseline=-3pt,label distance=-3pt},
%%%%%%%%%%  Node styles
emptynode/.style={circle,minimum size=0pt, inner sep=0pt, outer sep=0},
dotnode/.style={fill=black,circle,minimum size=2.5pt, inner sep=1pt, outer
sep=0},
small_dotnode/.style={fill=black,circle,minimum size=2pt, inner sep=0pt, outer sep=0},
morphism/.style={fill=white,circle,draw,thin, inner sep=1pt, minimum size=15pt,
                 scale=0.8},
small_morphism/.style={fill=white,circle,draw,thin,inner sep=1pt,
                       minimum size=0pt, scale=0.8},
coupon/.style={draw,thin, inner sep=1pt, minimum size=18pt,scale=0.8},
semi_morphism/.style args={#1,#2}{
                  fill=white,semicircle,draw,thin, inner sep=1pt, scale=0.8,
                  shape border rotate=#1,
                  label={#1-90:#2}},
%%%% different line styles:
regular/.style={densely dashed}, %% for the regular color, i.e. sum d_i
edge/.style={very thick, draw=green, text=black},
overline/.style={preaction={draw,line width=#1 mm,white,-}},
overline/.default=2,
thin_overline/.style={preaction={draw,line width=#1 mm,white,-}},
thin_overline/.default=2,
thick_overline/.style={preaction={draw,line width=3mm,white,-}},
%drinfeld center/.style={>=stealth,green!60!black, double
%distance=1pt,text=black},
boundary/.style={thick,  draw=blue, text=black},
%arrow_decoration={markings, mark=at position 0.5 with {\arrow{>}}}
%%%%%%% Fill styles %%%%%%%%%%%%%%%
cell/.style={fill=black!10},
subgraph/.style={fill=black!30},
%%%%%%% Mid-path arrows
midarrow/.style={postaction={decorate},
                 decoration={
                    markings,% switch on markings
                    mark=at position #1 with {\arrow{>}},
                 }},
midarrow/.default=0.5,
%%%%% Mid-path arrow but reverse
midarrow_rev/.style={postaction={decorate},
                 decoration={
                    markings,% switch on markings
                    mark=at position #1 with {\arrow{<}},
                 }},
midarrow_rev/.default=0.5
}
\newcommand{\al}{\alpha}
\newcommand{\ga}{\gamma}
\newcommand{\ph}{\varphi}
\newcommand{\lmb}{\lambda}
\newcommand{\Lmb}{\Lambda}
\newcommand{\VV}{\mathbf{V}}
\newcommand{\WW}{\mathbf{W}}
\newcommand{\ZTV}{Z_\text{TV}}    %Turaev-Viro theory
\newcommand{\ZCY}{{Z_\text{CY}}}
\newcommand{\hatZCY}{{\hat{Z}_\text{CY}}}
\newcommand{\Zel}{{\mathcal{Z}^\text{el}}}
\newcommand{\ZA}{{\cZ(\cA)}}
\newcommand{\torus}{{\mathbf{T}^2}}
\newcommand{\punctorus}{{\mathbf{T}_0^2}}
\newcommand{\tnsrbar}{{\, \overline{\tnsr} \,}}
\newcommand{\tnsrproj}[2]{\tensor[_{#1}]\tnsrbar{_{#2}}}
\newcommand{\cD}{{\mathcal{D}}}
\newcommand{\Ann}{\text{Ann}}
\newcommand{\hA}{{\hat{\cA}}}
\newcommand{\ihom}[2]{\Hom_{#1}^{#2}}
\newcommand{\cM}{\mathcal{M}}      % category 
\newcommand{\hZSig}[1]{{\hatZCY(\Sigma_{#1})}}
\newcommand{\ZSig}[1]{{\ZCY(\Sigma_{#1})}}
\newcommand{\wdtld}{\widetilde}
\DeclareMathOperator{\Kar}{Kar} % Karoubi envelope
\DeclareMathOperator{\htr}{hTr} % horizontal trace
\DeclareMathOperator{\Skein}{Skein}
\newcommand{\st}{\; | \;}                               %%  such that
\newcommand{\rdual}[1]{\tensor*[^{*}]{{#1}}{}}
\newcommand{\rvee}[1]{\prescript{\vee}{}#1}
\newcommand{\CYA}{{\ZCY(\text{Ann})}}
\newcommand{\hCYA}{{\hatZCY(\text{Ann})}}
\newcommand{\hP}{{\hat{P}}}
\newcommand{\cAA}{{\cA \boxtimes \cA}}
\newcommand{\cAAbop}{{\cA \boxtimes \cA^\bop}}
\newcommand{\onebar}{{\overline{\one}}}
\DeclareMathOperator{\Mat}{Mat}
\DeclareMathOperator{\Vctsp}{Vec}
\newcommand{\matvec}[1]{\Mat_{#1}(\Vctsp)}
\newcommand{\op}{\text{op}}
\newcommand{\hU}{{\hat{U}}}
\newcommand{\UCY}{{U_{\text{CY}}}}
\newcommand{\UUCY}{{U^2_{\text{CY}}}}
\newcommand{\thetabar}{{\overline{\theta}}}
\newcommand{\tnsrz}[1]{{\, \overline{\tnsr}_{#1} \,}}
\newcommand{\htnsr}{{\hat{\tnsr}}}
\newcommand{\tnsrst}{\tnsrz{\text{st}}}
\newcommand{\onest}{{\onebar_{\text{st}}}}
\newcommand{\POP}{{M_\text{POP}}}
\newcommand{\YPOP}{{M_\text{Y}}}
\newcommand{\defend}{\hfill $\triangle$}
\newcommand{\rmkend}{\hfill $\triangle$}
\newcommand{\cS}{{\mathcal{S}}}
\begin{document}

\title{Reduced Tensor Product on the Drinfeld Center}
\author{Ying Hong Tham}
   \address{Department of Mathematics, Stony Brook University, 
            Stony Brook, NY 11794, USA}
    \email{yinghong.tham@stonybrook.edu}
    \urladdr{http://www.math.sunysb.edu/\textasciitilde yinghong/}

\begin{abstract}
The annulus comes with a ``stacking" operation
which glues two annuli into one.
This provides a tensor product structure
on the category of boundary values $\CYA$
associated to the annulus in an extended
Crane-Yetter TQFT.
It is known that $\CYA \simeq \ZA$, the Drinfeld center
of the premodular category $\cA$ from which $\ZCY$
is constructed.
We give an explicit formula for the tensor product on
$\ZA$ that corresponds to the stacking operation
on $\CYA$.
\end{abstract}
\maketitle

\section{Introduction}

The Crane-Yetter state-sum $\ZCY$,
as defined in \ocite{CY}
and further generalized in \ocite{CKY},
is a 4D TQFT that depends on a choice of premodular category $\cA$,
that is, a ribbon fusion category.
When $\cA$ is modular, it is known
that $\ZCY$ is an invertible theory that detects
only the signature and Euler characteristic of
a closed 4-manifold \ocite{CKYeval}.

In \ocite{KT},
we studied a certain category-valued invariant
$\ZCY(\Sigma)$ of (not necessarily closed) surfaces $\Sigma$,
which has been shown to be the codimension-2 extension
of the Crane-Yetter TQFT (see \ocite{tham_thesis}).
Briefly, $\ZCY(\Sigma)$ has as objects
configurations of $\cA$-labeled marked points in
$\Sigma$,
and morphisms are $\cA$-colored graphs in
$\Sigma \times [0,1]$
(see \secref{s:cy}).

It was shown in \ocite{Cooke}, and later independently in \ocite{KT},
that $\ZCY$ satisfies an excision property:
when a surface $\Sigma$ is built from
two smaller surfaces $\Sigma_1$ and $\Sigma_2$
along a circular boundary,
we have that ($\Ann$ short for annulus)
\[
\ZCY(\Sigma) \simeq \ZCY(\Sigma_1) \boxtimes_{\CYA}
  \ZCY(\Sigma_2)
\]
(one can also consider surfaces glued along segments
of boundaries; see \ocite{KT}*{Theorem 7.5}).
Here $\CYA$ is endowed with the ``stacking'' tensor product,
which comes from the operation of gluing
two annuli along a boundary to get a new annulus,
and $\ZCY(\Sigma_1),\ZCY(\Sigma_2)$ have
$\CYA$-module structures by virtue of their
circular boundary component.

Thus, understanding the properties of $\CYA$,
in particular the stacking tensor product,
is crucial to the computation of $\ZCY(\Sigma)$
for arbitrary surfaces.
Moreover, the stacking product, and more generally the excision property,
is a gluing along 1-dimensional manifolds,
and suggests that Crane-Yetter theory can be further extended
to codimension 3
(see \rmkref{r:coincidence} for an example showing the interaction
between skeins in 3-manifolds, boundary values on 2-manifolds,
and the stacking product);
studying $\CYA$ may suggest a suitable type of object that
$\ZCY$ should associate to 1-manifolds.
%We note that this excision property was
%also proved independently in \ocite{Cooke}.

%It is known that, as abelian categories\footnotemark,
%\[
%\ZA \simeq \CYA
%\]
%where $\ZA$ is the Drinfeld center of $\cA$
%(see for example \ocite{KT}*{Example 8.2}).
%\footnotetext{In fact, when $\ZA$ is equipped with the
%standard tensor product, and $\CYA$ is equipped with
%the pair-of-pants product, there is a braided tensor equivalence;
%see \rmkref{r:tnsr_topology} for a brief discussion.}
%Thus, we were motivated to describe this stacking
%tensor product on $\ZA$ intrinsically,
%and the result is the main construction of the paper,
%the \emph{reduced tensor product}, denoted $\tnsrbar$
%(see \defref{d:tnsr_red_ZA}).

It is known that, as braided tensor categories,
\[
\ZA \simeq \CYA
\]
where $\ZA$ is the Drinfeld center of $\cA$
equipped with the standard tensor product,
and $\CYA$ is equipped with the pair-of-pants product;
this is closely related to the fact that in Turaev-Viro theory,
$\ZA \simeq \ZTV(S^1)$.
(See \ocite{KT}*{Example 8.2} for a proof of equivalence
as abelian categories;
see \rmkref{r:tnsr_topology} for a brief discussion.)
Thus, we were motivated to describe this stacking
tensor product on $\ZA$ intrinsically,
and the result is the main construction of the paper,
the \emph{reduced tensor product}, denoted $\tnsrbar$
(see \defref{d:tnsr_red_ZA}).

The main result of this paper is
\thmref{t:main}, which says that the
\emph{reduced tensor product} on $\ZA$
encodes the \emph{stacking tensor product}
on $\CYA$,
expressed in the following 2-commutative diagram
of tensor functors:
\begin{equation}
\label{e:final_comm_diag}
\begin{tikzcd}
(\cA,\tnsr) \ar[r,"\htr"] \ar[d,"I"']
& (\hA,\htnsr) \ar[ld,"G"'] \ar[d,"\Kar"]
    \ar[r,"H"] \ar[r,"\simeq"']
& (\hCYA,\tnsrst) \ar[d, "\Kar"]
\\
(\ZA,\tnsrbar)
& (\Kar(\hA),\htnsr) \ar[l,"\Kar(G)"] \ar[l,"\simeq"']
    \ar[r, "\Kar(H)"'] \ar[r,"\simeq"]
& (\CYA,\tnsrst)
\end{tikzcd}
\end{equation}
where
\begin{itemize}
\item $(\ZA,\tnsrbar)$: the Drinfeld center with the
  reduced tensor product, see \secref{s:reduced},
  in particular \defref{d:tnsr_red_ZA};
\item $I$: two-sided adjoint of the forgetful functor $\ZA \to \cA$; see \prpref{p:center};
\item $\hA,\htr,G$: the horizontal trace of $\cA$ as an $\cA$-bimodule category;
this serves as an alternative path to realizing the reduced tensor product; see \defref{d:hA},\defref{d:htr},\prpref{p:hA_ZA};
%\item $G$: see \prpref{p:hA_ZA};
\item $(\CYA,\tnsrst)$: category of boundary values
  with its stacking tensor product;
	$\hCYA$ is an intermediate construction for $\CYA$;
  see \defref{d:CYA};
\item $H$: sends an object $X$
	to the boundary value with one marked point
	labeled by $X$; see \eqnref{e:def_H};
\item $\Kar$ refers to Karoubi envelope.
\end{itemize}

We also describe a $\ZZ/2$-action on each category,
and show that the functors above
are $\ZZ/2$-equivariant.

As an application of \thmref{t:main},
we describe $\ZCY(\torus)$ purely algebraically
in terms of $\cA$,
where $\torus$ is the torus (see \corref{c:torus_ZA}).

Finally in the last two sections,
we consider these constructions in light of
additional assumptions on $\cA$,
in particular the extreme cases of
when $\cA$ is modular (\secref{s:modular})
and when $\cA$ is symmetric (\secref{s:symmetric}).
In these cases, the reduced tensor product has
simpler descriptions:
\begin{itemize}
\item
When $\cA$ is modular, one has
the equivalence $\ZA \simeq_{\tnsr,br} \cAAbop$
\ocite{Mu},
which has simple objects $X_i \boxtimes X_j^*$,
where $X_i,X_j \in \cA$ are simple.
Then the reduced tensor product on $\ZA$ induces
the following tensor product on $\cAA$
(see \eqnref{e:cAA_simple}):
\[
(X_i \boxtimes X_j^*) \tnsrbar (X_k \boxtimes X_l^*)
\simeq \delta_{j,k} X_i \boxtimes X_l^*
\;\;.
\]

\item
When $\cA$ is symmetric, in particular when
$\cA = \Rep(G)$ for a finite group $G$,
$\ZA$ can be described as the category of
$G$-equivariant bundles over $G$, where $G$ acts
by conjugation on the base
(see e.g. \ocite{BakK}*{Section 3.2}).
Then the reduced tensor product on $\ZA$
corresponds to the fiberwise tensor product of bundles
(see \thmref{t:ZA_DG_red}).
(The supergroup case is a little more complicated,
as it mixes the even and odd components.
See \defref{d:symm_red_super}.)
\end{itemize}

We are certainly not the first to
consider the stacking product.
In \ocite{BZBJ1}, they show that the value of
factorization homology $\int_{\Ann} \cA$,
as introduced in \ocite{AFR},
on the annulus is equivalent to the category of
modules over a certain algebra $\mathfrak{F}_\cA$
in $\cA$.
In a subsequent paper \ocite{BZBJ2},
they show that the stacking product on $\int_{\Ann} \cA$
corresponds to the relative tensor product
$M \tnsr_{\mathfrak{F}_\cA} N$
for two modules $M,N \in \mathfrak{F}_\cA -\mmod$.
Since $\ZCY$ satisfies excision,
it must be the same as $\int_{-} \cA$
(see \ocite{AF}), thus our definitions must agree
(but we have not worked out an explicit
correspondence).

We also note that Wasserman \ocite{wassermansymm}
defined a ``symmetric tensor product"
on $\ZA$ when $\cA$ is symmetric, and coincides with
the reduced tensor product defined in this paper.
He shows in \ocite{wasserman2fold}
that this symmetric tensor product,
together with the usual tensor product,
makes $\ZA$ a ``bilax 2-fold tensor category",
which is of interest to topologists as they
are closely related to iterated loop spaces
(see \ocite{wasserman2fold} and references therein).
The reduced tensor product defined in this paper
should fit into a similar structure,
but we will not pursue this connection in much detail
in this paper, merely commenting on it briefly in
\rmkref{r:2fold} and \rmkref{r:2fold_cy}.

Acknowledgments: The author thanks
David Ben-Zvi,
Jin-Cheng Guu,
David Jordan,
Louis Kauffman,
Alexander Kirillov,
Dennis Sullivan,
and
Thomas Wasserman
for many helpful discussions.

\subsection*{Notations, Conventions, and Lemmas}

We adopt notations and conventions from
\ocite{stringnet}, to which we refer the reader for
details and proofs.
Throughout the paper, we fix an algebraically closed field $\kk$
of characteristic zero. All vector spaces and linear maps will be 
considered over $\kk$.
$\cA$ will denote a premodular category over $\kk$.
In particular, $\cA$ is semisimple with finitely many
isomorphism classes of simple objects. We will denote by $\Irr(\cA)$ the
set of isomorphism classes of simple objects.
We fix a representative $X_i$ for each isomorphism class
$i\in \Irr(\cA)$; abusing language,
we will frequently use the same letter $i$ for denoting
both the isomorphism class and the choice of
representative $X_i$.
We will also denote by $\one=X_1$ the unit object in $\cA$
(which is simple).

Rigidity gives us an involution $-^*$ on $\Irr(\cA)$;
$X_i$'s are chosen so that $X_{i^*} = X_i^*$.
When there is little cause for confusion,
we will suppress the
associativity, unit, and pivotal morphisms
$\delta: X \simeq X^{**}$.

We denote the categorical dimensions of simple objects
by $d_i = \dim X_i$.
For each $i$, we will fix a choice of square root
$\sqrt{d_i}$ so that $\sqrt{d_1} = 1$
and $\sqrt{d_i} = \sqrt{d_{i^*}}$.
We also denote the dimension of $\cA$
by $\cD = \sum_{i\in \Irr(\cA)} d_i^2$,
and fix a square root $\sqrt{\cD}$.
Note that by results of \ocite{ENO2005},
$\cD\neq 0$.

Concatenation of objects will mean tensor product;
this will always be the ``standard" one,
e.g. the tensor product in $\cA$
or tensor product of vector spaces.
When we are constructing a non-standard tensor product,
we use a different symbol like $\tnsrbar$,
and duals are represented by $X^\vee$ or $\rvee X$.

We define the functor
$\eval{} : \cA^{\boxtimes n} \to \Vctsp$
by
\begin{equation} \label{e:vev}
\eval{V_1,\ldots,V_n} = \Hom_\cA(\one,V_1\cdots V_n)
\;\;.
\end{equation}
The pivotal structure gives functorial isomorphisms
\begin{equation} \label{e:cyclic}
z:\eval{V_1,\ldots,V_n} \simeq
  \eval{V_n,V_1,\ldots,V_{n-1}}
\end{equation}
such that $z^n = \id$ (see \ocite{BakK}*{Section 5.3});
so up to canonical isomorphism,
$\eval{V_1,\ldots,V_n}$ only depends on the cyclic
order of $V_1,\ldots,V_n$.

There is a non-degenerate pairing
$\mathbf{\text{ev}}: \eval{V_1,\ldots,V_n} \tnsr
\eval{V_n^*,\ldots,V_1^*} \to \kk$
obtained by post-composing with evaluation maps.
When two nodes in a graph are labeled by
the same Greek letter, say $\al$,
it stands for a summation over
a pair of dual bases:
\begin{equation}\label{e:summation_convention}
%%%%%%%%
\begin{tikzpicture}
\node[morphism] (ph) at (0,0) {$\al$};
\draw[->] (ph)-- +(240:1cm) node[pos=0.7, left] {$V^{*}_1$} ;
\draw[->] (ph)-- +(180:1cm);
\draw[->] (ph)-- +(120:1cm);
\draw[->] (ph)-- +(60:1cm);
\draw[->] (ph)-- +(0:1cm);
\draw[->] (ph)-- +(-60:1cm) node[pos=0.7, right] {$V^{*}_n$};
\node[morphism] (ph') at (3,0) {$\al$};
\draw[->] (ph')-- +(240:1cm) node[pos=0.7, left] {$V_n$} ;
\draw[->] (ph')-- +(180:1cm);
\draw[->] (ph')-- +(120:1cm);
\draw[->] (ph')-- +(60:1cm);
\draw[->] (ph')-- +(0:1cm);
\draw[->] (ph')-- +(-60:1cm) node[pos=0.7, right] {$V_1$};
\end{tikzpicture}
%%%%%%%%
\quad = \sum_\al\quad
%%%%%%%%
\begin{tikzpicture}
\node[morphism] (ph) at (0,0) {$\ph_\al$};
\draw[->] (ph)-- +(240:1cm) node[pos=0.7, left] {$V^{*}_1$} ;
\draw[->] (ph)-- +(180:1cm);
\draw[->] (ph)-- +(120:1cm);
\draw[->] (ph)-- +(60:1cm);
\draw[->] (ph)-- +(0:1cm);
\draw[->] (ph)-- +(-60:1cm) node[pos=0.7, right] {$V^{*}_n$};
\node[morphism] (ph') at (3,0) {$\ph^\al$};
\draw[->] (ph')-- +(240:1cm) node[pos=0.7, left] {$V_n$} ;
\draw[->] (ph')-- +(180:1cm);
\draw[->] (ph')-- +(120:1cm);
\draw[->] (ph')-- +(60:1cm);
\draw[->] (ph')-- +(0:1cm);
\draw[->] (ph')-- +(-60:1cm) node[pos=0.7, right] {$V_1$};
\end{tikzpicture}
%%%%%%%%
\end{equation}
where $\ph_\al\in \eval{V_1,\dots, V_n}$,
$\ph^\al\in \eval{V_n^*,\dots, V_1^*}$
are dual bases with respect to the pairing 
$\mathbf{\text{ev}}$.

Diagrams represent morphisms from top to bottom.
The braiding $c$ and twist operators $\theta$
are depicted by right-hand twists:
\[
c_{X,Y} = 
\begin{tikzpicture}
\draw (-0.3,0.3) .. controls +(down:0.2cm) and +(up:0.2cm) .. (0.3,-0.3)
  node[pos=0,above] {$X$}
  node[below] {$X$};
\draw[overline={1.5}]
(0.3,0.3) .. controls +(down:0.2cm) and +(up:0.2cm) .. (-0.3,-0.3)
  node[pos=0,above] {$Y$}
  node[below] {$Y$};
\end{tikzpicture}
\;\;,\;\;
\theta_X =
\begin{tikzpicture}
\draw
  (0,0.3) .. controls +(down:0.4cm) and +(down:0.1cm) ..
  (0.2,0)
  node[pos=0,above] {$X$};
 \draw[overline={1.25}]
  (0.2,0) .. controls +(up:0.1cm) and +(up:0.4cm) ..
  (0,-0.3)
  node[below] {$X$};
\end{tikzpicture}
\;\;.
\]
We will denote $\thetabar := \theta^\inv$.

A dashed line stands for the regular coloring,
i.e. the sum of all colorings
by simple objects $i$, each taken with coefficient $d_i$:
\begin{equation} \label{e:regular_color}
\begin{tikzpicture}
\draw[regular] (0,0.3) -- (0,-0.3);
\end{tikzpicture}
=\sum_{i\in \Irr(\cA)} d_i \quad 
\begin{tikzpicture}
\draw[midarrow={0.55}] (0,0.3)--(0,-0.3)
  node[pos=0.5, right] {$i$};
\end{tikzpicture}
\;\;.
\end{equation}
An oriented edge labeled $X$ is the same as
the oppositely oriented edge labeled $X^*$.

Note that although we will be discussing 
various pivotal multifusion categories,
the morphisms are described in terms of morphisms
in $\cA$, in particular
all morphisms depicted graphically are
of morphisms in $\cA$,
unless specified otherwise.

Finally, we record some facts and lemmas that are useful
for computations. We give no proofs,
referring readers to \ocite{stringnet},\ocite{KT}.

\begin{equation} \label{e:combine}
\begin{tikzpicture}
\node[small_morphism] (T) at (0,-0.4) {$\al$};
\node[small_morphism] (B) at (0,0.4) {$\al$};
\node at (0,1) {$\dots$};
\node at (0,-1) {$\dots$};
\draw[midarrow] (T)-- +(-120:0.8cm)
  node[pos=0.5,left] {\small $V_1$};
\draw[midarrow] (T)-- +(-60:0.8cm)
  node[pos=0.5,right] {\small $V_n$};
\draw[midarrow_rev] (B)-- +(120:0.8cm)
  node[pos=0.5,left] {\small $V_1$};
\draw[midarrow_rev] (B)-- +(60:0.8cm)
  node[pos=0.5,right] {\small $V_n$};
\draw[regular] (T) -- (B); 
\end{tikzpicture}
%%%%%%%%%%%%%%%%%
=
%%%%%%%%%%%%%%%%%
\begin{tikzpicture}
\node at (0,0) {$\dots$};
\draw[midarrow] (-0.3,1)-- (-0.3,-1)
  node[pos=0.5,left] {\small $V_1$};
\draw[midarrow] (0.3,1)-- (0.3,-1)
  node[pos=0.5,right] {\small $V_n$};
\end{tikzpicture}
\end{equation}

%%%%%%%%%%%%%%%%%%%%%%%%%%%%%%%%%%%
%%%%%%%%%%%%%%%%%%%%%%%%%%%%%%%%%%%
\begin{equation} \label{e:sliding}
\textrm{Sliding lemma:}\;\;\;\;\;\;
\begin{tikzpicture}
\draw[regular] (0,0) circle(0.4cm);
\path[subgraph] (0,0) circle(0.3cm); 
\draw (0,1.2)..controls +(-90:0.8cm) and +(90:0.5cm) ..
  (-0.55, 0) ..controls +(-90:0.5cm) and +(90:0.8cm) ..
  (0,-1.2);
 \end{tikzpicture}
%%%%%%%%%%%%%
\quad=\quad
%%%%%%%%%%%%%
\begin{tikzpicture}
\draw[regular] (0,0) circle(0.4cm);
\node[small_morphism] (top) at (0,0.45) {\tiny $\al$};
\node[small_morphism] (bot) at (0,-0.45) {\tiny $\al$};
\path[subgraph] circle(0.3cm); 
\draw (0,1.2)--(top) (bot)--(0,-1.2);    
\end{tikzpicture}
%%%%%%%%%%%%%%
\quad=\quad 
%%%%%%%%%%%%%
 \begin{tikzpicture}
 \draw[regular] (0,0) circle(0.4cm);
 \path[subgraph] (0,0) circle(0.3cm); 
 \draw (0,1.2)..controls +(-90:0.8cm) and +(90:0.5cm) ..
   (0.55, 0) ..controls +(-90:0.5cm) and +(90:0.8cm) ..
   (0,-1.2);
 \end{tikzpicture}
\end{equation}
where the shaded region can contain anything.

%%%%%%%%%%%%%%%%%%%%%%%%%%%%%%%%%%%
%%%%%%%%%%%%%%%%%%%%%%%%%%%%%%%%%%%

\begin{equation} \label{e:al_natural}
\text{For }
f: V_1 \to W_1,\;\;\;\;
\begin{tikzpicture}
\node[small_morphism] (al) at (0,0) {$\al$};
\draw[->] (al)-- +(240:1cm) node[pos=0.7, left] {$V^{*}_1$} ;
\draw[->] (al)-- +(180:1cm);
\draw[->] (al)-- +(120:1cm);
\draw[->] (al)-- +(60:1cm);
\draw[->] (al)-- +(0:1cm);
\draw[->] (al)-- +(-60:1cm) node[pos=0.7, right] {$V^{*}_n$};
\end{tikzpicture}
\;\;\;\;
\begin{tikzpicture}
\node[small_morphism] (al) at (0,0) {$\al$};
\draw[->] (al)-- +(240:1cm) node[pos=0.7, left] {$V_n$} ;
\draw[->] (al)-- +(180:1cm);
\draw[->] (al)-- +(120:1cm);
\draw[->] (al)-- +(60:1cm);
\draw[->] (al)-- +(0:1cm);
\draw[midarrow={0.3},->] (al)-- +(-60:1cm) node[right] {$W_1$};
\node[small_morphism] at (-60:0.6cm) {\small $f$};
\end{tikzpicture}
%%%%%%%%%%%%%%%%%%%%%%%%%%%%%%%%%
=
%%%%%%%%%%%%%%%%%%%%%%%%%%%%%%%%%
\begin{tikzpicture}
\node[small_morphism] (be) at (0,0) {$\beta$};
\draw[midarrow={0.3},->] (be)-- +(240:1cm)
  node[left] {$V^{*}_1$} ;
\node[small_morphism] at (-120:0.6cm) {\tiny $f^*$};
\draw[->] (be)-- +(180:1cm);
\draw[->] (be)-- +(120:1cm);
\draw[->] (be)-- +(60:1cm);
\draw[->] (be)-- +(0:1cm);
\draw[->] (be)-- +(-60:1cm)
  node[pos=0.7,right] {$V^{*}_n$};
\end{tikzpicture}
\;\;
\begin{tikzpicture}
\node[small_morphism] (be) at (0,0) {$\beta$};
\draw[->] (be)-- +(240:1cm) node[pos=0.7, left] {$V_n$} ;
\draw[->] (be)-- +(180:1cm);
\draw[->] (be)-- +(120:1cm);
\draw[->] (be)-- +(60:1cm);
\draw[->] (be)-- +(0:1cm);
\draw[->] (be)-- +(-60:1cm) node[pos=0.7,right] {$W_1$};
\end{tikzpicture}
\end{equation}

\begin{equation} \label{e:charge_conservation}
\text{When $\cA$ is modular,}
\;\;
\frac{1}{\cD}
\begin{tikzpicture}
\draw[midarrow={0.9}] (0,0) -- (0,-0.5);
\node at (0.15,-0.4) {\tiny $i$};
\draw[overline={1.5},regular] (0,0.1) circle (0.25cm);
\draw[overline={1.5}] (0,0) -- (0,0.5);
\end{tikzpicture}
=
\delta_{i,1} \id_{X_i}
\end{equation}

\section{Reduced Tensor Product on $\ZA$}
\label{s:reduced}

Recall that $\ZA$, the Drinfeld center of $\cA$,
is the category with

Objects: pairs $(X,\ga)$, where $X\in \cA$ and $\ga$
is a half-braiding, i.e. a natural isomorphism of functors  
$\ga_A : A \tnsr X\to X \tnsr A$, $A\in \cC$
satisfying natural compatibility conditions.

Morphisms: $\Hom_\ZA((X,\ga), (X',\ga'))=\{f\in \Hom_\cA(X,X')\st f\ga=\ga' f\}$.

We recall some well-known properties of $\ZA$.
These do not require the braiding on $\cA$,
only its spherical fusion structure.

The following is standard (see e.g.
\ocite{EGNO}*{Corollary 8.20.14}):
\begin{definition}
$\ZA$ is modular, with tensor product
\begin{equation}
(X,\ga) \tnsr (Y,\mu) := (X\tnsr Y,\ga \tnsr \mu)
  \label{e:tnsr_std}
\end{equation}
where
\begin{equation}
  (\ga \tnsr \mu)_A := (\id_X \tnsr \mu_A) \circ (\ga_A \tnsr \id_Y)
  \label{e:tnsr_hfbrd_std}
\end{equation}
\label{d:tnsr_std}
and left dual given by
\begin{equation}
(X,\ga)^* = (X^*,\ga^*),
\text{ where }
(\ga^*)_A =
(\ga_{\rdual{A}})^* = 
\begin{tikzpicture}
\node[small_morphism] (ga) at (0,0) {\tiny $\ga$};
\draw (ga) .. controls +(-45:0.5cm) and +(0:0.5cm) ..
  (-0.3,-0.9) .. controls +(180:0.5cm) and +(down:1cm) .. (-1,1)
  node[above] {$A$};
\draw (ga) .. controls +(135:0.5cm) and +(180:0.5cm) ..
  (0.3,0.9) .. controls +(0:0.5cm) and +(up:1cm) .. (1,-1)
  node[below] {$A$};
\draw (ga) .. controls +(-120:0.5cm) and +(0:0.2cm) ..
  (-0.25,-0.6) .. controls +(180:0.2cm) and +(down:1cm) .. (-0.5,1)
  node[above] {$X^*$};
\draw (ga) .. controls +(60:0.5cm) and +(180:0.2cm) ..
  (0.25,0.6) .. controls +(0:0.2cm) and +(up:1cm) .. (0.5,-1)
  node[below] {$X^*$};
\end{tikzpicture}
,
\end{equation}
and similarly the right dual is
$\rdual{(X,\ga)} = (\rdual{X}, \rdual{\ga})$,
where $(\rdual{\ga})_A = \rdual{(\ga_{A^*})}$.

The pivotal structure is given by that of $\cA$.
\defend
\end{definition}

The following is taken from \ocite{stringnet}*{Theorem 8.2}:

\begin{proposition}\label{p:center}\par\noindent
Let $F : \ZA \to \cA$ be the natural forgetful functor
$F: (X,\ga) \mapsto X$. 
Then it has a two-sided adjoint functor
$I : \cA \to \ZA$, given by 
\begin{equation}
\label{e:induction}
I(A)=\big( \bigoplus_{i\in \Irr(\cA)} X_i \tnsr A \tnsr X_i^*, \Gamma \big)
\end{equation}
where $\Gamma$ is the half-braiding given by
\begin{equation}
\Gamma_B = 
\displaystyle{\sum_{i,j \in \Irr(\cA)}\sqrt{d_i}\sqrt{d_j}}\quad 
\begin{tikzpicture}
\draw (0,1)--(0,-1);
\node[above] at (0,1) {$A$};
\node[below] at (0,-1) {$A$};
\node[small_morphism] (L) at (-0.5,0) {$\al$};
\node[small_morphism] (R) at (0.5,0) {$\al$};
\draw (L)-- +(0,1) node[above] {$i$}; \draw (L)-- +(0,-1) node[below] {$j$}; 
\draw (R)-- +(0,1)node[above] {$i^*$}; \draw (R)-- +(0,-1)node[below] {$j^*$}; 
\draw (-1, 1) .. controls +(down:0.5cm) and +(135:0.5cm) .. (L);
\draw (1, -1) .. controls +(up:0.5cm) and +(-45:0.5cm) .. (R);
\node[above] at (-1,1) {$B$};
\node[below] at (1,-1) {$B$};
\end{tikzpicture}
.
\end{equation}
We refer the reader to \ocite{stringnet} for more details.
\end{proposition}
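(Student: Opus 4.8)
The plan is to verify directly that $I$ as given is two-sided adjoint to $F$, by producing the unit and counit of each adjunction and checking the triangle identities graphically. Since $F$ is monoidal and $\ZA$ is semisimple, it suffices to exhibit natural isomorphisms $\Hom_{\ZA}(I(A),(X,\gamma)) \cong \Hom_\cA(A,X)$ and $\Hom_{\ZA}((X,\gamma),I(A)) \cong \Hom_\cA(X,A)$, natural in both variables; fullness of the argument then reduces to the first isomorphism together with the self-duality statement in \defref{d:tnsr_std} (applying $-^*$ interchanges the two Hom-spaces and swaps $F\dashv I$ with $I\dashv F$). So I would focus on the left adjunction $\Hom_{\ZA}(I(A),(X,\gamma)) \cong \Hom_\cA(A,X)$.

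First I would unwind the left-hand side: a morphism $I(A)\to(X,\gamma)$ is a collection of maps $f_i : X_i\tnsr A\tnsr X_i^* \to X$ in $\cA$ intertwining the half-braiding $\Gamma$ with $\gamma$. The candidate map $\Hom_\cA(A,X)\to\Hom_{\ZA}(I(A),(X,\gamma))$ sends $g:A\to X$ to the family obtained by composing $X_i\tnsr g\tnsr X_i^*$ with the half-braiding $\gamma$ used to ``absorb'' the $X_i$ and $X_i^*$ strands into $X$ and then the evaluation $X_i\tnsr X_i^* \to\one$ (up to the normalizing factor $\sqrt{d_i}$); pictorially this is the standard cap-off. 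The inverse sends a family $(f_i)$ to $f_1$ restricted along the unit $A = \one\tnsr A\tnsr\one$. I would check these are mutually inverse: one composite is immediate from $\gamma_\one = \id$ and the normalization $\sqrt{d_1}=1$; for the other, the key input is the half-braiding intertwining condition for $(X,\gamma)$ applied to $B = X_j$, which forces $f_j$ to be determined by $f_1$ — this is exactly where the summation/fusion structure of $\Gamma$ (the $\alpha$-nodes summed over dual bases, with the $\sqrt{d_i}\sqrt{d_j}$ weights) does its work, via the identity \eqnref{e:combine} to collapse a regular-colored strand. Naturality in $A$ is clear since everything is built functorially from $\tnsr$; naturality in $(X,\gamma)$ is clear since we only post-compose.

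The main obstacle I expect is purely bookkeeping: matching the normalization constants $\sqrt{d_i}$ in the definition of $\Gamma$ against the factors of $d_i$ that appear when one caps off a simple strand and uses \eqnref{e:combine} (where the regular coloring carries weight $d_i$), and checking that these conspire so that the two composites are genuinely the identity rather than a scalar multiple. A secondary point requiring care is verifying that $\Gamma$ as written really is a half-braiding — i.e. that it is natural in $B$ (use \eqnref{e:al_natural}) and satisfies the hexagon/compatibility with $\tnsr$ — but since the statement cites \ocite{stringnet}*{Theorem 8.2} I would treat this as known and only sketch it, concentrating the verification on the adjunction and on the duality formulas of \defref{d:tnsr_std}, which are needed to deduce the \emph{two-sided} nature of the adjoint from the one-sided computation. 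Once both adjunction isomorphisms are in hand, the unit $\eta_A : A\to FI(A)$ is the inclusion of the $i=1$ summand and the counit $\varepsilon_A : FI(A)\to A$ is the projection onto it (up to normalization), and the triangle identities follow from the same cap-off computation.
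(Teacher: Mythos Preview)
The paper does not give its own proof of this proposition; it simply cites \ocite{stringnet}*{Theorem 8.2} and moves on. Your proposal is therefore not being compared against an argument in the paper but against a bare citation, and as a direct verification of the adjunction it is essentially the standard one and is correct in outline.

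One point deserves more care than you give it. Your reduction of the two-sided statement to the one-sided one via duality (``applying $-^*$ interchanges the two Hom-spaces'') implicitly uses an isomorphism $I(A)^* \simeq I(A^*)$ in $\ZA$, compatible with the duality on $\cA$. This is true---the half-braiding $\Gamma$ is self-dual in the appropriate sense and $\bigoplus_i X_i A X_i^*$ is stable under $(-)^*$ up to reindexing $i\mapsto i^*$---but it is not automatic from \defref{d:tnsr_std} alone and should be stated and checked, or else the second adjunction should be written out directly (it is no harder than the first). Also, your closing sentence conflates the unit of $I\dashv F$ with the counit of $F\dashv I$: both live between $A$ and $FI(A)$, but the triangle identities for each adjunction also involve the (co)unit on the $\ZA$ side, namely maps $(X,\gamma)\leftrightarrows IF(X,\gamma)=I(X)$. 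These are precisely the inclusion/projection realizing $(X,\gamma)$ as a summand of $I(X)$ via the idempotent $P_{(X,\gamma)}$ that the paper records in the very next proposition, so you should invoke that explicitly rather than leaving it implicit.
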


\begin{proposition}
The adjoint functor $I: \cA \to \ZA$ above is dominant.
More explicitly,
the object $(X,\ga)$ is a direct summand of $I(X)$,
given by the projection $P_{(X,\ga)}$, described below:

\begin{equation}
P_{(X,\ga)} :=
\sum_{i,j \in \Irr(\cA)} \frac{\sqrt{d_i}\sqrt{d_j}}{\cD}
\begin{tikzpicture}
\tikzmath{
  \toprow = 1;
  \bottom = -1;
  \lf = -0.7;
  \rt = 0.7;
}
\node[small_morphism] (ga) at (0,0.3) {$\ga$}; 
\node[small_morphism] (ga2) at (0,-0.3) {$\ga$};
%% vertical lines
\draw (ga) -- (0,\toprow) node[above] {$X$};
\draw (ga) -- (ga2);
\draw (ga2) -- (0,\bottom) node[below] {$X$}; 
%% cup branches
\draw[midarrow_rev] (ga) to[out=180,in=-90] (\lf,\toprow);
\node at (-0.6,0.35) {\tiny $i$};
\draw[midarrow] (ga) to[out=0,in=-90] (\rt,\toprow);
%% cap branches
\draw[midarrow] (ga2) to[out=180,in=90] (\lf,\bottom);
\node at (-0.6,-0.25) {\tiny $j$};
\draw[midarrow_rev] (ga2) to[out=0,in=90] (\rt,\bottom);
\end{tikzpicture}
.
\end{equation}
\end{proposition}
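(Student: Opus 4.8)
The plan is to verify directly that the morphism $P_{(X,\ga)}\in\Hom_\ZA(I(X),I(X))$ displayed above is an idempotent whose image is isomorphic to $(X,\ga)$. Since $I$ is a two-sided adjoint to the forgetful functor $F$, dominance is essentially a formal consequence of the existence of enough idempotents, but to make the statement precise I would exhibit explicit morphisms realizing $(X,\ga)$ as a direct summand. Concretely, let $\iota_{(X,\ga)}\colon (X,\ga)\to I(X)$ be the morphism in $\ZA$ adjoint (under $F\dashv I$) to $\id_X\colon X\to F\,I(X)$ restricted appropriately — equivalently, the inclusion of the $i=1$ summand $X\tnsr\one\tnsr\one^*\cong X$ twisted by the half-braiding $\ga$ — and let $\pi_{(X,\ga)}\colon I(X)\to(X,\ga)$ be the morphism adjoint (under $I\dashv F$) to $\id_X$. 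The claim reduces to two identities:
\begin{equation}
\pi_{(X,\ga)}\circ\iota_{(X,\ga)}=\id_{(X,\ga)},
\qquad
\iota_{(X,\ga)}\circ\pi_{(X,\ga)}=P_{(X,\ga)}.
\end{equation}

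The first identity I would check by a short graphical computation: composing the two adjunction units/counits and using the unit constraint together with the normalization $\sqrt{d_1}=1$, the sum over $i,j$ collapses (only $i=j=1$ survives after capping off with the trivial strand), and one is left with $\id_X$; one must also check this intertwines the half-braidings, which is immediate since $\ga$ was built into $\iota$. The second identity — that $\iota\pi$ equals the displayed sum — is the substantive part. Here I would expand $\iota_{(X,\ga)}\circ\pi_{(X,\ga)}$ in terms of the explicit formula for $I(X)=\big(\bigoplus_i X_i\tnsr X\tnsr X_i^*,\Gamma\big)$ from \prpref{p:center}, write the composite as a sum over pairs of simple labels $i,j$ (coming from the source and target summands), and recognize that each term is precisely a pair of $\ga$-nodes connected by strands colored $i$ and $j$, with coefficient $\sqrt{d_i}\sqrt{d_j}/\cD$. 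The factor of $1/\cD$ should come from the normalization in the adjunction counit $FI\to\id$ (or equivalently from closing up a regular-colored loop and dividing by the global dimension, as in \eqnref{e:combine}).

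The key technical inputs are: the defining property of $\Gamma$ as a half-braiding (so that $P_{(X,\ga)}$ is genuinely a morphism in $\ZA$, i.e. commutes with $\Gamma$ — this is where naturality relation \eqnref{e:al_natural} and the sliding lemma \eqnref{e:sliding} enter, since one must slide a $B$-strand past the dashed/regular loop implicit in the sum over $i$); the fact that $f=\ga$, regarded as an element of $\eval{\cdot}$-type Hom spaces, satisfies the half-braiding hexagon, which forces the two $\ga$-nodes in $P_{(X,\ga)}^2$ to fuse into one $\ga$-node via \eqnref{e:combine}; and the pairing/summation convention \eqnref{e:summation_convention} to handle the $\al$-labeled nodes. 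Idempotency $P_{(X,\ga)}^2=P_{(X,\ga)}$ then follows from $\pi\iota=\id$ and the two identities above, so strictly speaking I only need to prove the two displayed identities; alternatively I would verify $P_{(X,\ga)}^2=P_{(X,\ga)}$ directly by stacking two copies, using \eqnref{e:combine} to merge the regular-colored strand and the half-braiding compatibility to merge the $\ga$-nodes, collapsing one factor of $\sum_k d_k = $ (the relevant partial dimension) against the $1/\cD$.

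The main obstacle I anticipate is bookkeeping the square-root-of-dimension factors and the orientation conventions in the graphical calculus so that the coefficient comes out as exactly $\sqrt{d_i}\sqrt{d_j}/\cD$ rather than, say, $d_id_j/\cD^2$ or with a $d_{i^*}$ in the wrong place; getting this right requires careful use of the conventions $\sqrt{d_i}=\sqrt{d_{i^*}}$, $\sqrt{d_1}=1$, and the precise form of the adjunction (co)units from \ocite{stringnet}. A secondary subtlety is checking that $P_{(X,\ga)}$ as written really lands in $\Hom_\ZA$ and not merely $\Hom_\cA$ — i.e. that it commutes with the half-braiding $\Gamma$ of $I(X)$ — which, as noted, is where the sliding lemma and the hexagon axiom for $\ga$ do the real work; I would isolate this as a separate lemma-style computation before assembling the final argument. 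Once these are in place, dominance of $I$ is immediate since every $(X,\ga)\in\ZA$ is, up to isomorphism, a summand of $I(X)=I\,F(X,\ga)$.
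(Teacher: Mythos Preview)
Your plan is correct and is the standard way to verify this. Note, however, that the paper does not actually supply a proof of this proposition: it is stated without argument, presumably because it is well known (the relevant computations appear in \ocite{stringnet}, to which the preceding proposition is attributed). So there is nothing in the paper to compare your approach against; you are filling in an omitted proof, and the outline you give --- exhibit $\iota_{(X,\ga)},\pi_{(X,\ga)}$ via the adjunctions, check $\pi\iota=\id$ and $\iota\pi=P_{(X,\ga)}$, and separately verify $P_{(X,\ga)}$ intertwines $\Gamma$ --- is exactly right. Your anticipated obstacles (the $\sqrt{d_i}$ bookkeeping and the $\Gamma$-intertwining check) are the only places where care is needed, and the tools you cite (\eqnref{e:combine}, \eqnref{e:sliding}, the hexagon for $\ga$) are the correct ones.
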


\subsection{The Reduced Tensor Product, $\tnsrbar$}
\par \noindent
\\

From here on, we will assume that $\cA$ is premodular.
We define a different monoidal structure on $\ZA$,
which we call the \emph{reduced tensor product},
denoted by $\tnsrbar$;
we emphasize that \emph{braiding} is required
to define this monoidal structure.
We will see in the coming sections that
the definitions and results proved here
have topological origin,
but we first present them purely algebraically.

We note that many of the definitions and formulas here
can be found in \ocite{tham}*{Section 3},
where we used them to define a tensor product
on a different category, $\Zel(\cA)$.
It is no coincidence that there is overlap,
as $\Zel(\cA)$ is the category associated to the once-punctured
torus, $\ZCY(\punctorus)$ (see \ocite{KT}*{Proposition 9.5}),
and both the tensor product defined here
and the one in \ocite{tham} have similar topological origins
(compare \ocite{tham}*{Remark 3.21} and \rmkref{r:tnsr_topology}).
We also note again that \ocite{wassermansymm}
defines a similar tensor product that coincides with ours
when $\cA$ is symmetric.

The definition of $\tnsrbar$ will be given in several steps.

\begin{definition}
\label{d:tnsr_red}
Let $X,Y$ be objects in $\cA$,
and let $\ga,\mu$ be half-braidings
on $X,Y$ respectively.
The \emph{reduced tensor product of $X$ and $Y$
with respect to $\ga,\mu$}
is defined as the image of the
projection $Q_{\ga,\mu}:X \tnsr Y \to X \tnsr Y$ defined
as follows:
\begin{equation} \label{e:Qproj}
X \tnsrproj{\ga}{\mu} Y :=
\im (Q_{\ga,\mu})
%%%%%%%%%%%%%%%%%%%%%
\hspace{5pt},\hspace{5pt}
%%%%%%%%%%%%%%%%%%%%%
Q_{\ga,\mu} :=
%%%%%%%%%%%%%%%%%%%%%
\frac{1}{\cD}
\begin{tikzpicture}
\draw (0.3,-0.8) -- (0.3,0.8)
  node[above] {$Y$};
\draw[regular, overline] (0,0) circle (0.5cm); 
\node[small_morphism] (n2) at (0.3,0.4) {\tiny $\mu$};
\draw[overline] (-0.3,-0.8) -- (-0.3,0.8)
  node[above] {$X$};
\node[small_morphism] (n1) at (-0.3,-0.4) {\tiny $\ga$};
\end{tikzpicture}
\hspace{10pt}.
\end{equation}
It is easy to check that $Q^2_{\ga,\mu} = Q_{\ga,\mu}$.
\defend
\end{definition}
(Essentially \ocite{tham}*{Definition 3.14};
compare \ocite{wassermansymm}*{Equation (11)}.)

There is an accompanying definition of $\tnsrbar$
for half-braidings:
\begin{definition}
Let $\ga,\mu$ be half-braidings on $X,Y$ respectively.
Define $\ga \tnsrbar \mu$ to be natural transformation
$- \tnsr XY \to XY \tnsr -$ given by
\[
(\ga \tnsrbar \mu)_A =
\frac{1}{\cD}
\begin{tikzpicture}
\draw (0.3,-0.8) -- (0.3,0.8)
  node[above] {$Y$}
  node[pos=0,below] {$Y$};
\draw[regular, overline] (0,0) circle (0.5cm); 
\node[small_morphism] (n2) at (0.3,0.4) {\tiny $\mu$};
\draw[overline] (-0.3,-0.8) -- (-0.3,0.8)
  node[above] {$X$}
  node[pos=0,below] {$X$};
\node[small_morphism] (n1) at (-0.3,-0.4) {\tiny $\ga$};
%% cross strand
\node[small_morphism] (a1) at (-0.5,0) {\tiny $\al$};
\draw (a1) to[out=150,in=-90] (-1,0.8)
  node[above] {$A$};
\node[small_morphism] (a2) at (0.5,0) {\tiny $\al$};
\draw (a2) to[out=-30,in=90] (1,-0.8)
  node[below] {$A$};
\end{tikzpicture}
\;\;.
\]
\defend
\end{definition}

In general, $\ga \tnsrbar \mu$ fails to be a
half-braiding, but only insofar as it is not an isomorphism.
Observe that $\ga \tnsrbar \mu$ commutes with
$Q_{\ga,\mu}$, so it descends to a
natural transformation
$- \tnsr (X\tnsrproj{\ga}{\mu} Y)
\to (X\tnsrproj{\ga}{\mu} Y) \tnsr -$.
This is in fact a half-braiding
on $X\tnsrproj{\ga}{\mu} Y$:

\begin{lemma}
Let $\ga,\mu$ be half-braidings on $X,Y$ respectively.
Consider the half-braidings $\ga \tnsr c^\inv$ and $c \tnsr \mu$
on $X \tnsr Y$, where recall $c$ is the braiding on $\cA$.
Observe that the projection $Q_{\ga,\mu}$ intertwines
both $\ga \tnsr c^\inv$ and $c \tnsr \mu$,
thus they restrict to half-braidings
on $X \tnsrproj{\ga}{\mu} Y$.
%i.e. $Q_{\ga,\mu} \in \End_{\ZA}((X\tnsr Y,\ga \tnsr c^\inv))$
%and
%$Q_{\ga,\mu} \in \End_{\ZA}((X\tnsr Y,c \tnsr \mu))$.
Then as half-braidings on
$X \tnsrproj{\ga}{\mu} Y$,
we have
\begin{equation*}
\ga \tnsrbar \mu =
  \ga \tnsr c^\inv = c \tnsr \mu\
\;\;.
\end{equation*}
\label{l:hfbrd_red}
\end{lemma}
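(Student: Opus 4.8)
The plan is to prove the two equalities $\ga\tnsrbar\mu = \ga\tnsr c^\inv$ and $\ga\tnsrbar\mu = c\tnsr\mu$ as half-braidings on $X\tnsrproj{\ga}{\mu}Y$ by direct graphical manipulation, using the fact that everything is being evaluated after the idempotent $Q_{\ga,\mu}$. The key point is that the defining picture for $(\ga\tnsrbar\mu)_A$ differs from the pictures for $(\ga\tnsr c^\inv)_A$ and $(c\tnsr\mu)_A$ only in how the strand labeled $A$ is routed past the regular-coloring circle, and the sliding lemma \eqnref{e:sliding} together with \eqnref{e:combine} should let us move that strand from one side of the circle to the other at the cost of exactly the correction terms that produce the honest braidings $c^{\pm1}$.

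First I would write out $(\ga\tnsrbar\mu)_A$ as drawn, then pre- and post-compose with $Q_{\ga,\mu}$ (which is legitimate since we work on the image). This stacks a second regular-coloring circle; by \eqnref{e:combine} the two concatenated regular circles, together with the $\al$-nodes, collapse, and we are left with a single regular circle encircling the $\ga$, $\mu$ nodes and carrying the $A$-strand through one of the $\al$-summation nodes. Next I would apply the sliding lemma \eqnref{e:sliding}: the shaded region may contain the $\ga$ and $\mu$ nodes and the two vertical $X,Y$ strands, so the $A$-strand can be slid around the regular circle. Sliding it to the ``left'' of the subgraph, past the $\ga$ node only, the $A$-strand no longer interacts with the regular circle at all and the braiding of $A$ over/under the $X$-strand produced by passing the node is precisely $c^{\pm1}$ acting on $X$; combined with the half-braiding $\ga$ this yields the picture for $\ga\tnsr c^\inv$ (with the sign of the crossing determined by the right-handed convention for $c$ fixed in the excerpt, and by which direction one slides). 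Sliding instead to the ``right'', past the $\mu$ node, gives $c\tnsr\mu$ by the symmetric argument. That the resulting natural transformations genuinely are the restrictions of $\ga\tnsr c^\inv$ and $c\tnsr\mu$ — i.e.\ that $Q_{\ga,\mu}$ intertwines these two half-braidings — is the content of the parenthetical ``Observe'' in the statement, which I would justify by the same sliding lemma (moving the $A$-strand all the way around shows $Q_{\ga,\mu}$ commutes with both), so this costs nothing extra.

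The main obstacle I anticipate is bookkeeping of the crossings: tracking which strand goes over which, and hence whether one lands on $c$ or $c^{\inv}$, requires care with the overline/underline conventions in the diagrams and with the orientation reversals coming from the dual labels $i,j$ versus $i^*,j^*$ in $Q_{\ga,\mu}$. A secondary subtlety is confirming that after \eqnref{e:combine} the surviving single regular circle is positioned so that \eqnref{e:sliding} applies verbatim (one regular circle, one connected subgraph inside, one strand passing through) rather than in some degenerate configuration; I expect this to be routine but worth spelling out. Finally I would remark that naturality in $A$ and the half-braiding axioms for the result follow immediately once the identification with $\ga\tnsr c^\inv$ is established, since the latter is manifestly a half-braiding on $X\tnsr Y$ that is intertwined by $Q_{\ga,\mu}$, completing the proof.
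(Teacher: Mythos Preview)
Your core idea---use the sliding lemma \eqnref{e:sliding} to move the $A$-strand from one side of the regular circle to the other---is exactly what the paper does, and your identification of which direction yields $\ga\tnsr c^\inv$ versus $c\tnsr\mu$ is correct. The only real content is indeed the over/under bookkeeping, which resolves as you anticipate.

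However, your proposed detour of pre/post-composing $(\ga\tnsrbar\mu)_A$ with extra copies of $Q_{\ga,\mu}$ and then collapsing via \eqnref{e:combine} is unnecessary and slightly off. First, \eqnref{e:combine} concerns a pair of $\al$-nodes joined by a single dashed edge, not the merging of two full regular circles, so it does not apply as stated. Second, and more to the point, the defining picture for $(\ga\tnsrbar\mu)_A$ \emph{already is} $Q_{\ga,\mu}$ with $\al$-nodes on the circle through which $A$ is routed---there is no second circle to introduce. The paper's argument is therefore a single short chain \eqnref{e:hfbrd_sym}: start from $Q_{\ga,\mu}$ composed with $(\ga\tnsr c^\inv)_A$ (one extra $\ga$-node on the $X$-strand carrying the $A$-strand), apply \eqnref{e:sliding} once to absorb the outgoing $A$-strand onto the circle via $\al$-nodes, then use the half-braiding axiom to merge the two adjacent $\ga$-nodes on the $X$-strand, landing exactly on the definition of $(\ga\tnsrbar\mu)_A$. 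The symmetric manipulation on the $\mu$-side gives the other equality. No extra idempotents, no \eqnref{e:combine}.
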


\begin{proof}
This follows from the following computation:
\begin{equation}
\label{e:hfbrd_sym}
\begin{tikzpicture}
\draw (0.3,-1) -- (0.3,1);
\draw[regular, overline] (0,0.2) circle (0.5cm); 
\node[small_morphism] (n2) at (0.3,0.6) {\tiny $\mu$};
\draw[overline] (-0.3,-1) -- (-0.3,1);
\node[small_morphism] (n1) at (-0.3,-0.2) {\tiny $\ga$};
%% cross strand
\node[small_morphism] (n3) at (-0.3,-0.7) {\tiny $\ga$};
\draw (n3) to[out=150,in=-90] (-1,1);
\draw[overline] (n3) to[out=-20,in=90] (1,-1);
\end{tikzpicture}
%%%%%%%%%%%%%%%%%%%%%%%%%%%%%
=
%%%%%%%%%%%%%%%%%%%%%%%%%%%%%
\begin{tikzpicture}
%% circle
\draw (0.3,-1) -- (0.3,1);
\draw[regular, overline] (0,0.2) circle (0.5cm); 
\node[small_morphism] (n2) at (0.3,0.6) {\tiny $\mu$};
\draw[overline] (-0.3,-1) -- (-0.3,1);
\node[small_morphism] (n1) at (-0.3,-0.2) {\tiny $\ga$};
%% cross strand
\node[small_morphism] (n3) at (-0.3,-0.7) {\tiny $\ga$};
\draw (n3) to[out=150,in=-90] (-1,1);
\node[small_morphism] (a1) at (0.1,-0.25) {\tiny $\al$};
\draw (n3) to[out=-20,in=-90] (a1);
\node[small_morphism] (a2) at (0.5,0.2) {\tiny $\al$};
\draw (a2) to[out=-50,in=90] (1,-1);
\end{tikzpicture}
%%%%%%%%%%%%%%%%%%%%%%%%%%%%%
=
%%%%%%%%%%%%%%%%%%%%%%%%%%%%%
\begin{tikzpicture}
%% circle
\draw (0.3,-1) -- (0.3,1);
\draw[regular, overline] (0,0) circle (0.5cm); 
\node[small_morphism] (n2) at (0.3,0.4) {\tiny $\mu$};
\draw[overline] (-0.3,-1) -- (-0.3,1);
\node[small_morphism] (n1) at (-0.3,-0.4) {\tiny $\ga$};
%% cross strand
\node[small_morphism] (a1) at (-0.5,0) {\tiny $\al$};
\draw (a1) to[out=150,in=-90] (-1,1);
\node[small_morphism] (a2) at (0.5,0) {\tiny $\al$};
\draw (a2) to[out=-30,in=90] (1,-1);
\end{tikzpicture}
%%%%%%%%%%%%%%%%%%%%%%%%%%%%%
=
%%%%%%%%%%%%%%%%%%%%%%%%%%%%%
\begin{tikzpicture}
%% cross strand
\node[small_morphism] (n3) at (0.3,0.7) {\tiny $\mu$};
\draw (n3) to[out=160,in=-90] (-1,1);
\draw (n3) to[out=-30,in=90] (1,-1);
%% circle
\draw (n3) -- (0.3,1);
\draw (n3) -- (0.3,-1);
\draw[regular, overline] (0,-0.2) circle (0.5cm); 
\node[small_morphism] (n2) at (0.3,0.2) {\tiny $\mu$};
\draw[overline] (-0.3,-1) -- (-0.3,1);
\node[small_morphism] (n1) at (-0.3,-0.6) {\tiny $\ga$};
\end{tikzpicture}
\;\;.
\end{equation}
\end{proof}
(Essentially \ocite{tham}*{Lemma 3.16};
compare \ocite{wassermansymm}*{Lemma 10}.)

\begin{definition}
\label{d:tnsr_red_ZA}
Let $(X,\ga),(Y,\mu)$ be objects in $\ZA$.
Their \emph{reduced tensor product}
is defined as follows:
\begin{equation*}
  (X,\ga) \tnsrbar (Y,\mu) := (X \tnsrproj{\ga}{\mu} Y, \ga \tnsrbar \mu)
%\label{e:tnsr_red}
\end{equation*}

and for $f : (X,\ga) \to (X,\ga'), g : (Y,\mu) \to (Y',\mu')$,
their reduced tensor product is
\[
  f \tnsrbar g := Q_{\ga',\mu'} \circ (f \tnsr f') \circ Q_{\ga,\mu}
\]
or more simply, it is $f \tnsr f'$ restricted to $X \tnsrproj{\ga}{\mu} Y$.
\defend
\end{definition}

\begin{lemma}
The reduced tensor product of \defref{d:tnsr_red_ZA}
is associative. More precisely,
if $a : (X_1 \tnsr X_2) \tnsr X_3 \simeq X_1 \tnsr (X_2 \tnsr X_3)$
is the associativity constraint of $\cA$,
and $\ga_1,\ga_2,\ga_3$ are half-braidings on $X_1,X_2,X_3$
respectively,
then $a$ restricts to an isomorphism
\[
  a : (X_1 \tnsrproj{\ga_1}{\ga_2} X_2) \tnsrproj{\ga_1 \tnsrbar \ga_2}{\ga_3} X_3
  \simeq
  X_1 \tnsrproj{\ga_1}{\ga_2 \tnsrbar \ga_3} (X_2 \tnsrproj{\ga_2}{\ga_3} X_3)
\]
and hence
\[
  a : \big( (X_1,\ga_1) \tnsrbar (X_2,\ga_2) \big) \tnsrbar (X_3,\ga_3)
  \simeq
  (X_1,\ga_1) \tnsrbar \big( (X_2,\ga_2) \tnsrbar (X_3,\ga_3) \big)
\;\;.
\]
Furthermore, $a$ is natural in $(X_i,\ga_i)$,
and satisfies the pentagon equation.
\label{l:assoc}
\end{lemma}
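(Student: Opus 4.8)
The plan is to reduce the entire statement to a single graphical identity between two idempotents on the \emph{ordinary} triple tensor product $X_1\tnsr X_2\tnsr X_3$, from which the isomorphism $a$, its compatibility with half-braidings, its naturality, and the pentagon equation all follow formally. Unwinding \defref{d:tnsr_red_ZA} and \defref{d:tnsr_red}, the object $(X_1\tnsrproj{\ga_1}{\ga_2}X_2)\tnsrproj{\ga_1\tnsrbar\ga_2}{\ga_3}X_3$ is the image of the endomorphism $R_L := Q_{\ga_1\tnsrbar\ga_2,\ga_3}\circ(Q_{\ga_1,\ga_2}\tnsr\id_{X_3})$ of $X_1\tnsr X_2\tnsr X_3$, where $Q_{\ga_1\tnsrbar\ga_2,\ga_3}$ denotes the morphism \eqnref{e:Qproj} formed using the natural transformation $\ga_1\tnsrbar\ga_2$ (not required to be invertible away from the image of $Q_{\ga_1,\ga_2}$); symmetrically, $X_1\tnsrproj{\ga_1}{\ga_2\tnsrbar\ga_3}(X_2\tnsrproj{\ga_2}{\ga_3}X_3)$ is the image of $R_R := Q_{\ga_1,\ga_2\tnsrbar\ga_3}\circ(\id_{X_1}\tnsr Q_{\ga_2,\ga_3})$. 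Using that $\ga_1\tnsrbar\ga_2$ commutes with $Q_{\ga_1,\ga_2}$ (noted just before Lemma~\ref{l:hfbrd_red}) together with the sliding lemma \eqnref{e:sliding}, which lets the subdiagram $Q_{\ga_1,\ga_2}$ pass freely through the regular loop of $Q_{\ga_1\tnsrbar\ga_2,\ga_3}$, one checks that the two factors of $R_L$ are commuting idempotents, so $R_L$ is an idempotent, and likewise $R_R$. It therefore suffices to prove $a\circ R_L = R_R\circ a$, equivalently $R_L=R_R$ once $a$ is suppressed as in the paper's convention; this immediately identifies the two images via $a$.

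To prove $R_L=R_R$, I would first invoke Lemma~\ref{l:hfbrd_red} to remove the nested reduced half-braidings: on the image of $Q_{\ga_1,\ga_2}\tnsr\id_{X_3}$, the natural transformation $\ga_1\tnsrbar\ga_2$ occurring inside $Q_{\ga_1\tnsrbar\ga_2,\ga_3}$ may be replaced by $c\tnsr\ga_2$ (or by $\ga_1\tnsr c^\inv$), and symmetrically $\ga_2\tnsrbar\ga_3$ inside $R_R$ by $\ga_2\tnsr c^\inv$ (or $c\tnsr\ga_3$), where $c$ is the braiding of $\cA$; the replacement is justified by sliding $Q_{\ga_1,\ga_2}$ adjacent to the insertion point. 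After this substitution both $R_L$ and $R_R$ become honest diagrams built from two regular loops, the half-braiding nodes $\ga_1,\ga_2,\ga_3$, a crossing $c^{\pm1}$ coming from the substitution, and a matching scalar. I would then transform both diagrams into a common normal form — a \emph{symmetric triple projection} built from $\ga_1,\ga_2,\ga_3$ together with a fixed arrangement of regular loops — using the combine lemma \eqnref{e:combine} to fuse parallel regular loops, the sliding lemma \eqnref{e:sliding} to move strands and the $\ga_i$-nodes past regular loops, the naturality of each half-braiding and of $c$, and the hexagon-type compatibility of the $\ga_i$ with $\tnsr$. The scalars match because the construction of $R_L$ and $R_R$ is symmetric under the relabelling $1\leftrightarrow 3$; I stress that we may \emph{not} simplify using \eqnref{e:charge_conservation}, since here $\cA$ is only premodular. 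This diagrammatic computation — in effect the three-strand analogue of \ocite{tham}*{Lemma 3.16} — is the one substantive step, and the main obstacle.

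Granting $R_L=R_R$: (i) the associativity constraint $a$ of $\cA$ restricts to the asserted isomorphism of images. (ii) That this $a$ is a morphism in $\ZA$, i.e. that it intertwines the half-braidings $(\ga_1\tnsrbar\ga_2)\tnsrbar\ga_3$ and $\ga_1\tnsrbar(\ga_2\tnsrbar\ga_3)$, follows by computing each with Lemma~\ref{l:hfbrd_red} applied at the outer level and then again: both restrict on the triple subobject to the same natural transformation, namely $\ga_1$ on $X_1$ followed by $c^\inv$ braided past $X_2$ and then past $X_3$ (the two ways of slicing $c^\inv$ past $X_2\tnsr X_3$ agreeing by the hexagon axiom), and $a$ manifestly intertwines this. (iii) Naturality of $a$ with respect to $\tnsrbar$ is immediate from naturality of $a$ in $\cA$, since by \defref{d:tnsr_red_ZA} the morphisms $f\tnsrbar g$ are just restrictions of $f\tnsr g$. (iv) Finally, the pentagon for $\tnsrbar$ is obtained by restricting the commuting pentagon diagram of $\cA$ for $X_1,X_2,X_3,X_4$ to the five $\tnsrbar$-bracketings, each of which is a subobject of $X_1\tnsr X_2\tnsr X_3\tnsr X_4$ cut out by an iterated $Q$; that each edge of the diagram is a well-defined restriction of $a$ is precisely part (i) applied in the various positions, valid because the proof of (i) used only that $\ga_1,\ga_2,\ga_3$ are half-braidings, and $\ga_i\tnsrbar\ga_j$ is again a half-braiding by Lemma~\ref{l:hfbrd_red}.
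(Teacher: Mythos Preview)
Your proposal is correct and follows exactly the route the paper intends: the paper's entire proof is the one line ``Follows easily from \lemref{l:hfbrd_red}'', and you have fleshed out that line faithfully, correctly identifying that everything reduces to a graphical identity $R_L=R_R$ of idempotents on $X_1\tnsr X_2\tnsr X_3$, and that the remaining claims (intertwining of half-braidings, naturality, pentagon) are formal consequences.

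One sharpening is worth noting: your description of the ``common normal form'' can be made completely explicit, and then the combine lemma \eqnref{e:combine} is not needed. Using the two outer equalities of \eqnref{e:hfbrd_sym}, namely $(\ga\tnsrbar\mu)_A = Q_{\ga,\mu}\circ(c\tnsr\mu)_A$ and $(\ga\tnsrbar\mu)_A = (\ga\tnsr c^{-1})_A\circ Q_{\ga,\mu}$, one substitutes directly into the $\ga$- and $\mu$-slots of the outer $Q$ and then observes that $Q_{c\tnsr\ga_2,\ga_3}=\id_{X_1}\tnsr Q_{\ga_2,\ga_3}$ and $Q_{\ga_1,\ga_2\tnsr c^{-1}}=Q_{\ga_1,\ga_2}\tnsr\id_{X_3}$ (a Reidemeister-II move: the relevant strand crosses the regular loop twice on the same side and slides off). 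This yields immediately
\[
Q_{\ga_1\tnsrbar\ga_2,\ga_3} \;=\; (Q_{\ga_1,\ga_2}\tnsr\id_{X_3})\circ(\id_{X_1}\tnsr Q_{\ga_2,\ga_3}) \;=\; Q_{\ga_1,\ga_2\tnsrbar\ga_3},
\]
and then $R_L=R_R$ follows by absorbing the extra $Q_{\ga_1,\ga_2}\tnsr\id$ (respectively $\id\tnsr Q_{\ga_2,\ga_3}$) using commutativity of the two factors, exactly as you argue. This is presumably the ``symmetric triple projection'' you had in mind; naming it removes the only vagueness in your sketch.
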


\begin{proof}
Follows easily from \lemref{l:hfbrd_red}.
See also \ocite{tham}*{Corollary 3.17},
and compare \ocite{wassermansymm}*{Lemma 17}.
\end{proof}

\begin{proposition}
\label{p:reduced_pivotal}
$(\ZA, \tnsrbar)$ is a pivotal multifusion category.
More precisely,
\begin{itemize}
\item the associativity constraint is given by
  the associativity constraint of $\cA$
  (see \lemref{l:assoc});
\item the unit object, denoted $\onebar$, is $I(\one)$
  (see \prpref{p:center}),
  with left and right unit constraints given by
\begin{equation}
l_{(X,\ga)} :=
\sum_i \frac{\sqrt{d_i}}{\sqrt{\cD}}
\begin{tikzpicture}
\node[small_morphism] (ga) at (0,0) {\tiny $\ga$};
\draw (ga) -- (0,0.8) node[above] {$X$};
\draw (ga) -- (0,-0.6) node[below] {$X$};
\draw[midarrow_rev] (ga) to[out=180,in=-45] (-1,0.8);
\draw[overline] (ga) to[out=0,in=-45] (-0.5,0.8);
\node at (-0.8,0.35) {\tiny $i$};
\node[above] at (-0.75,0.8) {$I(\one)$};
\end{tikzpicture}
%%%%%%%%%%%%%%%%%%%%%%%%%%
\hspace{10pt}
%%%%%%%%%%%%%%%%%%%%%%%%%%
r_{(X,\ga)} := \sum_i \frac{\sqrt{d_i}}{\sqrt{\cD}}
\begin{tikzpicture}
\node[small_morphism] (ga) at (0,0) {\tiny $\ga$};
\draw (ga) -- (0,-0.6) node[below] {$X$};
\draw (ga) to[out=180,in=-135] (0.5,0.8);
\draw[midarrow] (ga) to[out=0,in=-135] (1,0.8);
\node at (0.8,0.3) {\tiny $i$};
\draw[overline] (ga) -- (0,0.8) node[above] {$X$};
\node[above] at (0.75,0.8) {$I(\one)$};
\end{tikzpicture}
\;\; ;
\end{equation}
\item the duals are given by
\begin{equation*}
(X,\ga)^\vee := (X^*,\ga^\vee),
\hspace{10pt}
\rvee (X,\ga) := (\rdual{X}, \rvee \ga)
\end{equation*}

where

\begin{equation} \label{e:brd_duals}
\ga^\vee := 
%%%%%%%%%%%%%%%%%%%%%%%%%
\begin{tikzpicture}
\node[small_morphism] (ga) at (0,0) {\tiny $\ga$};
\draw (ga) -- (-0.5,0.8);
\draw[overline={1.5}]
  (ga) .. controls +(-120:0.3cm) and +(down:0.2cm) ..
  (-0.25,0) .. controls +(up:0.2cm) and +(down:0.5cm) ..
  (0,0.8);
\node[above] at (0,0.8) {$X^*$};
\draw 
  (ga) .. controls +(60:0.3cm) and +(up:0.2cm) ..
  (0.25,0) .. controls +(down:0.2cm) and +(up:0.5cm) ..
  (0,-0.8);
\draw[overline] (ga) -- (0.5,-0.8);
\end{tikzpicture}
%%%%%%%%%%%%%%%%%%%%%%%%%
=
%%%%%%%%%%%%%%%%%%%%%%%%%
\begin{tikzpicture}
\node[small_morphism] (ga) at (0,0) {\tiny $\ga *$};
\draw (ga) -- (0,-0.8);
\draw[overline={1.5}] (ga) .. controls +(120:0.5cm) and +(150:1.5cm) .. (0.5,-0.8);
\draw (ga) .. controls +(-60:0.5cm) and +(-30:1.5cm) .. (-0.5,0.8);
\draw[overline={1.5}] (ga) -- (0,0.8) node[above] {$X^*$};
\end{tikzpicture}
%%%%%%%%%%%%%%%%%%%%%%%%%
=
%%%%%%%%%%%%%%%%%%%%%%%%%
\begin{tikzpicture}
\node[small_morphism] (ga) at (0,0) {\tiny $\ga *$};
\draw (ga) -- (0,0.8);
\node[above] at (0,0.8) {$X^*$};
\draw (ga) .. controls +(-60:0.4cm) and +(-80:1.5cm) .. (-0.5,0.8);
\draw[thin_overline={1}] (ga) -- (0,-0.8);
\draw[thin_overline={1}] (ga) .. controls +(120:0.4cm) and +(100:1.5cm) .. (0.5,-0.8);
\end{tikzpicture}
%%%%%%%%%%%%%%%%%%%%%%%%%
, \hspace{10pt}
%%%%%%%%%%%%%%%%%%%%%%%%%
\rvee \ga
:=
%%%%%%%%%%%%%%%%%%%%%%%%%
\begin{tikzpicture}
\node[small_morphism] (ga) at (0,0) {\tiny $\ga$};
\draw
  (ga) .. controls +(-100:0.3cm) and +(down:0.3cm) ..
  (0.25,0) .. controls +(up:0.2cm) and +(down:0.4cm) ..
  (0,0.8);
\node[above] at (0,0.8) {$\rdual{X}$};
\draw[overline={1.5}] (ga) -- (0.5,-0.8);
\draw (ga) -- (-0.5,0.8);
\draw [overline={1.5}]
  (ga) .. controls +(80:0.3cm) and +(up:0.3cm) ..
  (-0.25,0) .. controls +(down:0.2cm) and +(up:0.4cm) ..
  (0,-0.8);
\end{tikzpicture}
%%%%%%%%%%%%%%%%%%%%%%%%%
=
%%%%%%%%%%%%%%%%%%%%%%%%%
\begin{tikzpicture}
\node[small_morphism] (ga) at (0,0) {\tiny $* \ga$};
\draw (ga) -- (0,0.8);
\node[above] at (0,0.8) {$\rdual{X}$};
\draw (ga) .. controls +(-60:0.4cm) and +(-80:1.5cm) .. (-0.5,0.8);
\draw[overline={1.5}] (ga) -- (0,-0.8);
\draw[overline={1.5}] (ga) .. controls +(120:0.4cm) and +(100:1.5cm) .. (0.5,-0.8);
\end{tikzpicture}
%%%%%%%%%%%%%%%%%%%%%%%%%
=
%%%%%%%%%%%%%%%%%%%%%%%%%
\begin{tikzpicture}
\node[small_morphism] (ga) at (0,0) {\tiny $* \ga$};
\draw (ga) -- (0,-0.8);
\draw[overline={1.5}] (ga) .. controls +(120:0.5cm) and +(150:1.5cm) .. (0.5,-0.8);
\draw (ga) .. controls +(-60:0.5cm) and +(-30:1.5cm) .. (-0.5,0.8);
\draw[overline={1.5}] (ga) -- (0,0.8) node[above] {$X^*$};
\end{tikzpicture}
%%%%%%%%%%%%%%%%%%%%%%%%%
\end{equation}

with evaluation and coevaluation maps given by
(the projections $Q_{\ga^\vee,\ga}$ are implicit)

\begin{equation}
\ev_{(X,\ga)} :=
\sum_i \frac{\sqrt{d_i}}{\sqrt{\cD}}
\begin{tikzpicture}
\node[above] at (-0.2,0.7) {$X^*$ \;};
\node[above] at (0.2,0.7) {\;\; $X$};
\draw (-0.2,0.7) .. controls +(down:1cm) and +(180:0.1cm) ..
  (0,-0.3) .. controls +(0:0.1cm) and +(down:1cm) ..  (0.2,0.7);
\node[morphism] (brd) at (0,0.2) {\tiny $\ga^\vee \tnsrbar \ga$};
\draw[midarrow={0.8}] (brd) .. controls +(170:0.7cm) and +(up:0.3cm) .. (-0.1,-0.7);
\node at (-0.3,-0.5) {\tiny $i$};
\draw (brd) .. controls +(-10:0.6cm) and +(up:0.2cm) .. (0.1,-0.7);
\node[below] at (0,-0.7) {$I(\one)$};
\end{tikzpicture}
%%%%%%%%%%%%%%%%%%%%%%%%
=
%%%%%%%%%%%%%%%%%%%%%%%%
\sum_i \frac{\sqrt{d_i}}{\sqrt{\cD}}
\begin{tikzpicture}
\node[small_morphism] (ga) at (0,0.1) {\tiny $\ga$};
\draw[midarrow={0.8}] (ga) .. controls +(120:0.5cm) and +(up:1cm) .. (-0.3,-0.7);
\node at (-0.4,-0.5) {\tiny $i$};
\draw[overline] (ga) .. controls +(-150:0.3cm) and +(down:1cm) .. (-0.7,0.7);
\draw (ga) .. controls +(30:0.3cm) and +(down:0.4cm) .. (0.4,0.7);
\draw[midarrow_rev={0.7}] (ga) .. controls +(-60:0.3cm) and +(up:0.3cm) .. (0,-0.7);
\node at (0.15,-0.5) {\tiny $i$};
\node[above] at (-0.7,0.7) {$X^*$};
\node[above] at (0.4,0.7) {$X$};
\node[below] at (-0.15,-0.7) {$I(\one)$};
\end{tikzpicture}
%%%%%%%%%%%%%%%%%%%%%%%%
, \hspace{10pt}
%%%%%%%%%%%%%%%%%%%%%%%%
\coev_{(X,\ga)} :=
\sum_i \frac{\sqrt{d_i}}{\sqrt{\cD}}
\begin{tikzpicture}
\node[small_morphism] (gav) at (0,-0.1) {\tiny $\ga$};
\draw[midarrow_rev={0.7}] (gav) .. controls +(120:0.3cm) and +(down:0.3cm) .. (0,0.7);
\node at (-0.15,0.5) {\tiny $i$};
\draw (gav) .. controls +(-150:0.3cm) and +(up:0.4cm) .. (-0.4,-0.7);
\draw (gav) .. controls +(30:0.3cm) and +(up:1cm) .. (0.7,-0.7);
\draw[overline, midarrow={0.8}] (gav) .. controls +(-60:0.5cm) and +(down:1cm) .. (0.3,0.7);
\node at (0.4,0.5) {\tiny $i$};
\node[above] at (0.15,0.7) {$I(\one)$};
\node[below] at (-0.4,-0.7) {$X$};
\node[below] at (0.7,-0.7) {$X^*$};
\end{tikzpicture}
\end{equation}
and similarly for right duals;
\item the pivotal structure is given by that of $\cA$.
\end{itemize}
\label{t:ZA_reduced}
\end{proposition}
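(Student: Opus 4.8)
The associativity constraint and the pentagon equation are already provided by \lemref{l:assoc}, and the underlying $\kk$-linear abelian category of $(\ZA,\tnsrbar)$ is literally that of $(\ZA,\tnsr)$, hence semisimple with finitely many isomorphism classes of simple objects and finite-dimensional Hom spaces; the only reason the statement is for \emph{multi}fusion is that the proposed unit $\onebar = I(\one) = \bigoplus_{i} X_i \tnsr X_i^*$ is in general not simple. So the content of the proposition is: (i) $I(\one)$ with the displayed constraints $l,r$ is a tensor unit; (ii) the displayed $(-)^\vee$, $\rvee{(-)}$, $\ev$, $\coev$ exhibit left and right duals; (iii) the pivotal structure of $\cA$ descends. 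All three are diagram computations of the same flavour as those in \ocite{tham}*{Section 3} (carried out there for $\Zel(\cA) = \ZCY(\punctorus)$) and \ocite{wassermansymm} (in the symmetric case), and I would organize the proof around the two workhorse identities \eqnref{e:combine} (collapsing a regular circle threaded by a pair of summed trivalent vertices) and \eqnref{e:sliding}.

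\textbf{Unit.} First I would check that $l_{(X,\ga)}$ genuinely has source $\onebar \tnsrbar (X,\ga) = I(\one) \tnsrproj{\Gamma}{\ga} X$, i.e.\ that it is unchanged by pre-composition with $Q_{\Gamma,\ga}$; this is \eqnref{e:sliding} applied to the regular circle in $Q_{\Gamma,\ga}$ together with the definition of $\Gamma$ in \prpref{p:center}. Then I would exhibit an explicit inverse $X \to I(\one) \tnsrproj{\Gamma}{\ga} X$ --- create an $i/i^*$ pair, act by $\ga^\inv$, normalize by $\sqrt{d_i}/\sqrt{\cD}$, and project by $Q_{\Gamma,\ga}$ --- and verify $l \circ l^\inv = \id_X$ by a bubble move (the normalizations collect to $\frac1{\cD}\sum_i d_i^2 = 1$) and $l^\inv\circ l = Q_{\Gamma,\ga}$ by \eqnref{e:combine} (the regular circle of $Q_{\Gamma,\ga}$ absorbs the created $i/i^*$ pair). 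Naturality in $(X,\ga)$ and the fact that $l_{(X,\ga)}$ intertwines $\Gamma \tnsrbar \ga$ with $\ga$ (so it is a morphism in $\ZA$) are short chases; $r_{(X,\ga)}$ is symmetric, and the triangle identity with $a$ from \lemref{l:assoc} follows from the same collapse after using \lemref{l:hfbrd_red} to move the inner half-braiding past the circle.

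\textbf{Duality.} I would first note that $\ga^\vee$ and $\rvee\ga$ of \eqnref{e:brd_duals} are half-braidings: they are the standard dual half-braidings of \defref{d:tnsr_std} pre/post-composed with a braiding of $\cA$, so naturality and the hexagon for them reduce to those for $\ga$ together with naturality of $c$. The core step is the two zig-zag identities for $\ev_{(X,\ga)}$ and $\coev_{(X,\ga)}$: insert the implicit projections $Q_{\ga^\vee,\ga}$ (resp.\ $Q_{\ga,\ga^\vee}$), rewrite the half-braiding $\ga^\vee\tnsrbar\ga$ on $X^*\tnsrproj{\ga^\vee}{\ga}X$ via \lemref{l:hfbrd_red} (as $c\tnsr\ga$, say), and collapse the resulting regular circles with \eqnref{e:sliding} and \eqnref{e:combine}; the global $1/\cD$ in $Q$ and the $\sqrt{d_i}/\sqrt{\cD}$ in $\ev,\coev$ combine to the identity. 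One also checks that $\ev,\coev$ are morphisms for $\tnsrbar$ (compatible with $\Gamma$ on $\onebar$), again via \eqnref{e:combine}. The right duals are identical with $c$ replaced by $c^\inv$, and $(X,\ga)^\vee \simeq \rvee{(X,\ga)}$ under the pivotal structure.

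\textbf{Pivotal structure and conclusion.} Since $X\tnsrproj{\ga}{\mu}Y$ is by construction a direct summand of $X\tnsr Y$ via $Q_{\ga,\mu}$, and the family $\{Q_{\ga,\mu}\}$ is compatible with the pivotal isomorphism $\delta_X: X\simeq X^{**}$ of $\cA$ and with all the constraints of $(\ZA,\tnsr)$, the $\delta_X$ restrict to a monoidal natural isomorphism $\id\simeq(-)^{\vee\vee}$ on $(\ZA,\tnsrbar)$, and sphericity is inherited the same way. Combined with the abelian-semisimple structure noted above, this yields a pivotal multifusion category with exactly the stated data. The main obstacle is the duality computation: because the braiding of $\cA$ is built into $\ga^\vee$ and $\rvee\ga$, one must keep careful track of which of $c$ and $c^\inv$ appears at each stage of the zig-zag --- as in \ocite{tham}, but now with an extra strand circulating the regular circle --- whereas everything in the unit and pivotal parts is essentially forced once \lemref{l:hfbrd_red} normalizes the half-braiding on a reduced tensor product.
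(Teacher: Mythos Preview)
Your proposal is correct and follows essentially the same approach as the paper: both reduce everything to diagram computations using \eqnref{e:combine}, \eqnref{e:sliding}, and \lemref{l:hfbrd_red}, with the paper exhibiting the inverse of $l_{(X,\ga)}$ as the vertical reflection and verifying one composite lands on $Q_{\Gamma,\ga}$, and then checking one zig-zag identity $\ev\circ\coev=\id_{(X,\ga)}$ explicitly. Your write-up is more carefully organized (separating out that $l,r,\ev,\coev$ are $\ZA$-morphisms and that $\ga^\vee$ is a half-braiding) but the substance is the same.
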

(Compare \ocite{wassermansymm}*{Theorem 24}.)

\begin{proof}
The inverses of the unit constraints are given by reflecting the diagrams
vertically. For example, we have
\begin{equation*}
\sum_{i,j} \frac{\sqrt{d_i} \sqrt{d_j}}{\cD}
\begin{tikzpicture}
\draw (0,-1) -- (0,1);
%% top half
\node[small_morphism] (ga) at (0,0.3) {\tiny $\ga$};
\draw[midarrow_rev] (ga) to[out=-170,in=-45] (-1,1);
\draw[overline] (ga) to[out=10,in=-45] (-0.5,1);
\node at (-0.8,0.4) {\tiny $i$};
%% bottom half
\node[small_morphism] (ga2) at (0,-0.3) {\tiny $\ga$};
\draw[midarrow] (ga2) to[out=170,in=45] (-1,-1);
\draw[overline] (ga2) to[out=-10,in=45] (-0.5,-1);
\node at (-0.8,-0.4) {\tiny $j$};
\end{tikzpicture}
%%%%%%%%%%%%%%%%%%%%%%%%%%%
=
%%%%%%%%%%%%%%%%%%%%%%%%%%%
\sum_{i,j} \frac{\sqrt{d_i} \sqrt{d_j}}{\cD}
\begin{tikzpicture}
\draw (0,-1) -- (0,1);
\node[small_morphism] (a1) at (-0.8,0) {\tiny $\al$};
\node[small_morphism] (a2) at (0.4,0) {\tiny $\al$};
%% a1 to top,bottom
\draw[midarrow_rev] (a1) to[out=90,in=-90] (-1,1);
\node at (-1.1,0.5) {\tiny $i$};
\draw[midarrow] (a1) to[out=-90,in=90] (-1,-1);
\node at (-1.1,-0.5) {\tiny $j$};
%% a2 to top,bottom
\draw[midarrow={0.3},overline] (a2) to[out=90,in=-90] (-0.5,1);
\node at (0.4,0.5) {\tiny $i$};
\draw[midarrow_rev={0.3},overline] (a2) to[out=-90,in=90] (-0.5,-1);
\node at (0.4,-0.5) {\tiny $j$};
%% dashed middle strand
\draw[regular] (a1) -- (a2);
\node[small_morphism] at (0,0) {\tiny $\ga$};
\end{tikzpicture}
%%%%%%%%%%%%%%%%%%%%%%%%%%%
=
%%%%%%%%%%%%%%%%%%%%%%%%%%%
\sum_{i,j} \frac{\sqrt{d_i} \sqrt{d_j}}{\cD}
\begin{tikzpicture}
\node[small_morphism] (b1) at (-0.8,-0.2) {\tiny $\beta$};
\node[small_morphism] (b2) at (0.4,-0.2) {\tiny $\beta$};
\node[small_morphism] (ga) at (0,0.2) {\tiny $\ga$};
%% middle strand
\draw[regular] (b1) to[out=160,in=180] (ga);
\draw[regular] (b2) to[out=20,in=0] (ga);
%% b1 to top,bottom
\draw[overline,midarrow_rev] (b1) to[out=90,in=-90] (-1,1);
\node at (-1.05,0.5) {\tiny $i$};
\draw[midarrow] (b1) to[out=-95,in=90] (-1,-1);
\node at (-1.05,-0.5) {\tiny $j$};
%% ga to top,bottom
\draw (ga) -- (0,1);
\draw (ga) -- (0,-1);
%% b2 to top,bottom
\draw[overline,midarrow={0.4}] (b2) .. controls +(up:1cm) and +(down:0.5cm) .. (-0.5,1);
\node at (0.4,0.5) {\tiny $i$};
\draw[overline,midarrow_rev={0.3}] (b2) to[out=-90,in=90] (-0.5,-1);
\node at (0.4,-0.6) {\tiny $j$};
\end{tikzpicture}
%%%%%%%%%%%%%%%%%%%%%%%%%%%
=
%%%%%%%%%%%%%%%%%%%%%%%%%%%
Q_{\Gamma,\ga}
=
\id_{I(\one) \tnsrbar (X,\ga)}
\;\;.
\end{equation*}

The last two forms of $\ga^\vee$ are equivalent
from the following consideration:
pulling the diagonal strands across the $\ga^*$ morphism
accumulates a (left) right-hand twist,
i.e. (inverse) Drinfeld morphism,
and these cancel out by the naturality of half-braidings.

It is also easy to check that the (co)evaluation maps
described have the desired properties.
The only potentially confusing thing is that
the ``empty strand" in graphical calculus is no longer
the unit of $(\ZA, \tnsrbar)$,
so the unit object and unit constraints have to be explicitly included.
For example, (all dot nodes represent $\ga$,
and a sum over $i,j \in \Irr(\cA)$ is implicit)

\begin{equation*}
\ev_{(X,\ga)} \circ \coev_{(X,\ga)}
=
\frac{d_i d_j}{\cD^2}
\begin{tikzpicture}
%% top half
\draw (1,0) -- (1,1.2) node[above] {\small $X$};
\node[dotnode] (g1) at (1,1) {};
\node[dotnode] (g2) at (0.2,0.3) {};
\draw (0,0) to[out=90,in=-160] (g2);
\draw (g2) to[out=20,in=90] (0.5,0);
\draw[midarrow] (g1) .. controls +(150:0.2cm) and +(120:0.2cm) .. (g2);
\draw[overline] (g2) .. controls +(-60:0.2cm) and +(-120:0.2cm) ..
  (0.6,0.5) .. controls +(60:0.3cm) and +(-30:0.4cm) .. (g1);
\node at (0.5,1) {\tiny $i$};
%% bottom half
\draw (0,0) -- (0,-1.2);
\node[dotnode] (g3) at (0,-1) {};
\node[dotnode] (g4) at (0.8,-0.3) {};
\draw[midarrow] (g3) .. controls +(-30:0.2cm) and +(-60:0.2cm) .. (g4);
\draw (g4) .. controls +(120:0.2cm) and +(60:0.2cm) ..
  (0.4,-0.5) .. controls +(-120:0.3cm) and +(150:0.4cm) .. (g3);
\draw (1,0) to[out=-90,in=20] (g4);
\draw[overline] (g4) to[out=-160,in=-90] (0.5,0);
\draw[overline] (g3) -- (0,0);
\node at (0.5,-1) {\tiny $j$};
\end{tikzpicture}
%%%%%%%%%%%%%%%%%%%%%%%%%
=
%%%%%%%%%%%%%%%%%%%%%%%%%
\frac{d_i d_j}{\cD^2}
\begin{tikzpicture}
\draw (0,-1.2) .. controls +(up:1.2cm) and +(down:1.2cm) .. (1,1.2)
  node[above] {\small $X$};
\node[dotnode] (g1) at (0.93,0.7) {};
\node[dotnode] (g2) at (0.7,0.25) {};
\node[dotnode] (g3) at (0.3,-0.25) {};
\node[dotnode] (g4) at (0.07,-0.7) {};
\draw (g2) .. controls +(160:0.5cm) and +(160:0.5cm) .. (g4);
\draw[midarrow=0.2] (g4) .. controls +(-20:0.5cm) and +(-20:0.5cm) .. (g2);
\draw[overline, midarrow=0.2] (g1) .. controls +(160:0.5cm) and +(160:0.5cm) .. (g3);
\draw[overline] (g1) .. controls +(-20:0.5cm) and +(-20:0.5cm) .. (g3);
\node at (0.8, 0.9) {\tiny $i$};
\node at (0.3, -0.9) {\tiny $j$};
\end{tikzpicture}
%%%%%%%%%%%%%%%%%%%%%%%%%%
=
%%%%%%%%%%%%%%%%%%%%%%%%%%
\frac{1}{\cD^2}
\begin{tikzpicture}
\draw (0,-1.2) to (0,1.2) node[above] {\small $X$};
\draw[regular] (0,0.6) circle (0.4cm);
\draw[regular] (0,-0.6) circle (0.4cm);
\node[dotnode] at (0,1) {};
\node[dotnode] at (0,0.2) {};
\node[dotnode] at (0,-0.2) {};
\node[dotnode] at (0,-1) {};
\end{tikzpicture}
%%%%%%%%
=
%%%%%%%%
\frac{1}{\cD^2}
\begin{tikzpicture}
\draw (0.5,-1.2) to (0.5,1.2) node[above] {\small $X$};
\draw[regular] (0,0.6) circle (0.4cm);
\draw[regular] (0,-0.6) circle (0.4cm);
\end{tikzpicture}
%%%%%%%%
=
%%%%%%%%
\id_{(X,\ga)}
\;\;.
\end{equation*}

As for the pivotal structure, it is straightforward
to check that $\ga^{\vee \vee} = \ga^{**}$.
\end{proof}

\begin{remark}
$\tnsrbar$ is not braided;
indeed, we will see later
(e.g. \eqnref{e:cAA_not_symm})
that there can be objects
$(X,\ga),(Y,\mu) \in \ZA$ such that
$(X,\ga) \tnsrbar (Y,\mu) \not\simeq 0$
and $(Y,\mu) \tnsrbar (X,\ga) \simeq 0$.
In particular,
this necessitates the ``multi" in multifusion.
\rmkend
\end{remark}

When considering the usual monoidal structure on $\ZA$,
the forgetful functor $F: \ZA \to \cA$
is naturally a tensor functor, but its adjoint,
$I : \cA \to \ZA$ is not.
With the reduced tensor product, however,
$F$ is clearly not tensor,
but $I$ is:

\begin{proposition}
\label{p:I_tensor}
The functor $I$ from \prpref{p:center} is a tensor functor.
More precisely, for $X,Y \in \cA$, define the morphism
\begin{equation*}
J_{X,Y} : I(X) \tnsrbar I(Y) \to I(X \tnsr Y)
\end{equation*}
by
\begin{equation*}
J_{X,Y} :=
\sqrt{d_i} \sqrt{d_j} \sqrt{d_k}
\begin{tikzpicture}
\node[above] at (-0.4,0.7) {\tiny $I(X)$};
\node[above] at (0.4,0.7) {\tiny $I(Y)$};
\node[below] at (0,-0.7) {\tiny $I(X\tnsr Y)$};
%% Y strand
\draw (0.4,0.7) .. controls +(down:0.5cm) and +(up:0.5cm) .. (0.1,-0.7);
\node[small_morphism] (al1) at (-0.6,0) {\tiny $\alpha$};
\node[small_morphism] (al2) at (0.6,0) {\tiny $\alpha$};
%% al1 stuff; i,j,k
\draw[midarrow_rev] (al1) .. controls +(100:0.3cm) and +(down:0.3cm) .. (-0.5,0.7);
\node at (-0.7,0.4) {\tiny $i$};
\draw[midarrow] (al1) .. controls +(down:0.3cm) and +(up:0.3cm) .. (-0.2,-0.7);
\node at (-0.6,-0.4) {\tiny $j$};
\draw[midarrow_rev={0.9}] (al1) .. controls +(45:0.3cm) and +(down:0.3cm) .. (0.3,0.7);
%% X strand goes under
\draw[overline] (-0.4,0.7) .. controls +(down:0.5cm) and +(up:0.5cm) .. (-0.1,-0.7);
%% al2 stuff
\draw[midarrow={0.8}] (al2) .. controls +(125:0.3cm) and +(down:0.3cm) .. (0.5,0.7);
%\node at (0.7,0.4) {\tiny $i$};
\draw[midarrow_rev] (al2) .. controls +(down:0.3cm) and +(up:0.3cm) .. (0.2,-0.7);
\node at (0.6,-0.4) {\tiny $j$};
\draw[thin_overline={1.5},midarrow={0.9}]
  (al2) .. controls +(70:0.3cm) and +(down:0.3cm) .. (-0.3,0.7);
\node at (0.65,0.6) {\tiny $k$};
\end{tikzpicture}
\;\;.
\end{equation*}

Then
\[
(I,J) : (\cA, \tnsr) \to (\ZA,\tnsrbar)
\]
is a pivotal tensor functor.
\end{proposition}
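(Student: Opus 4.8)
The plan is to verify the axioms of a (strong) pivotal monoidal functor directly in graphical calculus. \emph{First}, I would check that $J_{X,Y}$ is a well-defined morphism in $\ZA$. On the one hand it must factor through the subobject $I(X) \tnsrproj{\Gamma}{\Gamma} I(Y) = \im(Q_{\Gamma,\Gamma}) \subseteq I(X)\tnsr I(Y)$, i.e. $J_{X,Y}\circ Q_{\Gamma,\Gamma} = J_{X,Y}$; this follows by absorbing the regular-colored circle of $Q_{\Gamma,\Gamma}$ into the $\al$-nodes of $J_{X,Y}$ via the combine lemma \eqnref{e:combine} and the sliding lemma \eqnref{e:sliding}. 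On the other hand it must intertwine the half-braiding $\Gamma \tnsrbar \Gamma$ on the source with $\Gamma$ on $I(X\tnsr Y)$: by \lemref{l:hfbrd_red}, on $\im(Q_{\Gamma,\Gamma})$ we may replace $\Gamma \tnsrbar \Gamma$ by $\Gamma \tnsr c^{\inv}$ (or $c \tnsr \Gamma$), after which one slides the external $B$-strand past the $\al$-nodes and the bare $X$- and $Y$-strands using naturality \eqnref{e:al_natural} of half-braidings, reassembling a single copy of $\Gamma$ on $I(X\tnsr Y)$.

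\emph{Second}, I would exhibit the inverse $J_{X,Y}^{\inv} : I(X\tnsr Y) \to I(X)\tnsrbar I(Y)$ as the diagram obtained by reflecting the defining picture of $J_{X,Y}$ top-to-bottom (with the same normalization $\sqrt{d_i}\sqrt{d_j}\sqrt{d_k}$), and verify that the two composites are identities: one collapses to $\id_{I(X\tnsr Y)}$ and the other to $Q_{\Gamma,\Gamma} = \id_{I(X)\tnsrbar I(Y)}$, the free regular circles produced in the process each contributing a factor $\cD$ that cancels the normalization — this is the same style of computation as the unit-constraint and $\ev\circ\coev$ checks in the proof of \prpref{t:ZA_reduced}.

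\emph{Third}, with $J$ established I would check the coherence conditions. For associativity, one compares $J_{X\tnsr Y, Z} \circ (J_{X,Y} \tnsrbar \id_{I(Z)})$ with $I(a)\circ J_{X, Y\tnsr Z}\circ (\id_{I(X)} \tnsrbar J_{Y,Z})\circ a$, where $a$ is the associativity isomorphism of \lemref{l:assoc} (the associativity of $\cA$ restricted to the relevant images) and $\tnsrbar$ on morphisms is as in \defref{d:tnsr_red_ZA}; both sides unwind to the same $\cA$-morphism after merging the stacked $\al$-nodes with \eqnref{e:combine}. For unitality, the monoidal unit of $(\ZA,\tnsrbar)$ is $\onebar = I(\one)$ by \prpref{t:ZA_reduced}, so the unit comparison is $\id_{I(\one)}$, and the two unit triangles amount to checking that $J_{\one,X}$ (resp. $J_{X,\one}$) post-composed with the unit constraint $l$ (resp. $r$) of \prpref{t:ZA_reduced} equals $I$ applied to the corresponding unit constraint of $\cA$ — a direct diagrammatic comparison. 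Naturality of $J$ in $X$ and $Y$ is immediate, since the $X$- and $Y$-strands occur bare in the diagram. Finally, pivotality: since the pivotal morphisms on both sides are literally those of $\cA$, "$I$ is pivotal" reduces to the statement that under the isomorphism $J$ the $\ZA$-duality data on $I(X)$ — the dual half-braiding $\ga^\vee$ and the (co)evaluations of \prpref{t:ZA_reduced} — matches $I$ of the duality data of $X$ in $\cA$; this is again checked graphically, the braidings appearing in $\ga^\vee$ cancelling against the (inverse) Drinfeld morphisms introduced when sliding strands across $\al$-nodes, exactly as in the remark following \prpref{t:ZA_reduced}.

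The step I expect to be the main obstacle is the well-definedness of $J$ as a $\ZA$-morphism (the half-braiding intertwining property) together with the associativity square: both require carefully bookkeeping the twists and braidings accumulated as external strands and index-strands are slid past one another and past the $X$- and $Y$-strands, and then recognizing that the resulting configuration of $\al$-nodes and regular circles recombines — through repeated use of \eqnref{e:combine} and \eqnref{e:sliding} — into the single-$\al$-node presentation of $\Gamma$ on $I(X\tnsr Y)$. The remaining items are routine in the style already used in this section.
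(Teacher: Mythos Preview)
Your proposal is correct and is exactly the ``straightforward computations'' that the paper's own proof alludes to: the paper gives no details here, merely citing \ocite{tham}*{Prop 3.19} for the monoidal structure and noting that only the pivotal compatibility is new. Your outline (well-definedness of $J$ via \eqnref{e:combine} and \eqnref{e:sliding}, inverse by vertical reflection, hexagon and unit triangles by merging $\al$-nodes, pivotality from the cancellation of twists) is precisely the content of that reference, so nothing is missing.
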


\begin{proof}
Straightforward computations;
we refer the reader to \ocite{tham}*{Prop 3.19},
where we prove essentially the same result
(the only thing new here is the pivotal structure).
\end{proof}

Observe that for a half-braiding $\ga$ on $X$,
$(\rdual{\ga})^\vee$ is also a half-braiding on $X$.
This gives us an anti-tensor automorphism of $\ZA$:

\begin{proposition}
\label{p:aut_ZA}
There is a tensor equivalence
\begin{align*}
U: (\ZA,\tnsrbar) &\simeq_\tnsr (\ZA,\tnsrbar^\op)
\\
(X,\ga) &\mapsto (X,(\rdual{\ga})^\vee)
\;\;.
\end{align*}

Furthermore, $\thetabar: U^2 \simeq \id$,
so $U$ generates a $\ZZ/2$-action on $\ZA$
(but not tensor action).
\end{proposition}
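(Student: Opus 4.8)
The plan is to realize $U$ as a composite of the two duality functors already at hand, deduce formally that it is an equivalence, build its anti-tensor structure from the braiding of $\cA$, and then identify $U^2$ by unwinding the diagrammatic formulas for the duals.

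\textbf{Step 1: $U$ is a well-defined equivalence of categories.} As remarked just before the statement, for a half-braiding $\ga$ on $X$ the object $\rdual{(X,\ga)}=(\rdual X,\rdual\ga)$ of \defref{d:tnsr_std} has a $\tnsrbar$-dual $(\rdual X,\rdual\ga)^\vee=((\rdual X)^\ast,(\rdual\ga)^\vee)$, and the canonical pivotal isomorphism $(\rdual X)^\ast\simeq X$ turns $(\rdual\ga)^\vee$ into a half-braiding on $X$; so $U$ is defined on objects. On morphisms we put $U(f)=f$, transported along that identification: since both $\rdual{(-)}$ and $(-)^\vee$ are functorial, $f$ intertwines $(\rdual\ga)^\vee$ and $(\rdual{\ga'})^\vee$ whenever it intertwines $\ga,\ga'$. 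Thus $U$ is fully faithful, and it is essentially surjective by Step 3 (indeed invertible up to isomorphism), hence an equivalence $\ZA\to\ZA$.

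\textbf{Step 2: the anti-tensor structure (the main obstacle).} We must exhibit natural isomorphisms
\[
J^U_{(X,\ga),(Y,\mu)}:\ U(Y,\mu)\tnsrbar U(X,\ga)\ \xrightarrow{\ \sim\ }\ U\big((X,\ga)\tnsrbar(Y,\mu)\big)
\]
— the source being the $\tnsrbar^\op$-product of $U(X,\ga)$ and $U(Y,\mu)$ — satisfying the two coherence hexagons. The underlying objects are the images of $Q_{(\rdual\mu)^\vee,(\rdual\ga)^\vee}$ inside $Y\tnsr X$ and of $Q_{\ga,\mu}$ inside $X\tnsr Y$, and I would take $J^U$ to be induced by the braiding $c_{X,Y}$ of $\cA$, suitably decorated by the pivotal identifications of underlying objects. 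The content is that $c_{X,Y}$ carries one projection onto the other and intertwines the two half-braidings; this is where \lemref{l:hfbrd_red} does the work, since it lets one rewrite $\ga\tnsrbar\mu$ as $c\tnsr\mu$ or $\ga\tnsr c^\inv$, after which the claim becomes a string-diagram identity using the graphical formulas \eqnref{e:brd_duals} for $\ga^\vee$ and $\rvee\ga$, the sliding lemma \eqnref{e:sliding}, and \eqnref{e:combine}. Naturality of $J^U$ is then formal, and the hexagons reduce to the hexagon axioms for the braiding $c$ of $\cA$ together with \lemref{l:assoc}. This verification, entirely parallel to \ocite{tham}*{Section 3} and \ocite{wassermansymm}, is the one genuinely computational part of the argument.

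\textbf{Step 3: $U^2\simeq\id$ via $\thetabar$.} Unwinding, $U^2(X,\ga)=\big(X,(\rdual{((\rdual\ga)^\vee)})^\vee\big)$. Expanding this half-braiding with the diagrammatic descriptions of $\rdual{(-)}$ (\defref{d:tnsr_std}) and $(-)^\vee$ (\eqnref{e:brd_duals}), the strands wind around twice; the pivotal and ribbon structure then collapse the picture, and one reads off that the resulting half-braiding is $\ga$ conjugated by the twist, namely $A\mapsto(\id_X\tnsr\thetabar_X)\circ\ga_A\circ(\theta_X\tnsr\id_A)$. Because $\theta$ is natural, $\thetabar_X:X\to X$ intertwines this half-braiding with $\ga$ and is natural in $(X,\ga)$, giving an isomorphism of functors $\thetabar:U^2\Rightarrow\id$ on $\ZA$. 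Since the twist is not a tensor natural transformation for $\tnsrbar$, this $\ZZ/2$-action is only on the underlying category, not a tensor $\ZZ/2$-action. The one subtlety here is exactly which power of the twist appears — that the answer is $\thetabar$ and not $\theta$ — which is settled by the same bookkeeping of pivotal versus Drinfeld morphisms already used in \eqnref{e:brd_duals}.
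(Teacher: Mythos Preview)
Your overall strategy matches the paper's: the tensor structure on $U$ is induced by the braiding of $\cA$, and $\thetabar$ witnesses $U^2\simeq\id$. Two corrections are needed.

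The minor one: your map $J^U$ goes from a subobject of $Y\tnsr X$ to a subobject of $X\tnsr Y$, so the braiding you want is $c_{Y,X}$, not $c_{X,Y}$; the paper's direct computation of $Q_{\wdtld\mu,\wdtld\ga}$ (with $\wdtld\ga:=(\rdual\ga)^\vee$) confirms that it is $c_{Y,X}$---and not its inverse---that conjugates this projection to $Q_{\ga,\mu}$ and intertwines $\wdtld\mu\tnsrbar\wdtld\ga$ with $\wdtld{\ga\tnsrbar\mu}$. Also, your formula $(\id_X\tnsr\thetabar_X)\circ\ga_A\circ(\theta_X\tnsr\id_A)$ does not type-check; the correct identity is $\wdtld{\wdtld\ga}_A=(\theta_X\tnsr\id_A)\circ\ga_A\circ(\id_A\tnsr\thetabar_X)$.

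The substantive one is your last sentence. You claim that $\thetabar$ is not a tensor natural transformation for $\tnsrbar$ and that this is why the action is ``not tensor''. In fact $\thetabar:U^2\Rightarrow\id$ \emph{is} a monoidal natural isomorphism: composing the anti-monoidal structure $c$ of $U$ with itself gives $U^2$ the monoidal structure $c_{Y,X}\circ c_{X,Y}$ on underlying objects, and the ribbon identity
\[
\thetabar_{X\tnsr Y}\circ(c_{Y,X}\circ c_{X,Y})=\thetabar_X\tnsr\thetabar_Y
\]
is precisely the monoidality of $\thetabar$. The parenthetical ``(but not tensor action)'' refers instead to the generator: $U$ is a tensor equivalence $(\ZA,\tnsrbar)\simeq(\ZA,\tnsrbar^\op)$, hence anti-monoidal as an endofunctor of $(\ZA,\tnsrbar)$, and since $\tnsrbar$ is not braided (cf.\ \eqnref{e:cAA_not_symm}) there is no way to promote $U$ to a monoidal autoequivalence of $(\ZA,\tnsrbar)$. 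So the $\ZZ/2$-action exists on the underlying category, with $U^2\simeq\id$ even as tensor functors, but its nontrivial element does not act by a tensor automorphism.
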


\begin{proof}
Denote $\wdtld{\ga} := (\rdual{\ga})^\vee$.

First we check that $U(\onebar) \simeq \onebar$.
Recall $\onebar = (\bigoplus X_i X_i^*, \Gamma)$.
We have (sum over $i,j\in \Irr(\cA)$ is implicit)
\[
\wdtld{\Gamma} =
\sqrt{d_i}\sqrt{d_j}
\begin{tikzpicture}
\tikzmath{
  \tp = 0.8;
  \bt = -0.8;
  \lf = -0.3;
  \rt = 0.3;
}
\node[small_morphism] (a1) at (\lf,0) {\tiny $\al$};
\node[small_morphism] (a2) at (\rt,0) {\tiny $\al$};
%% top cross strand
\draw (a2) .. controls +(30:0.5cm) and +(down:0.3cm) ..
  (-0.7,\tp);
%% verticals
\draw[overline={1.5}] (a1) -- (\lf,\tp)
  node[above] {\small $X_i$};
\draw (a1) -- (\lf,\bt)
  node[below] {\small $X_j$};
\draw[overline={1.5}] (a2) -- (\rt,\tp)
  node[above] {\small $X_i^*$};
\draw (a2) -- (\rt,\bt)
  node[below] {\small $X_j^*$};
%% bottom cross strand
\draw[overline={1.5}] (a1) .. controls +(-150:0.5cm) and +(up:0.3cm) ..
  (0.7,\bt);
\end{tikzpicture}
%%%%%%%%%%%%%%%%%%%%%%%%%%%
=
%%%%%%%%%%%%%%%%%%%%%%%%%%%
\sqrt{d_i}\sqrt{d_j}
\begin{tikzpicture}
\tikzmath{
  \tp = 0.8;
  \bt = -0.8;
  \lf = -0.3;
  \rt = 0.3;
}
\node[small_morphism] (a1) at (\lf,0) {\tiny $\al$};
\node[small_morphism] (a2) at (\rt,0) {\tiny $\al$};
%% top cross strand
\draw (a2) .. controls +(120:0.3cm) and +(down:0.3cm) ..
  (-0.7,\tp);
%% a bit of bottom cross strand
\coordinate (mm) at (-0.5,0);
\draw (a1) .. controls +(45:0.4cm) and +(up:0.4cm) .. (mm);
%% verticals
\draw[overline={1}] (a1) -- (\lf,\tp)
  node[above] {\small $X_i$};
\draw (a1) -- (\lf,\bt)
  node[below] {\small $X_j$};
\draw (a2) -- (\rt,\tp)
  node[above] {\small $X_i^*$};
\draw (a2) -- (\rt,\bt)
  node[below] {\small $X_j^*$};
%% rest of bottom cross strand
\draw[overline={1.5}] (mm) .. controls +(down:0.5cm) and +(up:0.3cm) ..
  (0.7,\bt);
\end{tikzpicture}
%%%%%%%%%%%%%%%%%%%%%%%%%%%
=
%%%%%%%%%%%%%%%%%%%%%%%%%%%
\sqrt{d_i}\sqrt{d_j}
\begin{tikzpicture}
\tikzmath{
  \tp = 0.8;
  \bt = -0.8;
  \lf = -0.3;
  \rt = 0.3;
}
\node[small_morphism] (a1) at (\lf,0) {\tiny $\al$};
\node[small_morphism] (a2) at (\rt,0) {\tiny $\al$};
%% top cross strand
\draw (a2) .. controls +(120:0.8cm) and +(down:0.2cm) ..
  (-0.7,\tp);
%% verticals
\draw[overline={1.5}] (a1) -- (\lf,\tp)
  node[above] {\small $X_i$};
\draw (a1) -- (\lf,\bt)
  node[below] {\small $X_j$};
\draw (a2) -- (\rt,\tp)
  node[above] {\small $X_i^*$};
\draw (a2) -- (\rt,\bt)
  node[below] {\small $X_j^*$};
%% bottom cross strand
\draw[overline={1.5}] (a1) .. controls +(-45:0.3cm) and +(up:0.3cm) ..
  (0.7,\bt);
%% twists
\node[small_morphism] at (\lf,0.35) {\tiny $\thetabar$};
\node[small_morphism] at (\lf,-0.4) {\tiny $\theta$};
\end{tikzpicture}
\]
where recall $\theta$ is the twist operator,
and $\thetabar$ is its inverse.
Then it is easy to see that
$c^\inv \circ (\thetabar \tnsr \id) : U(\onebar) \simeq \onebar$
is an isomorphism.

Next we construct the tensor structure on $U$.
Let us compute $U((X,\ga)) \tnsrbar^\op U((Y,\mu))$.
$Y \tnsrproj{\wdtld{\mu}}{\wdtld{\ga}} X$
is the image of the projection
$Q_{\wdtld{\mu},\wdtld{\ga}}$,
which equals
\[
Q_{\wdtld{\mu},\wdtld{\ga}}
=
\frac{1}{\cD}
\begin{tikzpicture}
\draw (0.3,-0.8) -- (0.3,0.8)
  node[above] {$X$};
\draw[regular, overline] (0,0) circle (0.5cm); 
\node[small_morphism] (n2) at (0.3,0.4) {\tiny $\wdtld{\ga}$};
\draw[overline] (-0.3,-0.8) -- (-0.3,0.8)
  node[above] {$Y$};
\node[small_morphism] (n1) at (-0.3,-0.4) {\tiny $\wdtld{\mu}$};
\end{tikzpicture}
%%%%%%%%%%%%%%%%%%%%%%%%%%%%
=
%%%%%%%%%%%%%%%%%%%%%%%%%%%%
\frac{1}{\cD}
\begin{tikzpicture}
\draw[overline] (-0.3,-0.8) -- (-0.3,0.8)
  node[above] {$Y$};
\draw[regular, overline] (0,0) circle (0.5cm); 
\draw[overline] (0.3,-0.8) -- (0.3,0.8)
  node[above] {$X$};
\node[small_morphism] (n2) at (0.3,-0.4) {\tiny $\ga$};
\node[small_morphism] (n1) at (-0.3,0.4) {\tiny $\mu$};
\end{tikzpicture}
\;\;.
\]
Observe that the braiding gives an isomorphism
$c_{Y,X} : Y \tnsrproj{\wdtld{\mu}}{\wdtld{\ga}} X
\simeq X \tnsrproj{\ga}{\mu} Y$,
and in fact intertwines the half-braidings
$\wdtld{\mu} \tnsrbar \wdtld{\ga}$ and
$\wdtld{\ga \tnsrbar \mu}$ (see \eqnref{e:hfbrd_sym}):

\[
c_{Y,X} \circ (\wdtld{\mu} \tnsrbar \wdtld{\ga}) =
\begin{tikzpicture}
\begin{scope}[shift={(0,0.25)}]
\draw (0.3,-1) -- (0.3,1) node[above] {$X$};
\draw[regular, overline] (0,0) circle (0.5cm); 
\node[small_morphism] (n2) at (0.3,0.4) {\tiny $\wdtld{\ga}$};
\draw[overline] (-0.3,-1) -- (-0.3,1) node[above] {$Y$};
\node[small_morphism] (n1) at (-0.3,-0.4) {\tiny $\wdtld{\mu}$};
%% cross strand
\node[small_morphism] (a1) at (-0.5,0) {\tiny $\al$};
\draw (a1) to[out=150,in=-90] (-1,1);
\node[small_morphism] (a2) at (0.5,0) {\tiny $\al$};
\draw (a2) to[out=-30,in=90] (1,-1.5);
%% bottom braiding
\draw (-0.3,-1) .. controls +(down:0.2cm) and +(up:0.2cm) .. (0.3,-1.5);
\draw[overline={1.5}] (0.3,-1) .. controls +(down:0.2cm) and +(up:0.2cm) .. (-0.3,-1.5);
\end{scope}
\end{tikzpicture}
%%%%%%%%%%%%%%%%%%%%%%%%%%%%%
=
%%%%%%%%%%%%%%%%%%%%%%%%%%%%%
\begin{tikzpicture}
\begin{scope}[shift={(0,0.25)}]
\draw[overline={1.5}] (-0.3,-1) -- (-0.3,1)
  node[above] {$Y$};
\draw[regular, overline] (0,0) circle (0.5cm); 
\draw[overline={1.5}] (0.3,-1) -- (0.3,1)
  node[above] {$X$};
\node[small_morphism] (n2) at (0.3,-0.4) {\tiny $\ga$};
\node[small_morphism] (n1) at (-0.3,0.4) {\tiny $\mu$};
%% the cross strands
\node[small_morphism] (al1) at (0,0.5) {\tiny $\al$};
\node[small_morphism] (al2) at (0,-0.5) {\tiny $\al$};
\draw[overline={1.5}]
  (al2) .. controls +(down:0.3cm) and +(up:0.8cm) .. (1,-1.5);
\draw (al1) .. controls +(up:0.3cm) and +(down:0.3cm) .. (-1,1);
%% add vertical line so cross strand goes under
\draw[overline={1.5}] (-0.3,1) -- (-0.3,0.5);
%% bottom braiding
\draw (-0.3,-1) .. controls +(down:0.2cm) and +(up:0.2cm) .. (0.3,-1.5);
\draw[overline={1.5}] (0.3,-1) .. controls +(down:0.2cm) and +(up:0.2cm) .. (-0.3,-1.5);
\end{scope}
\end{tikzpicture}
%%%%%%%%%%%%%%%%%%%%%%%%%%%%%
=
%%%%%%%%%%%%%%%%%%%%%%%%%%%%%
\begin{tikzpicture}
\begin{scope}[shift={(0,-0.25)}]
\node[small_morphism] (a2) at (0.5,0) {\tiny $\al$};
\draw (a2) .. controls +(30:0.3cm) and +(0:0.7cm) ..
  (0,0.8) .. controls +(180:0.5cm) and +(down:0.2cm) .. (-1,1.5);
\draw[overline={1.5}] (0.3,-1) -- (0.3,1);
\draw[regular, overline] (0,0) circle (0.5cm); 
\node[small_morphism] (n2) at (0.3,0.4) {\tiny $\mu$};
\draw[overline] (-0.3,-1) -- (-0.3,1);
\node[small_morphism] (n1) at (-0.3,-0.4) {\tiny $\ga$};
%% cross strand
\node[small_morphism] (a1) at (-0.5,0) {\tiny $\al$};
\node[small_morphism] (a2) at (0.5,0) {\tiny $\al$};
\draw[overline={1.5}]
  (a1) .. controls +(-150:0.3cm) and +(180:0.7cm) ..
  (0,-0.8) .. controls +(0:0.5cm) and +(up:0.2cm) .. (1,-1);
%% top braiding
\draw (-0.3,1.5) .. controls +(down:0.2cm) and +(up:0.2cm) .. (0.3,1)
  node[pos=0,above] {$Y$};
\draw[overline={1.5}] (0.3,1.5) .. controls +(down:0.2cm) and +(up:0.2cm) .. (-0.3,1)
  node[pos=0,above] {$X$};
\end{scope}
\end{tikzpicture}
=
\wdtld{\ga \tnsrbar \mu} \circ c_{Y,X}
\;\;.
\]
It follows easily that
$W_{(X,\ga),(Y,\mu)} := c_{Y,X}$
is a tensor structure on $U$.

Finally, observe that
$\wdtld{\wdtld{\ga}} = (\theta_X \tnsr \id)
\circ \ga \circ (\id \tnsr \thetabar_X)$,
as already hinted at in the computation of
$\wdtld{\Gamma}$ above.
So $\thetabar_X:
U^2((X,\ga)) = (X,\wdtld{\wdtld{\ga}})
\simeq (X,\ga)$ is our desired natural isomorphism
$U^2 \simeq \id$.
This is in fact a natural isomorphism of
tensor functors, due to the identity
$\thetabar_{X\tnsr Y} \circ
(c_{Y,X} \circ c_{X,Y})
= \thetabar_X \tnsr \thetabar_Y$.
\end{proof}

We also note that the above proposition holds true
for $\ZA$ with the standard tensor product.
We will not use this fact in the rest of the paper,
but we state it anyway for completeness:

\begin{proposition}
There is a tensor equivalence
\begin{align*}
U: (\ZA,\tnsr) &\simeq_\tnsr (\ZA,\tnsr^\op)
\\
(X,\ga) &\mapsto (X,(\rdual{\ga})^\vee)
\;\;.
\end{align*}

%Furthermore, as tensor functors, $U^2 \simeq_\tnsr \id$,
%so $U$ generates a $\ZZ/2$-action on $\ZA$.
\end{proposition}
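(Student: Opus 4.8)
The plan is to follow the proof of \prpref{p:aut_ZA} almost verbatim, the only change being that the regular-coloured circles (equivalently, the projections $Q_{\ga,\mu}$) disappear, which if anything simplifies the argument. Write $\wdtld{\ga} := (\rdual{\ga})^\vee$ for the half-braiding appearing in the definition of $U$; as already noted in \prpref{p:aut_ZA}, this is again a half-braiding, since $\ga\mapsto\rdual{\ga}$ (\defref{d:tnsr_std}) and $\ga\mapsto\ga^\vee$ (\eqnref{e:brd_duals}) each send half-braidings to half-braidings on the dual object, and their composite returns to the object $X$ via the canonical identification $(\rdual{X})^*\simeq X$. Observe that neither operation refers to the tensor product on $\ZA$, so $\wdtld{\ga}$ is literally the same half-braiding as in \prpref{p:aut_ZA}. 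On morphisms $U$ is the identity of $\Hom_\cA$, and since $\wdtld{\ga}$ is built functorially out of $\ga$, every $f:(X,\ga)\to(X',\ga')$ in $\ZA$ satisfies $f\,\wdtld{\ga}=\wdtld{\ga'}\,f$; thus $U$ is a well-defined functor, and it is bijective on all Hom-sets, hence fully faithful.

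First, $U$ preserves the unit: here the unit of $(\ZA,\tnsr)$ has underlying object $\one$ equipped with the trivial half-braiding, and since the braidings and the twist of $\cA$ involving $\one$ are trivial, the diagrams defining $\rdual{(-)}$ and $(-)^\vee$ collapse (compare the computation of $\wdtld{\Gamma}$ in \prpref{p:aut_ZA}, which degenerates completely in this simpler case), so $U$ fixes the unit object up to the canonical unit constraints. Next we equip $U$ with a tensor structure. For objects $(X,\ga),(Y,\mu)\in\ZA$ one computes, using \eqnref{e:tnsr_std}, that $U(X,\ga)\tnsr^\op U(Y,\mu)=(Y\tnsr X,\ \wdtld{\mu}\tnsr\wdtld{\ga})$ while $U\big((X,\ga)\tnsr(Y,\mu)\big)=(X\tnsr Y,\ \wdtld{\ga\tnsr\mu})$. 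I claim the braiding $c_{Y,X}\colon Y\tnsr X\xrightarrow{\sim}X\tnsr Y$ of $\cA$ intertwines $\wdtld{\mu}\tnsr\wdtld{\ga}$ with $\wdtld{\ga\tnsr\mu}$, and hence defines an isomorphism $W_{(X,\ga),(Y,\mu)}:=c_{Y,X}$ in $\ZA$; this is precisely the diagrammatic identity carried out in \eqnref{e:hfbrd_sym} and in the proof of \prpref{p:aut_ZA}, now with the dashed circle erased. Naturality of $c$ makes $W$ a natural transformation, and the two hexagon axioms for $c$ in $\cA$ are exactly what is needed to verify the coherence hexagon relating $W$ to the associativity constraints (bearing in mind that the target monoidal structure is $\tnsr^\op$, so its associator is the inverse associator of $\cA$ with the arguments reversed); hence $(U,W)$ is a monoidal functor.

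Finally, $U$ is essentially surjective, and therefore an equivalence: exactly as in \prpref{p:aut_ZA} one has $\wdtld{\wdtld{\ga}}=(\theta_X\tnsr\id_X)\circ\ga\circ(\id_X\tnsr\thetabar_X)$, so $\thetabar_X\colon U^2(X,\ga)=(X,\wdtld{\wdtld{\ga}})\to(X,\ga)$ is an isomorphism in $\ZA$, natural in $(X,\ga)$; using $\thetabar_{X\tnsr Y}\circ(c_{Y,X}\circ c_{X,Y})=\thetabar_X\tnsr\thetabar_Y$ this upgrades to an isomorphism of tensor functors $U^2\simeq\id$, so $U$ is its own quasi-inverse and generates a $\ZZ/2$-action as before (not a tensor action, since $U$ lands in $\tnsr^\op$). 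The one step that needs genuine, if routine, graphical work is the claim that $c_{Y,X}$ intertwines $\wdtld{\mu}\tnsr\wdtld{\ga}$ and $\wdtld{\ga\tnsr\mu}$; structurally this is identical to \eqnref{e:hfbrd_sym}, so I do not expect any new obstacle beyond the bookkeeping of crossings and dualities already handled there.
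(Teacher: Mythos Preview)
Your proposal is correct and follows essentially the same approach as the paper, which explicitly says ``Essentially the same as before; even the tensor structure and natural isomorphism are the same'' and then displays the one nontrivial computation, namely that $c_{Y,X}$ intertwines $\wdtld{\mu}\tnsr\wdtld{\ga}$ with $\wdtld{\ga\tnsr\mu}$. One small caveat: that intertwining computation is not literally obtained by ``erasing the dashed circle'' from the reduced case (the combinatorics of $\tnsr$ versus $\tnsrbar$ differ), and the paper does carry it out explicitly for $\tnsr$; but you correctly flag this as the one step requiring genuine graphical work, and it is indeed routine.
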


\begin{proof}
Essentially the same as before;
even the tensor structure and natural isomorphism
are the same.
We just show one computation,
checking $c_{Y,X}$ intertwines
$\wdtld{\mu} \tnsr \wdtld{\ga}$ and $\wdtld{\ga \tnsr \mu}$:
\[
c_{Y,X} \circ (\wdtld{\mu} \tnsr \wdtld{\ga}) =
\begin{tikzpicture}
\tikzmath{
  \tp = 0.8;
  \bt = -0.8;
  \lf = -0.3;
  \rt = 0.3;
}
\draw
  (\lf,\tp) -- (\lf,0)
  .. controls +(down:0.4cm) and +(up:0.4cm) .. (\rt,\bt);
\draw[overline={1.5}]
  (\rt,\tp) -- (\rt,0)
  .. controls +(down:0.4cm) and +(up:0.4cm) .. (\lf,\bt);
\coordinate (mu) at (\lf,0.5);
\coordinate (ga) at (\rt,0.3);
\draw (-0.7,\tp) .. controls +(down:0.1cm) and +(-150:0.3cm) ..
  (mu) -- (ga)
  .. controls +(-10:0.3cm) and +(up:0.8cm) .. (0.7,\bt);
\node[small_morphism] at (mu){\tiny $\wdtld{\mu}$};
\node[small_morphism] at (ga) {\tiny $\wdtld{\ga}$};
\end{tikzpicture}
%%%%%%%%%%%%%%%%%%%%%%%%%%%%
=
%%%%%%%%%%%%%%%%%%%%%%%%%%%%
\begin{tikzpicture}
\tikzmath{
  \tp = 0.8;
  \bt = -0.8;
  \lf = -0.3;
  \rt = 0.3;
}
\draw
  (\lf,\tp) .. controls +(down:0.4cm) and +(up:0.4cm) ..
  (\rt,0) -- (\rt,\bt);
\coordinate (mu) at (\rt,-0.3);
\coordinate (ga) at (\lf,-0.5);
\draw[overline={1.5}]
  (-0.7,\tp) .. controls +(down:0.8cm) and +(150:0.3cm) ..
  (mu) .. controls +(-30:0.2cm) and +(-30:0.8cm) ..
  (0,-0.4) .. controls +(150:0.8cm) and +(150:0.2cm) ..
  (ga) .. controls +(-30:0.5cm) and +(up:0.2cm) ..
  (0.7,\bt);
\draw[overline={1.5}]
  (\rt,\tp) .. controls +(down:0.4cm) and +(up:0.4cm) ..
  (\lf,0) -- (\lf,\bt);
\node[small_morphism] at (mu){\tiny $\wdtld{\mu}$};
\node[small_morphism] at (ga) {\tiny $\wdtld{\ga}$};
\end{tikzpicture}
%%%%%%%%%%%%%%%%%%%%%%%%%%%%
=
%%%%%%%%%%%%%%%%%%%%%%%%%%%%
\begin{tikzpicture}
\tikzmath{
  \tp = 0.8;
  \bt = -0.8;
  \lf = -0.3;
  \rt = 0.3;
}
\coordinate (mu) at (\rt,-0.2);
\coordinate (ga) at (\lf,-0.4);
%% bottom part of vertical are underneath cross strand
\draw (mu) -- (\rt,\bt);
\draw (ga) -- (\lf,\bt);
%% long cross strand
\draw[overline={1.5}]
  (-0.7,\tp) .. controls +(down:0.8cm) and +(30:0.8cm) ..
  (mu) -- (ga)
  .. controls +(-150:0.8cm) and +(up:0.2cm) ..
  (0.7,\bt);
%% only draw to ga,mu
\draw[overline={1.5}]
  (\lf,\tp) .. controls +(down:0.4cm) and +(up:0.4cm) ..
  (\rt,0) -- (mu);
\draw[overline={1.5}]
  (\rt,\tp) .. controls +(down:0.4cm) and +(up:0.4cm) ..
  (\lf,0) -- (ga);
\node[small_morphism] at (mu){\tiny $\wdtld{\mu}$};
\node[small_morphism] at (ga) {\tiny $\wdtld{\ga}$};
\end{tikzpicture}
=
\wdtld{\ga \tnsr \mu} \circ c_{Y,X}
\;\;.
\]
\end{proof}

\begin{remark} \label{r:2fold}
In \ocite{wasserman2fold},
Wasserman showed that the Drinfeld center
of a symmetric $\cA$ is a ``bilax 2-fold
tensor category", which loosely means
a category with two monoidal structures that
almost commute. More precisely,
it consists of a pair of natural transformations
\[
\eta : ((X,\ga) \tnsr (X',\ga')) \tnsrbar
  ((Y,\mu) \tnsr (Y',\mu'))
\rightleftharpoons
((X,\ga) \tnsrbar (Y,\mu)) \tnsr
  ((X',\ga') \tnsrbar (Y',\mu'))
: \zeta
\]
such that $\eta \circ \zeta = \id$,
together with several morphisms
relating the units $\one$ and $\onebar$,
and they satisfy a cocktail of compatibility axioms.
%As his symmetric tensor product is the same
%as the reduced tensor product,
%one might try to show that $\ZA$ also has such a
%structure when $\cA$ is not symmetric.
We claim that $\ZA$ also has such a
structure when $\cA$ is not symmetric.
The only difference to the structure maps is
where we have to distinguish the braiding $c$
from its inverse:
$\eta = \id_X \tnsr c^\inv_{X',Y} \tnsr \id_{Y'}$
and
$\zeta = \id_X \tnsr c_{Y,X'} \tnsr \id_{Y'}$
(with the various projections implicit).
The proof that they satisfy the various compatibility
axioms is a lengthy calculation that we do not share
here.
There is a more topological approach,
see \rmkref{r:2fold_cy}.
We do not prove this claim in this paper,
as it will take us too far afield.
\rmkend
\end{remark}

%TODO: explore the $\ZZ/2$-action in
%https://arxiv.org/abs/0905.3117

\section{Horizontal Trace of $\cA$}
\label{s:htr}

In this section, we describe another way to obtain the
reduced tensor product,
via an intermediate category between $\cA$ and $\ZA$
known as the \emph{horizontal trace} of $\cA$,
as defined in \ocite{BHLZ}*{Section 2.4},
which is a generalization of Ocneanu's tube algebra
\ocite{Ocn}.

The horizontal trace of a braided category
is naturally a tensor category.
The Karoubi envelope of the horizontal trace of $\cA$
is equivalent to the Drinfeld center of $\cA$.
The main result of this section is \prpref{p:hA_ZA_tnsr},
which states that this equivalence
is also a tensor equivalence.

We begin with an exposition on the horizontal trace
mostly taken from \ocite{KT},
where we also considered a minor generalization
to bimodule categories $\cM$;
here we only consider $\cM = \cA$.

\begin{definition}
\label{d:hA}
Consider $\cA$ as a bimodule category over itself
by left and right multiplication.
Its \emph{horizontal trace},
denoted $\htr(\cA)$ or simply $\hA$,
is the category with the following objects and morphisms:

Objects: same as in $\cA$

Morphisms: $\Hom_\hA(X,X') := \bigoplus_A \ihom{\cA}{A}(X,X')/\sim$,
where $\ihom{\cA}{A}(X,X') := \Hom_\cA(A \tnsr X, X' \tnsr A)$,
the sum is over all objects $A\in \cA$,
and $\sim$ is the equivalence relation generated by 
the following:
%\begin{itemize}

For any $\psi\in \ihom{\cA}{B,A}(X,X') := \Hom_\cA(B \tnsr X,X' \tnsr A)$
  and $f\in \Hom_\cA(A,B)$, we have 

\begin{equation}
\ihom{\cA}{A}(X,X') \ni
%%%%%%%%%%%%%%%%%%%%%%
\begin{tikzpicture}
\node[small_morphism] (psi) at (0,0) {\small $\psi$};
\draw (psi) -- +(0,0.8) node[above] {$X$};
\draw (psi) -- +(0,-0.8) node[below] {$X'$};
\coordinate (L) at (-0.8,0.6);
\coordinate (R) at (0.8,-0.6);
\node[left] at (L) {$A$};
\node[right] at (R) {$A$};
\draw (L) -- (psi);
\draw (psi) -- (R);
\node[small_morphism] at (-0.48,0.36) {\tiny $f$};
\node at (-0.35,0.05) {\tiny $B$};
\end{tikzpicture}
%%%%%%%%%%%%%%%%%%%%%%%%
\quad \sim \quad
%%%%%%%%%%%%%%%%%%%%%%%%
\begin{tikzpicture}
\node[small_morphism] (psi) at (0,0) {\small $\psi$};
\draw (psi) -- +(0,0.8) node[above] {$X$};
\draw (psi) -- +(0,-0.8) node[below] {$X'$};
\coordinate (L) at (-0.8,0.6);
\coordinate (R) at (0.8,-0.6);
\node[left] at (L) {$B$};
\node[right] at (R) {$B$};
\draw (L) -- (psi);
\draw (psi) -- (R);
\node[small_morphism] at (0.48,-0.36) {\tiny $f$};
\node at (0.35,-0.1) {\tiny $A$};
\end{tikzpicture}
%%%%%%%%%%%%%%%%%%%%%%
\in \ihom{\cA}{B}(X,X')
\end{equation}
or in other words, $\Hom_\hA(X,X') =
\displaystyle\int^A \ihom{\cA}{A}(X,X')$.
\defend
\end{definition}

\begin{definition}
\label{d:htr}
Let $\htr: \cA \to \hA$
be the natural inclusion functor
that is identity on objects,
and on morphisms is the natural map
$\Hom_\cA(X,X') = \ihom{\cA}{\one}(X,X') \to \Hom_\hA(X,X')$.
\defend
\end{definition}

The adjective ``inclusion" further justified by the
following proposition
(along with the fact that $I$ is faithful),
which implies $\htr$ is faithful also.

\begin{proposition}{\ocite{KT}*{Theorem 3.9}}
\label{p:hA_ZA}
Let $G: \hA \to \ZA$ be defined as follows:
on objects, $G(X) = I(X)$, and on morphisms,
for $\psi \in \ihom{\cA}{A}(X,X')$,

\begin{equation}
G(\psi) = 
\sum_{i,j \Irr(\cA)} \sqrt{d_i}\sqrt{d_j}
%%%%%%%%
\begin{tikzpicture}
\node[small_morphism] (psi) at (0,0) {$\psi$}; 
\node[small_morphism] (L) at (-0.7,0) {$\al$};
\node[small_morphism] (R) at (0.7,0) {$\al$};
\draw (psi)-- +(0,1) node[above] {$G(X)$};
\draw (psi)-- +(0,-1) node[below] {$G(X')$}; 
\draw[midarrow_rev] (L)-- +(0,1) node[pos=0.5,left] {\tiny $i$};
\draw[midarrow] (L) -- +(0,-1) node[pos=0.5,left] {\tiny $j$}; 
\draw[midarrow] (R) -- +(0,1) node[pos=0.5,right] {\tiny $i$};
\draw[midarrow_rev] (R)-- +(0,-1) node[pos=0.5,right] {\tiny $j$};
\draw[midarrow={0.7}] (L) -- (psi);
\node at (-0.4, -0.2) {\tiny $A$};
\draw[midarrow_rev={0.7}] (R) -- (psi);
\node at (0.4, -0.2) {\tiny $A$};
\end{tikzpicture}
%%%%%%%%
\;\;.
\end{equation}

Then the extension to the Karoubi envelope
is an equivalence of abelian categories:
\[
  \Kar(G) : \Kar(\hA) \simeq \ZA
	\;\;.
\]
Under this equivalence, the natural functor $\htr: \cA \to \hA$
is identified with $I: \cA \to \ZA$,
i.e. we have the commutative diagram
\begin{equation}
\label{e:comm_diag}
\begin{tikzcd}
\cA \ar[r,"\htr"] \ar[d,"I"']
& \hA \ar[ld,"G"'] \ar[d,"\Kar"]
\\
\ZA
& \Kar(\hA) \ar[l,"\Kar(G)"] \ar[l,"\simeq"']
\end{tikzcd}
\;\;.
\end{equation}
\end{proposition}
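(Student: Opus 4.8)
The plan is to show that $G$ is a well-defined, fully faithful functor whose essential image consists of the objects $I(X)$, and then to invoke the dominance of $I$; the identification of $\htr$ with $I$ will fall out of the formulas. First I would check that the displayed formula for $G(\psi)$ is unchanged under the moves generating $\sim$ in \defref{d:hA}. Given $\psi\colon B\tnsr X\to X'\tnsr A$ and $f\colon A\to B$, precomposing with $f\tnsr\id_X$ versus postcomposing with $\id_{X'}\tnsr f$ yields two morphisms of $\hA$ that must have the same image; once the $A$- (resp.\ $B$-) strand has been closed up against the pair of $\al$-nodes, the equality of the two is exactly the naturality identity \eqnref{e:al_natural}, which allows $f$ to be slid from one $\al$-node to the other around the loop. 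Functoriality is checked in the same spirit: $G(\id_X)=\id_{I(X)}$ and $G(\psi'\circ\psi)=G(\psi')\circ G(\psi)$ both reduce, after the internal summation index is forced by biorthogonality of the $\al$-nodes, to the bubble-collapse identity \eqnref{e:combine}.

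The main work is full faithfulness, and I would establish it by computing both Hom-spaces independently and observing that $G$ is the comparison map. On one side, $\Hom_\hA(X,Y)=\int^{A}\Hom_\cA(A\tnsr X,Y\tnsr A)$; since $\cA$ is semisimple with simple objects $X_i$, this coend collapses to $\bigoplus_{i\in\Irr(\cA)}\Hom_\cA(X_i\tnsr X,Y\tnsr X_i)$, because the defining coequalizer may be restricted to $A$ among the $X_i$, between which the only morphisms are scalar multiples of identities. On the other side, using that $I$ is two-sided adjoint to the forgetful functor $F$ (\prpref{p:center}) together with $F(I(Y))=\bigoplus_i X_i\tnsr Y\tnsr X_i^*$ from \eqnref{e:induction}, one gets $\Hom_\ZA(I(X),I(Y))\cong\Hom_\cA(X,F I(Y))\cong\bigoplus_i\Hom_\cA(X_i\tnsr X,Y\tnsr X_i)$. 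A direct check then shows that the formula for $G(\psi)$ realizes precisely this bijection — the normalization $\sqrt{d_i}\sqrt{d_j}$ being exactly what turns the $\al$-nodes into the unit and counit of the adjunction — so $G$ is fully faithful, and hence so is $\Kar(G)$.

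For essential surjectivity of $\Kar(G)$, I would use the dominance of $I$ (the object $(X,\ga)\in\ZA$ is a direct summand of $I(X)=G(X)$ via the projection $P_{(X,\ga)}$): since $G$ is full, $P_{(X,\ga)}=G(e)$ for an idempotent $e\in\Hom_\hA(X,X)$, whence $(X,\ga)\simeq\Kar(G)(X,e)$, so $\Kar(G)$ is an equivalence of abelian categories. Finally, $G\circ\htr$ agrees with $I$ on objects by definition, and on morphisms $G(\htr(f))$ is the $A=\one$ specialization of the formula for $G$, which is the formula for $I(f)$ from \ocite{stringnet}; hence $G\circ\htr=I$ and diagram \eqnref{e:comm_diag} commutes, and this passes to Karoubi envelopes as stated.

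The step I expect to be the real obstacle is the last half of full faithfulness: pinning down that $G$ \emph{is} the adjunction isomorphism rather than merely that the two Hom-space descriptions are abstractly isomorphic. This requires carefully tracking all the $\sqrt{d_i}$ factors, the cyclic-symmetry isomorphisms \eqnref{e:cyclic}, and the dual-basis conventions of \eqnref{e:summation_convention}, and it is where the string-calculus lemmas \eqnref{e:combine}, \eqnref{e:sliding}, and \eqnref{e:al_natural} all get used; the well-definedness and functoriality checks, while needed, are comparatively routine applications of the same lemmas.
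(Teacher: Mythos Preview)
Your proposal is correct and follows the natural strategy: well-definedness via \eqnref{e:al_natural}, full faithfulness by computing both Hom-spaces through the coend/adjunction descriptions, and essential surjectivity via the dominance of $I$. The paper itself does not prove this proposition in-text; it is quoted from \ocite{KT}*{Theorem 3.9}, and the only local verification the paper offers is \prpref{p:G_inv}, which exhibits an explicit inverse $\Kar(G)^\inv\colon (X,\ga)\mapsto (X,\hP_\ga)$ with $\hP_\ga=\frac{1}{\cD}\sum_i d_i\,\ga_{X_i}$ and checks it is inverse by a ``straightforward'' computation.

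So your route and the paper's differ: you argue fully faithful plus essentially surjective, while the paper (here) simply writes down an inverse functor and leaves the checks to the reader. Your approach has the advantage of explaining \emph{why} the equivalence holds (the Hom-space identification $\int^A\Hom_\cA(AX,YA)\cong\bigoplus_i\Hom_\cA(X_iX,YX_i)\cong\Hom_\cA(X,FI(Y))\cong\Hom_\ZA(I(X),I(Y))$ is the conceptual heart), and it makes the commutativity of \eqnref{e:comm_diag} transparent. The paper's approach via $\hP_\ga$ is shorter and more hands-on, and has the side benefit that the explicit inverse is reused later (e.g.\ in describing the tensor equivalence $\ZA\simeq\CYA$). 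Your caution about the normalization bookkeeping in the ``$G$ is the adjunction isomorphism'' step is well placed; that is indeed where the $\sqrt{d_i}$ conventions and \eqnref{e:combine} must be tracked carefully, but there is no missing idea.
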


We note that in \ocite{KT}, the proposition
establishes an equivalence
$\Kar(\htr(\cM)) \simeq \cZ(\cM)$,
where $\htr(\cM)$ is the horizontal trace
of an $\cA$-bimodule category,
and $\cZ(\cM)$ is the center of $\cM$
\ocite{GNN}*{Definition 2.1}
which is analogous to the Drinfeld center.

It is useful to construct an inverse to $\Kar(G)$:

\begin{proposition}
\label{p:G_inv}
An inverse to $\Kar(G)$ is given by:
\begin{align*}
\Kar(G)^\inv : \ZA &\simeq \Kar(\hA) \\
(X,\ga) &\mapsto (X, \hP_\ga)
\end{align*}
where
\[
\hP_\ga := \sum_{i \in \Irr(\cA)} \frac{d_i}{\cD} \ga_{X_i} =
%\sum_{i \in \Irr(\cA)} \frac{d_i}{\cD}
\frac{1}{\cD}
\begin{tikzpicture}
\node[small_morphism] (ga) at (0,0) {\tiny $\ga$};
\draw (ga) -- (0,0.6) node[above] {\small $X$};
\draw (ga) -- (0,-0.6);
\draw[regular] (ga) -- +(135:0.6cm);
\draw[regular] (ga) -- +(-45:0.6cm);
%\draw (ga) -- +(135:0.6cm);
%\draw[midarrow={0.6}] (ga) -- +(-45:0.6cm);
%\node at (0.5,-0.3) {\tiny $i$};
\end{tikzpicture}
\]
and on morphisms, for $f\in \Hom_\ZA((X,\ga),(Y,\mu))$,
\[
f \mapsto \hP_\mu \circ f = f \circ \hP_\ga
\;\;.
\]
\end{proposition}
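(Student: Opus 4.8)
The plan is to establish, in order, three things: that $(X,\hP_\ga)$ is a genuine object of $\Kar(\hA)$, i.e.\ that $\hP_\ga$ is idempotent in $\Hom_\hA(X,X)$; that the assignment $f\mapsto\hP_\mu\circ f$ is a well-defined functor; and that the resulting functor $\Phi:=\Kar(G)^\inv$ is quasi-inverse to $\Kar(G)$. For idempotency, write $\hP_\ga=\tfrac1\cD\sum_{i\in\Irr(\cA)}d_i\,\ga_{X_i}$ with $\ga_{X_i}\in\ihom{\cA}{X_i}(X,X)=\Hom_\cA(X_i\tnsr X,X\tnsr X_i)$, and compose two copies using the composition rule of \defref{d:hA}; the half-braiding compatibility $\ga_{A\tnsr B}=(\ga_A\tnsr\id_B)\circ(\id_A\tnsr\ga_B)$ identifies $\hP_\ga\circ\hP_\ga$ with $\tfrac1{\cD^2}\sum_{i,j}d_id_j\,\ga_{X_i\tnsr X_j}$ over the object $X_i\tnsr X_j$. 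Pushing $\ga_{X_i\tnsr X_j}$ forward along each inclusion $X_k\hookrightarrow X_i\tnsr X_j$ by naturality of $\ga$, the coend relation rewrites this class as $\tfrac1{\cD^2}\sum_k\bigl(\sum_{i,j}d_id_j\dim\Hom_\cA(X_k,X_i\tnsr X_j)\bigr)\ga_{X_k}$, which equals $\hP_\ga$ because $\sum_{i,j}d_id_j\dim\Hom_\cA(X_k,X_i\tnsr X_j)=d_k\cD$; graphically this is just \eqnref{e:combine} applied to the regular-colored strand of \eqnref{e:regular_color}.

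For morphisms, note that for $f\in\Hom_\ZA((X,\ga),(Y,\mu))$ the defining relation $f\ga=\mu f$ of $\ZA$ says exactly that $(f\tnsr\id_{X_i})\circ\ga_{X_i}=\mu_{X_i}\circ(\id_{X_i}\tnsr f)$ for every $i$; unravelling the composition in $\hA$ term by term, this is precisely the equality $\htr(f)\circ\hP_\ga=\hP_\mu\circ\htr(f)$ in $\Hom_\hA(X,Y)$. Denoting this common morphism by $\Phi(f)$, idempotency of $\hP_\ga,\hP_\mu$ gives $\hP_\mu\circ\Phi(f)=\Phi(f)=\Phi(f)\circ\hP_\ga$, so $\Phi(f)$ is indeed a morphism $(X,\hP_\ga)\to(Y,\hP_\mu)$ in $\Kar(\hA)$, and compatibility with composition and identities is a short check using the same intertwining relations.

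It remains to see that $\Phi$ is quasi-inverse to $\Kar(G)$. Since $\Kar(G)$ is already an equivalence of abelian categories by \prpref{p:hA_ZA}, it suffices to produce a natural isomorphism $\Kar(G)\circ\Phi\Rightarrow\id_{\ZA}$, and then $\Phi\circ\Kar(G)\cong\id$ follows automatically. On objects, $\Kar(G)(\Phi(X,\ga))=\im\bigl(G(\hP_\ga)\bigr)$ sits inside $G(X)=I(X)$, while by the dominance of $I$ the object $(X,\ga)$ is canonically the image of the idempotent $P_{(X,\ga)}\colon I(X)\to I(X)$; so it is enough to show the idempotents $G(\hP_\ga)$ and $P_{(X,\ga)}$ have the same image, equivalently that they are conjugate, $P_{(X,\ga)}\circ G(\hP_\ga)=G(\hP_\ga)$ and $G(\hP_\ga)\circ P_{(X,\ga)}=P_{(X,\ga)}$. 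For this I would expand $G(\hP_\ga)$ via the formula for $G$ in \prpref{p:hA_ZA}: the $d_i$-weighted sum over $\Irr(\cA)$ turns the $A$-strand into a regular-colored loop threading the single half-braiding, which one slides past $\ga$ by naturality and absorbs into the two half-braidings of $P_{(X,\ga)}$ using the sliding lemma \eqnref{e:sliding} and \eqnref{e:combine}, the $\sqrt{d_i}/\sqrt{\cD}$ factors cancelling appropriately. The resulting isomorphism $\im(G(\hP_\ga))\cong\im(P_{(X,\ga)})\cong(X,\ga)$ is natural in $(X,\ga)$ since it is assembled from the canonical splitting maps and $G$ applied to $\htr(f)$ restricts compatibly (cf.\ \eqnref{e:comm_diag}). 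The main obstacle is precisely this last graphical step — reconciling the single-half-braiding expression $G(\hP_\ga)$ with the two-half-braiding projector $P_{(X,\ga)}$, tracking the regular-colored loop created by the $d_i$-sum and keeping the square-root normalizations in order; everything else is routine coend bookkeeping together with the defining relations of $\ZA$.
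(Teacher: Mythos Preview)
Your proof is correct and fills in the details the paper omits (the paper's proof is simply ``Straightforward''). The idempotency check and the functoriality on morphisms are exactly right, and reducing to a single natural isomorphism $\Kar(G)\circ\Phi\cong\id_{\ZA}$ using the known equivalence from \prpref{p:hA_ZA} is the clean way to finish.

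One sharpening of your last step: you aim to show that $G(\hP_\ga)$ and $P_{(X,\ga)}$ have the same image, but in fact they are \emph{equal} as endomorphisms of $I(X)$. The manipulation goes as follows. In $G(\hP_\ga)$, the regular-colored $X_k$-strand runs from the left $\alpha$ through $\ga_{X_k}$ to the right $\alpha$. Use naturality of $\ga$ (in the form of \eqnref{e:al_natural}, not the sliding lemma \eqnref{e:sliding}) to push the left $\alpha$, viewed as a morphism $X_j^*X_i\to X_k$, through the half-braiding; this replaces $\ga_{X_k}$ by $\ga_{X_j^*X_i}$ and brings both $\alpha$'s to the same side, connected by the regular $X_k$-strand. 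Now \eqnref{e:combine} collapses that pair to the identity. Finally the half-braiding axiom splits $\ga_{X_j^*X_i}=(\ga_{X_j^*}\tnsr\id)\circ(\id\tnsr\ga_{X_i})$, and the standard compatibility of half-braidings with duality rewrites the $\ga_{X_j^*}$-plus-$\coev_{X_j}$ piece as $\ga_{X_j}^{-1}$-plus-$\coev_{X_j}$, yielding exactly $P_{(X,\ga)}$. So no separate ``same image'' or absorption argument is needed, and the $\sqrt{d_i}\sqrt{d_j}/\cD$ normalizations match on the nose. Your invocation of \eqnref{e:sliding} is the one misplaced reference; everything else is sound.
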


\begin{proof}
Straightforward.
\end{proof}

When $\cA$ is premodular,
in particular when $\cA$ is \emph{braided},
$\hA$ has a natural monoidal structure.
This was discussed in \ocite{KT}*{Example 8.2};
here we spell it out more explicitly.

\begin{proposition}
There is a tensor product on $\hA$,
denoted $\htnsr$:
on objects, it is simply the same as $\cA$,
and on morphisms,

\begin{equation*}
\begin{tikzpicture}
\node at (0,3) {$\htnsr: $};
\end{tikzpicture}
%%% \hA stuff %%%%%%%%%%%%%%%%%%%%%%%%%%%%%%%%%%%%%%%%%%%
\begin{tikzpicture}
\node at (0,3) {$\ihom{\cA}{A}(X,X')$};
\node at (0,2) {$X$};
\node at (0,0) {$X'$};
\node[small_morphism] (ph) at (0,1) {$\ph$};
\draw (ph) -- (0,0.2);
\draw (ph) -- (0,1.8);
\draw (ph) -- (-0.5,1.5) node[pos=1,above left] {$A$};
\draw (ph) -- (0.5,0.5) node[pos=1,below right] {$A$};
\end{tikzpicture}
%%%%%%%%%%%%%%%%%%%%%%%%%%%%%%%
\begin{tikzpicture}
\node at (0,1) {$\boxtimes$};
\node at (0,3) {$\boxtimes$};
\end{tikzpicture}
%%% \hA stuff %%%%%%%%%%%%%%%%%%%%%%%%%%%%%%%%%%%%%%%%%%%
\begin{tikzpicture}
\node at (0,3) {$\ihom{\cA}{B}(Y,Y')$};
\node at (0,2) {$Y$};
\node at (0,0) {$Y'$};
\node[small_morphism] (ph) at (0,1) {\small $\psi$};
\draw (ph) -- (0,0.2);
\draw (ph) -- (0,1.8);
\draw (ph) -- (-0.5,1.5) node[pos=1,above left] {$B$};
\draw (ph) -- (0.5,0.5) node[pos=1,below right] {$B$};
\end{tikzpicture}
%%%%%%%%%%%%%%%%%%%%%%%%%%%%%%%
\begin{tikzpicture}
\node at (0,1) {$\mapsto$};
\node at (0,3) {$\longrightarrow$};
\end{tikzpicture}
%%% \hA stuff %%%%%%%%%%%%%%%%%%%%%%%%%%%%%%%%%%%%%%%%%%%
\begin{tikzpicture}
\node at (0,3) {$\ihom{\cA}{AB}(XY, X'Y'$)};
\node at (0,2) {$XY$};
\node at (0,0) {$X'Y'$};
\node[small_morphism] (ph1) at (-0.2,0.9) {\tiny $\ph$};
\node[small_morphism] (ph2) at (0.2,1.1) {\tiny $\psi$};
\draw (ph2) -- (0.2,1.8);
\draw (ph2) -- (0.2,0.2);
\draw (ph2) -- +(150:1cm);
\draw (ph2) -- +(-30:0.8cm);
\draw[overline] (ph1) -- (-0.2,1.8);
\draw (ph1) -- (-0.2,0.2);
\draw (ph1) -- +(150:0.8cm);
\draw[overline] (ph1) -- +(-30:1cm);
\node at (-1.3,1.7) {$AB$};
\node at (1.3,0.3) {$AB$};
\end{tikzpicture}
\;\;.
\end{equation*}

The unit object is the same as that of $\cA$.

Furthermore, this tensor product structure
is compatible with the rigid and pivotal structures
of $\cA$.

In other words, this is an
extension of the tensor product on $\cA$,
so that the inclusion functor $\htr: \cA \to \hA$
is a pivotal tensor functor.
\end{proposition}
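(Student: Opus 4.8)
The plan is to verify, in order, that $\htnsr$ is well defined on $\Hom$-spaces, that it is a bifunctor, that the associativity and unit constraints of $\cA$ promote to constraints making $(\hA,\htnsr)$ monoidal, that $\htr$ is then a strict tensor functor, and finally that the rigid and pivotal structures of $\cA$ transport along $\htr$. For well-definedness, recall from \defref{d:hA} that $\Hom_\hA(X,X')=\int^{A}\ihom{\cA}{A}(X,X')$, so the displayed formula defines a map on $\Hom$-spaces only if $(\ph,\psi)\mapsto \ph\htnsr\psi$ is dinatural in $A$ and in $B$ separately; equivalently it must respect the relation $\sim$ on either tensor factor. Since $\sim$ is generated by sliding a morphism $f\colon A\to B$ along the closed $A$-strand of $\ph$, and the only feature of $\ph\htnsr\psi$ not already present in $\ph$ and $\psi$ is a pair of crossings of the $A$-labelled strand of $\ph$ with the strands of $\psi$ (and symmetrically for $\psi$), the check is exactly naturality of the braiding $c$ of $\cA$: $f$ passes freely through every crossing. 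One then invokes the Fubini isomorphism $\int^{A}\!\int^{B}=\int^{A,B}$ and the canonical comparison $\int^{A,B}\ihom{\cA}{A\tnsr B}(XY,X'Y')\to\int^{C}\ihom{\cA}{C}(XY,X'Y')$ induced by restriction along $\tnsr\colon\cA\times\cA\to\cA$ to land in $\Hom_\hA(XY,X'Y')$.

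Bifunctoriality is the main computational step. Preservation of identities is immediate; for composition one compares $(\ph'\circ\ph)\htnsr(\psi'\circ\psi)$ with $(\ph'\htnsr\psi')\circ(\ph\htnsr\psi)$, where composition in $\hA$ tensors the labelling objects. Both sides are the same braided diagram after sliding the labelling strands of $\ph,\ph'$ and of $\psi,\psi'$ past one another, so the equality again reduces to naturality of $c$ together with the Reidemeister-$2/3$ type isotopies valid in any braided category. The associativity and unit constraints of $\hA$ are then declared to be the images under $\htr$ of those of $\cA$, and the pentagon and triangle identities for $\hA$ follow from the corresponding ones in $\cA$ after absorbing the extra crossings via naturality; hence $(\hA,\htnsr)$ is monoidal.

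That $\htr$ is a strict tensor functor is then essentially automatic: it is the identity on objects, and on morphisms the case $A=B=\one$ of the formula renders all braidings trivial (they become unit constraints), so $\htr(f)\htnsr\htr(g)=\htr(f\tnsr g)$. Compatibility with the rigid and pivotal structures is now a formal consequence. We take the dual objects, evaluations, coevaluations and pivotal isomorphism $\delta$ of $\hA$ to be the images under $\htr$ of those of $\cA$; since $\htr$ is a strict tensor functor, the snake identities and the monoidality of $\delta\colon\id\simeq(-)^{**}$ are transported along $\htr$ from $\cA$, so $\hA$ is rigid and pivotal and $\htr$ is a pivotal tensor functor. (One may note a more conceptual origin — the braiding is precisely what lets the self-bimodule structure on $\cA$ interact with $\tnsr\colon\cA\times\cA\to\cA$ — but the direct verification above suffices.)

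The hard part will be bifunctoriality, and in particular keeping track of the crossings when comparing $(\ph'\circ\ph)\htnsr(\psi'\circ\psi)$ with $(\ph'\htnsr\psi')\circ(\ph\htnsr\psi)$ (and already in the dinaturality check). There is, however, no genuinely new ingredient beyond naturality of the braiding of $\cA$ and the coend description of $\Hom_\hA$, so once the diagrammatic conventions are pinned down the verification is routine.
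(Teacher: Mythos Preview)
Your proposal is correct and supplies exactly the routine verifications the paper has in mind; the paper's own proof is simply ``Straightforward,'' and your outline (dinaturality via naturality of the braiding, bifunctoriality via sliding/Reidemeister moves, then transporting the unit, associativity, duals, and pivotal $\delta$ along $\htr$) is precisely how one unpacks that word. The only point worth tightening is that naturality of the pivotal isomorphism $\delta$ must be checked against \emph{all} morphisms of $\hA$, not just those in the image of $\htr$; this again reduces to naturality of the braiding once one computes $\ph^{\vee\vee}$ using \eqnref{e:hA_dual}.
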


\begin{proof}
Straightforward.
\end{proof}

It is useful to work out the left dual of a morphism
(the right dual being similar):
\begin{equation}
\label{e:hA_dual}
\begin{tikzpicture}
\node at (0,1.5) {$\ihom{\cA}{A}(X,X')$};
\node[small_morphism] (ph) at (0,0) {\small $\ph$};
\draw (ph) -- (0,0.6) node[above] {$X$};
\draw (ph) -- (0,-0.6) node[below] {$X'$};
\draw (ph) -- (-0.6,0.3) node[left] {$A$};
\draw (ph) -- (0.6,-0.3) node[right] {$A$};
\end{tikzpicture}
%%%%%%%%%%%%%%%%%%%%%%%%%%
\begin{tikzpicture}
\node at (0,1.5) {$\to$};
\node at (0,0) {$\mapsto$};
\end{tikzpicture}
%%%%%%%%%%%%%%%%%%%%%%%%%%
\begin{tikzpicture}
\node at (0,1.5) {$\ihom{\cA}{A}(X'^*,X^*)$};
\node[small_morphism] (ph) at (0,0) {\tiny $\ph$};
\draw (ph) .. controls +(up:0.3cm) and +(up:0.3cm) ..
  (0.2,0) .. controls +(down:0.2cm) and +(up:0.2cm) ..
  (0,-0.6)
  node[below] {$X^*$};
\draw (ph) -- (-0.6,0.3) node[left] {$A$};
\draw[overline={1.5}] (ph) -- (0.6,-0.3)
  node[right] {$A$};
\draw[overline={1.25}]
  (ph) .. controls +(down:0.3cm) and +(down:0.3cm) ..
  (-0.2,0) .. controls +(up:0.2cm) and +(down:0.2cm) ..
  (0,0.6)
  node[above] {$X'^*$};
\end{tikzpicture}
\;\;.
\end{equation}

In particular, when $X' = X$, and $\ph=\ga$ is a half-braiding on $X$,
the right side is nothing but $\ga^\vee$.

The following proposition is an upgrade of
\prpref{p:hA_ZA}:

\begin{proposition}
\label{p:hA_ZA_tnsr}
When $\ZA$ is endowed with the reduced tensor product,
the equivalence in \prpref{p:hA_ZA} is an equivalence
of pivotal multifusion categories.
More precisely, let $J$ be the tensor structure on $I$
from \prpref{p:I_tensor}.
Then
\[
(G,J) : (\hA, \htnsr) \to (\ZA, \tnsrbar)
\]
is a pivotal tensor functor,
and hence its completion to $\Kar(\hA)$
is a pivotal tensor equivalence.
\end{proposition}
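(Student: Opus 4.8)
The plan is to verify directly that the pair $(G,J)$ — where $G : \hA \to \ZA$ is the functor of \prpref{p:hA_ZA} and $J$ is the tensor structure on $I$ from \prpref{p:I_tensor}, transported along the identification $G = I$ on objects — satisfies the axioms of a pivotal tensor functor. Since \prpref{p:hA_ZA} already gives that $\Kar(G)$ is an equivalence of abelian categories, and since a tensor structure on a functor automatically extends to the Karoubi envelope (the projectors are split by definition, and $J$ is natural so it restricts to the images of idempotents), it suffices to check the tensor-functor axioms for $G$ itself; the statement about $\Kar(\hA)$ then follows formally. So the real content is three things: (i) $J_{X,Y} : G(X) \tnsrbar G(Y) \to G(X \tnsr Y)$ is well-defined on $\hA$, i.e. compatible with the coend relation $\sim$ of \defref{d:hA}; (ii) $G$ together with $J$ is monoidal, i.e. the hexagon/associativity constraint and the unit constraints hold; and (iii) $G$ is compatible with duals and the pivotal structure.

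For (i), I would take a morphism represented by $\ph \in \ihom{\cA}{A}(X,X')$ and $\psi \in \ihom{\cA}{B}(Y,Y')$, form $G(\ph) \tnsrbar G(\psi)$ as a morphism $G(X) \tnsrbar G(Y) \to G(X') \tnsrbar G(Y')$, and compare $J_{X',Y'} \circ (G(\ph) \tnsrbar G(\psi))$ with $G(\ph \htnsr \psi) \circ J_{X,Y}$, showing they agree as graphs in $\cA$ after inserting the summation conventions \eqnref{e:summation_convention}. The key computational tools are the ``combine'' identity \eqnref{e:combine}, the sliding lemma \eqnref{e:sliding}, naturality of the $\al$-nodes \eqnref{e:al_natural}, and the description of $\hP_\ga$ from \prpref{p:G_inv} (it is convenient to precompose/postcompose with $\hP$ to land in the image of the idempotent). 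The invariance under $\sim$ is essentially the statement that both sides only see the class of $\ph,\psi$ in the coend, which follows because $G$ itself was already shown well-defined in \ocite{KT} and $J$ is built from $\al$-nodes that slide freely; concretely, moving an $f : A \to B$ across $\ph$ corresponds to an isotopy of the graph that $J$ is invariant under.

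For (ii) and (iii), the bulk of the work is already done: \lemref{l:assoc} gives that $\tnsrbar$ on $\ZA$ is associative with associator inherited from $\cA$, \prpref{t:ZA_reduced} identifies the unit $\onebar = I(\one)$ and the duals/pivotal structure, and \prpref{p:I_tensor} states precisely that $(I,J)$ is a pivotal tensor functor $(\cA,\tnsr) \to (\ZA,\tnsrbar)$. Since $G \circ \htr = I$ (the object-level statement, compatibly with morphisms via \eqnref{e:comm_diag}) and $\htr$ is a pivotal tensor functor $(\cA,\tnsr) \to (\hA,\htnsr)$ by the proposition preceding \eqnref{e:hA_dual}, the coherence axioms for $(G,J)$ on the image of $\htr$ follow immediately; the point is to extend this to all of $\hA$, whose morphisms are generated (over the morphisms of $\cA$) by the extra $\ihom{\cA}{A}$-cells. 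Here \eqnref{e:hA_dual} is exactly what we need for duals — it shows $G$ sends the left dual of a morphism in $\hA$ to $\ga^\vee$-type data, matching \eqnref{e:brd_duals} — and the pivotal compatibility reduces to $\ga^{\vee\vee} = \ga^{**}$, already noted at the end of the proof of \prpref{t:ZA_reduced}.

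The main obstacle I anticipate is purely bookkeeping: verifying that the single diagram defining $J_{X,Y}$ — which involves three simple-object summations indexed by $i,j,k$ and a somewhat intricate tangle of $\al$-nodes — intertwines $\tnsrbar$ on $\hA$ (where the tensor product of morphisms stacks the $A$- and $B$-strands into an $AB$-strand) with $\tnsrbar$ on $\ZA$ (where one must project by $Q_{\ga,\mu}$ and then by the outer $\Gamma$-projector of $I(X\tnsr Y)$). The resolution is to expand everything into $\cA$-diagrams and repeatedly apply \eqnref{e:combine} to collapse the regular-coloring loops, exactly as in \ocite{tham}*{Prop.\ 3.19}, so I would organize the proof by first reducing to morphisms of the form $G(\ph) \tnsrbar G(\mathrm{id})$ and $G(\mathrm{id}) \tnsrbar G(\psi)$ (using that $\tnsrbar$ is a bifunctor) and checking the hexagon on these, then assembling. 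No genuinely new phenomenon arises beyond what \ocite{tham} and \prpref{p:I_tensor} handle; the only new ingredient is tracking the pivotal structure, which is routine given \eqnref{e:hA_dual}.
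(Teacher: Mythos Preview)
Your proposal is correct and follows essentially the same approach as the paper: since \prpref{p:I_tensor} already establishes that $(I,J)$ is a pivotal tensor functor and $G$ agrees with $I$ on objects and on the image of $\htr$, the only new content is naturality of $J$ with respect to the extra morphisms $\ph \in \ihom{\cA}{A}(X,X')$ of $\hA$, which the paper (like you) checks by reducing to the one-sided case $J_{X',Y} \circ (G(\ph) \tnsrbar G(\id_Y)) = G(\ph \htnsr \id_Y) \circ J_{X,Y}$ via a single graphical computation using \eqnref{e:al_natural} and \eqnref{e:combine}. Your discussion of well-definedness under $\sim$ is a bit more than needed, since $G$ was already shown well-defined in \ocite{KT}, but otherwise your plan matches the paper's.
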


\begin{proof}
As \prpref{p:I_tensor} takes care of objects,
it remains to check naturality of $J$ with respect to morphisms
of $\hA$. This is easy to do:
for example, for $\ph \in \ihom{\cA}{A}(X,X')$,
we see that
$J_{X',Y} \circ (I(\ph) \tnsrbar I(\id_Y))
=
I(\ph \tnsr \id_Y) \circ J_{X,Y}$ by

\begin{equation*}
%%%%%%%%%%%%%%%%%%%%%%%%%%%%%%%%%
\sqrt{d_i} \sqrt{d_j} \sqrt{d_k} d_l
\begin{tikzpicture}
\tikzmath{ \toprow = 1; \bottom = -1; }
%\node[above] at (-0.4,\toprow) {\tiny $I(X)$};
%\node[above] at (0.4,\toprow) {\tiny $I(Y)$};
\node[above] at (0,\toprow) {\tiny $I(X) \tnsrbar I(Y)$};
\node[below] at (0,\bottom) {\tiny $I(X'\htnsr Y)$};
\node[small_morphism] (be1) at (-0.6,0) {\tiny $\beta$};
\node[small_morphism] (be2) at (0.6,0) {\tiny $\beta$};
\node[small_morphism] (al1) at (-0.7,0.7) {\tiny $\al$};
\node[small_morphism] (al2) at (-0.1,0.7) {\tiny $\al$};
\node[small_morphism] (ph) at (-0.4,0.7) {\tiny $\ph$};
%% Y strand
\draw (0.4,\toprow) .. controls +(down:0.5cm) and +(up:0.5cm) .. (0.1,\bottom);
%% be1 stuff; i,j,k
\draw (be1) .. controls +(100:0.3cm) and +(down:0.3cm) .. (al1);
\node at (-0.8,0.4) {\tiny $l$};
\draw (be1) .. controls +(down:0.3cm) and +(up:0.3cm) .. (-0.3,\bottom);
\node at (-0.6,-0.5) {\tiny $j$};
\draw (be1) .. controls +(45:0.3cm) and +(down:0.3cm) .. (0.3,\toprow);
%% X strand goes over
\draw (ph) -- (-0.4,\toprow);
\draw[thin_overline={1.5}] (ph) .. controls +(down:0.5cm) and +(up:0.5cm) .. (-0.1,\bottom);
%% be2 stuff
\draw (be2) .. controls +(125:0.3cm) and +(down:0.3cm) .. (0.5,\toprow);
%\node at (0.7,0.4) {\tiny $i$};
\draw (be2) .. controls +(down:0.3cm) and +(up:0.3cm) .. (0.3,\bottom);
\draw[thin_overline={1.5}]
  (be2) .. controls +(70:0.3cm) and +(down:0.3cm) .. (al2);
\node at (0.6,0.6) {\tiny $k$};
%% als to top
\draw (al1) -- (-0.7,\toprow);
\node at (-0.8,0.9) {\tiny $i$};
\draw (al2) -- (-0.1,\toprow);
%% cross strand on X
\draw (al1) -- (ph);
\draw (ph) -- (al2);
\end{tikzpicture}
%%%%%%%%%%%%%%%%%%%%%%%%%%%%%%%%%
=
%%%%%%%%%%%%%%%%%%%%%%%%%%%%%%%%%
\sqrt{d_i} \sqrt{d_j} \sqrt{d_k}
\begin{tikzpicture}
\tikzmath{ \toprow = 1; \bottom = -1; }
%\node[above] at (-0.4,\toprow) {\tiny $I(X)$};
%\node[above] at (0.4,\toprow) {\tiny $I(Y)$};
\node[above] at (0,\toprow) {\tiny $I(X) \tnsrbar I(Y)$};
\node[below] at (0,\bottom) {\tiny $I(X'\htnsr Y)$};
\node[small_morphism] (ga1) at (-0.6,0) {\tiny $\ga$};
\node[small_morphism] (ga2) at (0.6,0) {\tiny $\ga$};
\node[small_morphism] (ph) at (-0.2,0) {\tiny $\ph$};
%% Y strand
\draw (0.4,\toprow) .. controls +(down:0.5cm) and +(up:0.5cm) .. (0.1,\bottom);
%% ga1 stuff; i,j,k
\draw (ga1) .. controls +(100:0.3cm) and +(down:0.3cm) .. (-0.6,\toprow);
\node at (-0.75,0.5) {\tiny $i$};
\draw (ga1) .. controls +(down:0.3cm) and +(up:0.3cm) .. (-0.3,\bottom);
\node at (-0.6,-0.5) {\tiny $j$};
\draw (ga1) .. controls +(45:0.3cm) and +(down:0.3cm) .. (0.3,\toprow);
%% X strand goes over
\draw[thin_overline={1.5}] (ph) .. controls +(100:0.3cm) and +(down:0.5cm) .. (-0.4,\toprow);
\draw (ph) .. controls +(-80:0.5cm) and +(up:0.5cm) .. (-0.1,\bottom);
%% ga2 stuff
\draw (ga2) .. controls +(125:0.3cm) and +(down:0.3cm) .. (0.5,\toprow);
%\node at (0.7,0.4) {\tiny $i$};
\draw (ga2) .. controls +(down:0.3cm) and +(up:0.3cm) .. (0.3,\bottom);
\draw[thin_overline={1.5}]
  (ga2) .. controls +(70:0.3cm) and +(down:0.3cm) .. (-0.2,\toprow);
\node at (0.6,0.6) {\tiny $k$};
%% cross strand on X
\draw (ga1) -- (ph);
\draw[thin_overline={1.5}] (ph) -- (ga2);
\node at (0.1,0.1) {\tiny $A$};
\end{tikzpicture}
%%%%%%%%%%%%%%%%%%%%%%%%%%%%%%%%%
=
%%%%%%%%%%%%%%%%%%%%%%%%%%%%%%%%%
\sqrt{d_i} \sqrt{d_j} \sqrt{d_k} d_l
\begin{tikzpicture}
\tikzmath{ \toprow = 1; \bottom = -1; }
%\node[above] at (-0.4,\toprow) {\tiny $I(X)$};
%\node[above] at (0.4,\toprow) {\tiny $I(Y)$};
\node[above] at (0,\toprow) {\tiny $I(X) \tnsrbar I(Y)$};
\node[below] at (0,\bottom) {\tiny $I(X'\htnsr Y)$};
\node[small_morphism] (be1) at (-0.6,0) {\tiny $\beta$};
\node[small_morphism] (be2) at (0.6,0) {\tiny $\beta$};
\node[small_morphism] (al1) at (-0.4,-0.7) {\tiny $\al$};
\node[small_morphism] (al2) at (0.4,-0.7) {\tiny $\al$};
\node[small_morphism] (ph) at (-0.1,-0.7) {\tiny $\ph$};
%% Y strand
\draw (0.4,\toprow) .. controls +(down:0.5cm) and +(up:0.5cm) .. (0.1,\bottom);
%% be1 stuff; i,j,k
\draw (be1) .. controls +(100:0.3cm) and +(down:0.3cm) .. (-0.6,\toprow);
\node at (-0.75,0.5) {\tiny $i$};
\draw (be1) .. controls +(down:0.3cm) and +(up:0.3cm) .. (al1);
\node at (-0.6,-0.45) {\tiny $l$};
\draw (be1) .. controls +(45:0.3cm) and +(down:0.3cm) .. (0.3,\toprow);
%% X strand goes over
\draw[thin_overline={1.5}] (ph) .. controls +(up:0.3cm) and +(down:0.5cm) .. (-0.4,\toprow);
\draw (ph) -- (-0.1,\bottom);
%% be2 stuff
\draw (be2) .. controls +(125:0.3cm) and +(down:0.3cm) .. (0.5,\toprow);
%\node at (0.7,0.4) {\tiny $i$};
\draw (be2) .. controls +(down:0.3cm) and +(up:0.3cm) .. (al2);
\draw[thin_overline={1.5}]
  (be2) .. controls +(70:0.3cm) and +(down:0.3cm) .. (-0.2,\toprow);
\node at (0.6,0.6) {\tiny $k$};
%% al1, al2 to bottom
\draw (al1) -- (-0.4,\bottom);
\node at (-0.5,-0.9) {\tiny $j$};
\draw (al2) -- (0.4,\bottom);
%% cross strand on X
\draw (al1) -- (ph);
\draw[thin_overline={1.5}] (ph) -- (al2);
\end{tikzpicture}
%%%%%%%%%%%%%%%%%%%%%%%%%%%%%%%%%
\end{equation*}
using \eqnref{e:al_natural} and \eqnref{e:combine}.
\end{proof}

\begin{corollary}
The inverse functor $\Kar(G)^\inv$ defined in
\prpref{p:G_inv}
is naturally a tensor functor.
\end{corollary}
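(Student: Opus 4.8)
The plan is to deduce this formally from \prpref{p:hA_ZA_tnsr}, using the standard fact that a quasi-inverse of a strong monoidal equivalence carries a canonical strong monoidal structure; with that structure it becomes ``naturally a tensor functor''. Write $F := \Kar(G) : (\Kar(\hA),\htnsr) \to (\ZA,\tnsrbar)$, which by \prpref{p:hA_ZA_tnsr} is a pivotal tensor equivalence with tensor structure $F_{M,N} : F(M) \tnsrbar F(N) \to F(M \htnsr N)$, and write $H := \Kar(G)^\inv$ for the quasi-inverse of \prpref{p:G_inv}. First I would record the natural isomorphisms $\eta : \id_{\Kar(\hA)} \simeq H F$ and $\eps : F H \simeq \id_{\ZA}$ witnessing that $H$ is a quasi-inverse; these are implicit in \prpref{p:hA_ZA} and \prpref{p:G_inv}, with $\eps$ identifying $FH(X,\ga)$ with $(X,\ga)$ by matching $\hP_\ga$ with the standard projection of $I(X)$ onto the summand $(X,\ga)$. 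After a routine modification of one of the two we may assume $(F,H,\eta,\eps)$ is an adjoint equivalence; this is needed only so that the coherence checks below become automatic.

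Next, following the usual recipe, I would equip $H$ with the tensor structure given by the composite of isomorphisms
\[
H M \htnsr H N
\xrightarrow{\ \eta\ }
H F(HM \htnsr HN)
\xrightarrow{\ H(F^\inv_{HM,HN})\ }
H\big(FHM \tnsrbar FHN\big)
\xrightarrow{\ H(\eps \tnsrbar \eps)\ }
H(M \tnsrbar N)
\]
for $M,N \in \ZA$, together with the unit comparison $H(\onebar) \simeq \one$ induced by $F(\one)\simeq\onebar$. Since every arrow is an isomorphism, $H$ is strong monoidal; the associativity and unit coherence axioms for $H$ follow formally from those for $F$ together with the triangle identities for $\eta,\eps$. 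Running the same formal argument with the pivotal structures --- all inherited from $\cA$ --- shows that $H$ is in fact a pivotal tensor functor, somewhat more than the corollary requires.

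Should an explicit description be wanted, the composite above unwinds, on an object of the form $(X,\ga) \tnsrbar (Y,\mu)$, to the inverse $J^\inv_{X,Y}$ of the tensor structure of $I$ from \prpref{p:I_tensor}, conjugated by the idempotents $\hP_\ga,\hP_\mu,\hP_{\ga\tnsrbar\mu}$ and the projections $Q_{\ga,\mu}$. The only point needing care is that this datum be a well-defined morphism in $\Kar(\hA)$ --- that it intertwine the relevant idempotents --- and that is immediate from \lemref{l:hfbrd_red} and the combining and sliding identities \eqnref{e:combine}, \eqnref{e:sliding}, exactly as in the proof of \prpref{p:hA_ZA_tnsr}. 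I anticipate no genuine obstacle here: the content is entirely in \prpref{p:hA_ZA_tnsr}, and the corollary is its formal shadow.
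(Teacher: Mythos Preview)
Your proposal is correct and follows essentially the same approach as the paper: both begin from the general fact that a quasi-inverse to a monoidal equivalence is automatically monoidal, with the content residing in \prpref{p:hA_ZA_tnsr}. The paper states this general fact in one line and then, for concreteness, writes the tensor structure more directly as the projection $Q_{\ga,\mu}$ (using the identities $\hP_\ga \htnsr \hP_\mu = \hP_{\ga\tnsrbar\mu}$ and $\hP_{\ga\tnsrbar\mu} \circ Q_{\ga,\mu} = \hP_{\ga\tnsrbar\mu}$ in $\hA$) together with an explicit unit isomorphism, rather than unwinding the abstract $\eta,\eps$ recipe you describe.
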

\begin{proof}
This follows from the general fact that
(weak) inverses to monoidal functors are automatically monoidal.
We provide some details for concreteness.

The tensor structure on $\Kar(G)^\inv$
is a natural isomorphism
\[
(X,\hP_\ga) \tnsr (Y,\hP_\mu)
= (XY,\hP_\ga \tnsr \hP_\mu)
\simeq (X\tnsrproj{\ga}{\mu} Y,\hP_{\ga\tnsrbar\mu})
\]
which is given by the natural projection
$Q_{\ga,\mu} : XY \to X\tnsrproj{\ga}{\mu} Y$.
%(note that the tensor product in the middle term
%$\hP_\ga \tnsr \hP_\mu$ is of $\hA$).
This follows from two simple observations:
\[
\hP_\ga \tnsr \hP_\mu = \hP_{\ga\tnsrbar\mu}
\in \End_\hA(XY)
\]
(using \eqnref{e:combine}),
which implies $(XY,\hP_\ga \tnsr \hP_\mu)
= (XY,\hP_{\ga\tnsrbar\mu})$,
and
\[
\hP_{\ga\tnsrbar\mu} =
\hP_\ga \circ Q_{\ga,\mu} =
\hP_\ga \circ Q_{\ga,\mu} \circ Q_{\ga,\mu} =
\hP_{\ga\tnsrbar\mu} \circ Q_{\ga,\mu}
\in \End_\hA(XY)
\]
(see \eqnref{e:hfbrd_sym})
which implies $Q_{\ga,\mu} : (XY,\hP_{\ga\tnsrbar\mu})
\to (X\tnsrproj{\ga}{,\mu} Y,\hP_{\ga\tnsrbar\mu})$
is an isomorphism.
It is easy to check that this satisfies
the hexagon axiom for tensor structure.

The isomorphism $\Kar(G)^\inv (\onebar)
\simeq \one$ is given by
\begin{equation}
\label{e:unit_isom_G}
\sum_{i\in \Irr(\cA)}
\frac{\sqrt{d_i}}{\sqrt{\cD}}
\begin{tikzpicture}
\draw (-0.2,0.5) .. controls +(down:0.5cm) and +(-30:0.5cm) .. (-0.6,0.3)
  node[pos=0,above] {\small $X_i$};
\draw (0.2,0.5) .. controls +(down:0.5cm) and +(150:0.5cm) .. (0.6,-0.3)
  node[pos=0,above] {\small $X_i^*$};
 \node at (0,-0.5) {\small $\one$};
\end{tikzpicture}
\;\;.
\end{equation}
\end{proof}

Next we consider a $\ZZ/2$-action,
related to \prpref{p:aut_ZA}.
Let $\wdtld{\cdot}: \ihom{\cA}{A}(X,X')
\to \ihom{\cA}{A^*}(X,X')$
be the map
\[
\psi \mapsto \wdtld{\psi} :=
\begin{tikzpicture}
\node[small_morphism] (psi) at (0,0) {\tiny $\psi$};
\draw (psi) -- (0,-0.5) node[below] {$X'$};
\draw (psi) .. controls +(-30:0.25cm) and +(-10:0.4cm) ..
  (0,0.25) -- +(170:0.4cm)
  node[left] {$A^*$};
\draw[overline={1.5}]
  (psi) .. controls +(150:0.25cm) and +(170:0.4cm) ..
  (0,-0.25) -- +(-10:0.4cm)
  node[right] {$A^*$};
 \draw[overline={1.5}] (psi) -- (0,0.5)
 	node[above] {$X$};
\end{tikzpicture}
\;\;.
\]
This is very similar to the definition of $\wdtld{\ga}$
in proof of \prpref{p:aut_ZA}.
Indeed, if $\ga$ is a half-braiding on $A$,
then $\ga_A \in \ihom{\cA}{A}(X,X)$,
and $\wdtld{\ga_A} = (\wdtld{\ga})_{A^*}$.

This defines an endomorphism
$\wdtld{\cdot} : \Hom_\hA(X,X') \simeq
\Hom_\hA(X,X')$,
and in fact is part of an
automorphism of $\hA$:
(compare \prpref{p:aut_ZA})

\begin{proposition}
\label{p:aut_hA}
Let $\hU : \hA \to \hA$ be the functor
that is identity on objects,
and acts by $\wdtld{\cdot}$ on morphisms.
Then there is a tensor equivalence
\[
(\hU,c) : (\hA,\htnsr) \simeq_\tnsr (\hA, \htnsr^\op)
\;\;.
\]

Furthermore,
$\thetabar : \hU^2 \simeq \id$,
so $\hU$ generates a $\ZZ/2$-action on $\hA$.
(but not tensor action).

The tensor equivalence and $\ZZ/2$-action
naturally extends to the Karoubi envelope.
\end{proposition}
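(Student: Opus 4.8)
The plan is to run the argument in parallel with the proof of \prpref{p:aut_ZA}, now carried out directly in $\hA$; in fact, since $\wdtld{\ga_A}=(\wdtld\ga)_{A^*}$, the functor $\hU$ will be intertwined with $U$ under the equivalence $\Kar(G)$ of \prpref{p:hA_ZA}, so one could also deduce the statement from \prpref{p:aut_ZA} and \prpref{p:hA_ZA_tnsr} — but I will indicate the direct route. First I would check that $\hU$ is a well-defined endofunctor. The operation $\wdtld\cdot$ is naturally defined on all of $\Hom_\cA(B\tnsr X,X'\tnsr A)$ — wrapping the $B$- and $A$-legs as in \eqnref{e:hA_dual} to land in $\Hom_\cA(B^*\tnsr X,X'\tnsr A^*)$ — and by naturality of the duality one has $\wdtld{\psi\circ(f\tnsr\id_X)}=\wdtld\psi\circ(f^*\tnsr\id_X)$ and the analogous identity on the other side for $f\in\Hom_\cA(A,B)$; hence $\wdtld\cdot$ respects the coend relation of \defref{d:hA} and descends to a map $\Hom_\hA(X,X')\to\Hom_\hA(X,X')$. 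Since $\one^*=\one$ and the wrap on a $\one$-leg is trivial, $\wdtld{\id_X}=\id_X$, and compatibility with composition in $\hA$ is a routine rigidity computation, so $\hU$ is an identity-on-objects endofunctor.

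Next I would put the tensor structure on $\hU$. On objects $\htnsr$ and $\htnsr^\op$ send $(X,Y)$ to $X\tnsr Y$ and $Y\tnsr X$, so the braiding $c_{Y,X}\colon Y\tnsr X\to X\tnsr Y$ of $\cA$, viewed in $\hA$ via $\htr$, is the natural candidate for the structure isomorphism $\hU(X)\htnsr^\op\hU(Y)\to\hU(X\htnsr Y)$. The substantive check is naturality: for $\ph\in\ihom{\cA}{P}(X,X_1)$ and $\psi\in\ihom{\cA}{Q}(Y,Y_1)$ one needs
\[
c_{Y_1,X_1}\circ(\wdtld\psi\htnsr\wdtld\ph)=\wdtld{\ph\htnsr\psi}\circ c_{Y,X}
\]
in $\Hom_\hA(Y\tnsr X,\,X_1\tnsr Y_1)$. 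I would prove this exactly as the three-term diagram chain at the end of the proof of \prpref{p:aut_ZA}: slide the dual-wraps of $\wdtld\ph$ and $\wdtld\psi$ past the braidings using naturality of $c$ and the zig-zag identities, so that the residual braidings on the $X$- and $Y$-strands coalesce into $c_{Y,X}$. The hexagon axiom for $(\hU,c)$ then reduces to the hexagon axioms for $c$ in $\cA$, since $\htnsr$ and $\htnsr^\op$ use the associator and unit of $\cA$, and compatibility with the pivotal and rigid structures is inherited from $\cA$.

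Finally, applying $\wdtld\cdot$ twice and using $\wdtld{\ga_A}=(\wdtld\ga)_{A^*}$ with the identity $\wdtld{\wdtld\ga}=(\theta_X\tnsr\id)\circ\ga\circ(\id\tnsr\thetabar_X)$ from the proof of \prpref{p:aut_ZA}, one obtains, after the pivotal identification $A^{**}\simeq A$, that $\wdtld{\wdtld\psi}=(\theta_{X'}\tnsr\id_A)\circ\psi\circ(\id_A\tnsr\thetabar_X)$ for $\psi\in\ihom{\cA}{A}(X,X')$; hence $\thetabar_X$ (as an element of $\ihom{\cA}{\one}(X,X)$, i.e. pushed into $\hA$ via $\htr$) satisfies $\thetabar_{X'}\circ\wdtld{\wdtld\psi}=\psi\circ\thetabar_X$ and is invertible, giving $\thetabar\colon\hU^2\simeq\id$, which is monoidal by the ribbon identity $\thetabar_{X\tnsr Y}\circ(c_{Y,X}\circ c_{X,Y})=\thetabar_X\tnsr\thetabar_Y$. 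So $\hU$ is an equivalence generating a $\ZZ/2$-action (not a tensor action, as $\hU$ interchanges $\htnsr$ and $\htnsr^\op$); since everything above is objectwise and respects idempotents it extends to $\Kar(\hA)$ via $(X,p)\mapsto(X,\wdtld p)$, consistently with $U$ on $\ZA$ under $\Kar(G)$. The hard part is the naturality identity for the tensor structure, but it is a graphical calculation of exactly the shape and difficulty of the one already done for $(\ZA,\tnsr)$ in \prpref{p:aut_ZA}, so no new difficulty arises.
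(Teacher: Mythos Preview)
Your proposal is correct and follows essentially the same approach as the paper: the paper's proof is the single line ``Similar to \prpref{p:aut_ZA}'' together with the observations that $\thetabar:\hU^2\simeq\id$ is monoidal and that $\Kar(\hU)((X,\hP_\ga))=(X,\hP_{\wdtld\ga})$, and you have simply unpacked that similarity argument in detail. Your additional check that $\wdtld{\cdot}$ respects the coend relation (hence $\hU$ is well-defined on $\hA$) is a point the paper leaves implicit, so your write-up is in fact more complete.
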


\begin{proof}
Similar to \prpref{p:aut_ZA}.
We note again $\thetabar : \hU^2 \simeq \id$
is a natural isomorphism of tensor functors.
It is also useful to observe that
$\Kar(\hU)((X,\hP_\ga)) = (X,\hP_{\wdtld{\ga}})$.
\end{proof}

\begin{proposition}
$\Kar(G) : \Kar(\hA) \simeq \ZA$
respects the $\ZZ/2$-actions.
More precisely, there is a natural isomorphism
$u: U \circ \Kar(G) \simeq \Kar(G) \circ \hU$
that makes $\Kar(G)$ a $\ZZ/2$-equivariant functor.
\end{proposition}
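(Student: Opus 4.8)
The plan is to produce the coherence datum $u$ by transporting structure along the explicit quasi-inverse $K := \Kar(G)^\inv$ of \prpref{p:G_inv}, rather than by comparing $U(I(X))$ with $I(X)$ by hand; with this choice the argument becomes essentially formal. The first step is to check that $K$ strictly intertwines the two $\ZZ/2$-actions, i.e. that $\Kar(\hU)\circ K = K\circ U$ as functors $\ZA \to \Kar(\hA)$. On objects this follows from $K(X,\ga) = (X,\hP_\ga)$, from $U(X,\ga) = (X,\wdtld{\ga})$, and from the observation recorded in the proof of \prpref{p:aut_hA} that $\Kar(\hU)(X,\hP_\ga) = (X,\hP_{\wdtld{\ga}})$; here $\wdtld{\hP_\ga} = \hP_{\wdtld{\ga}}$ because $\wdtld{\ga_{X_i}} = (\wdtld{\ga})_{X_{i^*}}$ and $d_{i^*} = d_i$. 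On a morphism $f\in\Hom_\ZA((X,\ga),(Y,\mu))$ one has $U(f) = f$ — since $\wdtld{(\cdot)}$ is natural in $(X,\ga)$, the same underlying map still intertwines $\wdtld{\ga}$ and $\wdtld{\mu}$ — while $K(f) = \hP_\mu\circ\htr(f)$ in $\hA$; as $\hU$ is a functor and $\wdtld{\htr(f)} = \htr(f)$ (bending the trivial strand changes nothing), $\Kar(\hU)(K(f)) = \wdtld{\hP_\mu}\circ\wdtld{\htr(f)} = \hP_{\wdtld{\mu}}\circ\htr(f) = K(U(f))$.

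Given this identity, I would take $u\colon U\circ\Kar(G)\simeq\Kar(G)\circ\Kar(\hU)$ to be the canonical natural isomorphism furnished by transport of structure along the equivalence $\Kar(G)\dashv K$ (assembled from the unit $\id\simeq K\circ\Kar(G)$, the counit $\Kar(G)\circ K\simeq\id$, and the equality above). It then remains to verify the one coherence axiom of a $\ZZ/2$-equivariant functor: that $u$ intertwines the chosen trivializations $\thetabar\colon U^2\simeq\id_{\ZA}$ and $\thetabar\colon\Kar(\hU)^2\simeq\id_{\Kar(\hA)}$. Because $u$ is obtained by transport, this reduces to the claim that the component $\thetabar_X\colon U^2(X,\ga)\simeq(X,\ga)$ of the first trivialization is sent by $K$ to the component $\thetabar_X\colon\Kar(\hU)^2(X,\hP_\ga)\simeq(X,\hP_\ga)$ of the second, under the identification $\Kar(\hU)^2\circ K = K\circ U^2$ obtained by iterating the first step; since both trivializations are literally the twist $\thetabar_X$ of $\cA$ (see the proofs of \prpref{p:aut_ZA} and \prpref{p:aut_hA}) and $K$ sends $\thetabar_X$ to $\htr(\thetabar_X)$, this holds with no further computation. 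Thus $(\Kar(G),u)$ is $\ZZ/2$-equivariant; one checks similarly that $u$ is moreover monoidal for the tensor structure $J$ of \prpref{p:hA_ZA_tnsr}, so the equivariance in fact holds among (anti-)tensor functors.

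I expect the only real obstacle to be routine bookkeeping: confirming that $U$ and $\hU$ act by the identity on underlying $\cA$-morphisms, and that $\wdtld{(\cdot)}$ commutes with pre- and post-composition by $\htr(-)$ and by the projections $\hP_{(\cdot)}$. These are all instances of naturality of the bending construction, and have already appeared implicitly in \prpref{p:aut_ZA}, \prpref{p:aut_hA}, and \prpref{p:G_inv}. If one prefers to avoid $K$, the same statement can be obtained directly: define $u_X\colon U(I(X)) = (\bigoplus_i X_i\tnsr X\tnsr X_i^*,\wdtld{\Gamma})\to I(X)$ on the $i$-th summand by $\thetabar_{X_i}$ followed by the braiding needed to turn $\wdtld{\Gamma}$ into $\Gamma$ — exactly as in the construction of $U(\onebar)\simeq\onebar$ in the proof of \prpref{p:aut_ZA} — verify naturality with respect to the morphisms $G(\psi)$ using \eqnref{e:al_natural} and \eqnref{e:combine}, extend the datum to $\Kar(\hA)$, and note that the $\theta$-coherence again comes down to the identity $\thetabar_{X\tnsr Y}\circ c_{Y,X}\circ c_{X,Y} = \thetabar_X\tnsr\thetabar_Y$ used earlier.
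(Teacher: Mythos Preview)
Your approach is correct and genuinely different from the paper's. The paper constructs the equivariance datum $u$ explicitly: for $(X,p)\in\Kar(\hA)$ it writes down a concrete morphism $u_{(X,p)}:\im(G(p))\to\im(G(\wdtld p))$ built from the braiding on the outer strands together with a $\thetabar$ on one of them, checks by hand that this intertwines the half-braidings $\wdtld{\Gamma}$ and $\Gamma$, and then verifies the coherence square $U^2 G\simeq G\hU^2\simeq G$ by reducing it to the three-strand identity relating twist and braiding. Your argument instead observes that the quasi-inverse $K=\Kar(G)^{-1}$ is \emph{strictly} $\ZZ/2$-equivariant (the identities $\Kar(\hU)\circ K=K\circ U$ and $K(\thetabar)=\thetabar_K$ hold on the nose, using $\wdtld{\hP_\ga}=\hP_{\wdtld\ga}$ and $\wdtld{\htr(f)}=\htr(f)$), and then transports this along the equivalence.

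What each buys: your route is cleaner and entirely formal once the two equalities for $K$ are checked, and it makes transparent why no real computation is needed (the $\ZZ/2$-action was already designed so that $\hP_{(\cdot)}$ and $\htr$ commute with it). The paper's route, on the other hand, produces the explicit formula for $u$, which is what one would actually use in later diagrammatic calculations; indeed your closing paragraph, describing the direct construction via the isomorphism $U(\onebar)\simeq\onebar$ from \prpref{p:aut_ZA}, is essentially the paper's proof. One minor point: when you say ``$K$ sends $\thetabar_X$ to $\htr(\thetabar_X)$'', strictly speaking $K(\thetabar_X)=\hP_\ga\circ\htr(\thetabar_X)$ as a morphism in the Karoubi envelope, but this coincides with the component of the extended natural isomorphism $\thetabar$ at $(X,\hP_\ga)$ by naturality of $\thetabar:\hU^2\simeq\id$, so the coherence check goes through as you claim.
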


\begin{proof}
The natural isomorphism is given as follows:
for $(X,p) \in \Kar(\hA)$,
$u_{(X,p)} : \im(G(p)) \to \im(G(\wdtld{p}))$
is given by
\[
u_{(X,p)} =
\begin{tikzpicture}
\tikzmath{
  \a = 0.4;
  \b = -0.4;
  \lf = -0.3;
  \rt = 0.3;
}
\draw (\rt,\a) .. controls +(down:0.2cm) and +(up:0.4cm) .. (\lf,\b)
  node[pos=0,above] {\small $i^*$}
  node[below] {\small $i^*$};
\draw[overline={2}] (0,\a) -- (0,\b)
  node[pos=0,above] {\small $X$}
  node[below] {\small $X$};
\draw[overline={1}] (\lf,\a) .. controls +(down:0.2cm) and +(up:0.4cm) .. (\rt,\b)
  node[pos=0,above] {\small $i$}
  node[below] {\small $i$};
\node[small_morphism] at (0.25,-0.2) {\tiny $\thetabar$};
\end{tikzpicture}
\]
where the $i^*$ strand is at the bottom,
with the projections $G(p), G(\wdtld{p})$
implicit.
It is easy to check
$u_{(X,p)} \circ G(p) = G(\wdtld{p}) \circ u_{(X,p)}$
- intuitively, $u$ ``drags'' the middle strand around,
introducing a half-twist to turn $p$ into $\wdtld{p}$,
so this is an isomorphism.
It is also easy to check that it intertwines
the half-braidings $\wdtld{\Gamma}$ and $\Gamma$
(essentially the same computation for
$U(\onebar) \simeq \onebar$ in the proof of
\prpref{p:aut_ZA}).

Next we need to check the following diagram of
natural isomorphisms commutes:
\[
\begin{tikzcd}
U^2 G \ar[r,"Uu"] \ar[d,"\thetabar G"]
& U G \hU \ar[r,"u\hU"]
& G \hU^2 \ar[d,"G(\thetabar)"]
\\
G \ar[rr,equal]
& & G
\end{tikzcd}
\;\;.
\]
The left vertical arrow is a full inverse twist
on three strands, and commutativity
follows from the three strand version of the identity
relating braiding and twist.
\end{proof}

\section{Crane-Yetter theory on the Annulus}
\label{s:cy}

As mentioned in the introduction,
we were motivated by the study of
an extended Crane-Yetter theory $\ZCY$,
in particular its value on surfaces.
There the annulus played an important role,
so this section is devoted to relating
the previous sections to $\ZCY(\Ann)$;
more precisely, we describe a tensor product
on $\ZCY(\Ann)$,
and prove the main result of the paper,
\thmref{t:main},
which states that
$\ZCY(\Ann) \simeq \ZA$ as pivotal multifusion categories.

We review the definition of and some properties
about $\ZCY(\Sigma)$ from \ocite{KT},
to which we refer the reader for more details.
For most of this section, we will take 
$\Sigma = \Ann = S^1 \times (0,1)$,
the annulus.

A \emph{boundary value} on a surface $\Sigma$
is a configuration $B$ of finitely many framed points,
where each point $b\in B$ is labeled with an object
$V_b \in \cA$
(here framed point means a trivialization of its normal
bundle, or simply a tangent vector).
We denote such a boundary value by $(B,\{V_b\})$.

An \emph{$\cA$-colored ribbon graph} $\Gamma$
in an oriented 3-manifold $M$
is a ribbon graph in $M$ where each oriented edge $\ee$
is assigned an object $V_\ee \in \cA$
(with opposite orientations of the same edge
assigned mutually dual objects),
and each vertex is assigned some morphism
in $\Hom(\one, V_{\ee_1} \tnsr \cdots \tnsr V_{\ee_k})$,
$\ee_i$'s taken with outgoing orientation,
and edges ordered arbitrarily.
(Some care should be taken when identifying
$\Hom$'s for different choices of orderings,
since $\cA$ may not be symmetric.
See \ocite{KT}*{Definition 5.1}
for a more careful approach using the
Reshetikhin-Turaev invariant \ocite{RT}.)
Such graphs are allowed to meet the boundary transversally,
in which case it leaves an ``imprint'' on the boundary,
$\del \Gamma = \Gamma \cap \del M$.
Note that the coloring and framing of $\Gamma$
induce colorings and framings on $\del \Gamma$,
so we call $\del \Gamma$
the \emph{boundary value of $\Gamma$}.

%which we call its boundary value, denoted $\del \Gamma$.

Given a boundary value $(B,\{V_b\})$ on $\del M$,
one can consider the vector space generated by
$\cA$-colored ribbon graphs $\Gamma$ in $M$ with
boundary value $\del \Gamma = (B, \{V_b\})$,
modulo isotopy and certain local relations.
These local relations essentially arise from
taking a small ball $D$ in $M$,
``evaluating" $\Gamma \cap D$ using
the Reshetikhin-Turaev invariant \ocite{RT},
and replacing $\Gamma \cap D$ with another graph
with the same evaluation.
The resulting quotient vector space is called
the \emph{skein module of $M$ with boundary value $(B,\{V_b\})$},
denoted $\Skein(M;(B,\{V_b\}))$.
We will mostly consider $M = \Sigma \times [0,1]$,
where $\Sigma \times \{0\}$ is thought of as the
incoming boundary,
and $\Sigma \times \{1\}$ the outgoing boundary.

\begin{definition}
\label{d:CYA}
$\hZSig{}$ is the category with the following objects
and morphisms:

Objects: boundary values on $\Sigma$,

Morphisms: for boundary values $\VV = (B,\{V_b\}), \VV'$,
\[
\Hom_{\hZSig{}}(\VV,\VV')
:= \Skein(\Sigma \times [0,1]; \overline{\VV},\VV')
\]
is the skein module of $\Sigma \times [0,1]$
with boundary value $\overline{\VV} = (B,\{V_b^*\})$
on the
incoming boundary $\Sigma \times \{0\}$,
and boundary value $\VV'$ on the
outgoing boundary $\Sigma \times \{1\}$.

$\ZSig{}$ is defined to be the
Karoubi envelope of $\hZSig{}$.
\defend
\end{definition}

$\Ann$, as the product of something with the interval,
has a ``stacking" structure,
in that we can include two annuli $S^1 \times (0,1/2)$
and $S^1 \times (1/2,1)$ in $\Ann$.
In particular, two objects $\VV_1, \VV_2$ can be included
side by side in a bigger annulus, and similarly morphisms
$f:\VV_1 \to \VV_1'$, $g:\VV_2 \to \VV_2'$
can be placed side by side.
The unit object with respect to this tensor product
is the empty configuration $\onest = (\emptyset, \{\})$.
This gives us a stacking tensor product structure on $\hCYA$
that is in fact rigid and pivotal:

\begin{proposition}[\ocite{KT}*{Proposition 6.7}]
The stacking tensor product described above,
denoted by $\tnsrst$,
makes $\hCYA$ a pivotal multifusion category:
\begin{itemize}
\item For an object $\VV = (B, \{V_b\})$,
its left dual is the object $\VV^\vee
:= (\theta(B), \{V_b^\vee\})$,
where $\theta$ is the operation on $\Ann$ that flips $(0,1)$,
and has the following evaluation and coevaluation maps:
\[
\begin{tikzpicture}
\node (a1) at (0,0) {$\VV^\vee \tnsrst \VV$};
\node (a2) at (0,-1) {$\onest$};
\node (a3) at (0, -2) {$\VV \tnsrst \VV^\vee$};
\draw[->] (a1) -- (a2) node[left,pos=0.5] {$\ev_\VV$};
\draw[->] (a2) -- (a3) node[left,pos=0.5] {$\coev_\VV$};
\end{tikzpicture}
\begin{tikzpicture}
%% bottom ellipses
%\draw (0,-1) ellipse (0.5cm and 0.16cm);
%\draw (0,-1) ellipse (1.5cm and 0.5cm);
%% inner ellipse
\draw (0,0) ellipse (0.5cm and 0.16cm);
%% nodes
\node[dotnode] (Vb) at (-0.75, 0) {};
\node[dotnode] (Vbp) at (-1.25, 0) {};
\node[dotnode] (Vb1) at (0.75, -0.1) {};
\node[dotnode] (Vb1p) at (1, -0.2) {};
%% the evaluation graph
\draw (Vb) .. controls +(down:0.8cm) and +(down:0.8cm) .. (Vbp);
\draw (Vb1) .. controls +(down:0.5cm) and +(down:0.5cm) .. (Vb1p);
%% inner vertical lines
\draw (-0.5, 0) -- (-0.5, -1);
\draw (0.5, 0) -- (0.5, -1);
%% gray and outside ellipse
\draw[overline={1}, gray] (0,0) ellipse (1cm and 0.33cm);
\draw[overline={1}] (0,0) ellipse (1.5cm and 0.5cm);
%% outside vertical lines
\draw (-1.5, 0) -- (-1.5, -1);
\draw (1.5, 0) -- (1.5, -1);
%% node labels
\node at (-0.7, 0.2) {\tiny $V_b$};
\node at (-1.22,0.2) {\tiny $V_b^*$};
\node at (0.75, 0.1) {\tiny $V_{b'}$};
\node at (1.22, -0.05) {\tiny $V_{b'}^*$};
%%%%%
%% bottom half
%%%%%
\draw (0,-2) ellipse (0.5cm and 0.16cm);
%% nodes
\node[dotnode] (Vbp) at (-0.75, -2) {};
\node[dotnode] (Vb) at (-1.25, -2) {};
\node[dotnode] (Vb1p) at (0.75, -2.1) {};
\node[dotnode] (Vb1) at (1, -2.2) {};
%% gray and outside ellipse
\draw[gray] (0,-2) ellipse (1cm and 0.33cm);
\draw (0,-2) ellipse (1.5cm and 0.5cm);
%% the coevaluation graph
\draw[overline={1}] (Vb) .. controls +(up:0.8cm) and +(up:0.8cm) .. (Vbp);
\draw[overline={1}] (Vb1) .. controls +(up:0.5cm) and +(up:0.5cm) .. (Vb1p);
%% vertical lines
%% vertical lines
\draw (-0.5, -1) -- (-0.5, -2);
\draw (0.5, -1) -- (0.5, -2);
\draw (-1.5, -1) -- (-1.5, -2);
\draw (1.5, -1) -- (1.5, -2);
%% node labels
\node at (-0.7, -2.2) {\tiny $V_b^*$};
\node at (-1.22,-2.2) {\tiny $V_b$};
\node at (0.55, -2.3) {\tiny $V_{b'}^*$};
\node at (1.12, -2.4) {\tiny $V_{b'}$};
\end{tikzpicture}
\;\;\;
%%this is just a full stop but need to push it down
\begin{tikzpicture}
\node at (0,-2.3) {.};
\end{tikzpicture}
\]
(The gray lines indicate $S^1 \times \{1/2\}$
which separates $\VV^\vee$ from $\VV$
in the tensor product, and play no role in defining
these morphisms.)
We think of the outside half of the annulus as the left side
in the tensor product.
The right dual is $\rvee{\VV}:= (\theta(B), \{\rdual{V_b}\})$,
with essentially the same (co)evaluation maps.

\item The pivotal map
$\delta_\VV : \VV \to \VV^{\vee\vee}$
is simply the graph with vertical strands running down,
and one node on each strand labeled with $\delta_{V_b}$.
\end{itemize}

$\CYA$, as the Karoubi envelope of $\hCYA$,
naturally inherits these structures.
\end{proposition}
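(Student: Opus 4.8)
Since the statement is \ocite{KT}*{Proposition 6.7}, I only sketch the argument; it breaks into showing (a) $\tnsrst$ is a monoidal structure, (b) the displayed objects and maps exhibit duals, and (c) $\hCYA$ is finite semisimple, so that with (b) it is multifusion. For (a): inserting two boundary values into concentric sub-annuli of $\Ann$ is well defined on objects, and placing two skeins side by side in the two corresponding half-thickened annuli is well defined on morphisms, since the local relations cutting out $\Skein(-)$ are each supported in a small ball contained in one half or the other; hence $\tnsrst$ descends to the quotients. The interchange law holds because composition is stacking in the $[0,1]$-direction while $\tnsrst$ is stacking in the radial direction, and the two operations commute up to ambient isotopy. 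The associator and the unit constraints (with $\onest$ realised by the empty sub-annulus) are the evident reparametrisations of $S^1 \times (0,1)$, and the pentagon and triangle axioms hold because these structure isomorphisms are all realised by ambient isotopies varying in a contractible family of reparametrisations. Finally, everything passes to $\CYA = \Kar(\hCYA)$ by standard idempotent-completion arguments.

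For (b): one checks the snake identities for $\VV^\vee$, namely $(\ev_\VV \tnsrst \id_\VV) \circ (\id_\VV \tnsrst \coev_\VV) = \id_\VV$ and its mirror giving $\id_{\VV^\vee}$ (inserting unit constraints where needed), and likewise for $\rvee\VV$. Stacking the skeins that define $\ev_\VV$ and $\coev_\VV$ yields, for each marked point $b \in B$, a single $V_b$-colored arc making a U-turn; because these arcs sit in nested concentric regions of the thickened annulus they are pairwise unlinked, so each U-turn straightens by an isotopy supported near it, leaving the identity skein. I expect this step to be the main obstacle, though the difficulty is bookkeeping rather than conceptual: one must track the flip $\theta$ on $\Ann$, the matching of positions and framings of marked points on the two boundary circles that it induces, and the resulting (inverse) twists on strands, which are absorbed using the pivotal structure of $\cA$ exactly as the two descriptions of $\ga^\vee$ are reconciled in the proof of \prpref{p:reduced_pivotal}.

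For (c): $\delta_\VV$ is invertible since each $\delta_{V_b}$ is, is natural because pivotal nodes slide freely along strands through crossings and vertices by naturality of the pivotal structure of $\cA$, and is monoidal, $\delta_{\VV \tnsrst \WW} = \delta_\VV \tnsrst \delta_\WW$, because $\tnsrst$ is disjoint juxtaposition; compatibility with the (co)evaluations is then a local check inside $\cA$. It remains to note that $\hCYA$ has finite-dimensional morphism spaces and that $\CYA = \Kar(\hCYA)$ is finite semisimple, a standard feature of skein categories of a premodular category, obtained by choosing a handle decomposition of $\Ann \times [0,1]$ and reducing skein evaluation to the Reshetikhin--Turaev functor; together with (b) this gives that $\CYA$ is pivotal multifusion. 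We remark that $\onest$ is in general not simple: $\End_{\hCYA}(\onest) = \Skein(\Ann \times [0,1]; \emptyset)$ is the skein algebra of a thickened annulus, equivalently of the solid torus $S^1 \times D^2$, of dimension $|\Irr(\cA)|$ --- which is what forces the ``multi'' in multifusion.
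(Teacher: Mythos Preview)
The paper does not give its own proof of this proposition: it is stated with attribution to \ocite{KT}*{Proposition 6.7} and no argument is supplied here. Your sketch is a correct outline of the standard argument and is consistent with how the result is used later in the paper; in particular your computation $\dim\End_{\hCYA}(\onest)=|\Irr(\cA)|$ matches the paper's identification of $\onest$ with $\onebar=I(\one)$ under $\CYA\simeq\ZA$.

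One minor over-complication: in part (b) you anticipate twists appearing when straightening the snakes and plan to absorb them via the pivotal structure, invoking the analogy with $\ga^\vee$ in \prpref{p:reduced_pivotal}. In fact no twists arise. The (co)evaluation skeins are ``radial'' fold-overs in $\Ann\times[0,1]$, and composing them as in the snake identity produces, for each $b\in B$, an arc that is isotopic rel endpoints to a vertical strand with no self-crossing and no framing twist; the flip $\theta$ is only used to match positions, and the framings line up directly. The reference to \prpref{p:reduced_pivotal} is a red herring: the equality of the two forms of $\ga^\vee$ there concerns half-braidings on objects of $\ZA$, not the rigidity of $\hCYA$, and the ``inside-out'' discussion immediately following the proposition in the paper is about computing \emph{duals of morphisms}, not about verifying the snake identities. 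So your (b) is right but simpler than you suggest.
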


It is not hard to see that the left dual of a morphism
is obtained by turning the solid annulus ``inside-out";
for example,

\begin{equation*}
%%% Ann %%%%%%%%%%%%%%%%%%%%%%%%%%%%%%%%%%%%%%%%%%%%%%%%
\begin{tikzpicture}
\tikzmath{
  \toprow = 0.8;
  \bottom = -0.8;
  \midrow = 0.2;
  \bigx = 1; %size of big ellipse
  \bigy = 0.2;
  \smlx = 0.5;
  \smly = 0.1;
  \medx = 0.75;
  \medy = 0.15;
}
%% bottom ellipse
\draw (0,\bottom) ellipse (\bigx cm and \bigy cm);
%\draw (0,0) ellipse (0.5cm and 0.25cm); don't want to get covered
%% back half of graph
\draw (\medx,\midrow) arc(0:180:\medx cm and \medy cm);
%% inner vertical lines
\draw[overline={1.5}] (-0.5,\bottom) -- (-0.5,\toprow);
\draw[overline={1.5}] (0.5,\bottom) -- (0.5,\toprow);
\draw (0,\bottom) ellipse (\smlx cm and \smly cm); %bottom inner ellipse
%% gray arrows indicate flipping operation
\draw[gray,overline={1},->] (-0.4,-0.55)
  to[out=60,in=-60] (-0.4,0.55);
\draw[gray,overline={1},->] (0.4,-0.55)
  to[out=120,in=-120] (0.4,0.55);
\draw[gray,->] (-1.1,0.55)
  to[out=-120,in=120] (-1.1,-0.55);
\draw[gray,->] (1.1,0.55)
  to[out=-60,in=60] (1.1,-0.55);
%% graph
\node[dotnode] (top) at (-\medx,\toprow) {}; %% make label later because of overline
\node[dotnode, label={[shift={(0.,-0.6)}] \small $Y'$}] (bottom) at (-\medx,\bottom) {};
\draw[overline={1.5}, midarrow_rev]
  (\medx,\midrow) arc(0:-180:\medx cm and \medy cm)
  node[pos=0.5, below] {\tiny $A$};
\draw[overline={1}] (top) -- (bottom);
\node[small_morphism] at (-\medx,\midrow) {\tiny $\ph$};
%% top ellipse
\draw[overline={1}] (0,\toprow) ellipse (\bigx cm and \bigy cm);
\draw (0,\toprow) ellipse (\smlx cm and \smly cm);
%% vertical lines
\draw (-1,\bottom) -- (-1,\toprow);
\draw (1,\bottom) -- (1,\toprow);
%% node label
\node[label={\small $Y^*$}] at (-\medx,\toprow) {};
\end{tikzpicture}
%%%%%%%%%%%%%%%%%%%%%%%%%%%%%%%%%%%%%%%%%%%%%%%%%%%%%%%%%%
\mapsto
%%%%%%%%%%%%%%%%%%%%%%%%%%%%%%%%%%%%%%%%%%%%%%%%%%%%%%%%%%
\begin{tikzpicture}
\tikzmath{
  \toprow = 0.8;
  \bottom = -0.8;
  \midrow = -0.2;
  \bigx = 1; %size of big ellipse
  \bigy = 0.2;
  \smlx = 0.5;
  \smly = 0.1;
  \medx = 0.75;
  \medy = 0.15;
}
%% bottom ellipse
\draw (0,\bottom) ellipse (\bigx cm and \bigy cm);
%\draw (0,0) ellipse (0.5cm and 0.25cm); don't want to get covered
%% back half of graph
\draw (\medx,\midrow) arc(0:180:\medx cm and \medy cm);
%% inner vertical lines
\draw[overline={1.5}] (-0.5,\bottom) -- (-0.5,\toprow);
\draw[overline={1.5}] (0.5,\bottom) -- (0.5,\toprow);
\draw (0,\bottom) ellipse (\smlx cm and \smly cm); %bottom inner ellipse
%% graph
\node[dotnode] (top) at (-\medx,\toprow) {}; %% make label later because of overline
\node[dotnode, label={[shift={(0,-0.6)}] \small $Y^*$}] (bottom) at (-\medx,\bottom) {};
\draw[overline={1.5}, midarrow_rev]
  (\medx,\midrow) arc(0:-180:\medx cm and \medy cm)
  node[pos=0.5, above] {\tiny $A$};
\draw[overline={1}] (top) -- (bottom);
\node[small_morphism] at (-\medx,\midrow)
  {\tiny $\rotatebox[origin=c]{180}{\(\ph\)}$};
%% top ellipse
\draw[overline={1}] (0,\toprow) ellipse (\bigx cm and \bigy cm);
\draw (0,\toprow) ellipse (\smlx cm and \smly cm);
%% vertical lines
\draw (-1,\bottom) -- (-1,\toprow);
\draw (1,\bottom) -- (1,\toprow);
%% node label
\node[label={\small $Y'$}] at (-\medx,\toprow) {};
\end{tikzpicture}
%%%%%%%%%%%%%%%%%%%%%%%%%%%%%%%%%%%%%%%%%%%%%%%%%%%%%%%%%%
=
%%%%%%%%%%%%%%%%%%%%%%%%%%%%%%%%%%%%%%%%%%%%%%%%%%%%%%%%%%
\begin{tikzpicture}
\tikzmath{
  \toprow = 0.8;
  \bottom = -0.8;
  \midrow = 0;
  \bigx = 1; %size of big ellipse
  \bigy = 0.2;
  \smlx = 0.5;
  \smly = 0.1;
  \medx = 0.75;
  \medy = 0.15;
}
%% bottom ellipse
\draw (0,\bottom) ellipse (\bigx cm and \bigy cm);
%\draw (0,0) ellipse (0.5cm and 0.25cm); don't want to get covered
%% back half of graph
\draw (\medx,\midrow) arc(0:180:\medx cm and \medy cm);
%% inner vertical lines
\draw[overline={1.5}] (-0.5,\bottom) -- (-0.5,\toprow);
\draw[overline={1.5}] (0.5,\bottom) -- (0.5,\toprow);
\draw (0,\bottom) ellipse (\smlx cm and \smly cm); %bottom inner ellipse
%% graph
\node[dotnode] (top) at (-\medx,\toprow) {}; %% make label later because of overline
\node[dotnode, label={[shift={(0.,-0.6)}] \small $Y^*$}] (bottom) at (-\medx,\bottom) {};
\draw[overline={1}]
  (top) .. controls +(down:0.5cm) and +(up:0.3cm) ..
  (-0.95,\midrow) .. controls +(down:0.3cm) and +(down:0.3cm) ..
  (-\medx,\midrow) .. controls +(up:0.3cm) and +(up:0.3cm) ..
  (-0.55,\midrow) .. controls +(down:0.3cm) and +(up:0.5cm) .. (bottom);
\draw[overline={1.5}, midarrow_rev]
  (\medx,\midrow) arc(0:-180:\medx cm and \medy cm)
  node[pos=0.5, below] {\tiny $A$};
\node[small_morphism] at (-\medx,\midrow) {\tiny $\ph$};
%% top ellipse
\draw[overline={1}] (0,\toprow) ellipse (\bigx cm and \bigy cm);
\draw (0,\toprow) ellipse (\smlx cm and \smly cm);
%% vertical lines
\draw (-1,\bottom) -- (-1,\toprow);
\draw (1,\bottom) -- (1,\toprow);
%% node label
\node[label={\small $Y'$}] at (-\medx,\toprow) {};
\end{tikzpicture}
\;\;.
\end{equation*}

The gray arrows indicate the ``inside-out" operation.
\footnote{Imagine pulling your hand out of the sleeve of
a tight sweater, so that
the sleeve will be pulled inside out.
The second diagram is
inside-out the same way the sleeve is.}
Note the upside-down `$\ph$' in the second diagram;
the last diagram can be turned into the second
by pulling on the upward and downward strands,
forcing the $\ph$ node to turn upside-down.

The last diagram is reminiscent of duals in $\hA$
(see \eqnref{e:hA_dual}).
In particular, when $Y' = Y$,
and $\ph$ is a half-braiding on $Y$,
the extra bending in the last diagram can be incorporated into
the node to become $\ph^\vee$ (see
\prpref{p:reduced_pivotal}).

In \ocite{KT}, in Example 8.2, we provided an explicit equivalence
$H = H_p:\hA \simeq \hCYA$, where $p\in \Ann$, given as follows:
\footnote{
$H_p$ is dependent on the choice of a point $p\in \Ann$,
but all $H_p$ are naturally isomorphic (by non-unique
natural isomorphism).
One can consider the full subcategory with objects
of the form $(\{p\},\{X\})$. This is strictly speaking not
a tensor subcategory, since the tensor product of such objects
would have two marked points.
However, one can put a different tensor product,
$(\{p\},\{X\}) \tnsrst' (\{p\},\{Y\}) = (\{p\},\{XY\})$,
and the inclusion can be made a pivotal tensor equivalence.
}

\begin{equation}
\label{e:def_H}
%%% \hA stuff %%%%%%%%%%%%%%%%%%%%%%%%%%%%%%%%%%%%%%%%%%%
\begin{tikzpicture}
\node at (0,2) {$\hA$};
\node at (0,1.2) {$Y$};
\node at (0,-1.2) {$Y'$};
\node[small_morphism] (ph) at (0,0) {$\vphi$};
\draw (ph) -- (0,-1);
\draw (ph) -- (0,1);
\draw (ph) -- (-0.5,0.5) node[left] {$A$};
\draw (ph) -- (0.5,-0.5) node[right] {$A$};
\end{tikzpicture}
%%%%%%%%%%%%%%%%%%%%%%%%%%%%%%%
\begin{tikzpicture}
\node at (0,0) {$\mapsto$};
\node at (0,-1.2) {$\mapsto$};
\node at (0,1.2) {$\mapsto$};
\node at (0,2) {$\overset{H}{\simeq}$};
\end{tikzpicture}
%%%%%%%%%%%%%%%%%%%%%%%%%%%%%%%%%%%%%%%%%%%%%%%%%%%%%%%%%%
\begin{tikzpicture}
\node at (0,2) {$\hCYA$};
\tikzmath{
  \toprow = 0.7;
  \bottom = -0.7;
  \midrow = 0;
  \bigx = 1; %size of big ellipse
  \bigy = 0.2;
  \smlx = 0.5;
  \smly = 0.1;
  \medx = 0.75;
  \medy = 0.15;
}
%%%%% the objects, top and bottom Annulus
\draw (0,1.2) ellipse (\bigx cm and \bigy cm);
\draw (0,1.2) ellipse (\smlx cm and \smly cm);
\node[dotnode, label={[shift={(0,0.1)}] \small $Y$}] at (-\medx,1.2) {};
\draw (0,-1.2) ellipse (\bigx cm and \bigy cm);
\draw (0,-1.2) ellipse (\smlx cm and \smly cm);
\node[dotnode, label={[shift={(0,-0.6)}] \small $Y'$}] at (-\medx,-1.2) {};
%%%%%%%%%%%%%%%%%%%%%%%%%%%%%%%%%%%%%%%%%%%%%%%%%%%%
%%% the graph %%%%%%%%%%%%%%%%%%%%%%%%%%%%%%%%%%%%%%
%% bottom ellipse
\draw (0,\bottom) ellipse (\bigx cm and \bigy cm);
%\draw (0,0) ellipse (0.5cm and 0.25cm); don't want to get covered
%% back half of graph
\draw (\medx,\midrow) arc(0:180:\medx cm and \medy cm);
%% inner vertical lines
\draw[overline={1.5}] (-0.5,\bottom) -- (-0.5,\toprow);
\draw[overline={1.5}] (0.5,\bottom) -- (0.5,\toprow);
\draw (0,\bottom) ellipse (\smlx cm and \smly cm); %bottom inner ellipse
%% graph
\node[dotnode] (top) at (-\medx,\toprow) {}; %% make label later because of overline
\node[dotnode] (bottom) at (-\medx,\bottom) {};
\draw[overline={1.5}, midarrow_rev]
  (\medx,\midrow) arc(0:-180:\medx cm and \medy cm)
  node[pos=0.5, below] {\tiny $A$};
\draw[overline={1},midarrow={0.3},midarrow={0.9}]
  (top) -- (bottom);
\node[small_morphism] at (-\medx,\midrow) {\tiny $\ph$};
%% top ellipse
\draw[overline={1}] (0,\toprow) ellipse (\bigx cm and \bigy cm);
\draw (0,\toprow) ellipse (\smlx cm and \smly cm);
%% vertical lines
\draw (-1,\bottom) -- (-1,\toprow);
\draw (1,\bottom) -- (1,\toprow);
%% labels Y, Y'
\node[style={fill=white,circle,inner sep=0pt,
  minimum size=0pt, scale=0.8}] at (-0.95,0.4) {\tiny $Y$};
\node[style={fill=white,circle,inner sep=0pt,
  minimum size=0pt, scale=0.8}] at (-0.95,-0.4) {\tiny $Y'$};
\node at (-0.6,1.2) {\tiny $p$};
\node at (-0.6,-1.2) {\tiny $p$};
\end{tikzpicture}
%%%%%%%%%%%%%%%%%%%%%%%%%%%%%%%%%%%%%%%%%%%%%%%%%%%%%%%%%
\;\;\;.
\end{equation}

It is not hard to see that this equivalence is in fact tensor
and pivotal.
For example, for $X,Y \in \Obj \hA$,
the tensor product in $\hCYA$, $H(X) \tnsrst H(Y)$,
is an object with two marked points, labeled with $X$ and $Y$.
The tensor structure on $H$ would be a trivalent
graph connecting $H(X) \tnsrst H(Y)$ to $H(XY)$,
with the unique vertex labeled by
$\id_{XY}$
(which is naturally identified with
$\coev \in \Hom_\cA(\one,XY(XY)^*)$
).
The unit object $\onest$ in $\hCYA$, i.e. the empty configuration,
is isomorphic to the object $H(\one) = (\{p\},\{\one\})$.
Hence we have the following:

\begin{theorem}
\label{t:main}
We have the following commutative diagram,
where all functors are pivotal tensor functors:
\begin{equation*}
%\label{e:comm_diag}
\begin{tikzcd}
(\cA,\tnsr) \ar[r,"\htr"] \ar[d,"I"']
& (\hA,\htnsr) \ar[ld,"G"'] \ar[d,"\Kar"]
    \ar[r,"H"] \ar[r,"\simeq"']
& (\hCYA,\tnsrst) \ar[d, "\Kar"]
\\
(\ZA,\tnsrbar)
& (\Kar(\hA),\htnsr) \ar[l,"\Kar(G)"] \ar[l,"\simeq"']
    \ar[r, "\Kar(H)"'] \ar[r,"\simeq"]
& (\CYA,\tnsrst)
\end{tikzcd}
\;\;\;
\begin{tikzcd}
\textcolor{white}{.}
\\
.
\end{tikzcd}
\end{equation*}
In other words, the reduced tensor product $\tnsrbar$ on $\ZA$
encodes the stacking tensor product on $\CYA$.
\end{theorem}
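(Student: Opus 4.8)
The plan is to build the diagram out of components that are almost entirely established already, leaving only one genuinely new ingredient: that $H : (\hA,\htnsr) \to (\hCYA,\tnsrst)$ is a pivotal tensor equivalence. For the left-hand triangle and square, recall that $\htr$ is a pivotal tensor functor (the discussion preceding \prpref{p:hA_ZA_tnsr}), that $(I,J)$ is one by \prpref{p:I_tensor}, and that $(G,J)$ is one by \prpref{p:hA_ZA_tnsr}; moreover the natural isomorphism $G\circ\htr\simeq I$ of \prpref{p:hA_ZA} is an isomorphism of tensor functors, since on objects both sides equal $I$ and both carry the same tensor structure $J$. Applying the Karoubi envelope, which preserves pivotal tensor structure and functors between them, upgrades $\Kar(G)$ to a pivotal tensor equivalence and makes the lower-left square commute. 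So everything to the left of the column $\{\hA,\Kar(\hA)\}$ is in place, and it remains to handle $H$ and the right-hand square.

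The first step for $H$ is to exhibit its tensor structure. For $X,Y\in\cA$, the object $H(X)\tnsrst H(Y)$ is the two-marked-point configuration on $\Ann$ with the outer point labeled $X$ and the inner point labeled $Y$; I would define
\[
H_2(X,Y) : H(X)\tnsrst H(Y) \longrightarrow H(XY)
\]
to be the skein in $\Ann\times[0,1]$ given by a single trivalent vertex colored by $\coev\in\Hom_\cA(\one, XY(XY)^*)$ that fuses the two incoming points to the one outgoing point $(\{p\},\{XY\})$, performed inside a disk in $\Ann$ so that no strand wraps the core circle. Unit compatibility uses the canonical isomorphism $\onest=(\emptyset,\{\})\simeq H(\one)=(\{p\},\{\one\})$ obtained by capping off a $\one$-colored strand. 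One then checks the two coherence axioms: the pentagon for $H_2$ follows from the associativity of $\coev$ together with the fact that the threefold fusion of $H(X),H(Y),H(Z)$ can be carried out in a single disk; and naturality of $H_2$ in morphisms of $\hA$ amounts, for $\ph\in\ihom{\cA}{A}(X,X')$, to the topological identity
\[
H_2(X',Y)\circ\big(H(\ph)\tnsrst H(\id_Y)\big) = H(\ph\tnsr\id_Y)\circ H_2(X,Y),
\]
which holds because in the stacking picture the $A$-colored loop carrying $\ph$ runs around the \emph{outer} annulus and so passes uniformly over the $Y$-strand --- precisely the over/under crossing encoded in the definition of $\htnsr$ on morphisms. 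Pivotal compatibility is immediate, as $H$ sends the node $\delta_X$ to the pivotal map $\delta_{H(X)}$ of $\hCYA$, and duality compatibility is the observation recorded after \eqnref{e:hA_dual} and in the ``inside-out'' discussion: $H(\ph^\vee)$ agrees with $(H(\ph))^\vee$ once the extra bending in the inside-out picture is absorbed into the node.

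Once $H$ is a pivotal tensor functor, its already-known extension $\Kar(H):\Kar(\hA)\simeq\CYA$ (\ocite{KT}*{Example 8.2}) becomes a pivotal tensor equivalence and the right-hand square commutes by functoriality of $\Kar$; together with the left-hand part this yields the full 2-commutative diagram, and the final sentence of the theorem is then just its interpretation. I expect the main obstacle to be the naturality check of the previous paragraph: one must track crossing conventions carefully enough to see that ``outer annulus strand'' matches the braiding $c$ rather than $c^{-1}$ in $\htnsr$, and confirm this choice is consistent with the pentagon and the unit isomorphisms --- a bookkeeping-intensive but routine verification, wholly parallel to the computations in \prpref{p:hA_ZA_tnsr}. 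A secondary point is that $H=H_p$ depends on a base point $p\in\Ann$; one should check that $H_2$ is compatible with the canonical isomorphisms $H_p\simeq H_{p'}$, so that the construction --- and hence the diagram --- is independent of this choice.
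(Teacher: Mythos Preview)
Your proposal is correct and follows essentially the same approach as the paper: the paper treats the left-hand triangle and square as already established by \prpref{p:hA_ZA}, \prpref{p:I_tensor}, and \prpref{p:hA_ZA_tnsr}, then observes (in the paragraph immediately preceding the theorem) that the only new content is that $H$ is a pivotal tensor equivalence, with tensor structure given by exactly the trivalent $\coev$-graph you describe and unit isomorphism $\onest\simeq H(\one)$. You are simply more explicit than the paper about the naturality and coherence checks it waves away with ``It is not hard to see.''
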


Together with \prpref{p:G_inv},
the pivotal tensor equivalence $(\ZA,\tnsrbar) \simeq \CYA$
is given by

\begin{align}
\Kar(H) \circ \Kar(G)^\inv :
  (\ZA,\tnsrbar) &\simeq \CYA
\\
\begin{tikzpicture}
\node (X) at (0,0.65) {$(X,\ga)$};
\node (Y) at (0,-0.65) {$(Y,\mu)$};
\draw[midarrow={0.55}] (X) -- (Y) node[pos=0.5,right] {$f$};
\end{tikzpicture}
%%%%%%%%%%%%%%%%%%%%
&
\begin{tikzpicture}
\node at (0,0.65) {$\mapsto$};
\node at (0,0) {$\mapsto$};
\node at (0,-0.65) {$\mapsto$};
\end{tikzpicture}
%%%%%%%%%%%%%%%%%%%%
\begin{tikzpicture}
\begin{scope}[shift={(0,-1)}]
%% changing ellipse ratio 5:1
% bottom ellipse
\draw (0,0) ellipse (1cm and 0.2cm);
%% back half of graph
\draw[regular] (0.75,1.5) arc(0:180:0.75cm and 0.15cm);
\draw[regular] (0.75,0.5) arc(0:180:0.75cm and 0.15cm);
%% inner vertical lines
\draw[overline={1}] (-0.5,0) -- (-0.5,2);
\draw[overline={1}] (0.5,0) -- (0.5,2);
\draw (0,0) ellipse (0.5cm and 0.1cm); %bottom inner ellipse
%% rest of graph
\node[dotnode] (top) at (-0.75,2) {}; %% make label later
\node[dotnode] (bottom) at (-0.75,0) {};
\node at (-0.75,-0.3) {\small $Y$};
\draw[regular, overline={1.5}]
  (0.75,1.5) arc(0:-180:0.75cm and 0.15cm);
\draw[regular, overline={1.5}]
  (0.75,0.5) arc(0:-180:0.75cm and 0.15cm);
\draw[overline={1}] (top) -- (bottom);
\node[small_morphism] at (-0.75,1.5) {\tiny $\ga$};
\node[small_morphism] at (-0.75,1) {\tiny $f$};
\node[small_morphism] at (-0.75,0.5) {\tiny $\mu$};
%% top ellipse
\draw[overline={1}] (0,2) ellipse (1cm and 0.2cm);
\draw (0,2) ellipse (0.5cm and 0.1cm);
%% vertical lines
\draw (-1,0) -- (-1,2);
\draw (1,0) -- (1,2);
%% node label
\node at (-0.75,2.3) {\small $X$};
%% curly braces
%\node at (-1.2,1.7) {\Huge $\{$};
%\node at (-1.2,0.3) {\Huge $\{$};
\draw[decoration={brace,amplitude=3pt},decorate] (-1.2,1.3) -- (-1.2,2);
\node at (-1.55,1.65) {im};
\draw[decoration={brace,amplitude=3pt},decorate] (-1.2,0) -- (-1.2,0.7);
\node at (-1.55,0.35) {im};
\end{scope}
\end{tikzpicture}
%%%%%%%%%%%%%%%%%%%%
\;\;\;.
\end{align}

Once again this functor doesn't send unit to unit;
the isomorphism is given by (compare \eqnref{e:unit_isom_G})

\[
\sum_{i\in \Irr(\cA)}
\frac{\sqrt{d_i}}{\sqrt{\cD}}
\begin{tikzpicture}
\begin{scope}[shift={(0,-0.5)}]
%% changing ellipse ratio 5:1
% bottom ellipse
\draw (0,0) ellipse (1cm and 0.2cm);
%% back half of graph
\draw (0.75,0.5) arc(0:180:0.75cm and 0.15cm);
%% inner vertical lines
\draw[overline={1}] (-0.5,0) -- (-0.5,1);
\draw[overline={1}] (0.5,0) -- (0.5,1);
\draw (0,0) ellipse (0.5cm and 0.1cm); %bottom inner ellipse
%% rest of graph
\node[dotnode] (n1) at (-0.75,1) {}; %% make label later
\draw[overline={1.5}, midarrow={0.6}]
  (0.75,0.5) arc(0:-180:0.75cm and 0.15cm);
\node at (0,0.5) {\tiny $i$};
\node[small_morphism] (n2) at (-0.75,0.5) {\tiny $\id$};
\draw (n1) -- (n2);
%% top ellipse
\draw[overline={1}] (0,1) ellipse (1cm and 0.2cm);
\draw (0,1) ellipse (0.5cm and 0.1cm);
%% vertical lines
\draw (-1,0) -- (-1,1);
\draw (1,0) -- (1,1);
%% node label
\node at (-0.75,1.3) {\tiny $X_i X_i^*$};
%% curly braces
\end{scope}
\end{tikzpicture}
\;\;
\text{, or more intuitively,}\;\;
\sum_{i\in \Irr(\cA)}
\frac{\sqrt{d_i}}{\sqrt{\cD}}
\begin{tikzpicture}
\begin{scope}[shift={(0,-0.5)}]
%% changing ellipse ratio 5:1
% bottom ellipse
\draw (0,0) ellipse (1cm and 0.2cm);
%% back half of graph
%\draw (0.75,1) arc(0:150:0.75cm and 0.15cm);
\node[dotnode] (n1) at (-0.75,1) {}; %% make label later
\node[dotnode] (n2) at (-0.6,0.97) {};
\draw (n1) .. controls +(down:0.4cm) and +(-160:0.1cm) ..
  (-0.65,0.5) .. controls +(20:0.5cm) and +(up:0.1cm) ..
  (0.75,0.5);
%% inner vertical lines
\draw[overline={1}] (-0.5,0) -- (-0.5,1);
\draw[overline={1}] (0.5,0) -- (0.5,1);
\draw (0,0) ellipse (0.5cm and 0.1cm); %bottom inner ellipse
%% rest of graph
%\draw[overline={1}]
%  (0.75,0.5) arc(0:-150:0.75cm and 0.15cm);
\draw[overline={1}]
  (n2) .. controls +(down:0.5cm) and +(170:0.1cm) ..
  (-0.5,0.4) .. controls +(-10:0.3cm) and +(down:0.15cm) ..
  (0.75,0.5);
%\draw[overline] (n2) -- (-0.6,0.45);
%% top ellipse
\draw[overline={1}] (0,1) ellipse (1cm and 0.2cm);
\draw (0,1) ellipse (0.5cm and 0.1cm);
%% vertical lines
\draw (-1,0) -- (-1,1);
\draw (1,0) -- (1,1);
%% node label
\node at (-0.75,1.3) {\tiny $X_i X_i^*$};
\end{scope}
\end{tikzpicture}
\;\;\;.
\]

As an application of \thmref{t:main},
we describe $\ZCY(\torus)$ purely algebraically in terms
of $\cA$.
We can produce $\torus$  from $\Ann$ by
gluing (neighborhoods of) $S^1 \times \{0\}$
and $S^1 \times \{1\}$.
The excision property of $\ZCY$ as stated 
in the introduction doesn't work as is,
but \ocite{KT}*{Theorem 7.5}
actually proves an apparently slightly more general but ultimately
equivalent form of excision,
which allows $\Sigma$ to be obtained by
gluing two boundaries of a single surface $\Sigma'$;
the balanced tensor product is then replaced by
the center of $\ZCY(\Sigma')$
as a $\CYA$-bimodule
(as defined in \ocite{GNN}*{Definition 2.1},
repeated in \ocite{KT}*{Definition 3.1};
for applications here,
it suffices to know that this notion of center
for a monoidal category as a bimodule over itself
coincides with the Drinfeld center.)

We can view $\CYA$ as a bimodule category
over itself, thinking of the left and right actions
as ``insertions" from the left
($S^1 \times \{0\}$) and right ($S^1 \times \{0\}$).
Thus we have the following corollary
of \ocite{KT}*{Theorem 7.5}:
%(taking $X' = S^1 \times (0,3)$ there):
\begin{proposition} \label{p:ZCY_torus}
\[
\ZCY(\torus) \simeq \cZ((\CYA,\tnsrst))
\]
as abelian categories.
\end{proposition}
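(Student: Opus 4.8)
The plan is to apply the generalized excision theorem \ocite{KT}*{Theorem 7.5} with $\Sigma' = \Ann$. First I would observe that $\torus$ is obtained from $\Ann = S^1 \times (0,1)$ by gluing a collar of $S^1 \times \{0\}$ to a collar of $S^1 \times \{1\}$, which is precisely the situation handled by the form of excision in \ocite{KT}*{Theorem 7.5}, where a surface $\Sigma$ is built by identifying two boundary components of a single surface $\Sigma'$. That theorem then yields an equivalence of abelian categories
\[
\ZCY(\torus) \simeq \cZ\big(\ZCY(\Ann)\big),
\]
where $\ZCY(\Ann) = \CYA$ is regarded as a bimodule category over $\CYA$ via the left and right ``insertion'' actions coming from its two circular boundary components.

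Next I would identify this bimodule structure explicitly. By construction, the $\CYA$-action associated to a circular boundary component of a surface is given by gluing in an annulus along that boundary; for $\Sigma = \Ann$ with its two boundary circles, gluing an annulus on the outside (resp.\ inside) is exactly left (resp.\ right) multiplication under the stacking product $\tnsrst$. Hence $\ZCY(\Ann)$, viewed as a $\CYA$-bimodule, is nothing but $(\CYA,\tnsrst)$ acting on itself by left and right multiplication. This is the one point that requires care: one must check that the two insertion actions match the left and right stacking actions, including the matching of orientations of the glued circles; but this is immediate from the definition of $\tnsrst$ and of the excision gluing, and is exactly the identification anticipated in the discussion preceding the statement.

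Finally, I would invoke the general fact — recalled just before the statement, with references \ocite{GNN}*{Definition 2.1} and \ocite{KT}*{Definition 3.1} — that the center of a monoidal category regarded as a bimodule over itself coincides with its Drinfeld center. Applied to $(\CYA,\tnsrst)$, this gives
\[
\cZ\big(\ZCY(\Ann)\big) \simeq \cZ\big((\CYA,\tnsrst)\big),
\]
and composing with the equivalence produced by excision yields $\ZCY(\torus) \simeq \cZ((\CYA,\tnsrst))$ as abelian categories, as claimed. I do not expect a genuine obstacle here: all of the analytic content is packaged in the quoted excision theorem, and the only real work is the bookkeeping in the second step identifying the induced bimodule structure with the regular one.
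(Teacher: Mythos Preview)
Your proposal is correct and follows essentially the same approach as the paper: the paper's proof is the one-liner ``Take $X' = S^1 \times (0,3)$ in \ocite{KT}*{Theorem 7.5}'', and the surrounding discussion already spells out exactly the bimodule identification and the bimodule-center-equals-Drinfeld-center fact that you invoke. Your write-up simply makes explicit the bookkeeping that the paper leaves to the preceding paragraph.
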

\begin{proof}
Take $X' = S^1 \times (0,3)$ in \ocite{KT}*{Theorem 7.5}.
\end{proof}
%We will return to $\ZCY(\torus)$ shortly.

As an immediate corollary of this
and \thmref{t:main}, we have
\begin{corollary} \label{c:torus_ZA}
\[
\ZCY(\torus) \simeq \cZ((\ZA,\tnsrbar))
\]
as abelian categories.
\end{corollary}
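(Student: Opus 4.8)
The plan is to derive the corollary simply by chaining \prpref{p:ZCY_torus} with \thmref{t:main}. By \prpref{p:ZCY_torus} we already have $\ZCY(\torus) \simeq \cZ((\CYA,\tnsrst))$ as abelian categories, where, as recorded just before that proposition, the center of a monoidal category viewed as a bimodule over itself is nothing but the ordinary Drinfeld center of that monoidal category. So it suffices to produce an equivalence of abelian categories $\cZ((\CYA,\tnsrst)) \simeq \cZ((\ZA,\tnsrbar))$.

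For this I would invoke the general principle that the Drinfeld center is invariant under monoidal equivalence. Concretely, a monoidal equivalence $(F,J): \cM \to \cN$ of monoidal categories induces an equivalence $\cZ(F): \cZ(\cM) \to \cZ(\cN)$ sending an object $(X,\ga)$ to $(F(X),\ga^F)$, where the half-braiding $\ga^F$ on $F(X)$ is assembled from the tensor structure $J$, the transported half-braiding $\ga$ evaluated at a preimage, $J^\inv$, and the unit/counit isomorphisms of the adjoint equivalence, while on morphisms $\cZ(F)$ is just $F$. This assignment is functorial in $(F,J)$ and takes equivalences to equivalences; crucially it uses only the monoidal data of $F$, no braiding or pivotal structure on either side (standard; see e.g.\ \ocite{EGNO}). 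Applying it with $(F,J)$ the monoidal equivalence $\Kar(H)\circ\Kar(G)^\inv : (\ZA,\tnsrbar) \to (\CYA,\tnsrst)$ furnished by \thmref{t:main} yields $\cZ((\ZA,\tnsrbar)) \simeq \cZ((\CYA,\tnsrst))$.

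Combining the two equivalences gives $\ZCY(\torus) \simeq \cZ((\CYA,\tnsrst)) \simeq \cZ((\ZA,\tnsrbar))$ as abelian categories, which is exactly the statement. I do not anticipate any genuine obstacle; the only points that merit a word are that the ``center'' in \prpref{p:ZCY_torus} is the Drinfeld center (already noted there) and that transport of half-braidings along a monoidal equivalence is an equivalence (standard). If one wished to strengthen the conclusion — say, to an equivalence of braided, or braided pivotal, categories — one would additionally observe that $\cZ(F)$ is automatically braided and check that the pivotal structures correspond; but since the statement asserts only an equivalence of abelian categories, this is not needed.
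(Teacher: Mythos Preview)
Your proof is correct and follows exactly the same route as the paper, which simply states that the result is an immediate corollary of \prpref{p:ZCY_torus} and \thmref{t:main}. You have merely unpacked the one implicit step---that the Drinfeld center is invariant under monoidal equivalence---which the paper takes for granted.
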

%TODO Hmm should I discuss a tensor product
%on $\ZCY(\torus)$?
We will study $\ZCY(\torus)$ for $\cA = \Rep(G)$
in \secref{s:symm_torus}.\\

Now we can also give a topological interpretation
of the $\ZZ/2$-actions discussed in previous sections
(see \prpref{p:aut_ZA}, \prpref{p:aut_hA}).
An orientation-preserving diffeomorphism
from the annulus to itself naturally
gives an automorphism of $\CYA$.
In particular,
consider the following diffeomorphism $\UCY$:

\begin{equation}
\UCY: 
\begin{tikzpicture}
\tikzmath{
  \r1 = 0.6;
  \r2 = 1.2;
  \rmid = (\r1+\r2)/2;
  \a1 = -140;
  \adel = 20;
  \n = 16;
}
\foreach \x in {0,...,\n}
  \draw (\a1+\x*\adel:\r1 cm) -- (\a1+\x*\adel:\r2 cm);
\draw[midarrow_rev={0.3}]
  (-160:\r1 cm) -- (-160:\r2 cm);
\draw (0,0) circle (\r1 cm);
\draw (0,0) circle (\r2 cm);
\draw[midarrow={0.6}] (0,0) circle (\rmid cm);
\node[dotnode] at (-\rmid,0) {};
\node at (-\rmid - 0.125, -0.125) {\tiny $p$};
\end{tikzpicture}
%%%%%%%%%%%%%%%%%%%%%%%%%
\longrightarrow
%%%%%%%%%%%%%%%%%%%%%%%%%
\begin{tikzpicture}
\tikzmath{
  \r1 = 0.6;
  \r2 = 1.2;
  \rmid = (\r1+\r2)/2;
  \a1 = 180;
  \adel = 20;
  \n = 16;
}
\foreach \x in {0,...,\n}
  \draw (\a1+\x*\adel:\r1 cm) -- (\a1+\x*\adel:\r2 cm);
\draw[midarrow={0.95}]
  (160:\r1 cm) -- (160:\r2 cm);
\draw (0,0) circle (\r1 cm);
\draw (0,0) circle (\r2 cm);
\draw[midarrow_rev={0.425}] (0,0) circle (\rmid cm);
\draw[fill=white] (-\rmid,0) circle (0.1cm);
\end{tikzpicture}
%%%%%%%%%%%%%%%%%%%%%%%%%
\;\;
\text{;\hspace{5pt} near }p;
\;\;
%%%%%%%%%%%%%%%%%%%%%%%%%
\begin{tikzpicture}
%\draw[->] (0,1) -- (0,-1);
\draw (0,1) -- (0,-1);
\draw (-1,0) -- (1,0);
\node[dotnode] at (0,0) {};
\node at (-0.2,-0.2) {\small $p$};
\draw (0,0) circle (1cm);
\end{tikzpicture}
%%%%%%%%%%%%%%%%%%%%%%%%%
\longrightarrow
%%%%%%%%%%%%%%%%%%%%%%%%%
\begin{tikzpicture}
\foreach \x in {0,90,180,270}
\draw[rotate=\x] (0,0) --
  (0,0.05) .. controls +(up:0.15cm) and +(60:0.2cm) ..
  (-0.3,0) .. controls +(-120:0.3cm) and +(up:0.4cm) ..
  (0,-1);
\draw (0,0) circle (1cm);
%\draw[->] (0,0.99) -- (0,1);
\draw (0,0.99) -- (0,1);
\node[dotnode] at (0,0) {};
\end{tikzpicture}
\end{equation}

where we recall that $p$ was some fixed based point used
to define $H$.
The diffeomorphism $\UCY$ fixes $p$ and its tangent space,
swaps boundary components,
and reverses the direction of the core circle of the annulus.
The pair of arrows in the leftmost diagram
is sent to the pair of arrows in the next diagram.
The two diagrams on the right describe the behavior of $\UCY$
near $p$;
the tiny empty circle in the second-from-left diagram
should be filled in with the rightmost diagram.

$\UCY$ acts on objects of $\CYA$ by
applying $\UCY$ to the marked points,
and on morphisms, it acts by applying
$\UCY \times \id_{[0,1]}$ to graphs.

\begin{proposition}
$\UCY$ generates a $\ZZ/2$-action on
$\hCYA$ and $\CYA$.
\end{proposition}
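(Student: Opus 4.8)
The plan is to build the action in two stages: first verify that $\UCY$ induces a genuine endofunctor of $\hCYA$, and then exhibit a coherent natural isomorphism $\UCY_*^2 \simeq \id$.

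First I would observe that crossing $\UCY$ with $\id_{[0,1]}$ gives an orientation-preserving self-diffeomorphism of $\Ann \times [0,1]$ that carries the incoming boundary $\Ann \times \{0\}$ to itself and the outgoing boundary $\Ann\times\{1\}$ to itself. Pushing forward framed marked points and $\cA$-colored ribbon graphs along this diffeomorphism is compatible with isotopy and with the local skein relations, since the latter are defined by evaluating inside embedded balls with the Reshetikhin--Turaev invariant and so are manifestly natural under orientation-preserving embeddings. Hence $\UCY$ induces an endofunctor $\UCY_*$ of $\hCYA$, strictly functorial in the diffeomorphism ($(\phi\psi)_* = \phi_*\psi_*$ and $(\id)_* = \id$), and this passes to $\CYA = \Kar(\hCYA)$. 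Producing a $\ZZ/2$-action thus reduces to producing a natural isomorphism $\eta\colon \UCY_*^2 \Rightarrow \id$ with $\UCY_*\eta = \eta\UCY_*$.

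For $\eta$ I would use that isotopic diffeomorphisms of $\Ann$ fixing the basepoint $p$ and its tangent vector (the data entering the definition of $\UCY$) induce canonically isomorphic functors, by sweeping a graph along the isotopy. As a diffeomorphism fixing $p$ and its tangent vector, $\UCY^2$ becomes isotopic to the identity once the tangent vector is forgotten — it is the square of the class of the "flip'' swapping boundary components — so rel the full data it is isotopic to a framing rotation at $p$, i.e.\ to a power of the Dehn twist around a small circle encircling $p$; sweeping along this isotopy and undoing the rotation with the ribbon twist $\theta$ produces a natural isomorphism $\UCY_*^2 \simeq \id$. To see that it is the correct one and to handle coherence in one stroke, I would instead identify $\UCY_*$, under the pivotal tensor equivalence $H\colon \hA\simeq\hCYA$ of \eqnref{e:def_H}, with the functor $\hU$ of \prpref{p:aut_hA}: both are the identity on objects, and on morphisms $\UCY$ reverses the orientation of the core circle (turning an $A$-labeled strand into an $A^*$-labeled one) and turns the solid annulus inside out exactly as in \eqnref{e:hA_dual}, while the prescribed behaviour of $\UCY$ near $p$ contributes precisely the half-twist bookkeeping of $\wdtld{\cdot}$; transporting the $\ZZ/2$-structure $(\hU,\thetabar)$ of \prpref{p:aut_hA} across $H$ then yields $(\UCY_*,\eta)$, with $\eta$ corresponding to $\thetabar$.

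It remains to verify the coherence $\UCY_*\eta = \eta\UCY_*$ — equivalently, that the two natural transformations $\UCY_*^3 \Rightarrow \UCY_*$ agree so that the $\ZZ/2$-structure closes up — which for $\CYA$ is inherited from $\hCYA$. Through the identification of the previous paragraph this is exactly the assertion, already recorded in the proof of \prpref{p:aut_hA} and again in the $\ZZ/2$-equivariance statement at the end of \secref{s:htr}, that $\thetabar\colon \hU^2 \simeq \id$ is an isomorphism of $\ZZ/2$-structures, which reduces to the three-strand version of the identity relating the braiding and the twist; topologically it is the fact that the Dehn twist occurring in $\UCY^2$ is unchanged up to isotopy under conjugation by $\UCY$, since a Dehn twist does not depend on an orientation of its defining curve — the only thing $\UCY$ does to it. I expect the real obstacle to be the identification $\UCY_* \cong H\,\hU\,H^{-1}$ \emph{with exactly one twist and no more}: one must check that the quarter-turn picture defining $\UCY$ near $p$ matches the inside-out dualization of \eqnref{e:hA_dual} and \prpref{p:reduced_pivotal} on the nose. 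This is the same "drag the strand around $p$'' phenomenon appearing in the proof of the $\ZZ/2$-equivariance of $\Kar(G)$ in \secref{s:htr}; once it is pinned down, everything else is a transport of structure from \prpref{p:aut_hA}.
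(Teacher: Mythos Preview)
Your proposal is correct but takes a different route from the paper. The paper argues directly and topologically: it observes that $\UCY^2$ is isotopic to the identity by ``untwisting'' around $p$, and this isotopy, read as a movie in $\Ann \times [0,1]$, traces out for each marked point $b$ a strand from $(\UCY^2(b),0)$ to $(b,1)$, yielding the natural isomorphism $\overline{\Theta}\colon \UCY_*^2 \simeq \id$; naturality (and implicitly coherence) is left as a topology exercise, with the hint that $\overline{\Theta}$ relates $\Ann\times[0,1]$ to the mapping cylinder of $(\UCY^2)^{-1}$. You instead transport the $\ZZ/2$-structure from $\hA$ via the equivalence $H$: after checking $\UCY \circ H = H \circ \hU$ on the nose (which the paper does in the \emph{next} proposition, remarking that $\UCY$ was designed for exactly this), you pull back $(\hU,\thetabar)$ from \prpref{p:aut_hA} to obtain the action on $\hCYA$, getting coherence for free. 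Your approach front-loads the equivariance of $H$ and leans on the algebraic side; the paper's is self-contained and geometric but leaves more to the reader. Both produce the same natural isomorphism --- $\thetabar$ on objects of the form $H(X)$ --- so the resulting actions agree. Your opening digression through framing rotations and Dehn twists is unnecessary once you commit to the transport argument, and slightly muddies the exposition; I would drop it.
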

\begin{proof}
It suffices to consider $\hCYA$.
$\UUCY$ is isotopic to the identity
by ``untwisting" around $p$.
This gives us a natural isomorphism
$\overline{\Theta}: \UUCY \simeq \id_{\hCYA}$ as follows:
for a marked point $b$, the isotopy takes
$\UUCY(b)$ back to $b$;
thinking of the interval direction of the cylinder
$\Ann \times [0,1]$ as time,
this traces out a curve interpolating
$(\UUCY(b)),0)$ and $(b,1)$.

It is a simple topology exercise
to check that $\overline{\Theta}$ is
a natural isomorphism;
it is helpful to think of $\overline{\Theta}$
as relating $\Ann \times [0,1]$
to the mapping cylinder of $(\UUCY)^\inv$.
\end{proof}

In particular, when the object is $H(X)$,
that is, $p$ labeled with $X$,
the natural isomorphism
is the vertical graph with a full negative twist,
i.e. $\overline{\Theta}_{H(X)}$
is simply $\thetabar_X$,
hence the suggestive name for $\overline{\Theta}$.

\begin{proposition}
$H: \hA \simeq \hCYA$
and $\Kar(H): \Kar(\hA) \simeq \CYA$
are $\ZZ/2$-equivariant.
\end{proposition}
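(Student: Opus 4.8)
The plan is to produce the data making $H$ a $\ZZ/2$-equivariant functor between $(\hA,\hU)$ (see \prpref{p:aut_hA}) and $(\hCYA,\UCY)$: a natural isomorphism $\xi : \UCY \circ H \Rightarrow H \circ \hU$, together with the verification that $\xi$ is compatible with the trivializations $\overline{\Theta} : \UUCY \simeq \id$ and $\thetabar : \hU^2 \simeq \id$. On objects there is nothing to check: $\hU$ is the identity on objects, and $\UCY$ fixes the base point $p$ together with its tangent vector, so $\UCY(H(X)) = (\{p\},\{X\}) = H(X) = H(\hU(X))$. Thus $\xi_X$ will be an endomorphism of the object $H(X)$ in $\hCYA$, and we expect to be able to take it to be the identity, provided we can show that $\UCY \circ H$ and $H \circ \hU$ agree on morphisms as well.

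The substantive step — and the main obstacle — is to compute $\UCY(H(\psi))$ for $\psi \in \ihom{\cA}{A}(X,X')$ and recognise it as $H(\wdtld{\psi})$. By construction (see \eqnref{e:def_H}) $H(\psi)$ is the graph in $\Ann \times [0,1]$ consisting of the vertical $X/X'$-strand through $p$ carrying the coupon $\psi$, together with the $A$-coloured band wrapping once around the core circle. Applying $\UCY \times \id_{[0,1]}$ leaves the vertical strand and its coupon untouched, but turns the annulus inside-out and reverses the orientation of the core circle. One then checks that the inside-out move interchanges the two feet of the $A$-band at the coupon (exactly the ``inside-out'' picture used for left duals; compare \eqnref{e:hA_dual} and the surrounding discussion), while the core reversal, via the convention that an oriented edge labelled $A$ is the oppositely-oriented edge labelled $A^*$, relabels the band $A^*$; the resulting framed graph is isotopic in the solid torus $\Ann \times [0,1]$ to the graph defining $H(\wdtld{\psi})$. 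The delicate point is the framing bookkeeping: one must be sure that reversing the core orientation produces precisely the recipe defining $\wdtld{(-)}$ and not that recipe composed with a stray full twist $\theta_A$ on the $A$-band. This is exactly why $\UCY$ was arranged to reverse the core while fixing the framing at $p$; making the isotopy explicit (using the sliding lemma \eqnref{e:sliding} to drag the band past the coupon) settles it. With this identification, $\UCY \circ H = H \circ \hU$ on the nose and we take $\xi = \id$.

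It then remains to check the equivariance coherence square, which compares the composite identity natural transformation $\UUCY H \Rightarrow \UCY H \hU \Rightarrow H \hU^2$ against $\overline{\Theta} H : \UUCY H \Rightarrow H$ on one side and $H(\thetabar) : H \hU^2 \Rightarrow H$ on the other. This is immediate from the identity $\overline{\Theta}_{H(X)} = \thetabar_X$ recorded just before this proposition, together with the fact that $H$, being a pivotal tensor functor, sends the twist $\thetabar_X$ of $\cA$ to the twist of $H(X)$: both the ``untwist around $p$'' defining $\overline{\Theta}$ and the double application of $\wdtld{(-)}$ trivialised by $\thetabar$ realise the same negative full twist on the $p$-strand. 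Finally, the statement for $\Kar(H)$ is formal: the actions $\UCY$ and $\hU$, the functor $H$, the isomorphism $\xi$, and the coherence square all extend to Karoubi envelopes, so $\Kar(H)$ inherits $\ZZ/2$-equivariance with no further argument.
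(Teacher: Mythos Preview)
Your approach is essentially identical to the paper's: both argue that $\UCY \circ H = H \circ \hU$ strictly (so the equivariance datum $\xi$ is the identity), and then invoke the observation $\overline{\Theta}_{H(X)} = \thetabar_X$ for the coherence square, with the Karoubi extension being formal. The paper compresses the morphism-level check into ``$\UCY$ was cooked up to have this property,'' whereas you spell out the inside-out and core-reversal analysis; your added detail is helpful and correct.

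One small correction: your appeal to the sliding lemma \eqnref{e:sliding} is misplaced. That lemma concerns loops with the \emph{regular} coloring (dashed line), not an arbitrary $A$-labelled band, and in any case no such lemma is needed here --- the identification $\UCY(H(\psi)) = H(\wdtld{\psi})$ is a straightforward isotopy in the solid torus $\Ann \times [0,1]$, which is already built into the skein relations. Drop that parenthetical and the argument is clean.
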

\begin{proof}
It is easy to check that
$H$ commutes with $U$'s on the nose,
i.e. $\UCY \circ H = H \circ \hU$,
and likewise for Karoubi envelopes.
(Indeed, $\UCY$ was cooked up to have this property.)
The observation above shows
$\overline{\Theta}H = H\thetabar$
as natural isomorphisms
$\UUCY \circ H = H \circ \hU^2 \simeq H$,
and likewise for Karoubi envelopes.
Thus, $H$ is $\ZZ/2$-equivariant.
\end{proof}

\begin{remark} \label{r:tnsr_topology}
Since $\tnsrbar$ on $\ZA$ has a nice topological
interpretation as $\tnsrst$ on $\CYA$,
it is also nice to have a topological interpretation
for the standard tensor product on $\ZA$.
This comes from the (thickened) pair of pants,
denoted $\POP$, in the following manner.
For $(X,\ga),(Y,\mu) \in \ZA$,
with corresponding objects
$\VV = (H(X),H(\hP_\ga)), \WW = (H(Y),H(\hP_\mu))
\in \CYA$,
the object $\VV \tnsr \WW$
which corresponds to $(X,\ga)\tnsr(Y,\mu)$
is characterized by a natural isomorphism
\[
\Hom_\CYA(\VV \tnsr \WW, \mathbf{X})
\simeq \Skein(\POP; \overline{\VV},\overline{\WW},
  \mathbf{X})
= 
\Skein (
\begin{tikzpicture}[scale=0.6]
\tikzmath{
  \ellx = 1;
  \elly = 0.2;
  \ellxs = 0.8;
  \ellys = 0.16;
}
\draw (-1.5,1) ellipse (\ellx cm and \elly cm);
\draw (-1.5,1) ellipse (\ellxs cm and \ellys cm);
\draw (1.5,1) ellipse (\ellx cm and \elly cm);
\draw (1.5,1) ellipse (\ellxs cm and \ellys cm);
\draw (0,-1) ellipse (\ellx cm and \elly cm);
\draw (0,-1) ellipse (\ellxs cm and \ellys cm);
%% outside lines
\draw (-2.5,1) to[out=-90,in=90] (-1,-1);
\draw (-2.3,1) to[out=-90,in=90] (-0.8,-1);
\draw (2.5,1) to[out=-90,in=90] (1,-1);
\draw (2.3,1) to[out=-90,in=90] (0.8,-1);
%% middle
\draw (-0.5,1) .. controls +(down:0.5cm) and +(down:0.5cm)
  .. (0.5,1);
\draw (-0.7,1) .. controls +(down:0.6cm) and +(down:0.6cm)
  .. (0.7,1);
%% labels
\node at (-1.5,1.4) {\tiny $\overline{\VV}$};
\node at (1.5,1.4) {\tiny $\overline{\WW}$};
\node at (0,-1.4) {\tiny $\mathbf{X}$};
\end{tikzpicture}
)
\]
for all $\mathbf{X} \in \CYA$.
(This is well-known for the extended Turaev-Viro theory \ocite{TV},
where given spherical fusion $\cA$, one has
$Z_{\text{TV}}(S^1) = \ZA$ \ocite{stringnet},
and the standard tensor product on $\ZA$ is given by
the pair of pants just as above.)
The stacking product can also be described this way,
but instead of the usual pair of pants $\POP$,
we use a different cobordism, $\YPOP$,
depicted below:
\[
\Hom_\CYA(\VV \tnsrst \WW, \mathbf{X})
\simeq \Skein(\YPOP; \overline{\VV},\overline{\WW},
  \mathbf{X})
=
\Skein (
\begin{tikzpicture}[scale=0.6]
\draw (0,1) ellipse (2cm and 0.4cm);
\draw (0,1) ellipse (1.8 cm and 0.36cm);
\draw (0,1) ellipse (1cm and 0.2cm);
\draw (0,1) ellipse (0.8cm and 0.16cm);
\draw (0,-1) ellipse (1.5cm and 0.3cm);
\draw (0,-1) ellipse (1.3cm and 0.26cm);
%\draw[gray] (0,0.25) ellipse (1.4cm and 0.28cm);
%% vertical lines
\draw (-2,1) to[out=-90,in=90] (-1.5,-1);
\draw (-0.8,1) to[out=-90,in=90] (-1.3,-1);
\draw (2,1) to[out=-90,in=90] (1.5,-1);
\draw (0.8,1) to[out=-90,in=90] (1.3,-1);
\draw (-1.8,1) .. controls +(down:1cm) and +(down:1cm)
  .. (-1,1);
\draw (1.8,1) .. controls +(down:1cm) and +(down:1cm)
  .. (1,1);
%% labels
\node at (-2.2,1) {\tiny $\overline{\VV}$};
\node at (-1.2,1) {\tiny $\overline{\WW}$};
\node at (0,-1.5) {\tiny $\mathbf{X}$};
\end{tikzpicture}
)
\;\;.
\]
$\YPOP$ is a thickened `Y' crossed with $S^1$.
We do not prove these claims here,
which are not hard to prove after all
the work in this section.
\rmkend
\end{remark}
\begin{remark} \label{r:2fold_cy}
The topological interpretations of $\tnsr$ and $\tnsrst$
above can also elucidate the structure morphisms
mentioned in \rmkref{r:2fold}.
Consider the two cobordisms below.
\[
\begin{tikzpicture}[scale=0.8]
\tikzmath{
  \md = -0.2;
}
\draw (-2.5,1) ellipse (2cm and 0.4cm);
\draw (-2.5,1) ellipse (1.8 cm and 0.36cm);
\draw (-2.5,1) ellipse (1cm and 0.2cm);
\draw (-2.5,1) ellipse (0.8cm and 0.16cm);
\draw (2.5,1) ellipse (2cm and 0.4cm);
\draw (2.5,1) ellipse (1.8 cm and 0.36cm);
\draw (2.5,1) ellipse (1cm and 0.2cm);
\draw (2.5,1) ellipse (0.8cm and 0.16cm);
\draw (0,\md) ellipse (2cm and 0.4cm);
\draw (0,\md) ellipse (1.8 cm and 0.36cm);
\draw (0,\md) ellipse (1cm and 0.2cm);
\draw (0,\md) ellipse (0.8cm and 0.16cm);
\draw (0,-1) ellipse (1.5cm and 0.3cm);
\draw (0,-1) ellipse (1.3cm and 0.26cm);
%%%%%% top half
%% vertical lines
\draw (-4.5,1)
  .. controls +(down:0.5cm) and +(up:0.3cm) .. (-2,\md);
\draw (4.5,1)
  .. controls +(down:0.5cm) and +(up:0.3cm) .. (2,\md);
\draw (-4.3,1)
  .. controls +(down:0.5cm) and +(up:0.3cm) .. (-1.8,\md);
\draw (4.3,1)
  .. controls +(down:0.5cm) and +(up:0.3cm) .. (1.8,\md);
\draw (-3.5,1)
  .. controls +(down:0.5cm) and +(up:0.3cm) .. (-1,\md);
\draw (3.5,1)
  .. controls +(down:0.5cm) and +(up:0.3cm) .. (1,\md);
\draw (-3.3,1)
  .. controls +(down:0.5cm) and +(up:0.3cm) .. (-0.8,\md);
\draw (3.3,1)
  .. controls +(down:0.5cm) and +(up:0.3cm) .. (0.8,\md);
%% inside
\draw (-0.5,1)
  .. controls +(down:0.4cm) and +(down:0.4cm) .. (0.5,1);
\draw (-0.7,1)
  .. controls +(down:0.45cm) and +(down:0.45cm) .. (0.7,1);
\draw (-1.5,1)
  .. controls +(down:0.6cm) and +(down:0.6cm) .. (1.5,1);
\draw (-1.7,1)
  .. controls +(down:0.65cm) and +(down:0.65cm) .. (1.7,1);
%%%%%%%%%%%%%%% bottom half
\draw (-2,\md) to[out=-90,in=90] (-1.5,-1);
\draw (-0.8,\md) to[out=-90,in=90] (-1.3,-1);
\draw (2,\md) to[out=-90,in=90] (1.5,-1);
\draw (0.8,\md) to[out=-90,in=90] (1.3,-1);
\draw (-1.8,\md)
  .. controls +(down:0.5cm) and +(down:0.5cm) .. (-1,\md);
\draw (1.8,\md)
  .. controls +(down:0.5cm) and +(down:0.5cm) .. (1,\md);
%%%%%%%%%%%%%%%% gray curve for projector
\draw[gray]
(0,0.7) .. controls +(left:0.5cm) and +(left:0.2cm) ..
(-0.3,-0.8) .. controls +(right:0.1cm) and +(left:0.2cm) ..
(0,0.55) .. controls +(right:0.1cm) and +(left:0.1cm) ..
(0.2,-0.3) .. controls +(right:0.2cm) and +(right:0.2cm) ..
(0,0.7);
%% labels
\node at (-4.7,1) {$\overline{\VV}$};
\node at (-3.7,1) {$\overline{\WW}$};
\node at (0.3,1.05) {$\overline{\VV}'$};
\node at (1.3,1.05) {$\overline{\WW}'$};
\end{tikzpicture}
\;\;\;\;
\begin{tikzpicture}[scale=0.8]
\draw (-2.5,1) ellipse (2cm and 0.4cm);
\draw (-2.5,1) ellipse (1.8 cm and 0.36cm);
\draw (-2.5,1) ellipse (1cm and 0.2cm);
\draw (-2.5,1) ellipse (0.8cm and 0.16cm);
\draw (2.5,1) ellipse (2cm and 0.4cm);
\draw (2.5,1) ellipse (1.8 cm and 0.36cm);
\draw (2.5,1) ellipse (1cm and 0.2cm);
\draw (2.5,1) ellipse (0.8cm and 0.16cm);
\draw (-2.5,0) ellipse (1.5cm and 0.3cm);
\draw (-2.5,0) ellipse (1.3cm and 0.26cm);
\draw (2.5,0) ellipse (1.5cm and 0.3cm);
\draw (2.5,0) ellipse (1.3cm and 0.26cm);
\draw (0,-1) ellipse (1.5cm and 0.3cm);
\draw (0,-1) ellipse (1.3cm and 0.26cm);
%%%%%%%%%%%%%%% top left half
\draw (-4.5,1) to[out=-90,in=90] (-4,0);
\draw (-3.3,1) to[out=-90,in=90] (-3.8,0);
\draw (-0.5,1) to[out=-90,in=90] (-1,0);
\draw (-1.7,1) to[out=-90,in=90] (-1.2,0);
\draw (-4.3,1)
  .. controls +(down:0.5cm) and +(down:0.5cm) .. (-3.5,1);
\draw (-0.7,1)
  .. controls +(down:0.5cm) and +(down:0.5cm) .. (-1.5,1);
%%%%%%%%%%%%%%% top right half
\draw (4.5,1) to[out=-90,in=90] (4,0);
\draw (3.3,1) to[out=-90,in=90] (3.8,0);
\draw (0.5,1) to[out=-90,in=90] (1,0);
\draw (1.7,1) to[out=-90,in=90] (1.2,0);
\draw (4.3,1)
  .. controls +(down:0.5cm) and +(down:0.5cm) .. (3.5,1);
\draw (0.7,1)
  .. controls +(down:0.5cm) and +(down:0.5cm) .. (1.5,1);
%%%%%% bottom half
%% vertical lines
\draw (-4,0)
  .. controls +(down:0.4cm) and +(up:0.2cm) .. (-1.5,-1);
\draw (4,0)
  .. controls +(down:0.4cm) and +(up:0.2cm) .. (1.5,-1);
\draw (-3.8,0)
  .. controls +(down:0.4cm) and +(up:0.2cm) .. (-1.3,-1);
\draw (3.8,0)
  .. controls +(down:0.4cm) and +(up:0.2cm) .. (1.3,-1);
%% inside
\draw (-1,0)
  .. controls +(down:0.3cm) and +(down:0.3cm) .. (1,0);
\draw (-1.2,0)
  .. controls +(down:0.5cm) and +(down:0.5cm) .. (1.2,0);
%% labels
\node at (-4.7,1) {$\overline{\VV}$};
\node at (-3.7,1) {$\overline{\WW}$};
\node at (0.3,1) {$\overline{\VV}'$};
\node at (1.3,1) {$\overline{\WW}'$};
\end{tikzpicture}
\]
The cobordism on the left (ignoring the gray curve)
corresponds to 
$(\VV \tnsr \VV') \tnsrst (\WW \tnsr \WW')$,
or $((X,\ga) \tnsr (X',\ga')) \tnsrbar
  ((Y,\mu) \tnsr (Y',\mu'))$,
while the cobordism on the right corresponds to
$(\VV \tnsrst \WW) \tnsr (\VV' \tnsrst \WW')$,
or $((X,\ga) \tnsrbar (Y,\mu)) \tnsr
  ((X',\ga') \tnsrbar (Y',\mu'))$.
They are different, but not by much:
the right can be obtained from the left
by surgery, specifically,
by attaching a 2-handle along the gray curve
%(the gray curve is in the cobordism, lying close to
%the ``inner" boundary;
(the gray curve starts in the outside $\POP$ for $\VV$'s,
goes down into the bottom $\YPOP$,
back up into the $\POP$ for $\WW$'s,
then goes down again into $\YPOP$, and closes up,
all the while staying close to the ``inner boundary",
i.e. keeping as tight as possible like a rubber band).
One way to visualize this is to consider the
reverse process of removing a 2-handle:
start with the right side, push the two ``troughs"
- between $\VV$ and $\WW$ and between $\VV'$ and $\WW'$
- downward and into the bottom pair of pants,
and when they are about to meet in the middle,
drill a hole through the wall.

On the level of skein modules,
this surgery is the same as
adding the gray curve colored by the regular coloring
(up to a factor, see \eqnref{e:sliding});
this is the topological interpretation of $\eta$.
Conversely, graphs in the right cobordism can be
lifted to a graph in the left cobordism
plus the gray curve with regular coloring;
this is the topological interpretation of $\zeta$.

The other structure morphisms can also be described
very easily. For example, one of them is
a morphism $v_2: \onebar \tnsr \onebar \to \onebar$,
which is simply the empty graph in $\POP$
(up to a factor)
interpreted as a morphism
$\onest \tnsrst \onest \to \onest$.

Thus, checking the compatibility axioms
becomes an exercise in topology.
Once again, we do not show the work in this paper.

As mentioned in the introduction, such 2-fold monoidal
structures are related to iterated loop spaces.
It may be interesting to see if these two topological
aspects of $(\ZA,\tnsr,\tnsrbar)$ are directly related.
\rmkend
\end{remark}

%\section{Special Cases}
%\label{s:special}
%\subsection{$\cA$ modular}\par \noindent
\section{$\cA$ Modular}
\label{s:modular}

When $\cA$ is modular, we have
(see for example \ocite{EGNO}*{Proposition 8.20.12}):

\begin{align*}
\cAAbop &\simeq_{\tnsr, \text{br}} (\ZA,\tnsr) \\
X \boxtimes Y &\mapsto (XY, c c^\inv) = (X,c) \tnsr (Y,c^\inv)
\end{align*}
where $\cA^\bop$ is $\cA$ with the opposite braiding,
and $c$ is the braiding on $\cA$.
Here the monoidal structure on $\cAAbop$ is defined component-wise.
In particular, duals are given by
$(X\boxtimes Y)^* = X^* \boxtimes Y^*$.

It is natural to ask: what is the reduced tensor product on
$\cAAbop$ under this equivalence?
We claim (proven below in \thmref{t:AA_ZA})
that the following definition is the answer,
which justifies the repeated use of the name ``reduced"
and notation like $\tnsrbar$ and $\onebar$.
(The reduced tensor product on $\ZA$ cannot in general be
braided, as we shall see soon,
so we will ignore the difference in braiding on $\cA$.)

\begin{definition}
\label{d:AA_red}
Let $W_1 \boxtimes Y_1, W_2 \boxtimes Y_2 \in \cAA$.
Define their \emph{reduced tensor product}
to be
\[
(W_1 \boxtimes Y_1) \tnsrbar (W_2 \boxtimes Y_2)
:= \eval{Y_1,W_2} \cdot W_1 \boxtimes Y_2
\]
where recall $\eval{Y_1,W_2} :=
  \Hom_\cA(\one,Y_1 W_2)$.
$\tnsrbar$ naturally extends to direct sums,
and is clearly associative.

For morphisms $f_1 \boxtimes g_1 :
W_1 \boxtimes Y_1 \to W_1' \boxtimes Y_1'$,
$f_2 \boxtimes g_2 :
W_2 \boxtimes Y_2 \to W_2' \boxtimes Y_2'$,
their \emph{reduced tensor product} is 
given by
\[
(f_1 \boxtimes g_1) \tnsrbar (f_2 \boxtimes g_2)
:= \eval{g_1,f_2} \cdot f_1 \boxtimes g_2
=
\begin{tikzpicture}
\draw (-0.2,0.5) -- (-0.2,-0.5)
  node[pos=0,above] {\small $Y_1$}
  node[below] {\small $Y_1'$};
\node[small_morphism] at (-0.2,0) {\tiny $g_1$};
\draw (0.2,0.5) -- (0.2,-0.5)
  node[pos=0,above] {\small $W_2$}
  node[below] {\small $W_2'$};
\node[small_morphism] at (0.2,0) {\tiny $f_2$};
\end{tikzpicture}
\cdot
\begin{tikzpicture}
\draw (-0.3,0.5) -- (-0.3,-0.5)
  node[pos=0,above] {\small $W_1$}
  node[below] {\small $W_1'$};
\node[small_morphism] at (-0.3,0) {\tiny $f_1$};
\draw (0.3,0.5) -- (0.3,-0.5)
  node[pos=0,above] {\small $Y_2$}
  node[below] {\small $Y_2'$};
\node[small_morphism] at (0.3,0) {\tiny $g_2$};
\node at (0,0) {$\boxtimes$};
\end{tikzpicture}
\]
where the left side, the ``coefficient" $\eval{g_1,f_2}$,
is to be interpreted as a linear map
$\eval{Y_1,W_2} \to \eval{Y_1',W_2'}$
by composition.
\defend
\end{definition}

For example,
\begin{equation} \label{e:cAA_simple}
(X_i \boxtimes X_j^*) \tnsrbar (X_k \boxtimes X_l^*)
\simeq \delta_{j,k} X_i \boxtimes X_l^*
\;\;.
\end{equation}
In particular, when $i\neq j$,
\begin{equation} \label{e:cAA_not_symm}
\begin{split}
(X_i \boxtimes X_i^*) \tnsrbar (X_i \boxtimes X_j^*)
&\simeq X_i \boxtimes X_j^* \\
(X_i \boxtimes X_j^*) \tnsrbar (X_i \boxtimes X_i^*)
&\simeq 0
\end{split}
\end{equation}
so $\tnsrbar$ cannot be braided.

\begin{proposition}
$(\cAA, \tnsrbar)$ is a pivotal multifusion category.
More precisely,
\begin{itemize}
\item The unit object, denoted $\onebar$, is
  $\bigoplus_{i\in \Irr(\cA)} X_i \boxtimes X_i^*$;

\item $(X \boxtimes Y)^\vee = Y^* \boxtimes X^*$,
  $\rvee (X \boxtimes Y) = \rvee Y \boxtimes \rdual{X}$,
  the (co)evaluation maps are described in the proof;

\item The pivotal structure is defined component-wise:
  $\delta_{X\boxtimes Y} = \delta_X \boxtimes \delta_Y$.

\end{itemize}
\end{proposition}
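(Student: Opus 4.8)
The plan is to verify the defining axioms of a pivotal multifusion category directly from the explicit formulas of \defref{d:AA_red}, using as an organizing principle the fact that on simple objects $\tnsrbar$ is simply ``matrix multiplication''. Write $E_{ij} := X_i \boxtimes X_j^*$ for $i,j \in \Irr(\cA)$; these are pairwise non-isomorphic simple objects of $\cAA$, every object is a finite direct sum of them, and so $\cAA$ is semisimple with $\End$ of every simple equal to $\kk$. Since $\eval{X_j^*,X_k} = \Hom_\cA(X_j,X_k) = \delta_{jk}\kk$, formula \eqnref{e:cAA_simple} reads $E_{ij} \tnsrbar E_{kl} \simeq \delta_{jk}E_{il}$, so $(\cAA,\tnsrbar)$ is equivalent, as a monoidal category, to the canonical multifusion category $\matvec{n}$ with $n = |\Irr(\cA)|$, via $E_{ij} \leftrightarrow$ the elementary matrix supported in position $(i,j)$. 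This already dictates the shape of all the structure data; what remains is to spell it out with correct normalizations and to match it with the one claimed.

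First I would dispose of associativity: the constraint for $\tnsrbar$ is induced by the associativity constraint of $\cA$ on the two surviving tensor factors, so its naturality and the pentagon equation follow from those in $\cA$ (and are trivial in the model $\matvec{n}$). For the unit, take $W\boxtimes Y \in \cAA$ and compute $\onebar \tnsrbar (W\boxtimes Y) = \bigl(\bigoplus_i \eval{X_i^*,W}\cdot X_i\bigr)\boxtimes Y = \bigl(\bigoplus_i \Hom_\cA(X_i,W)\cdot X_i\bigr)\boxtimes Y$; the left unit constraint $l_{W\boxtimes Y}$ is the canonical evaluation isomorphism $\bigoplus_i \Hom_\cA(X_i,W)\cdot X_i \simeq W$ of semisimplicity, tensored with $\id_Y$, and the right unit constraint is the analogous isomorphism built from $\bigoplus_i \eval{Y,X_i}\cdot X_i^* \simeq Y$. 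Naturality and the triangle axiom are then immediate. Note that since $\onebar$ is a direct sum of the $n$ pairwise non-isomorphic simples $X_i\boxtimes X_i^*$, this already shows the category is genuinely \emph{multi}fusion (and not fusion once $n>1$).

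The substantive part is rigidity together with the pivotal structure. For $X\boxtimes Y$ with proposed dual $Y^*\boxtimes X^*$, I would write $\ev_{X\boxtimes Y} : (Y^*\boxtimes X^*)\tnsrbar(X\boxtimes Y) = \eval{X^*,X}\cdot(Y^*\boxtimes Y) \to \onebar$ and $\coev_{X\boxtimes Y} : \onebar \to (X\boxtimes Y)\tnsrbar(Y^*\boxtimes X^*) = \eval{Y,Y^*}\cdot(X\boxtimes X^*)$ out of the ordinary evaluation and coevaluation of $X$ and $Y$ in $\cA$, paired against the relevant multiplicity spaces via the canonical semisimple decompositions of $Y^*\boxtimes Y$ and $X\boxtimes X^*$; the relative normalization is then pinned down by requiring the two zig-zag composites to be the identity. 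After decomposing into the $E_{ij}$, each zig-zag identity factors as the combinatorial identity valid in $\matvec{n}$ times an ordinary zig-zag identity in $\cA$ for the surviving simple factor, hence holds; the right duals $\rvee(X\boxtimes Y) = \rvee Y \boxtimes \rdual{X}$ are treated identically. Finally, because $\tnsrbar$ acts ``flatly'' on the $\cA$-factors — the coefficient $\eval{g_1,f_2}$ of \defref{d:AA_red} only acts on multiplicity spaces — the double-dual functor of $(\cAA,\tnsrbar)$ is computed component-wise, and one checks that $\delta_{X\boxtimes Y} := \delta_X \boxtimes \delta_Y : X\boxtimes Y \to X^{**}\boxtimes Y^{**}$ satisfies the pivotal monoidality axiom because $\delta$ does in $\cA$. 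I expect the only genuinely delicate point to be confirming that this component-wise $\delta$ is compatible with the chosen (co)evaluation maps — i.e. that it is not rescaled by a scalar depending on $X,Y$ — which comes down to the sphericality of $\cA$ and the symmetry of the non-degenerate pairing on the spaces $\eval{\cdots}$. As a cross-check, once \thmref{t:AA_ZA} is available the entire proposition also follows by transporting the pivotal multifusion structure of $(\ZA,\tnsrbar)$ from \prpref{p:reduced_pivotal} along that equivalence.
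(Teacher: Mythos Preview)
Your approach is correct and ultimately proves the same thing, but the organization differs from the paper's in a way worth noting. The paper simply writes down explicit diagrammatic formulas for the unit constraints and (co)evaluation maps --- all carrying $\sqrt{d_k}$ normalizations coming from the dual-basis convention \eqnref{e:summation_convention} --- and leaves the verifications as ``straightforward''. You instead take the matrix-unit picture $E_{ij}\leftrightarrow X_i\boxtimes X_j^*$ as a guiding principle (this is exactly the content of \prpref{p:ZA_matvec}, which appears \emph{later} in the paper) and describe the structure maps abstractly via semisimplicity, with normalizations fixed a posteriori by the zig-zag identities. Both routes work; yours is more conceptual, while the paper's explicit formulas are what get reused in the proof of \thmref{t:AA_ZA} and in the subsequent dimension computation.

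One caveat you should be aware of: the paper observes right after \prpref{p:ZA_matvec} that the equivalence $(\cAA,\tnsrbar)\simeq\matvec{\Irr(\cA)}$ is \emph{not} pivotal --- the left dimension of $X_i\boxtimes X_j^*$ is $d_j/d_i$, not $1$. So your organizing principle genuinely breaks down at the pivotal level, and the component-wise $\delta_X\boxtimes\delta_Y$ cannot be obtained by transport from $\matvec{n}$. You handle this correctly by treating the pivotal structure separately, and you are right that the compatibility check is the one place requiring care; just be sure not to lean on the $\matvec{n}$ comparison there. Your final cross-check via \thmref{t:AA_ZA} is logically sound (that theorem is proved by direct construction and does not presuppose the present proposition), though of course it is not available at this point in the exposition.
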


\begin{proof}
Fix $X \boxtimes Y \in \cAA$.
The left and right unit constraint are given by
\[
l_{X\boxtimes Y} :=
\sum_{k \in \Irr(\cA)} \sqrt{d_k}
\begin{tikzpicture}
\draw (-0.3,0.3) .. controls +(-90:0.5cm) and +(-90:0.5cm) .. (0.3,0.3)
  node[pos=0,above] {\small $X_k^*$}
  node[above] {\small $X$};
\node[small_morphism] at (-0.25,0.1) {\tiny $\al$};
\end{tikzpicture}
\cdot
\begin{tikzpicture}
\draw (-0.3,-0.3) -- (-0.3,0.3)
  node[above] {\small $X_k$}
  node[pos=0,below] {\small $X$};
\node[small_morphism] at (-0.3,0) {\tiny $\al$};
\node at (0,0) {$\boxtimes$};
\draw (0.3,-0.3) -- (0.3,0.3)
  node[above] {\small $Y$}
  node[pos=0,below] {\small $Y$};
\end{tikzpicture}
%%%%%%%%%%%%%%%%%%%%%%%%
\;\; ; \;\;
%%%%%%%%%%%%%%%%%%%%%%%%
r_{X\boxtimes Y} :=
\sum_{k \in \Irr(\cA)} \sqrt{d_k}
\begin{tikzpicture}
\draw (-0.3,0.3) .. controls +(-90:0.5cm) and +(-90:0.5cm) .. (0.3,0.3)
  node[pos=0,above] {\small $Y$}
  node[above] {\small $X_k$};
\node[small_morphism] at (-0.25,0.1) {\tiny $\al$};
\end{tikzpicture}
\cdot
\begin{tikzpicture}
\draw (-0.3,-0.3) -- (-0.3,0.3)
  node[above] {\small $X$}
  node[pos=0,below] {\small $X$};
\node at (0,0) {$\boxtimes$};
\draw (0.3,-0.3) -- (0.3,0.3)
  node[above] {\small $X_k^*$}
  node[pos=0,below] {\small $Y$};
\node[small_morphism] at (0.3,0) {\tiny $\al$};
\end{tikzpicture}
\]
where we recall that $\al$ is a sum over
a pair of dual bases
(see \eqnref{e:summation_convention}).
Their inverses are given by
flipping the diagram upside down.

The left (co)evaluation maps are given by
\[
\ev_{X\boxtimes Y} :=
\sum_{k \in \Irr(\cA)} \sqrt{d_k}
\begin{tikzpicture}
\draw (-0.3,0.3) .. controls +(-90:0.5cm) and +(-90:0.5cm) .. (0.3,0.3)
  node[pos=0,above] {\small $X^*$}
  node[above] {\small $X$};
\end{tikzpicture}
\cdot
\begin{tikzpicture}
\draw (-0.3,-0.3) -- (-0.3,0.3)
  node[above] {\small $Y^*$}
  node[pos=0,below] {\small $X_k$};
\node[small_morphism] at (-0.3,0) {\tiny $\al$};
\node at (0,0) {$\boxtimes$};
\draw (0.3,-0.3) -- (0.3,0.3)
  node[above] {\small $Y$}
  node[pos=0,below] {\small $X_k^*$};
\node[small_morphism] at (0.3,0) {\tiny $\al$};
\end{tikzpicture}
%%%%%%%%%%%%%%%%%%%%%%%%
\;\; ; \;\;
%%%%%%%%%%%%%%%%%%%%%%%%
\coev_{X \boxtimes Y} :=
\sum_{k \in \Irr(\cA)} \sqrt{d_k}
\begin{tikzpicture}
\draw (-0.3,-0.3) .. controls +(90:0.5cm) and +(90:0.5cm) .. (0.3,-0.3)
  node[pos=0,below] {\small $Y$}
  node[below] {\small $Y^*$};
\end{tikzpicture}
\cdot
\begin{tikzpicture}
\draw (-0.3,-0.3) -- (-0.3,0.3)
  node[above] {\small $X_k$}
  node[pos=0,below] {\small $X$};
\node[small_morphism] at (-0.3,0) {\tiny $\al$};
\node at (0,0) {$\boxtimes$};
\draw (0.3,-0.3) -- (0.3,0.3)
  node[above] {\small $X_k^*$}
  node[pos=0,below] {\small $X^*$};
\node[small_morphism] at (0.3,0) {\tiny $\al$};
\end{tikzpicture}
\;\;.
\]
The right (co)evaluation maps are given by
similar diagrams.
It is straightforward to check that these have
the right properties.
\end{proof}

\begin{theorem} \label{t:AA_ZA}
There is an equivalence of pivotal multifusion categories
\begin{align*}
K: \cAA &\simeq_\tnsr (\ZA,\tnsrbar)
\\
X \boxtimes Y &\mapsto (XY, c c^\inv)
\;\;.
\end{align*}
\end{theorem}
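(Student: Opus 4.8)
The plan is to build on Müger's theorem \ocite{Mu} (see also \ocite{EGNO}*{Proposition 8.20.12}): for modular $\cA$ the functor $K\colon X\boxtimes Y\mapsto (XY,cc^\inv)$, $f\boxtimes g\mapsto f\tnsr g$, is already known to be an equivalence of abelian categories. So it suffices to equip $K$ with a tensor structure relating the reduced product of \defref{d:AA_red} on $\cAA$ to the one of \defref{d:tnsr_red_ZA} on $\ZA$, and then to observe that, being a tensor equivalence between rigid categories whose pivotal structures are both inherited from $\cA$, $K$ automatically preserves duals and is pivotal. Concretely, once the tensor structure is in place one has $K(\delta_X\boxtimes\delta_Y)=\delta_X\tnsr\delta_Y=\delta_{XY}$, which is exactly the pivotal morphism of $(XY,cc^\inv)$ in $(\ZA,\tnsrbar)$ by \prpref{t:ZA_reduced}; and left duals being unique up to canonical isomorphism, $K\big((X\boxtimes Y)^\vee\big)=K(Y^*\boxtimes X^*)\cong\big((XY)^*,(cc^\inv)^\vee\big)=K(X\boxtimes Y)^\vee$, compatibly with the (co)evaluations. (If one prefers, the identity $(cc^\inv)^\vee=cc^\inv$ on $Y^*X^*$ can also be checked directly from \eqnref{e:brd_duals} using naturality of the braiding, the auxiliary twists cancelling as in the remark after \prpref{t:ZA_reduced}.)

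The heart of the matter is the computation of $\tnsrbar$ on the $\ZA$ side. Fix $W_1\boxtimes Y_1,W_2\boxtimes Y_2\in\cAA$ and write $\ga_i=cc^\inv$ for the half-braiding carried by $W_iY_i=K(W_i\boxtimes Y_i)$. I claim that the projection $Q_{\ga_1,\ga_2}$ of \defref{d:tnsr_red}, acting on $W_1\tnsr Y_1\tnsr W_2\tnsr Y_2$, equals the idempotent
\[
\mathbf P:=\sum_\al\big(\id_{W_1}\tnsr\al\tnsr\id_{Y_2}\big)\circ\big(\id_{W_1}\tnsr\al^*\tnsr\id_{Y_2}\big),\qquad\al\in\eval{Y_1,W_2},
\]
($\al^*\colon Y_1W_2\to\one$ the dual cap) which caps the middle pair $Y_1W_2$ to the unit and recreates it, and whose image is canonically $\eval{Y_1,W_2}\tnsr(W_1\tnsr Y_2)$. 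Indeed, since the $\ga_i$ are genuine (and inverse) braidings, the coupons in the picture defining $Q_{\ga_1,\ga_2}$ become honest crossings, so $Q_{\ga_1,\ga_2}$ is $\frac1\cD$ times the skein element in which a single regular-coloured loop links the four strands $W_1,Y_1,W_2,Y_2$; the sliding lemma \eqnref{e:sliding} moves the loop into a standard encircling position and the modular handle-slide \eqnref{e:charge_conservation} then collapses it precisely to $\mathbf P$. (Equivalently, since both categories are semisimple and $\tnsrbar$ is additive, it suffices to run this on simples, where \eqnref{e:charge_conservation} kills the regular loop unless the two middle labels agree, reproducing \eqnref{e:cAA_simple}.) Finally \lemref{l:hfbrd_red} identifies the resulting half-braiding $\ga_1\tnsrbar\ga_2$ with $\ga_1\tnsr c^\inv$; on $\eval{Y_1,W_2}\tnsr(W_1\tnsr Y_2)$ the passages over the frozen $Y_1$ and $W_2$ strands are trivial, so this restricts to $\id_{\eval{Y_1,W_2}}\tnsr cc^\inv$. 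Hence $K(W_1\boxtimes Y_1)\tnsrbar K(W_2\boxtimes Y_2)\cong\big(\eval{Y_1,W_2}\cdot W_1Y_2,\,cc^\inv\big)=K\big((W_1\boxtimes Y_1)\tnsrbar(W_2\boxtimes Y_2)\big)$, the isomorphism $J^K$ being assembled from the $\al$-caps; its naturality in each variable reduces to \eqnref{e:al_natural} together with \eqnref{e:combine}, exactly as in \prpref{p:I_tensor}.

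What remains — the unit constraint and the coherence axioms — is routine. We have $K(\onebar)=\big(\bigoplus_iX_iX_i^*,cc^\inv\big)$, and by the adjunction between $I$ and the forgetful functor (\prpref{p:center}) one computes $\Hom_{\ZA}\big(I(\one),(X_iX_j^*,cc^\inv)\big)=\Hom_\cA(\one,X_iX_j^*)=\delta_{ij}\kk$, so $\onebar_{\ZA}=I(\one)\cong\bigoplus_i(X_iX_i^*,cc^\inv)=K(\onebar)$; this isomorphism, unique up to a scalar on each summand, is the unit constraint, and the left and right unit isomorphisms match under $K$ since on both sides they are given by the same $\al$-diagrams. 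Associativity on the $\ZA$ side is \lemref{l:assoc} and on the $\cAA$ side is the obvious associator, and the pentagon together with the compatibility of $J^K$ with associators are verified by standard diagram manipulations; because $\onebar$ is genuinely a nontrivial direct sum, the result is a \emph{multi}fusion category, in accordance with $(\ZA,\tnsrbar)$.

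I expect the only real obstacle to be the diagrammatic identity $Q_{cc^\inv,cc^\inv}=\mathbf P$: one must carefully track the over/under data coming from $cc^\inv$ through the coupons of \defref{d:tnsr_red}, isotope the resulting regular loop into standard position, and invoke modularity via \eqnref{e:charge_conservation}. This is essentially the computation already carried out in \ocite{tham}*{Section 3} for $\Zel(\cA)$ (and in \ocite{wassermansymm} in the symmetric case), so in the write-up we would recall the relevant pictures and refer there for the pictorial details.
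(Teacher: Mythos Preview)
Your approach is essentially the paper's: the core step is the same identification of $Q_{cc^\inv,cc^\inv}$ with the $\alpha$-cap projector $\mathbf P$, and your tensor structure $J^K$ is exactly the paper's $L$.

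One point deserves more care, however: the unit constraint. You argue abstractly via adjunction that $K(\onebar)\cong I(\one)$, but the paper emphasizes that these objects carry \emph{different} half-braidings ($cc^\inv$ versus $\Gamma$), and the isomorphism between them is concretely the $S$-matrix. Your claim that ``on both sides [the unitors] are given by the same $\alpha$-diagrams'' is not accurate as stated: the unitors of $(\ZA,\tnsrbar)$ in \prpref{t:ZA_reduced} involve $\ga$-coupons and the half-braiding $\Gamma$ of $I(\one)$, and matching them against the $\cAA$ unitors through $J^K$ requires threading the $S$-matrix through --- a second, genuine use of modularity. This is not a flaw in the strategy, but it is a nontrivial verification you should spell out rather than absorb into ``routine.''
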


\begin{proof}
The tensor structure $L$ on $K$ is given as follows:
for $W_1 \boxtimes Y_1, W_2 \boxtimes Y_2$,
the isomorphism
$L : K(W_1\boxtimes Y_1) \tnsrbar K(W_2\boxtimes Y_2)
\simeq K((W_1\boxtimes Y_1) \tnsrbar (W_2 \boxtimes Y_2))$
is given by
\begin{align*}
L: (W_1 Y_1, c c^\inv) \tnsrbar (W_2 Y_2, c c^\inv)
&\simeq 
\eval{Y_1,W_2} \cdot (W_1 Y_2,c c^\inv)
\\
L =
\begin{tikzpicture}
\node[small_morphism] (al) at (0,0) {\tiny $\al$};
\draw (al) to[out=-135,in=90] (-0.2,-0.5)
  node[below] {\tiny $Y_1$};
\draw (al) to[out=-45,in=90] (0.2,-0.5)
  node[below] {\tiny $W_2$};
\end{tikzpicture}
&\cdot
\begin{tikzpicture}
\node[small_morphism] (al) at (0,0) {\tiny $\al$};
\draw (al) to[out=135,in=-90] (-0.2,0.5)
  node[above] {\tiny $Y_1$};
\draw (al) to[out=45,in=-90] (0.2,0.5)
  node[above] {\tiny $W_2$};
\draw (-0.5,0.5) .. controls +(down:0.3cm) and +(up:0.3cm) .. (-0.3,-0.5)
  node[pos=0,above] {\tiny $W_1$}
  node[below] {\tiny $W_1$};
\draw (0.5,0.5) .. controls +(down:0.3cm) and +(up:0.3cm) .. (0.3,-0.5)
  node[pos=0,above] {\tiny $Y_2$}
  node[below] {\tiny $Y_2$};
\end{tikzpicture}
\;\;.
\end{align*}

The inverse to $L$ is given by flipping the diagram upside down.
The following observation is helpful:
for $(W_1 Y_1,c c^\inv),(W_2 Y_2, c c^\inv) \in \ZA$,
we have
\[
W_1 Y_1 \tnsrproj{c c^\inv}{c c^\inv} W_2 Y_2
=
\im\Big(
\frac{1}{\cD}
%%%%%%%%%%%%%%%%%%%
\begin{tikzpicture}
\tikzmath{
  \toprow = 0.6;
  \bottom = -0.6;
}
\draw (0.5,\bottom) -- (0.5,\toprow)
  node[above] {\tiny $Y_2$};
\draw (-0.2,\bottom) -- (-0.2,\toprow)
  node[above] {\tiny $Y_1$};
\draw (0.2,\bottom) -- (0.2,\toprow)
  node[above] {\tiny $W_2$};
\draw[regular,overline={1.5}] (0,0) ellipse (0.7cm and 0.3cm);
\draw[overline={1.5}] (-0.5,\bottom) -- (-0.5,\toprow)
  node[above] {\tiny $W_1$};
\draw[overline={1.5}] (-0.2,0) -- (-0.2,\toprow);
\draw[overline={1.5}] (0.2,0) -- (0.2,\toprow);
\end{tikzpicture}
%%%%%%%%%%%%%%%%%%%
\Big)
=
\im\Big( \;
%%%%%%%%%%%%%%%%%%%
\begin{tikzpicture}
\tikzmath{
  \toprow = 0.6;
  \bottom = -0.6;
}
\draw (-0.5,\bottom) -- (-0.5,\toprow);
\draw (0.5,\bottom) -- (0.5,\toprow);
\node[small_morphism] (al1) at (0,0.2) {\tiny $\al$};
\node[small_morphism] (al2) at (0,-0.2) {\tiny $\al$};
\draw (al1) to[out=135,in=-90] (-0.2,\toprow);
\draw (al1) to[out=45,in=-90] (0.2,\toprow);
\draw (al2) to[out=-135,in=-90] (-0.2,\bottom);
\draw (al2) to[out=-45,in=-90] (0.2,\bottom);
\end{tikzpicture}
%%%%%%%%%%%%%%%%%%%
\; \Big)
\;\;.
\]
%(see \eqnref{e:charge_conservation}).
It is easy to check that $L$ satisfies the hexagon axiom.

Note that $K$ does not send unit to unit - the half-braiding on
$K(\onebar) = (\bigoplus X_i X_i^*,cc^\inv)$
is not the same as
$\onebar = (\bigoplus X_i X_i^*, \Gamma)$;
the isomorphism $K(\onebar) \simeq \onebar$
is essentially given by the $S$-matrix:
\[
S = \sum_{i,j\in \Irr(\cA)} \sqrt{d_i}\sqrt{d_j}
\begin{tikzpicture}
\tikzmath{
  \tp = 0.4;
  \bt = -0.4;
  \lf = -0.15;
  \rt = 0.15;
  \midtp = 0.1;
  \middn = -0.1;
}
\draw (\rt,\tp) to[out=-90,in=0] (0,\middn);
\draw[midarrow_rev={0.4}] (\lf,\bt) to[out=90,in=180] (0,\midtp);
\node at (-0.3,0.3) {\tiny $i$};
\draw[overline={1.5}] (\rt,\bt) to[out=90,in=0] (0,\midtp);
\draw[overline={1.5},midarrow={0.4}]
  (\lf,\tp) to[out=-90,in=180] (0,\middn);
\node at (-0.3,-0.3) {\tiny $j$};
\end{tikzpicture}
\;\;.
\]
%(use \eqnref{e:combine} to check $S$ intertwines
%half-braidings).
Clearly the pivotal structures agree.
\end{proof}

The equivalence given above has a nice interpretation
in $\CYA$. Namely, the composition
$\Kar(H) \circ \Kar(G)^\inv \circ K :
  \cAA \simeq \CYA$
is naturally isomorphic to the following functor:
\begin{align*}
\cAA &\simeq \CYA
\\
\begin{tikzpicture}
\node (a) at (0,0.65) {$X\boxtimes Y$};
\node (b) at (0,-0.65) {$X' \boxtimes Y'$};
\draw[midarrow={0.55}] (a) -- (b) node[pos=0.5,right]
  {$f \boxtimes g$};
\end{tikzpicture}
%%%%%%%%%%%%%%%%%%%%
&
\begin{tikzpicture}
\node at (0,0.65) {$\mapsto$};
\node at (0,0) {$\mapsto$};
\node at (0,-0.65) {$\mapsto$};
\end{tikzpicture}
%%%%%%%%%%%%%%%%%%%%
\begin{tikzpicture}
\tikzmath{
  \toprow = 1;
  \bottom = -1;
  \midrow = 0;
  \qttop = 0.5;
  \qtbot = -0.5;
}
%% changing ellipse ratio 5:1
% bottom ellipse
\draw (0,\bottom) ellipse (1cm and 0.2cm);
%% back half of graph
\draw[regular] (0.75,\qttop) arc(0:180:0.75cm and 0.15cm);
\draw[regular] (0.75,\qtbot) arc(0:180:0.75cm and 0.15cm);
%% inner vertical lines
\draw[overline={1}] (-0.5,\bottom) -- (-0.5,\toprow);
\draw[overline={1}] (0.5,\bottom) -- (0.5,\toprow);
\draw (0,\bottom) ellipse (0.5cm and 0.1cm); %bottom inner ellipse
%% rest of graph
\node[dotnode] (n1) at (-0.85,\toprow) {};
\node[dotnode] (n2) at (-0.65,\toprow) {};
\node[dotnode] (n3) at (-0.85,\bottom) {};
\node[dotnode] (n4) at (-0.65,\bottom) {};
\draw[overline={1}] (n1) -- (n3);
\draw[overline={1}] (n2) -- (n4);
\node[dotnode] at (-0.85,\midrow) {};
\node at (-0.95,0.05) {\tiny $f$};
\node[dotnode] at (-0.65,\midrow) {};
\node at (-0.55,0.05) {\tiny $g$};
\draw[regular, overline={1.5}]
  (-0.75,\qttop) arc(180:360:0.75cm and 0.15cm);
\draw[regular, overline={1.5}]
  (-0.75,\qtbot) arc(180:360:0.75cm and 0.15cm);
%% top ellipse
\draw[overline={1}] (0,\toprow) ellipse (1cm and 0.2cm);
\draw (0,\toprow) ellipse (0.5cm and 0.1cm);
%% vertical lines
\draw (-1,\bottom) -- (-1,\toprow);
\draw (1,\bottom) -- (1,\toprow);
%% node label
\node at (-0.75,1.3) {\tiny $X Y$};
\node at (-0.75,-1.35) {\tiny $X' Y'$};
%% curly braces
\draw[decoration={brace,amplitude=3pt},decorate]
  (-1.2,0.3) -- (-1.2,\toprow);
\node at (-1.55,0.65) {im};
\node at (1.3,0.65) {$\frac{1}{\cD}$};
\draw[decoration={brace,amplitude=3pt},decorate]
  (-1.2,\bottom) -- (-1.2,-0.3);
\node at (-1.55,-0.65) {im};
\node at (1.3,-0.65) {$\frac{1}{\cD}$};
\end{tikzpicture}
%%%%%%%%%%%%%%%%%
\;\;.
\end{align*}

The composition $\Kar(H) \circ \Kar(G)^\inv \circ K$
itself only hits objects with one marked point $p$.
The functor presented above is more intuitive
from the following perspective.
Restricting to the first factor,
i.e. setting $Y=\one$,
this is like including $\cA$ into $\CYA$
along the outer boundary; likewise,
restricting to the second factor is like including $\cA$ into $\CYA$
along the inner boundary
(these are depicted below).
Then the functor presented above
is the $\tnsrst$-tensor product of these two restricted functors.
\[
X \mapsto (X, c) \mapsto
\im \big( \frac{1}{\cD}
\begin{tikzpicture}
\tikzmath{
  \toprow = 0.5;
  \bottom = -0.5;
  \midrow = 0;
}
%% changing ellipse ratio 5:1
% bottom ellipse
\draw (0,\bottom) ellipse (1cm and 0.2cm);
%% back half of graph
\draw[regular] (0.6,\midrow) arc(0:180:0.6cm and 0.1cm);
%% inner vertical lines
\draw[overline={1}] (-0.5,\bottom) -- (-0.5,\toprow);
\draw[overline={1}] (0.5,\bottom) -- (0.5,\toprow);
\draw (0,\bottom) ellipse (0.5cm and 0.1cm); %bottom inner ellipse
%% rest of graph
\node[dotnode] (n1) at (-0.75,\toprow) {};
\node[dotnode] (n2) at (-0.75,\bottom) {};
\draw[overline={1}] (n1) -- (n2);
\draw[regular, overline={1.5}]
  (0.6,\midrow) arc(0:-180:0.6cm and 0.1cm);
%% top ellipse
\draw[overline={1}] (0,\toprow) ellipse (1cm and 0.2cm);
\draw (0,\toprow) ellipse (0.5cm and 0.1cm);
%% vertical lines
\draw (-1,\bottom) -- (-1,\toprow);
\draw (1,\bottom) -- (1,\toprow);
%% node label
\node at (-0.75,0.8) {\tiny $X$};
\node at (-0.75,-0.8) {\tiny $X$};
\end{tikzpicture}
\big)
%%%%%%%%%%%%%%%%%
\;\;\;
;
\;\;\;
%%%%%%%%%%%%%%%%%
Y \mapsto (Y, c^\inv) \mapsto
\im \big( \frac{1}{\cD}
\begin{tikzpicture}
\tikzmath{
  \toprow = 0.5;
  \bottom = -0.5;
  \midrow = 0;
}
%% changing ellipse ratio 5:1
% bottom ellipse
\draw (0,\bottom) ellipse (1cm and 0.2cm);
%% back half of graph
\draw[regular] (0.875,\midrow) arc(0:180:0.875cm and 0.15cm);
%% inner vertical lines
\draw[overline={1}] (-0.5,\bottom) -- (-0.5,\toprow);
\draw[overline={1}] (0.5,\bottom) -- (0.5,\toprow);
\draw (0,\bottom) ellipse (0.5cm and 0.1cm); %bottom inner ellipse
%% rest of graph
\node[dotnode] (n1) at (-0.75,\toprow) {};
\node[dotnode] (n2) at (-0.75,\bottom) {};
\draw[overline={1}] (n1) -- (n2);
\draw[regular, overline={1.5}]
  (0.875,\midrow) arc(0:-180:0.875cm and 0.1cm);
%% top ellipse
\draw[overline={1}] (0,\toprow) ellipse (1cm and 0.2cm);
\draw (0,\toprow) ellipse (0.5cm and 0.1cm);
%% vertical lines
\draw (-1,\bottom) -- (-1,\toprow);
\draw (1,\bottom) -- (1,\toprow);
%% node label
\node at (-0.75,0.8) {\tiny $Y$};
\node at (-0.75,-0.8) {\tiny $Y$};
\end{tikzpicture}
\big)
\;\;.
\]
This picture also elucidates the definition of
$\tnsrbar$ on $\cAA$: if we take the tensor product
of the two functors above in opposite order,
essentially looking at
$(\one \boxtimes X) \tnsrbar (Y \boxtimes \one)$,
we get
\[
K(\one \boxtimes X) \tnsrst K(Y \boxtimes \one)
=
\im \big(
\frac{1}{\cD^2}
\begin{tikzpicture}
\tikzmath{
  \toprow = 0.5;
  \bottom = -0.5;
  \midrow = 0;
  %\qttop = 0.5;
  %\qtbot = -0.5;
}
%% changing ellipse ratio 5:1
% bottom ellipse
\draw (0,\bottom) ellipse (1cm and 0.2cm);
%% back half of graph
\draw[regular] (0.55,\midrow) arc(0:180:0.55cm and 0.1cm);
\draw[regular] (0.95,\midrow) arc(0:180:0.95cm and 0.2cm);
%% inner vertical lines
\draw[overline={1}] (-0.5,\bottom) -- (-0.5,\toprow);
\draw[overline={1}] (0.5,\bottom) -- (0.5,\toprow);
\draw (0,\bottom) ellipse (0.5cm and 0.1cm); %bottom inner ellipse
%% rest of graph
\node[dotnode] (n1) at (-0.85,\toprow) {};
\node[dotnode] (n2) at (-0.65,\toprow) {};
\node[dotnode] (n3) at (-0.85,\bottom) {};
\node[dotnode] (n4) at (-0.65,\bottom) {};
\draw[overline={1}] (n1) -- (n3);
\draw[overline={1}] (n2) -- (n4);
\draw[regular, overline={1.5}]
  (-0.55,\midrow) arc(180:360:0.55cm and 0.1cm);
\draw[regular, overline={1.5}]
  (-0.95,\midrow) arc(180:360:0.95cm and 0.2cm);
%% top ellipse
\draw[overline={1}] (0,\toprow) ellipse (1cm and 0.2cm);
\draw (0,\toprow) ellipse (0.5cm and 0.1cm);
%% vertical lines
\draw (-1,\bottom) -- (-1,\toprow);
\draw (1,\bottom) -- (1,\toprow);
%% node label
\node at (-0.75,0.8) {\tiny $XY$};
\node at (-0.75,-0.85) {\tiny $XY$};
\end{tikzpicture}
%%%%%%%%%%%%%%%%%%%%%%%%%%%%%%%%%%%%%%%%%
\big)
=
\im \big(
%%%%%%%%%%%%%%%%%%%%%%%%%%%%%%%%%%%%%%%%%
\frac{1}{\cD^2}
\begin{tikzpicture}
\tikzmath{
  \toprow = 0.5;
  \bottom = -0.5;
  \midrow = 0;
  %\qttop = 0.5;
  %\qtbot = -0.5;
}
%% changing ellipse ratio 5:1
% bottom ellipse
\draw (0,\bottom) ellipse (1cm and 0.2cm);
%% back half of graph
\draw[regular] (0.55,-0.1) arc(0:180:0.55cm and 0.1cm);
\draw[regular] (-0.55,0.1) arc(0:180:0.2cm and 0.04cm);
%\draw[regular] (0.95,\midrow) arc(0:180:0.95cm and 0.2cm);
%% inner vertical lines
\draw[overline={1}] (-0.5,\bottom) -- (-0.5,\toprow);
\draw[overline={1}] (0.5,\bottom) -- (0.5,\toprow);
\draw (0,\bottom) ellipse (0.5cm and 0.1cm); %bottom inner ellipse
%% rest of graph
\node[dotnode] (n1) at (-0.85,\toprow) {};
\node[dotnode] (n2) at (-0.65,\toprow) {};
\node[dotnode] (n3) at (-0.85,\bottom) {};
\node[dotnode] (n4) at (-0.65,\bottom) {};
\draw[overline={1}] (n1) -- (n3);
\draw[overline={1}] (n2) -- (n4);
\draw[regular, overline={1.5}]
  (-0.55,-0.1) arc(180:360:0.55cm and 0.1cm);
\draw[regular, overline={1.5}]
  (-0.95,0.1) arc(180:360:0.2cm and 0.04cm);
%\draw[regular, overline={1.5}]
%  (-0.95,\midrow) arc(180:360:0.95cm and 0.2cm);
%% top ellipse
\draw[overline={1}] (0,\toprow) ellipse (1cm and 0.2cm);
\draw (0,\toprow) ellipse (0.5cm and 0.1cm);
%% vertical lines
\draw (-1,\bottom) -- (-1,\toprow);
\draw (1,\bottom) -- (1,\toprow);
%% node label
\node at (-0.75,0.8) {\tiny $XY$};
\node at (-0.75,-0.85) {\tiny $XY$};
\end{tikzpicture}
%%%%%%%%%%%%%%%%%%%%%%%%%%%%%%%%%%%%%%%%%
\big)
=
\im \big(
%%%%%%%%%%%%%%%%%%%%%%%%%%%%%%%%%%%%%%%%%
\frac{1}{\cD}
\begin{tikzpicture}
\tikzmath{
  \toprow = 0.5;
  \bottom = -0.5;
  \midrow = 0;
  %\qttop = 0.5;
  %\qtbot = -0.5;
}
%% changing ellipse ratio 5:1
% bottom ellipse
\draw (0,\bottom) ellipse (1cm and 0.2cm);
%% back half of graph
\draw[regular] (0.55,0) arc(0:180:0.55cm and 0.1cm);
%\draw[regular] (-0.55,0.1) arc(0:180:0.2cm and 0.04cm);
%\draw[regular] (0.95,\midrow) arc(0:180:0.95cm and 0.2cm);
%% inner vertical lines
\draw[overline={1}] (-0.5,\bottom) -- (-0.5,\toprow);
\draw[overline={1}] (0.5,\bottom) -- (0.5,\toprow);
\draw (0,\bottom) ellipse (0.5cm and 0.1cm); %bottom inner ellipse
%% rest of graph
\node[dotnode] (n1) at (-0.85,\toprow) {};
\node[dotnode] (n2) at (-0.65,\toprow) {};
\node[dotnode] (n3) at (-0.85,\bottom) {};
\node[dotnode] (n4) at (-0.65,\bottom) {};
\node[dotnode] (a1) at (-0.75,0.1) {};
\node at (-0.9,0.1) {\tiny $\al$};
\node[dotnode] (a2) at (-0.75,-0.1) {};
\node at (-0.9,-0.1) {\tiny $\al$};
\draw[overline={1}] (n1) to[out=-90,in=120] (a1);
\draw[overline={1}] (n2) to[out=-90,in=60] (a1);
\draw[overline={1}] (n3) to[out=90,in=-120] (a2);
\draw[overline={1}] (n4) to[out=90,in=-60] (a2);
\draw[regular, overline={1.5}]
  (-0.55,0) arc(180:360:0.55cm and 0.1cm);
%\draw[regular, overline={1.5}]
%  (-0.95,0.1) arc(180:360:0.2cm and 0.04cm);
%\draw[regular, overline={1.5}]
%  (-0.95,\midrow) arc(180:360:0.95cm and 0.2cm);
%% top ellipse
\draw[overline={1}] (0,\toprow) ellipse (1cm and 0.2cm);
\draw (0,\toprow) ellipse (0.5cm and 0.1cm);
%% vertical lines
\draw (-1,\bottom) -- (-1,\toprow);
\draw (1,\bottom) -- (1,\toprow);
%% node label
\node at (-0.75,0.8) {\tiny $XY$};
\node at (-0.75,-0.85) {\tiny $XY$};
\end{tikzpicture}
\big)
\simeq
\eval{X,Y} \cdot \onest
\]
where we used \eqnref{e:sliding} and
\eqnref{e:charge_conservation}.

\begin{remark}
\label{r:coincidence}
Note that the equivalence
$\cAAbop \simeq_{\tnsr, \text{br}} (\ZA,\tnsr)$
mentioned in the beginning of this section
is also built by tensoring the same two functors together;
it just happens that
\footnote{Coincidence? I think NOT!}
\[
K(X \boxtimes \one) \tnsr K(\one \boxtimes Y)
= (X,c) \tnsr (Y,c^\inv)
= (XY, c c^\inv)
= K(X \boxtimes Y)
\simeq K(X \boxtimes \one) \tnsrbar K(\one \boxtimes Y)
= (X,c) \tnsrbar (Y,c^\inv)
\;\;.
\]

One way to make sense of this coincidence is to give a
topological interpretation of $K$,
which we shall do now without much justification.
(See also Remarks \ref{r:tnsr_topology} and \ref{r:2fold_cy}.)

We claim that $K$, more precisely
$\wdtld{K} = \Kar(H) \circ \Kar(G)^\inv \circ K$,
is defined by the following cobordism:
\[
M = 
\begin{tikzpicture}
\begin{scope}[shift={(0,-0.3)}]
%%points at the bottom
\node[emptynode] (a) at (0,0) {};
\node[emptynode] (b) at (0.5,0) {};
\node[emptynode] (c) at (1.5,0) {};
\node[emptynode] (d) at (2,0) {};
%%draw annulus
\draw (a) .. controls +(90:0.4cm) and +(90:0.4cm) .. (d);
\draw (a) .. controls +(-90:0.4cm) and +(-90:0.4cm) .. (d);
\draw (b) .. controls +(90:0.2cm) and +(90:0.2cm) .. (c);
\draw (b) .. controls +(-90:0.2cm) and +(-90:0.2cm) .. (c);
%%draw caps
\draw (a) .. controls +(90:1.2cm) and +(90:1.2cm) .. (d);
\draw (b) .. controls +(90:0.6cm) and +(90:0.6cm) .. (c);
%%the dots and stuff
%\node[small_dotnode] at (1,0.4) {};
%\node at (1.1,0.55) {\tiny $X$};
%\node[small_dotnode] at (1,0.85) {};
%\node at (1.1,1) {\tiny $Y$};
%\node at (0.3,-0.3) {\tiny V};
\end{scope}
\end{tikzpicture}
\;\;
\text{, i.e., }
\Hom_{\CYA}(\wdtld{K}(X \boxtimes Y), \VV)
=
\Skein(M; X, Y, \VV)
=
\Skein(\,
\begin{tikzpicture}
\begin{scope}[shift={(0,-0.3)}]
%%points at the bottom
\node[emptynode] (a) at (0,0) {};
\node[emptynode] (b) at (0.5,0) {};
\node[emptynode] (c) at (1.5,0) {};
\node[emptynode] (d) at (2,0) {};
%%draw annulus
\draw (a) .. controls +(90:0.4cm) and +(90:0.4cm) .. (d);
\draw (a) .. controls +(-90:0.4cm) and +(-90:0.4cm) .. (d);
\draw (b) .. controls +(90:0.2cm) and +(90:0.2cm) .. (c);
\draw (b) .. controls +(-90:0.2cm) and +(-90:0.2cm) .. (c);
%%draw caps
\draw (a) .. controls +(90:1.2cm) and +(90:1.2cm) .. (d);
\draw (b) .. controls +(90:0.6cm) and +(90:0.6cm) .. (c);
%%the dots and stuff
\node[small_dotnode] at (1,0.4) {};
\node at (1.1,0.55) {\tiny $Y$};
\node[small_dotnode] at (1,0.85) {};
\node at (1.1,1) {\tiny $X$};
\node at (0.3,-0.4) {\tiny $\VV$};
\end{scope}
\end{tikzpicture}
\,)
\]

Then $\wdtld{K}(X \boxtimes \one) \tnsrst \wdtld{K}(\one \boxtimes Y)
= \Kar(H) \circ \Kar(G)^\inv (K(X \boxtimes \one) \tnsrbar K(\one \boxtimes Y))$
may be interpreted as stacking one $M$ on top of the other:
\[
\begin{tikzpicture}
\begin{scope}[shift={(0,-0.3)}]
%%points at the bottom
\node[emptynode] (a) at (0,0) {};
\node[emptynode] (b) at (0.5,0) {};
\node[emptynode] (c) at (1.5,0) {};
\node[emptynode] (d) at (2,0) {};
%%draw annulus
\draw (a) .. controls +(90:0.4cm) and +(90:0.4cm) .. (d);
\draw (a) .. controls +(-90:0.4cm) and +(-90:0.4cm) .. (d);
\draw (b) .. controls +(90:0.2cm) and +(90:0.2cm) .. (c);
\draw (b) .. controls +(-90:0.2cm) and +(-90:0.2cm) .. (c);
%%draw caps
\draw (a) .. controls +(90:1.2cm) and +(90:1.2cm) .. (d);
\draw (b) .. controls +(90:0.6cm) and +(90:0.6cm) .. (c);
%%the dots and stuff
\node[small_dotnode] at (1,0.4) {};
\node at (1.1,0.55) {\tiny $Y$};
%\node[small_dotnode] at (1,0.85) {};
%\node at (1.1,1) {\tiny $Y$};
%\node at (0.3,-0.4) {\tiny $\VV$};
\end{scope}
%%now do the same, but larger
\begin{scope}[shift={(0,-0.3)}]
%%points at the bottom
\node[emptynode] (e) at (-0.7,0) {};
\node[emptynode] (f) at (-0.2,0) {};
\node[emptynode] (g) at (2.2,0) {};
\node[emptynode] (h) at (2.7,0) {};
%%draw annulus
\draw (e) .. controls +(90:0.7cm) and +(90:0.7cm) .. (h);
\draw (e) .. controls +(-90:0.7cm) and +(-90:0.7cm) .. (h);
\draw (f) .. controls +(90:0.5cm) and +(90:0.5cm) .. (g);
\draw (f) .. controls +(-90:0.5cm) and +(-90:0.5cm) .. (g);
%%draw caps
\draw (e) .. controls +(90:2.1cm) and +(90:2.1cm) .. (h);
\draw (f) .. controls +(90:1.5cm) and +(90:1.5cm) .. (g);
%%the dots and stuff
\node[small_dotnode] at (1,1.5) {};
\node at (1.1,1.7) {\tiny $X$};
%\node at (2.3,-0.7) {\tiny $\wdtld{K}(X\boxtimes \one) \tnsrbar \wdtld{K}(\one \boxtimes Y)$};
\end{scope}
\end{tikzpicture}
\]
which, upon gluing, becomes $M$ again;
on the other hand, we may consider the pair of pants product
$\wdtld{K}(X \boxtimes \one) \tnsr \wdtld{K}(\one \boxtimes Y)$:
\[
\begin{tikzpicture}%[scale=0.6]
\tikzmath{
  \ellx = 1;
  \elly = 0.2;
  \ellxs = 0.5;
  \ellys = 0.1;
}
\draw (-1.5,1) ellipse (\ellx cm and \elly cm);
\draw (-1.5,1) ellipse (\ellxs cm and \ellys cm);
\draw (1.5,1) ellipse (\ellx cm and \elly cm);
\draw (1.5,1) ellipse (\ellxs cm and \ellys cm);
\draw (0,-1) ellipse (\ellx cm and \elly cm);
\draw (0,-1) ellipse (\ellxs cm and \ellys cm);
%% outside lines
\draw (-2.5,1) to[out=-90,in=90] (-1,-1);
\draw (-2.0,1) to[out=-90,in=90] (-0.5,-1);
\draw (2.5,1) to[out=-90,in=90] (1,-1);
\draw (2.0,1) to[out=-90,in=90] (0.5,-1);
%% middle
\draw (-0.5,1) .. controls +(down:0.5cm) and +(down:0.5cm)
  .. (0.5,1);
\draw (-1,1) .. controls +(down:1cm) and +(down:1cm)
  .. (1,1);
%% labels
%\node at (-1.5,1.4) {\tiny $\overline{\VV}$};
%\node at (1.5,1.4) {\tiny $\overline{\WW}$};
%\node at (0,-1.4) {\tiny $\mathbf{X}$};
%%%%%%%%%%%%%%%%%%%%%%%%%
%%the caps, left cap first
%%points at the bottom
\begin{scope}[shift={(-2.5,1)}]
\node[emptynode] (a) at (0,0) {};
\node[emptynode] (b) at (0.5,0) {};
\node[emptynode] (c) at (1.5,0) {};
\node[emptynode] (d) at (2,0) {};
%%draw annulus
%\draw (a) .. controls +(90:0.4cm) and +(90:0.4cm) .. (d);
%\draw (a) .. controls +(-90:0.4cm) and +(-90:0.4cm) .. (d);
%\draw (b) .. controls +(90:0.2cm) and +(90:0.2cm) .. (c);
%\draw (b) .. controls +(-90:0.2cm) and +(-90:0.2cm) .. (c);
%%draw caps
\draw (a) .. controls +(90:1.2cm) and +(90:1.2cm) .. (d);
\draw (b) .. controls +(90:0.6cm) and +(90:0.6cm) .. (c);
%%the dots and stuff
%\node[small_dotnode] at (1,0.4) {};
%\node at (1.1,0.55) {\tiny $Y$};
\node[small_dotnode] at (1,0.85) {};
\node at (1.1,1) {\tiny $X$};
\end{scope}
%%the right cap
\begin{scope}[shift={(0.5,1)}]
\node[emptynode] (a) at (0,0) {};
\node[emptynode] (b) at (0.5,0) {};
\node[emptynode] (c) at (1.5,0) {};
\node[emptynode] (d) at (2,0) {};
%%draw annulus
%\draw (a) .. controls +(90:0.4cm) and +(90:0.4cm) .. (d);
%\draw (a) .. controls +(-90:0.4cm) and +(-90:0.4cm) .. (d);
%\draw (b) .. controls +(90:0.2cm) and +(90:0.2cm) .. (c);
%\draw (b) .. controls +(-90:0.2cm) and +(-90:0.2cm) .. (c);
%%draw caps
\draw (a) .. controls +(90:1.2cm) and +(90:1.2cm) .. (d);
\draw (b) .. controls +(90:0.6cm) and +(90:0.6cm) .. (c);
%%the dots and stuff
\node[small_dotnode] at (1,0.4) {};
\node at (1.1,0.55) {\tiny $Y$};
%\node[small_dotnode] at (1,0.85) {};
%\node at (1.1,1) {\tiny $X$};
\end{scope}
\end{tikzpicture}
\]
which is once again $M$.
\end{remark}

\begin{remark}
One can consider a similar tensor product
on $\cAA$ when $\cA$ is not modular.
\defref{d:AA_red} of $\tnsrbar$ will look the same,
but $\eval{Y_1,W_2}$ would be replaced by
the symmetric part of $Y_1 W_2$,
that is, the direct summands of $Y_1 W_2$
that belong to the symmetric center of $\cA$.
The functor $K: \cA \boxtimes \cA \to \ZA$
will still respect $\tnsrbar$,
but $K$ will not be an equivalence.
Furthermore, it is not clear whether
$\tnsrbar$ on $\cAA$ possesses a unit.
As evidence suggestive of this absence of unit,
recall that in the modular case,
even showing $K(\onebar) \simeq \onebar$
required the non-degeneracy of the $S$-matrix.
\rmkend
\end{remark}

The $\ZZ/2$-action is particularly simple
on $\cAA$.

\begin{proposition}
There is a tensor equivalence
\begin{align*}
U_M : (\cAA,\tnsrbar) &\simeq_\tnsr (\cAA,\tnsrbar^\op)
\\
X \boxtimes Y &\mapsto Y \boxtimes X
\;\;.
\end{align*}
We have $U^2_M = \id$, so
$U_M$ generates a $\ZZ/2$-action on $\cAA$
(but not a tensor action).
\end{proposition}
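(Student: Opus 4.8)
The plan is to define $U_M$ directly and read its tensor structure off the cyclic rotation isomorphism of \eqnref{e:cyclic}.

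First I would set $U_M(X \boxtimes Y) := Y \boxtimes X$ on objects and $U_M(f \boxtimes g) := g \boxtimes f$ on morphisms. This is $\kk$-linear, fully faithful, essentially surjective, and literally its own inverse, so that $U_M^2 = \id_{\cAA}$ \emph{as a plain functor}; hence, once $U_M$ is shown to be a tensor equivalence $(\cAA,\tnsrbar) \simeq_\tnsr (\cAA,\tnsrbar^\op)$, it automatically generates a $\ZZ/2$-action on the underlying category.

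Next I would produce the tensor structure. Unwinding \defref{d:AA_red} gives, for $A = W_1 \boxtimes Y_1$ and $B = W_2 \boxtimes Y_2$, on the one hand $U_M(A \tnsrbar B) = \eval{Y_1,W_2}\cdot(Y_2 \boxtimes W_1)$, and on the other hand $U_M(A) \tnsrbar^\op U_M(B) = (Y_2 \boxtimes W_2)\tnsrbar(Y_1 \boxtimes W_1) = \eval{W_2,Y_1}\cdot(Y_2 \boxtimes W_1)$. These differ only in the cyclic order of the coefficient, so I would take $(J_M)_{A,B}$ to be induced by the canonical isomorphism $z : \eval{W_2,Y_1} \simeq \eval{Y_1,W_2}$ of \eqnref{e:cyclic} (the $n=2$ case of cyclic rotation). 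Naturality of $J_M$ with respect to morphisms then follows from functoriality of $z$ together with its compatibility with the pairing $\mathbf{\text{ev}}$ used to define the ``coefficient'' maps in \defref{d:AA_red}.

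It remains to check coherence. For the unit, $U_M(\onebar) = \bigoplus_i X_i^* \boxtimes X_i = \bigoplus_i X_{i^*} \boxtimes X_{i^*}^* \cong \onebar$ via the reindexing $i \mapsto i^*$, using $X_{i^*} = X_i^*$ and $\sqrt{d_i} = \sqrt{d_{i^*}}$, and one checks this identification is compatible with the left and right unit constraints recorded in the proof that $(\cAA,\tnsrbar)$ is pivotal multifusion. The main obstacle is the hexagon axiom: a triple reduced tensor product collapses again to a single $\boxtimes$ of two factors carrying two pairing coefficients, the two associativity constraints on $(\cAA,\tnsrbar)$ merely rebracket those coefficients, and one has to verify that the two routes around the hexagon apply $z$ to the cyclic orders of the coefficients consistently; this ultimately reduces to $z^n = \id$ and the functoriality of $z$, a finite if slightly fussy bookkeeping exercise. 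Compatibility with duals and the pivotal structure is component-wise and routine. Finally, since $U_M$ lands in the \emph{opposite} monoidal category rather than being a tensor auto-equivalence of $(\cAA,\tnsrbar)$, the order-two symmetry it generates is not an action by tensor functors --- hence ``not a tensor action''. (Alternatively, one can identify $U_M$, up to a natural isomorphism supplied by the braiding of $\cA$, with the transport of the functor $U$ of \prpref{p:aut_ZA} along the equivalence $K$ of \thmref{t:AA_ZA}.)
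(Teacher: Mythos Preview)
Your proposal is correct and follows essentially the same approach as the paper: both identify the tensor structure on $U_M$ with the cyclic rotation isomorphism $z:\eval{W_2,Y_1}\simeq\eval{Y_1,W_2}$ of \eqnref{e:cyclic} applied to the coefficient space, tensored with the identity on $Y_2\boxtimes W_1$. The paper's proof is terser, omitting the unit and hexagon checks that you sketch, but the core idea is identical.
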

\begin{proof}
Let $W_1 \boxtimes Y_1, W_2 \boxtimes Y_2
\in \cAA$.
We have
\[
U_M(W_1 \boxtimes Y_1) \tnsrbar^\op
U_M(W_2 \boxtimes Y_2)
= \eval{W_2,Y_1} \cdot Y_2 \boxtimes W_1
\]
and
\[
U_M(W_1 \boxtimes Y_1 \tnsrbar W_2 \boxtimes Y_2)
= \eval{Y_1,W_2} \cdot Y_2 \boxtimes W_1
\;\;.
\]
We have the natural isomorphism
$z:\eval{W_2,Y_1} \simeq \eval{Y_1,W_2}$
(see \eqnref{e:cyclic}).
The tensor structure is given by
$z \cdot \id_{Y_2 \boxtimes W_1}$.
\end{proof}

\begin{proposition}
The natural isomorphism
$u_M : U \circ K \simeq K \circ U_M$
given by
\[
(u_M)_{X \boxtimes Y} = (\id_Y \tnsr \thetabar_X)
  \circ (c^\inv)_{X,Y}
=
\begin{tikzpicture}
\tikzmath{
  \a = 0.4;
  \b = -0.4;
  \lf = -0.3;
  \rt = 0.3;
}
\draw (\rt,\a) .. controls +(down:0.2cm) and +(up:0.4cm) .. (\lf,\b)
  node[pos=0,above] {\small $Y$};
\draw[overline={1.5}] (\lf,\a) .. controls +(down:0.2cm) and +(up:0.4cm) .. (\rt,\b)
  node[pos=0,above] {\small $X$};
\node[small_morphism] at (0.25,-0.2) {\tiny $\thetabar$};
\end{tikzpicture}
\]
makes $K : \cAA \simeq \ZA$
a $\ZZ/2$-equivariant equivalence.
\end{proposition}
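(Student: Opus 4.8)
The plan is to verify the conditions that, on top of \thmref{t:AA_ZA} and \prpref{p:aut_ZA}, amount to $\ZZ/2$-equivariance of $K$. Since neither $U$ nor $U_M$ changes underlying objects, $U(K(X\boxtimes Y)) = (XY,\wdtld{(cc^\inv)})$ and $K(U_M(X\boxtimes Y)) = (YX, cc^\inv)$, so $(u_M)_{X\boxtimes Y}$ is \emph{a priori} a morphism $XY\to YX$ in $\cA$, and there are four things to check: (a) it intertwines the half-braidings $\wdtld{(cc^\inv)}$ and $cc^\inv$, hence is a morphism in $\ZA$; (b) it is natural in $X\boxtimes Y$; (c) it is monoidal, for the tensor structure $c_{Y,X}$ on $U$ (\prpref{p:aut_ZA}), the tensor structure $L$ on $K$ (\thmref{t:AA_ZA}), and the tensor structure $z\cdot\id$ on $U_M$; and (d) it intertwines the trivializations of the squares, i.e.\ the diagram
\[
\begin{tikzcd}
U^2 K \ar[r,"Uu_M"] \ar[d,"\thetabar K"'] & U K U_M \ar[r,"u_M U_M"] & K U_M^2 \ar[d,equal] \\ K \ar[rr,equal] & & K
\end{tikzcd}
\]
commutes, where $U_M^2=\id$ holds strictly, so the right vertical map is $\id_K$.

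For (a) I would compute $\wdtld{(cc^\inv)}$ on $XY$ by the same graphical manipulation used for $\wdtld{\Gamma}$ and $\wdtld{\wdtld{\ga}}$ in the proof of \prpref{p:aut_ZA}: pulling the two legs of the half-braiding around the cap introduced by $(\,\cdot\,)^\vee$ and dragging them across the $X$- and $Y$-strands turns it into the half-braiding $cc^\inv$ on $YX$ after conjugation by $(c^\inv)_{X,Y}$, at the price of a right-hand twist on $X$ and a right-hand twist on $Y$; the factor $\thetabar_X$ in $u_M$ cancels the twist on $X$, and the twist on $Y$ is absorbed by naturality of the half-braiding $c^\inv$ on $Y$ (the cancellation of Drinfeld morphisms recorded after \eqnref{e:brd_duals}). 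Condition (b) is immediate, since $c$, $c^\inv$, $\theta$, $\thetabar$ are natural and $K$ is functorial. For (d) I would whisker off $K$ and reduce to an identity in $\cA$ on the strands $X,Y$: the top composite consists of two crossings and two twist corrections, and since $U^2$ acts on $(XY,cc^\inv)$ by $\thetabar_{XY}$ up to exactly this bookkeeping (as computed in \prpref{p:aut_ZA}), commutativity is the balancing identity $\thetabar_{X\tnsr Y}\circ(c_{Y,X}\circ c_{X,Y}) = \thetabar_X\tnsr\thetabar_Y$ used there.

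The only non-routine step, and thus the main obstacle, is the monoidality (c). For $W_1\boxtimes Y_1, W_2\boxtimes Y_2\in\cAA$, both sides of the coherence square are morphisms in $\ZA$ with value $\eval{Y_1,W_2}\cdot(Y_2 W_1, cc^\inv)$, but they recombine the ingredients $u_M$ (i.e.\ the crossing $c^\inv$ and the twist $\thetabar$), the structure map $L$ of $K$ from \thmref{t:AA_ZA}, the crossing $c$ from $U$'s structure map, the cyclic rotation $z$ of \eqnref{e:cyclic} from $U_M$'s structure map, and the dual-basis contraction of \eqnref{e:summation_convention}, in the two orders dictated by monoidality. After rewriting $X\tnsrproj{c c^\inv}{c c^\inv}Y$ by a single $\al$-contraction as in the proof of \thmref{t:AA_ZA} and expressing $\wdtld{(\cdot)}$ as in \prpref{p:aut_ZA}, the difference between the two orders amounts to sliding the $c^\inv$-crossing of $u_M$ past the $z$-rotation one way versus the other; this is settled by the hexagon axiom for $c$ together with naturality of the dual-basis contraction. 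I expect this diagram chase — at bottom a three-strand braiding-and-twist identity dressed with a coend contraction — to be the bulk of the work, everything else being a direct transcription of \prpref{p:aut_ZA} and \thmref{t:AA_ZA}.
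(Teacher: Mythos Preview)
Your breakdown into (a), (b), (d) matches what the paper means by $\ZZ/2$-equivariance, and those checks are precisely what the paper's one-line proof (``Straightforward'') has in mind; the parenthetical remark there (``the $\thetabar$ is needed so that $u_M^2=\thetabar_{XY}$'') is exactly your (d), reduced to the balancing identity.

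However, condition (c) is not part of the statement. The paper explicitly notes, in \prpref{p:aut_ZA}, \prpref{p:aut_hA}, and the proposition defining $U_M$, that these $\ZZ/2$-actions are \emph{not} tensor actions; accordingly, ``$\ZZ/2$-equivariant equivalence'' here means equivariance of $K$ as a functor between the underlying abelian categories. The analogous equivariance proof for $\Kar(G)$ in \secref{s:htr} checks only that the natural isomorphism exists and that the $\thetabar$-square commutes --- no monoidality of $u$. So what you have flagged as ``the only non-routine step, and thus the main obstacle'' is in fact not required at all, and once you drop it the remaining checks (a), (b), (d) are as routine as the paper indicates.
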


\begin{proof}
Straightforward.
(The $\thetabar$ is needed
so that $u^2_M = \thetabar_{XY}$,
because $\thetabar : U^2 \simeq \id$.)
\end{proof}

There is a more topological reason
why $\thetabar$ is needed.
In the equivalence $\cAA \simeq \CYA$ shown above,
$X \boxtimes Y$ is sent to an object with two marked
points $b_1$ and $b_2$, labeled $X$ and $Y$.
We can choose to make the marked points
very close to $p$ so that they are fixed by $\UCY$.
Then $u_M$ would be a graph in $\CYA$
between the images of $X\boxtimes Y$ and $Y\boxtimes X$,
and $u^2_M$ is a full inverse twist.

Finally, we compare $(\cAA,\tnsrbar)$ to
$\matvec{}$,
the category of $\Vctsp$-valued matrices
(see \ocite{EGNO}*{Example 4.1.3}).
Let $\cS$ be some finite set.
The objects of $\matvec{\cS}$
are bigraded vector spaces
$V = \bigoplus_{i,j\in \cS} V_i^j$,
and the tensor product is given by
\[
(V \tnsr W)_i^j = \bigoplus_{k\in \cS} V_i^k \tnsr W_k^j
\;\;.
\]
Let $\kk_i^j$ be $\kk$ with bigrading $i,j$.
Then the unit is $\bigoplus_{i\in \cS} \kk_i^i$.
Duals are given by transposing the matrix
and then taking duals componentwise,
i.e. $(V^*)_i^j = (V_j^i)^*$.

\begin{proposition} \label{p:ZA_matvec}
There is a tensor equivalence
\begin{align*}
\matvec{\Irr(\cA)} &\simeq_\tnsr (\cAA,\tnsrbar)
\\
\kk_i^j &\mapsto X_i \boxtimes X_j^*
\;\;.
\end{align*}
\end{proposition}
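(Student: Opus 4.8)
The plan is to exhibit the inverse functor explicitly rather than build $\Phi$ directly (which would force a choice of bases). I would define $\Psi \colon (\cAA,\tnsrbar) \to \matvec{\Irr(\cA)}$ by sending $X \boxtimes Y$ to the bigraded vector space with components $\Psi(X\boxtimes Y)_i^j := \Hom_\cA(X_i,X) \tnsr \Hom_\cA(X_j^*,Y)$, and a morphism $f\boxtimes g$ to the collection of linear maps $h\tnsr k \mapsto (f\circ h)\tnsr(g\circ k)$. Since $\cA$ is semisimple with $\{X_i\}_{i\in\Irr(\cA)}$ a complete set of simples, $\Psi$ is $\kk$-linear, fully faithful (this is just the standard decomposition of $\Hom$-spaces in $\cAA$ into their multiplicity spaces) and essentially surjective, hence an equivalence of abelian categories; on simple objects $\Psi(X_i\boxtimes X_j^*) = \kk_i^j$, so its quasi-inverse is exactly the functor $\Phi$ of the statement.

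Next I would equip $\Psi$ with a tensor structure. The key input is the canonical isomorphism $\eval{Y_1,W_2} = \Hom_\cA(\one,Y_1 W_2) \simeq \bigoplus_{k\in\Irr(\cA)} \Hom_\cA(X_k^*,Y_1)\tnsr\Hom_\cA(X_k,W_2)$, natural in $Y_1$ and $W_2$, obtained from semisimplicity by resolving $\id_{W_2}$ through the simple objects (this is essentially \eqnref{e:combine}). Feeding it into the defining formula $(W_1\boxtimes Y_1)\tnsrbar(W_2\boxtimes Y_2) = \eval{Y_1,W_2}\cdot(W_1\boxtimes Y_2)$ produces a natural isomorphism $\Psi\big((W_1\boxtimes Y_1)\tnsrbar(W_2\boxtimes Y_2)\big) \simeq \Psi(W_1\boxtimes Y_1)\tnsr\Psi(W_2\boxtimes Y_2)$, which is the desired structure $J^\Psi$. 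It carries the unit $\onebar = \bigoplus_i X_i\boxtimes X_i^*$ to $\bigoplus_i \kk_i^i$, the unit of $\matvec{\Irr(\cA)}$, and one checks that the unit constraints displayed in the proposition that $(\cAA,\tnsrbar)$ is pivotal multifusion are precisely the images of those of $\matvec{}$. For duals, $(X_i\boxtimes X_j^*)^\vee = X_j\boxtimes X_i^*$ corresponds to $(\kk_i^j)^* = \kk_j^i$, the evaluation and coevaluation maps given earlier match the evident ones on $\matvec{}$ componentwise, and the pivotal structure is defined componentwise on both sides and corresponds trivially.

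The one point requiring genuine care is the hexagon axiom for $J^\Psi$, equivalently that $\Psi$ intertwines the two associators. Unwinding the definitions, the associator of $\tnsrbar$ on $\cAA$ only permutes a tensor product of the one-dimensional multiplicity spaces $\eval{X_j^*,X_j}\simeq\Hom_\cA(X_j,X_j)=\kk\,\id_{X_j}$, whereas the associator of $\matvec{}$ is that of $\tnsr_\kk$ with no permutation; a permutation of one-dimensional spaces is the identity once each is trivialized via $\id$, so the two agree. I expect this verification, together with the bookkeeping of the trivializations across iterated tensor products, to be the main (but entirely routine) obstacle; at bottom it is the observation that every coefficient multiplicity space occurring in $\tnsrbar$ of simples is at most one-dimensional. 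Conceptually this is why $(\cAA,\tnsrbar)$ is a pointed multifusion category over the pair groupoid on $\Irr(\cA)$, whose classifying $3$-cocycle is trivial. Once the hexagon is checked, $\Psi$ is a pivotal tensor equivalence and $\Phi=\Psi^{-1}$ is the claimed equivalence with $\kk_i^j\mapsto X_i\boxtimes X_j^*$.
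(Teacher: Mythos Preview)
Your approach via the inverse functor $\Psi$ is correct for the stated proposition and is a reasonable alternative to the paper's argument. The paper works in the forward direction on simple objects only: it writes $X_i^j := X_i \boxtimes X_j^*$, observes $X_i^j \tnsrbar X_k^l = \eval{X_j^*,X_k}\cdot X_i^l$, and when $j=k$ defines the tensor structure as the single map $(\ev_{X_k}\circ -)\cdot \id_{X_i^l}$, then checks the hexagon directly. Your route through the multiplicity-space decomposition $\eval{Y_1,W_2}\simeq \bigoplus_k \Hom_\cA(X_k^*,Y_1)\tnsr \Hom_\cA(X_k,W_2)$ is essentially the same content packaged more functorially; it buys you naturality in all objects at once, at the cost of a slightly longer bookkeeping argument for the hexagon.

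However, your final sentence overclaims: you assert that $\Psi$ is a \emph{pivotal} tensor equivalence, and this is false. Immediately after the proposition, the paper computes that the left dimension of $X_i \boxtimes X_j^*$ in $(\cAA,\tnsrbar)$ is $d_j/d_i$, whereas in $\matvec{\Irr(\cA)}$ with its natural pivotal structure every simple has dimension $1$. Since pivotal tensor equivalences preserve categorical dimensions, no such equivalence can be pivotal unless all $d_i$ coincide. Your assertion that ``the evaluation and coevaluation maps given earlier match the evident ones on $\matvec{}$ componentwise'' is exactly where this goes wrong: the (co)evaluation maps for $(\cAA,\tnsrbar)$ carry factors of $\sqrt{d_k}$ which do not cancel against the tensor structure. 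The proposition only claims a tensor equivalence, so strike the pivotal claim and your argument stands.
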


\begin{proof}
Clearly the functor above is an equivalence of
abelian categories,
and sends unit to unit.

Denote $X_i^j := X_i \boxtimes X_j^*$,
so $(X_i^j)^\vee = X_j^i$.
One has
$X_i^j \tnsrbar X_j^l = \eval{X_j^*,X_k} X_i^l$,
which is 0 if $j \neq k$.
When $j=k$, define $J_k$ to be the map
\[
(\ev_{X_k} \circ \; -) \cdot \id_{X_i^l}:
  X_i^k \tnsrbar X_k^l \simeq X_i^l
\;\;.
\]
Then it is easy to check that
\[
J_{\kk_i^j,\kk_k^l} = \delta_{j,k} J_k
\]
is a tensor structure on the functor above.
\end{proof}

While $(\cAA,\tnsrbar)$ and $\matvec{\Irr(\cA)}$ are tensor equivalent,
they are not pivotal tensor equivalent
(assuming the natural pivotal structure on $\matvec{\Irr(\cA)}$
coming from $\Vctsp$).
This can be seen from computing dimensions.
For example, the left trace of $\id_{X_i^j}$ is
\[
\sum_{k\in \Irr(\cA)} d_k
\begin{tikzpicture}
\draw[midarrow] (0,0) circle (0.3cm);
\node at (-0.35,0.2) {\tiny $j^*$};
\end{tikzpicture}
\; \cdot
\begin{tikzpicture}
\draw[midarrow={0.55}] (-0.3,0.6) -- (-0.3,-0.6)
  node[pos=0,above] {\small $X_k$}
  node[below] {\small $X_k$};
\node at (-0.45,0) {\tiny $i$};
\node[small_morphism] at (-0.3,0.3) {\tiny $\al$};
\node[small_morphism] at (-0.3,-0.3) {\tiny $\beta$};
\node at (0,0) {$\boxtimes$};
\draw[midarrow_rev={0.55}] (0.3,0.6) -- (0.3,-0.6)
  node[pos=0,above] {\small $X_k^*$}
  node[below] {\small $X_k^*$};
\node at (0.45,0) {\tiny $i$};
\node[small_morphism] at (0.3,0.3) {\tiny $\al$};
\node[small_morphism] at (0.3,-0.3) {\tiny $\beta$};
\end{tikzpicture}
%%%%%%%%%%%%%%%%%%%
=
%%%%%%%%%%%%%%%%%%%
\sum_k \delta_{i,k} \frac{d_j}{d_k}
  \id_{X_k^k}
\]
so the left dimension of $X_i^j$ is
$d_{X_i^j}^L = \frac{d_j}{d_i}$,
which cannot always be 1 for all pairs $i,j$.
Its right dimension is
$d_{X_i^j}^R = d_{X_j^i}^L = \frac{d_i}{d_j}$.
Thus $(\cAA,\tnsrbar)$ can be thought of
as (the result of)
a deformation of the pivotal structure on $\matvec{}$.

%\subsection{$\cA = \Rep(G)$}\par \noindent
\section{$\cA$ Symmetric}
\label{s:symmetric}

Next we consider the other extreme case,
when $\cA$ is symmetric.
The main contribution of this section is to give
a more explicit description of $\ZCY(\torus)$
when $\cA$ is symmetric.

As mentioned before, when $\cA$ is symmetric,
our reduced tensor product construction
coincides with Wasserman's symmetric tensor product
in \ocite{wassermansymm}.
While we arrived at these results independently,
many of the results here can be found there,
which we include for completeness;
we also add several new results,
e.g. \thmref{t:super_same} and \thmref{t:torus_DG}.
We thank Thomas Wasserman for pointing out
the subtlety of the braidings that we missed
(see \rmkref{r:super_diff}).

In this section, all tensor products
(not including $\tnsrbar$) and diagrams
are considered in the category of
vector spaces over $\kk$.

We start with taking
$\cA = \Rep(G)$ for some finite group $G$;
we deal with the supergroup case later
by making some minor modifications.
Of course, $\Rep(G)$ can be treated by the supergroup
case by taking $z = e$,
but we find exposition this way to be clearer.
We will also show (\thmref{t:super_same})
that the supergroup case is essentially the same as
the non-super case (but see \rmkref{r:super_diff}),
so one may choose to
ignore the supergroup case and its complexities
for the purposes of this paper.

Let us first recall the following
(see e.g. \ocite{BakK}*{Section 3.2},
\ocite{EGNO}*{Section 7.14}):
\begin{proposition}
\label{p:ZA_DG}
We have an equivalence of modular categories
\[
T: \cZ(\Rep(G)) \simeq_{\tnsr,br} \cD(G)-\mmod
\]
where $\cD(G)$ is the Drinfeld double of $G$.
\end{proposition}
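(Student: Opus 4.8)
This is classical; I will indicate the main steps and refer to \ocite{BakK}*{Section 3.2} and \ocite{EGNO}*{Section 7.14} for full details. First recall a concrete model for the target: a $\cD(G)$-module is the same datum as a vector space $V$ equipped simultaneously with a $G$-action and a $G$-grading $V=\bigoplus_{g\in G}V_g$ such that $h\cdot V_g\subseteq V_{hgh^{-1}}$ (a Yetter--Drinfeld module over $\kk[G]$), with morphisms preserving both structures. Write $\mathcal{C}_G$ for this category; its tensor product is the convolution grading $(V\tnsr W)_g=\bigoplus_{ab=g}V_a\tnsr W_b$ with the diagonal $G$-action, its unit is $\kk$ concentrated in degree $e$, and its braiding is $c_{V,W}(v\tnsr w)=(g\cdot w)\tnsr v$ for $v\in V_g$. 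It is standard (loc.\ cit.) that $\mathcal{C}_G\simeq\cD(G)-\mmod$ as braided tensor categories, so it suffices to construct a braided tensor equivalence $T:\cZ(\Rep(G))\to\mathcal{C}_G$.

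The plan is to define $T$ by reading off the $G$-grading from the half-braiding. First I would note that, since $\Rep(G)$ is semisimple and every object is a direct summand of $\kk[G]^{\oplus n}$ for the left regular representation $\kk[G]$, naturality forces a half-braiding $\ga$ on a $G$-representation $V$ to be determined by the single component $\ga_{\kk[G]}$. Then, using naturality of $\ga$ with respect to the right-translation endomorphisms of $\kk[G]$ together with the monoidality axiom $\ga_{W_1\tnsr W_2}=(\ga_{W_1}\tnsr\id)\circ(\id\tnsr\ga_{W_2})$, one shows $\ga_{\kk[G]}$ must have the form $\ga_{\kk[G]}(e_h\tnsr v)=\sum_g(h\cdot v_g)\tnsr e_{hg}$ for a unique decomposition $v=\sum_g v_g$; the resulting $V=\bigoplus_g V_g$ satisfies $h\cdot V_g\subseteq V_{hgh^{-1}}$ precisely because $\ga_{\kk[G]}$ is $G$-equivariant, and this defines $T(V,\ga)$. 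Conversely, from $V\in\mathcal{C}_G$ one builds a half-braiding by $\ga_W(w\tnsr v)=(g\cdot w)\tnsr v$ for $v\in V_g$; the grading condition is exactly what makes each $\ga_W$ a morphism of $G$-representations, and the tensor compatibility is immediate. I would then check these assignments are mutually inverse and functorial, giving an equivalence of abelian categories.

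It remains to see $T$ is monoidal and braided. On objects and morphisms this is a direct translation of \defref{d:tnsr_std}: the tensor product of half-braidings becomes the convolution grading above, $(\one,\id)$ is sent to $\kk$ in degree $e$, so one may take the identity natural transformation as the monoidal structure, and the standard braiding of $\cZ(\Rep(G))$ (which is $\ga_W$ in the appropriate component) is carried to $c_{V,W}$. Modularity is then automatic, since $\cZ(\Rep(G))$ is the Drinfeld center of a fusion category and hence modular \ocite{Mu}, and $T$ transports this nondegeneracy to $\cD(G)-\mmod$. The one step that genuinely requires care is the identification in the second paragraph --- that a half-braiding on a $G$-representation is precisely a compatible $G$-grading, via the analysis of $\ga_{\kk[G]}$; everything downstream of that is routine.
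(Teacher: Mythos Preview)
Your proposal is correct and follows essentially the same route as the paper: both extract the extra structure on $V$ from the single component $\ga_{\kk[G]}$, the paper phrasing this as an $F(G)$-action $f\cdot v=\langle \id_V\tnsr f,\ga_{\kk[G]}(e\tnsr v)\rangle$ and you as a $G$-grading, which are the same datum. One small slip: with the paper's convention $\ga_W:W\tnsr V\to V\tnsr W$, your inverse construction should read $\ga_W(w\tnsr v)=v\tnsr(g\cdot w)$ for $v\in V_g$, matching the paper's $\ga_A(a\tnsr v)=\sum_g \delta_g\cdot v\tnsr g\cdot a$.
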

\begin{proof}
We recall the details of the equivalence
to serve as a scaffold for subsequent discussion.
Recall that $\cD(G) \cong \kk[G] \tnsr F(G)$
as coalgebras, multiplication is defined by
$(g_1 \tnsr \delta_{h_1}) \cdot (g_2 \tnsr \delta_{h_2})
= g_1 g_2 \tnsr \delta_{g_2^\inv h_1 g_2} \delta_{h_2}$,
and the $R$-matrix is given by
$\sum_{g\in G} g \tnsr \delta_g \in \cD(G) \tnsr \cD(G)$.

Given $(V,\ga) \in \cZ(\Rep(G))$,
we put an $F(G)$ action on $V$ as follows:
for $f\in F(G), v\in V$,
\[
f \cdot v :=
  \eval{\id_V \tnsr f, \ga_{\kk[G]} (e \tnsr v)}
%%%%%%%
\;\;
\text{, or in diagrammatic terms,}
\;\;
f \cdot (-) := 
\begin{tikzpicture}
\node[small_morphism] (ga) at (0,0) {\tiny $\ga$};
\draw (ga) -- (0,0.5) node[above] {\small $V$};
\draw (ga) -- (0,-0.5);
\node[small_morphism] (eta) at (-0.3,0.3) {\tiny $e$};
\node[small_morphism] (f) at (0.3,-0.3) {\tiny $f$};
\draw (ga) -- (eta);
\draw (ga) -- (f);
\end{tikzpicture}
\]
where $e$ is the identity element of $G$,
here thought of as the (not $G$-)linear map
$\kk \to \kk[G]$ sending $1 \mapsto e$;
the diagram on the right is not a morphism in
$\cA$, it is just a linear map of vector spaces.

Conversely, given a $\cD(G)$-module $V$,
one has the half-braiding $\ga$ on $V$ defined as follows:
for $A \in \cZ(\Rep(G))$, and $a\in A, v\in V$,
\begin{align*}
\ga_A : A \tnsr V &\simeq V \tnsr A
\\
a \tnsr v &\mapsto
\sum_{g\in G} \delta_g \cdot v \tnsr g \cdot a
\;\;.
\end{align*}
In other words, $\ga = P \circ R$,
where $P$ is the canonical swap in vector spaces over
$\kk$.

$\cD(G)-\mmod$ can be thought of as the category of
$G$-equivariant bundles over $G$,
where the action of $G$ on the base $G$ is conjugation.
Namely, $V = \bigoplus_{g\in G} V_g$,
where $V_g = \delta_g \cdot V$.
The commutation relation between $\kk[G]$ and $F(G)$ inside $\cD(G)$
means that the action of $g\in G$ gives an isomorphism
$g: V_h \simeq V_{g h g^\inv}$;
then $V_g$ is naturally a $Z(g)$-representation
(where $Z(g) \subseteq G$ is the centralizer of $g$).
The orbit of $g$ in the base
is the conjugacy class $\bar{g}$,
which determines a central idempotent
$\delta_{\bar{g}} := \sum_{h\in \bar{g}} \delta_h$.
This decomposes $\cD(G)-\mmod$ (as an abelian category)
as a direct sum
\[
\cD(G)-\mmod \sim \bigoplus_{\bar{g}\in G//G}
\cD(G)\delta_{\bar{g}}-\mmod
\]

There is a \emph{convolution} tensor product
on $\cD(G)-\mmod$ given by
\[
(V \tnsr W)_g = \bigoplus_h V_{gh} \tnsr W_{h^\inv}
\]
which arises from the coproduct structure on $\cD(G)$,
\[
g \mapsto g \tnsr g
\;\;,\;\;
\delta_g \mapsto \sum_h \delta_{gh} \tnsr \delta_{h^\inv}
\]
and it is easy to check that it agrees with
the usual tensor product on $\cZ(\Rep(G))$
under this equivalence. The dual is given by
\[
(V^*)_g = (V_{g^\inv})^*
\]
and the pivotal structure is the natural
pivotal structure of vector spaces $V_g \simeq (V_g)^{**}$.
\end{proof}

In particular, $T$ is an equivalence of abelian categories,
so we want to interpret the reduced tensor product
$\tnsrbar$ as a tensor product on
$\cD(G)-\mmod$.
As in the modular case, we use the same symbol $\tnsrbar$,
which will be justified
when we prove their equivalence (\thmref{t:ZA_DG_red}).

\begin{definition}
Let $V,W \in \cD(G)-\mmod$. Their
\emph{reduced tensor product},
or \emph{fiberwise tensor product},
denoted $V \tnsrbar W$, is defined
first as a $G$-graded vector space by
\[
(V \tnsrbar W)_g = V_g \tnsr W_g
\]
and then as a $G$-module,
by $h\in G$ acting diagonally:
\[
h = h \tnsr h : V_g \tnsr W_g \simeq
	V_{hgh^\inv} \tnsr W_{hgh^\inv}
\;\;.
\]
For morphisms $\ph: V \to V', \psi: W \to W'$,
\[
(\ph \tnsrbar \psi)_g = \ph_g \tnsr \psi_g :
	V_g \tnsr W_g \to V_g' \tnsr W_g'
\;\;.
\]
\defend
\end{definition}

Note that the $G$-grading on $V \tnsrbar W$
implicitly defines the action of $\delta_g$ as
\begin{equation}
\delta_g(v_{h} \tnsrbar w_{h'}) =
	\delta_{g,h} \delta_{g,h'} \cdot v_{h} \tnsrbar w_{h'}
\end{equation}

\begin{proposition}
$(\cD(G)-\mmod,\tnsrbar)$ is a
pivotal multifusion category,
with
\begin{itemize}
\item Unit $\onebar = \bigoplus_{g\in G} \kk_g$,
	i.e. $(\onebar)_g = \kk$ for all $g\in G$,
  where $\kk$ is the trivial $Z(g)$ representation;
	%are ``trivial" (see proof);
\item Duals: $(V^\vee)_g = (V_g)^*$,
so $V^\vee = \rvee V$;
\item Pivotal structure is the natural one
	$V_g \simeq (V_g)^{**} = (V^{\vee\vee})_g$
\end{itemize}
\end{proposition}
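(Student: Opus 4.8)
The plan is to exploit that every structure in the statement is defined \emph{fiberwise}, i.e.\ fiber by fiber over the base $G$, so that each axiom of a pivotal multifusion category decomposes into its fiberwise version, which is the corresponding standard statement for finite-dimensional $\kk$-vector spaces (and, organized by conjugacy class, for $\Rep(Z(g))$ with its usual tensor product). Thus nothing here is deep; the work is bookkeeping of the $G$-equivariant (bundle) structure.

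First I would verify that $\tnsrbar$ is a monoidal structure. Checking that $V\tnsrbar W$, with $(V\tnsrbar W)_g=V_g\tnsr W_g$ and diagonal $G$-action, is again a $\cD(G)$-module is routine from the commutation relation between $\kk[G]$ and $F(G)$. The associativity constraint is then the fiberwise associator of $\tnsr$ on $\Vctsp$, and the pentagon axiom holds because it holds in each fiber. For the unit, one checks that $\onebar$ as described is a genuine $\cD(G)$-module, i.e.\ a $G$-equivariant bundle with $\onebar_g=\kk$ the trivial $Z(g)$-representation and $k\in G$ acting by $\id_\kk:\onebar_g\to\onebar_{kgk^\inv}$ (consistent, since $k\in Z(g)$ then acts by $\id_\kk$ on $\onebar_g$), and that the fiberwise isomorphisms $\kk\tnsr V_g\cong V_g\cong V_g\tnsr\kk$ assemble into $\cD(G)$-module isomorphisms; the triangle identity is again fiberwise. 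For semisimplicity: $\cD(G)-\mmod$ is a finite semisimple $\kk$-linear category (as $\kk$ has characteristic zero), independently of the monoidal structure, and $\tnsrbar$ is $\kk$-bilinear and exact. Finally $\onebar$ decomposes, over conjugacy classes $\bar g\in G//G$, as the direct sum of the pairwise non-isomorphic simple ``trivial bundles'' on the $\bar g$, a direct sum of $|G//G|$ summands; this is why the category is multifusion (with a genuine ``multi'') rather than fusion. Equivalently, and this is a convenient way to package the whole argument, choosing a representative $g_0$ in each conjugacy class identifies $(\cD(G)-\mmod,\tnsrbar)$ with $\bigoplus_{\bar g}(\Rep(Z(g_0)),\tnsr)$ as monoidal categories, using the block decomposition in the proof of \prpref{p:ZA_DG} together with the observation that the fiberwise tensor product, read off at $g_0$, is exactly the tensor product of $Z(g_0)$-representations; each summand is a pivotal fusion category, hence the direct sum is pivotal multifusion.

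Next I would treat rigidity and the pivotal structure. One checks that $V^\vee$ with $(V^\vee)_g=(V_g)^*$ is a $\cD(G)$-module, with $Z(g)$ acting by the contragredient representation on each fiber and $k\in G$ acting on $(V_g)^*\to(V_{kgk^\inv})^*$ by the inverse-transpose of $k:V_g\to V_{kgk^\inv}$; restricting to $k\in Z(g)$ recovers the contragredient $Z(g)$-action. The evaluation and coevaluation are defined fiberwise by the evaluation and coevaluation of $\Vctsp$; the one point requiring attention is that they are $\cD(G)$-module maps, which comes down to the identity $\langle A^{-\mathsf{T}}\phi,Av\rangle=\langle\phi,v\rangle$ giving compatibility with the conjugation action on the base $G$, together with $\cD(G)$-equivariance with respect to the trivial action on $\onebar$. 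The zig-zag identities then hold because they hold in each fiber, so $V^\vee$ is a left dual; since each fiber is finite-dimensional, left and right duals coincide, giving $V^\vee=\rvee V$. For the pivotal structure, the canonical isomorphisms $V_g\xrightarrow{\ \sim\ }(V_g)^{**}$ assemble into a $\cD(G)$-module isomorphism $\delta_V:V\to V^{\vee\vee}$ (naturality and equivariance because these maps are canonical in $\Vctsp$), it is monoidal since $\delta_{V\tnsrbar W}=\delta_V\tnsrbar\delta_W$ fiberwise, and the axiom relating $\delta$ to duals of morphisms again reduces to the standard pivotal structure of $\Vctsp$.

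The main obstacle, then, is not conceptual but one of bookkeeping: one must consistently track the conjugation action of $G$ on the base $G$ so that $\tnsrbar$, $V^\vee$, $\onebar$ and $\delta_V$ stay inside $\cD(G)-\mmod$ and are not merely defined at the level of $G$-graded vector spaces — in particular the verifications that $\onebar$ and the unit constraints are honest $\cD(G)$-module isomorphisms and that $\ev_V,\coev_V$ are $\cD(G)$-module maps. As indicated above, all of this can be offloaded by first establishing the monoidal equivalence $(\cD(G)-\mmod,\tnsrbar)\simeq\bigoplus_{\bar g}(\Rep(Z(g_0)),\tnsr)$ and then invoking that $\Rep(H)$ is a pivotal fusion category for every finite group $H$.
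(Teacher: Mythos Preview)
Your proposal is correct and is precisely the kind of fiberwise verification the paper has in mind when it writes ``Straightforward'' as its entire proof; in particular, your packaging via the monoidal equivalence $(\cD(G)\text{-}\mmod,\tnsrbar)\simeq\bigoplus_{\bar g}(\Rep(Z(g_0)),\tnsr)$ is exactly the decomposition the paper itself invokes later in the proof of \thmref{t:torus_DG}.
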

\begin{proof}
Straightforward
%For the unit, the fibers are ``trivial" in the sense
%that if $h \in G$ fixes $g$, then $h$ acts trivially
%on $(\onebar)_g$. More concisely, $\onebar$ has a
%global non-vanishing $G$-equivariant section.
%The rest is straightforward.
\end{proof}

Note that the unit is $\kk[G]$
with action by conjugation,
in other words $\bigoplus_i \End_\kk(X_i)$,
and not the regular representation $\kk[G]$,
which is $\bigoplus_i X_i^{\oplus d_i}$.
This should be reminiscent of the unit
$\onebar = \bigoplus X_i X_i^*$ for
$(\ZA,\tnsrbar)$.
Indeed, we have the following result
(see also \ocite{wassermansymm}*{Theorem 44}):

\begin{theorem}
\label{t:ZA_DG_red}
The equivalence $T$ of \prpref{p:ZA_DG}
is a pivotal tensor equivalence
\[
T : (\ZA,\tnsrbar) \simeq_\tnsr (\cD(G)-\mmod,\tnsrbar)
\;\;.
\]
\end{theorem}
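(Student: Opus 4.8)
The functor $T$ is already an equivalence of abelian categories, and by \prpref{p:ZA_DG} it is a braided tensor equivalence when both sides carry the \emph{standard} tensor product. The plan is to keep this equivalence unchanged on objects and morphisms and equip it with a new tensor structure making it respect the reduced tensor products, by computing $T\big((X,\ga)\tnsrbar(Y,\mu)\big)$ directly. Write $X=\bigoplus_{g\in G}X_g$ and $Y=\bigoplus_{g\in G}Y_g$ for the $F(G)$-gradings determined by $\ga,\mu$ as in the proof of \prpref{p:ZA_DG}, with grading projections $p^X_g,p^Y_g$, and recall that for $\cA=\Rep(G)$ one has $\cD=|G|$ and that the regular coloring is the regular representation $\kk[G]$.

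The first step is to describe the $F(G)$-grading on $(X,\ga)\tnsrbar(Y,\mu)=\big(X\tnsrproj{\ga}{\mu}Y,\ \ga\tnsrbar\mu\big)$. Since $\cA$ is symmetric, \lemref{l:hfbrd_red} identifies $\ga\tnsrbar\mu$ on $\im(Q_{\ga,\mu})$ with both $\ga\tnsr c^\inv$ and $c\tnsr\mu$. Unwinding the recipe of \prpref{p:ZA_DG} for the $F(G)$-action through $\kk[G]$ (and using that the braiding of $\Rep(G)$ is the flip), the first expression makes $\delta_g$ act on $X\tnsr Y$ by $p^X_g\tnsr\id$ and the second by $\id\tnsr p^Y_g$; their agreement on $\im(Q_{\ga,\mu})$ forces $\im(Q_{\ga,\mu})\subseteq\bigoplus_{g}X_g\tnsr Y_g$. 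The key computation is the reverse inclusion: evaluating the regular-colored loop in the diagram defining $Q_{\ga,\mu}$ inside $\Rep(G)$ gives $Q_{\ga,\mu}=\sum_{g\in G}p^X_g\tnsr p^Y_g$, whence $X\tnsrproj{\ga}{\mu}Y=\bigoplus_{g\in G}X_g\tnsr Y_g$ as a subrepresentation of $X\tnsr Y$, with $\delta_g$ acting as the projection onto $X_g\tnsr Y_g$.

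Next, the underlying $\kk[G]$-action on $(X,\ga)\tnsrbar(Y,\mu)$ is, via the forgetful functor, just the diagonal $G$-action on $X\tnsr Y$ restricted to this subspace, and the half-braiding axioms make it send $X_g\tnsr Y_g$ to $X_{hgh^\inv}\tnsr Y_{hgh^\inv}$ -- exactly the conjugation action defining the fiberwise tensor product. Hence $T\big((X,\ga)\tnsrbar(Y,\mu)\big)$ and $T(X,\ga)\tnsrbar T(Y,\mu)$ have the same underlying graded $G$-representation, and the identity on $\bigoplus_g X_g\tnsr Y_g$ is the desired natural isomorphism. Naturality in both variables is immediate from the description of $f\tnsrbar g$ as the restriction of $f\tnsr g$; compatibility with the associativity constraints holds because both are induced from the associator of vector spaces (as in \prpref{p:ZA_DG}); and compatibility with units reduces to $T(\onebar)\cong\bigoplus_{g\in G}\kk_g$. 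This last point follows since $\onebar=\big(\bigoplus_i X_iX_i^*,\Gamma\big)$ has underlying $G$-representation $\kk[G]$ with the conjugation action, and a direct check with the formula for $\Gamma$ in \prpref{p:center} shows its $F(G)$-grading is one-dimensional in every degree with trivial centralizer action, i.e.\ it is $\bigoplus_g\kk_g$.

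Finally, for the rigid and pivotal structures, a short computation with the formula \eqnref{e:brd_duals} for $\ga^\vee$ shows that the induced grading on $X^*$ is $(X^*)_g=(X_g)^*$ -- the extra braiding in $\ga^\vee$, as opposed to $\ga^*$, is precisely what converts the standard grading $(V_{g^\inv})^*$ into $(V_g)^*$ -- which matches the dual in $(\cD(G)-\mmod,\tnsrbar)$; and since both pivotal structures are the canonical one inherited from vector spaces, $T$ is a pivotal tensor equivalence. The main obstacle is the identity $Q_{\ga,\mu}=\sum_g p^X_g\tnsr p^Y_g$, i.e.\ translating the regular-colored loop into group-theoretic terms; the remaining verifications are routine. (An alternative, slightly more conceptual route uses that $I\colon(\cA,\tnsr)\to(\ZA,\tnsrbar)$ is a dominant tensor functor by \prpref{p:I_tensor}, so $\tnsrbar$ is determined by $I$ together with its tensor structure $J$; under $T$ the functor $I$ carries $A$ to the $\cD(G)$-module $\bigoplus_{g\in G}A$ with $\delta_g$ the projection onto the degree-$g$ summand and $G$ acting diagonally on fibers by conjugation on the base, and one checks that $J$ agrees with the evident fiberwise tensor structure on this generating family.)
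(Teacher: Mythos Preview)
Your approach is essentially the same as the paper's: both reduce the claim to the identity $Q_{\ga,\mu}=\sum_{g\in G}p^X_g\tnsr p^Y_g$ (equivalently, $T(Q_{\ga,\mu})(v\tnsr w)=\sum_h \delta_h v\tnsr\delta_h w$), after replacing the regular-colored loop by the regular representation $\kk[G]$. The paper simply carries out this computation step by step through the half-braidings and the (co)evaluation of $\kk[G]$, whereas you assert the result and instead spend more effort on the surrounding tensor-functor axioms (unit, duals, pivotal structure) that the paper leaves implicit; your preliminary containment argument via \lemref{l:hfbrd_red} is correct and conceptually pleasant but redundant once the explicit formula for $Q_{\ga,\mu}$ is known.
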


\begin{proof}
The regular representation contains
$d_i = \dim_\kk(X_i)$ copies of $X_i$,
and thus the identity morphism on the regular representation
$\id_{\kk[G]}$ is effectively the same as
the dashed line \eqnref{e:regular_color}:
%Observe that the dashed line \eqnref{e:regular_color} is simply
%the identity on the regular representation:
\[
\sum_{i \in \Irr(\Rep(G))} d_i \cdot \id_{X_i}
= \id_{\kk[G]}
\;\;.\footnotemark
\]
\footnotetext{This equation, strictly speaking,
does not make sense,
but it is easy to check that in the computations we perform
for $Q_{\ga,\mu}$,
we may replace the regular coloring (dashed line)
of the loop in the diagram \eqnref{e:Qproj}
by the regular representation,
because it gets traced away.}
Consider objects $(V,\ga),(W,\mu) \in \cZ(\Rep(G))$,
their reduced tensor product is the
image of the projector $Q_{\ga,\mu}$.
We compute the projection $T(Q_{\ga,\mu})$
on $V \tnsr W \in \cD(G)-\mmod$
(all sums are over $G$):
\begin{align*}
v \tnsr w
%&\mapsto 
%\frac{1}{|G|} \sum_g g \tnsr \delta_g \tnsr v \tnsr w
%\\
&\mapsto \frac{1}{|G|}
\sum_g g \tnsr \delta_g \tnsr v \tnsr w
\;\;\;\;
(\text{from }
\coev: 1 \mapsto \sum_g g \tnsr \delta_g \in \kk[G] \tnsr \kk[G]^*)
\\
&\mapsto \frac{1}{|G|}
\sum_g g \tnsr v \tnsr \delta_g \tnsr w
\\
&\mapsto \frac{1}{|G|}
\sum_{g,h_1,h_2} \delta_{h_1} \cdot v \tnsr h_1 g
	\tnsr \delta_{h_2} \cdot w \tnsr \delta_{h_2 g}
\;\;\;\;
(\text{applying half-braidings }\ga, \mu)
\\
&\mapsto \frac{1}{|G|}
\sum_{g,h_1,h_2} \delta_{h_1} \cdot v \tnsr \delta_{h_2} \cdot w
	\tnsr h_1 g \tnsr \delta_{h_2 g}
\\
%&\mapsto \frac{1}{|G|}
%\sum_{g,h_1,h_2}
%	\delta_{h_1} \cdot v \tnsr \delta_{h_2} \cdot w
%	\tnsr h_1 g \tnsr \delta_{h_2 g}
%\\
&\mapsto \frac{1}{|G|}
\sum_{g,h_1,h_2} 
	\eval{h_1 g,\delta_{h_2 g}}
	\delta_{h_1} \cdot v \tnsr \delta_{h_2} \cdot w
\\
%&= \frac{1}{|G|}
%\sum_{g,h_1,h_2} 
%	\delta_{h_1,h_2}
%	\delta_{h_1} \cdot v \tnsr \delta_{h_2} \cdot w
%\\
&= \sum_h \delta_h \cdot v \tnsr \delta_h \cdot w
\;\;\;.
\end{align*}

In particular,
\[
T(Q_{\ga,\mu})|_{V_g \tnsr W_h} =
\delta_{g,h} \; \id_{V_g \tnsr W_h}
\;\;.
\]
It follows easily that
\[
\im(T(Q_{\ga,\mu})) = V \tnsrbar W
\;\;.
\]
\end{proof}

\begin{remark} \label{r:T_symm}
We note that both $(\cZ(\Rep(G)),\tnsrbar)$
and $(\cD(G)-\mmod,\tnsrbar)$
are symmetric:
$(\cD(G)-\mmod,\tnsrbar)$ inherits the braiding
on vector spaces $P: V_g \tnsr W_g \simeq W_g \tnsr V_g$,
while $(\cZ(\Rep(G)),\tnsrbar)$ can inherit
the symmetric braiding $c$ on $\Rep(G)$,
as $c$ intertwines projections $Q_{\ga,\mu}, Q_{\mu,\ga}$
and half-braidings
to give
$c : (X,\ga) \tnsrbar (Y,\mu) \simeq (Y,\mu) \tnsrbar (X,\ga)$.
Then the above equivalence $T$ is in fact braided
(see \ocite{wassermansymm}*{Theorem 44}).
\rmkend
\end{remark}

It is possible to describe $\tnsrbar$ in terms of
a coproduct structure on $\cD(G)$.
Recall that a coproduct $\Delta$ on an algbera $A$,
as part of a bialgebra structure on $A$,
defines, by pullback along $\Delta$,
a module structure on the
tensor product (as vector spaces) of two representations
of $A$,
thus defining a tensor product on the category of representations
$\Rep(A)$.
In the following, we define a coproduct $\ov{\Delta}$ on $\cD(G)$,
but it does not preserve units,
so for $V,W \in \cD(G)-\mmod$,
the unit $1 \in \cD(G)$ does not act as the identity
on $V \tnsr W$
(tensor product as vector spaces);
to remedy this, we perform a projection by the action of
$\ov{\Delta}(1)$ on $V \tnsr W$.
This perspective will be particularly useful
in the supergroup case.

\begin{proposition} \label{p:coprod}
Consider the map
\begin{align*}
\overline{\Delta} : \cD(G) &\mapsto \cD(G) \tnsr \cD(G)
\\
g &\mapsto g \tnsr g
\\
\delta_g &\mapsto \delta_g \tnsr \delta_g
\end{align*}
for all $g\in G$.
This is an algebra homomorphism, 
and for $V,W \in \cD(G)-\mmod$,
\[
V \tnsrbar W = \overline{\Delta}(1) \cdot (V \tnsr W)
\]
where $1 = e \tnsr \delta_* = e \tnsr \sum_h \delta_h$
is the unit of $\cD(G)$.
\end{proposition}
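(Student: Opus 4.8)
The plan is to reduce both assertions to short computations on the standard basis $\{g\tnsr\delta_h\}_{g,h\in G}$ of $\cD(G)$, on which $\overline{\Delta}$ is the diagonal map $g\tnsr\delta_h\mapsto (g\tnsr\delta_h)\tnsr(g\tnsr\delta_h)$ --- this is what the formulas $g\mapsto g\tnsr g$, $\delta_g\mapsto\delta_g\tnsr\delta_g$ encode after $\kk$-linear extension.

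First I would check multiplicativity. Writing $g\delta_h$ for $g\tnsr\delta_h$, the product rule of \prpref{p:ZA_DG} gives $(g_1\delta_{h_1})(g_2\delta_{h_2})=\delta_{g_2^{-1}h_1g_2,\,h_2}\,(g_1g_2\,\delta_{h_2})$, and since the Kronecker delta is idempotent one gets $\overline{\Delta}(g_1\delta_{h_1})\,\overline{\Delta}(g_2\delta_{h_2})=\big[(g_1\delta_{h_1})(g_2\delta_{h_2})\big]\tnsr\big[(g_1\delta_{h_1})(g_2\delta_{h_2})\big]=\overline{\Delta}\big((g_1\delta_{h_1})(g_2\delta_{h_2})\big)$. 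Equivalently, $\cD(G)$ is generated as an algebra by the group-likes $g$ and the orthogonal idempotents $\delta_h$, and the diagonal images plainly satisfy all the defining relations, the only one worth a word being the crossed relation $g\,\delta_h=\delta_{ghg^{-1}}\,g$, which is respected precisely because conjugation acts identically on the two tensor legs. One records that $\overline{\Delta}$ is not unital, since $\overline{\Delta}(1)=\sum_h\delta_h\tnsr\delta_h\neq 1\tnsr1$.

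For the identity $V\tnsrbar W=\overline{\Delta}(1)\cdot(V\tnsr W)$, I would set $p:=\overline{\Delta}(1)=\sum_h\delta_h\tnsr\delta_h$. Orthogonality of the $\delta_h$ gives $p^2=p$ and $\overline{\Delta}(x)\,p=\overline{\Delta}(x)=p\,\overline{\Delta}(x)$ for all $x\in\cD(G)$, so $p\cdot(V\tnsr W)$ is a unital $\cD(G)$-module under $x\cdot m:=\overline{\Delta}(x)\,m$. On elementary tensors $p\cdot(v\tnsr w)=\sum_h\delta_h v\tnsr\delta_h w$, whence $p\cdot(V\tnsr W)=\bigoplus_{h\in G}V_h\tnsr W_h$; moreover $\delta_g$ acts on the summand $V_h\tnsr W_h$ by the scalar $\delta_{g,h}$, so that summand lies in $G$-degree $h$, matching $(V\tnsrbar W)_h=V_h\tnsr W_h$. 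Finally, for $g\in G$ and $v\tnsr w\in V_h\tnsr W_h$ the collapse $\delta_{h'}v=\delta_{h',h}v$ gives $\overline{\Delta}(g)\cdot(v\tnsr w)=\sum_{h'}(g\delta_{h'}v)\tnsr(g\delta_{h'}w)=gv\tnsr gw$, which is exactly the diagonal action $g\colon V_h\tnsr W_h\xrightarrow{\ \sim\ }V_{ghg^{-1}}\tnsr W_{ghg^{-1}}$ defining $\tnsrbar$. Hence $p\cdot(V\tnsr W)$ and $V\tnsrbar W$ coincide as $\cD(G)$-modules.

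I do not expect a genuine obstacle; the one point needing care is the bookkeeping around the non-unitality of $\overline{\Delta}$ --- namely that the module structure on $\overline{\Delta}(1)\cdot(V\tnsr W)$ must be taken as the pullback along $\overline{\Delta}$, with $\overline{\Delta}(1)$ playing the role of the identity --- after which the proof reduces to the two one-line checks on $\delta_g$ and $g$ above.
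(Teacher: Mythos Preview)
Your proposal is correct and is precisely the routine verification the paper leaves implicit: the paper's own proof consists of the single word ``Straightforward.'' Your careful handling of the non-unitality of $\overline{\Delta}$, and your observation that the formulas on generators amount to the diagonal map $g\delta_h\mapsto g\delta_h\tnsr g\delta_h$ on the standard basis, are exactly the points one needs to check.
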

\begin{proof}
Straightforward.
\end{proof}

The methods above can be adapted for finite supergroups.
Recall that $\Rep(G,z)$ is the same as
$\Rep(G)$ as a pivotal fusion category,
but has a $\ZZ/2$-grading determined by the
action of the order 2 central element $z$,
and the braiding on $\Rep(G,z)$ is twisted by
the Koszul sign convention.

For $V \in \Rep(G,z)$,
the actions of $e^0 := \frac{1+z}{2}$ and
$e^1 := \frac{1-z}{2}$
are projections to its even and odd part,
which we denote by $V^0$ and $V^1$, respectively.
Similarly, for $v\in V$, we denote by $v^0$ and $v^1$
its even and odd part.
In particular, for elements of $\cD(G)$
as a $G$-module by left multiplication,
$g^\sigma = e^\sigma g = (g + (-1)^\sigma gz)/2$
and
$\delta_g^\sigma = e^\sigma \delta_g$.
We warn the reader not to confuse the left
multiplication action of $G$ on $\cD(G)$
with the $G$-action on $\kk[G]^* = F(G) \subseteq \cD(G)$,
which is by precomposition,
as used in the proof of \thmref{t:ZA_DG_red_super};
in the latter,
$\delta_g^\sigma = 
(\delta_g + (-1)^\sigma \delta_{gz})/2$,
which, under the inclusion $F(G) \hookrightarrow \cD(G)$,
does not agree with
$e^\sigma \delta_g = (\delta_g + (-1)^\sigma z \delta_g)/2$.

Letting $\cA = \Rep(G,z)$,
there is no change in the proof of \prpref{p:ZA_DG},
so that we still have an equivalence of abelian categories
$\cZ(\Rep(G,z)) \simeq \cD(G)-\mmod$.
However, the different braiding on $\Rep(G,z)$
affects $\tnsrbar$ on $\cZ(\Rep(G,z))$,
so we will need to modify the definition
of the reduced/fiberwise tensor product on $\cD(G)-\mmod$
(see also \ocite{wassermansymm}*{Definition 49}):

\begin{definition}
\label{d:symm_red_super}
Let $V,W \in \cD(G)-\mmod$.
Their \emph{reduced tensor product
with respect to $z$},
or \emph{$z$-twisted fiberwise tensor product},
denoted $V \tnsrz{z} W$,
is defined first as a $G$-graded vector space by
\[
(V\tnsrz{z} W)_g :=
(V^0_g \tnsr W^0_g)
\oplus (V^1_{gz} \tnsr W^1_{gz})
\oplus (V^0_{gz} \tnsr W^1_g)
\oplus (V^1_g \tnsr W^0_{gz})
=
\bigoplus_{\sigma,\tau \in \ZZ/2}
  V^\sigma_{gz^\tau} \tnsr W^\tau_{gz^\sigma}
\]
and then as a $G$-module, $h\in G$ acts diagonally, i.e.
\[
h = h \tnsr h :
  V^\sigma_{gz^\tau} \tnsr W^\tau_{gz^\sigma}
  \simeq
  V^\sigma_{hgz^\tau h^\inv} \tnsr W^\tau_{hgz^\sigma h^\inv}
\;\;.
\]
For morphisms,
it is just defined by the tensor product
of the appropriate restrictions.
\defend
\end{definition}

The definition of $\tnsrz{z}$ looks complicated.
It is useful to observe the following:
the tensor product mixes fibers over $g$ and $gz$
together, and of the terms in
$(\bigoplus_{\sigma,\tau} V^\sigma_{gz^\tau})
\tnsr
(\bigoplus_{\sigma',\tau'} W^{\sigma'}_{gz^{\tau'}})$,
only those with $\sigma + \tau + \sigma' + \tau' = 0$
appear in $V \tnsrz{z} W$;
such a term will have $G$-grading $gz^{\sigma+\tau'}$,
and have parity $\sigma + \sigma'$.
(It may seem, from $(V\tnsrz{z} W)_g$
in \defref{d:symm_red_super},
that we must have $\sigma = \tau'$ and $\sigma' = \tau$,
but here we are also considering $(V\tnsrz{z} W)_{gz}$.)

\begin{proposition}
$(\cD(G)-\mmod,\tnsrz{z})$ is a
pivotal multifusion category,
with
\begin{itemize}
\item Unit $\onebar = \bigoplus_{g\in G} \kk_g$
  is the same as for $(\cD(G)-\mmod,\tnsrbar)$.
\item Duals: $(V^\vee)_g = (V_g)^*$,
so $V^\vee = \rvee V$; the evaluation map is 0
on the odd part of $V^\vee \tnsrz{z} V$,
and the obvious one on the even part;
similarly for the coevaluation
(see proof for details);
\item Pivotal structure is the natural one
  $V_g \simeq (V_g)^{**} = (V^{\vee\vee})_g$
\end{itemize}
\end{proposition}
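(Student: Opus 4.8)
The plan is to verify the pivotal multifusion axioms directly, following the same template as for $(\cD(G)-\mmod,\tnsrbar)$ but keeping track of the Koszul signs that the twist $z$ introduces. Equivalently, once the forthcoming \thmref{t:ZA_DG_red_super} is available one may simply transport the structure along the equivalence $T$ of \prpref{p:ZA_DG} from $(\cZ(\Rep(G,z)),\tnsrbar)$, which is pivotal multifusion by \prpref{p:reduced_pivotal}, exactly as the non-super statement follows from \thmref{t:ZA_DG_red}; from that viewpoint the $\ev$ and $\coev$ below are literally the images under $T$ of $\ev_{(X,\ga)},\coev_{(X,\ga)}$ of \prpref{p:reduced_pivotal}, and their vanishing on the odd part records the sign cancellations coming from the braiding of $\Rep(G,z)$. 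I describe the direct route, which needs only \defref{d:symm_red_super}.

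For the monoidal structure I would use the bookkeeping recorded after \defref{d:symm_red_super}: a summand of $V\tnsrz{z}W$ coming from $V^\sigma_{gz^\tau}\tnsr W^\tau_{gz^\sigma}$ sits in grading $g$ with parity $\sigma+\tau$. Expanding the graded pieces of $(V_1\tnsrz{z}V_2)\tnsrz{z}V_3$ and of $V_1\tnsrz{z}(V_2\tnsrz{z}V_3)$ and matching them summand by summand, the associator is the evident reassociation of tensor factors of $\kk$-vector spaces twisted by the Koszul sign of the parities permuted, and the pentagon reduces to the pentagon in $\Vctsp$ together with the standard Koszul cocycle identity. (One could also package this through a $z$-twisted analogue of the coproduct $\overline{\Delta}$ of \prpref{p:coprod}, writing $V\tnsrz{z}W$ as the image of $\overline{\Delta}_z(1)$ on $V\tnsr W$, but the content is the same.) Since $\onebar=\bigoplus_g\kk_g$ is concentrated in even parity, only the $\sigma=0$ summands survive in $\onebar\tnsrz{z}V$, reassembling to $V_g$ with no net grading shift, so both unit constraints are the obvious isomorphisms; and as $\onebar$ decomposes as $\bigoplus_{\bar g\in G//G}(\text{trivial }Z(g)\text{-representation})$ it is a direct sum of simple objects, simple only for $G$ trivial, which forces the ``multi''.

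For the rigid and pivotal structure, set $\ev_V:V^\vee\tnsrz{z}V\to\onebar$ to be zero on the odd summands ($\sigma\ne\tau$) and, on the even summand $(V^\sigma_{gz^\sigma})^*\tnsr V^\sigma_{gz^\sigma}$ in grading $g$, the canonical pairing into $(\onebar)_g=\kk$; let $\coev_V:\onebar\to V\tnsrz{z}V^\vee$ be its transpose. These are $G$-equivariant and grading-preserving, hence morphisms of $\cD(G)$-modules, and since the same formulas serve for the right dual one gets $V^\vee=\rvee V$. The only substantive check is the pair of snake identities: following $(\id_V\tnsrz{z}\ev_V)\circ(\coev_V\tnsrz{z}\id_V)$ through the graded pieces, the odd summands drop out by construction and on the even summands the composite collapses to the ordinary zig-zag identity for finite-dimensional vector spaces. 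The pivotal structure is then the gradingwise canonical isomorphism $\delta_V:V_g\simeq(V_g)^{**}=(V^{\vee\vee})_g$, which carries no sign; its naturality and compatibility with grading and parity are immediate, its monoidality ($\delta_{V\tnsrz{z}W}$ matching $\delta_V\tnsrz{z}\delta_W$ under the associator) descends from the corresponding fact in $\Vctsp$, and its compatibility with the duality just defined is the usual vector-space identity.

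The main obstacle is exactly the rigid structure: the twist $z$ is what prevents the naive Koszul-signed evaluation from satisfying the snake identities, so the point one must get right is that the entire obstruction is supported on the odd part of $\ev_V$ and $\coev_V$ --- whence zeroing those components is both forced and sufficient. Everything else (associativity, unit, pentagon, pivotality) differs from the $\tnsrbar$ case of \prpref{p:reduced_pivotal} and \thmref{t:ZA_DG_red} only by signs that organize themselves through the standard Koszul cocycle, so those verifications are routine bookkeeping.
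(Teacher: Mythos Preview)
Your proposal is correct and follows the same direct-verification approach as the paper, which likewise declares the result ``more or less straightforward'' and only spells out the single snake identity $(\id_V \tnsrz{z} \ev_V)\circ(\coev_V \tnsrz{z} \id_V)=\id_V$ via an explicit basis computation. One small overcomplication: the associator for $\tnsrz{z}$ is literally the vector-space associator on matching summands with \emph{no} Koszul sign (nothing is permuted; the computation $\bigoplus_{\sigma,\tau,\nu}(V_1)^\sigma_{gz^{\tau+\nu}}\tnsr(V_2)^\tau_{gz^{\sigma+\nu}}\tnsr(V_3)^\nu_{gz^{\sigma+\tau}}$ comes out identical on both bracketings), so the pentagon is just that of $\Vctsp$ --- the Koszul sign belongs to the braiding of \rmkref{r:T_symm_super}, not to the associator.
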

\begin{proof}
This is more or less straightforward, if not
a little tricky for one not accustomed to the world
of super vector spaces.
Let us check that \;
$\ev \circ \coev = \id : V_g \to
  (V \tnsrz{z} V^\vee \tnsrz{z} V)_g \to V_g$.
Let $\{v_i\},\{\bar{v}_j\}$ be bases for
$V^0_g,V^1_g$ respectively,
and $\{w_k\},\{\bar{w}_l\}$ bases for $V^0_{gz},V^1_{gz}$
respectively. We raise indices for their dual bases
(not to be confused with parity).

Let $x\in V^0_g, \bar{x}\in V^1_g$. The coevaluation
is given by (sums over bases are implicit)
\[
1 \mapsto
v_i \tnsr v^i + \bar{w}_l \tnsr \bar{w}^l
+ w_k \tnsr w^k + \bar{v}_j \tnsr \bar{v}^j
\]
the first two terms are in $(V \tnsrz{z} V^\vee)_g$
and next two terms are in $(V \tnsrz{z} V^\vee)_{gz}$.
Then
\begin{align*}
x \mapsto v_i \tnsr v^i \tnsr x +
  \bar{w}_l \tnsr \bar{w}^l \tnsr x
\mapsto v_i\eval{v^i,x} + 0 = x
\\
\bar{x} \mapsto w_k \tnsr w^k \tnsr \bar{x} +
  \bar{v}_j \tnsr \bar{v}^j \tnsr \bar{x}
\mapsto 0 + \bar{v}_j\eval{\bar{v}^j,\bar{x}}
= \bar{x}
\;\;.
\end{align*}
\end{proof}

We have the analog of \thmref{t:ZA_DG_red}
(see also \ocite{wassermansymm}*{Theorem 51}):

\begin{theorem}
\label{t:ZA_DG_red_super}
The equivalence $T$ of \prpref{p:ZA_DG}
is a pivotal tensor equivalence
\[
T : (\cZ(\Rep(G,z)),\tnsrbar)
\simeq_\tnsr (\cD(G)-\mmod,\tnsrz{z})
\;\;.
\]
\end{theorem}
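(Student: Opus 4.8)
The plan is to imitate the proof of \thmref{t:ZA_DG_red} almost line for line, the only genuinely new ingredient being a careful bookkeeping of the Koszul signs introduced by the twisted braiding of $\Rep(G,z)$. Recall first that $T$ is already an equivalence of abelian categories (\prpref{p:ZA_DG}), the very same functor as in the non-super case, and that it sends the unit of $(\cZ(\Rep(G,z)),\tnsrbar)$ to the unit of $(\cD(G)-\mmod,\tnsrz{z})$: the former is $I(\one)=(\bigoplus_i X_i X_i^*,\Gamma)$, whose half-braiding $\Gamma$ does not involve the braiding of $\cA$ at all, so $T(I(\one))$ is the same $\cD(G)$-module as before, namely $\kk[G]$ with $G$ acting by conjugation, which is exactly $\bigoplus_{g\in G}\kk_g$. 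The unit constraints of $\tnsrbar$ (from the proof of \prpref{t:ZA_reduced}) are likewise drawn without crossings, so this identification of units is compatible with the structure maps just as in the non-super situation.

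The heart of the matter is to compute $T(Q_{\ga,\mu})$ as a linear endomorphism of $V\tnsr W$, for $(V,\ga),(W,\mu)\in\cZ(\Rep(G,z))$, and to identify its image, equipped with the $\cD(G)$-module structure coming from the half-braiding $\ga\tnsrbar\mu$ (equivalently $\ga\tnsr c^\inv$, by \lemref{l:hfbrd_red}), with $V\tnsrz{z}W$. As in the proof of \thmref{t:ZA_DG_red} I would chase $v\tnsr w$ through the sequence of maps realizing $Q_{\ga,\mu}$ in \eqnref{e:Qproj}: coevaluation of the loop strand, the half-braiding $\ga$ past $V$, the half-braiding $\mu$ past $W$, then evaluation of the loop. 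Since the braiding on $\Rep(G,z)$ is the $R$-matrix $\sum_g g\tnsr\delta_g$ followed by the \emph{Koszul-signed} swap, each of the two half-braiding steps now drags a sign $(-1)^{|\cdot|\,|\cdot|}$ across; decomposing everything into $z$-eigenspaces and unwinding these signs, one finds that the closing-up condition coming from the pairing $\eval{h_1 g,\delta_{h_2 g}}$ together with the accumulated sign annihilates every summand of $V\tnsr W$ except those listed in \defref{d:symm_red_super}, while on each surviving piece $V^\sigma_{gz^\tau}\tnsr W^\tau_{gz^\sigma}$ the total coefficient comes out to $+1$. This gives $\im(T(Q_{\ga,\mu}))=V\tnsrz{z}W$ as $G$-graded vector spaces with the diagonal $G$-action, and the same chase (applied to $\ga\tnsr c^\inv$ in place of a generic half-braiding) shows that the induced $F(G)$-grading is the fiberwise one.

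It then remains to record the tensor structure and pivotal data. The isomorphism $L$ witnessing $T\big((V,\ga)\tnsrbar(W,\mu)\big)\cong T(V,\ga)\tnsrz{z}T(W,\mu)$ and the verification of the hexagon axiom have the same combinatorial shape as in the non-super case; all one does is reinsert the Koszul signs, which are consistent precisely because $L$ only permutes strands through crossings whose signs are already built into the definition of $\tnsrz{z}$. The pivotal structures agree because on both sides they are the canonical one on underlying (super) vector spaces $V_g\simeq V_g^{**}$, and the duality data is the one spelled out in the preceding proposition. The main obstacle I anticipate is exactly the sign bookkeeping in the central computation: one must make sure that on the ``allowed'' summands the signs cancel to give literally the identity — so that $T(Q_{\ga,\mu})$ is a genuine projection onto $V\tnsrz{z}W$ and not merely idempotent up to signs — and that on the ``forbidden'' summands the contributions truly vanish rather than being merely rescaled. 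Organizing the chase by first splitting $V$ and $W$ into even and odd parts and tracking how each half-braiding moves the four labels $(\sigma,\tau,\sigma',\tau')$ — the survival condition $\sigma+\tau+\sigma'+\tau'=0$ being precisely the statement that the two half-twists cancel — should keep this under control.
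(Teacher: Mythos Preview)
Your approach is the paper's: rerun the chase from \thmref{t:ZA_DG_red} and track what the Koszul signs do. One correction and one simplification are worth flagging.

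The correction: the signs do \emph{not} come from the half-braidings $\ga,\mu$. Under $T$ these are given by the same $R$-matrix formula $a\tnsr v\mapsto\sum_h \delta_h v\tnsr ha$ as in the non-super case, which involves only the plain vector-space swap and is blind to parity. The Koszul signs enter through the two crossings in the diagram for $Q_{\ga,\mu}$ that are \emph{not} labeled by a half-braiding --- those are the braiding $c$ of $\Rep(G,z)$. Keep this straight or the computation will go wrong.

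The simplification: rather than carry signs, the paper absorbs them into $z$-shifts. On the regular representation, for $v$ of parity $\sigma$,
\[
c(g\tnsr v)=(g^0+g^1)\tnsr v\longmapsto v\tnsr(g^0+(-1)^\sigma g^1)=v\tnsr gz^\sigma,
\]
and likewise $c(\delta_g\tnsr v)=v\tnsr\delta_{gz^\sigma}$. With this in hand the chase is sign-free: for $v\in V^\sigma_g$, $w\in W^\tau_h$ one gets $T(Q_{\ga,\mu})(v\tnsr w)=\delta_{gz^\tau,\,hz^\sigma}\,v\tnsr w$, and the same trick applied to the induced half-braiding shows this lands in $G$-grade $gz^\tau$. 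That is exactly \defref{d:symm_red_super}, and your survival condition $\sigma+\tau+\sigma'+\tau'=0$ drops out automatically rather than needing separate verification.
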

\begin{proof}
Essentially the same as before,
but again slightly tricky.
Recall that the actions of $e^0 = (1+z)/2$ and $e^1 = (1-z)/2$
are projections onto even and odd parts respectively,
and that we denote the even and odd parts of basis elements
$g \in \kk[G]$ and $\delta_g \in F(G) = \kk[G]^*$ by
\begin{align*}
g^\sigma &= e^\sigma \cdot g = (g + (-1)^\sigma gz)/2
\in \kk[G]
\\
\delta_g^\sigma &= e^\sigma \cdot \delta_g = 
(\delta_g + (-1)^\sigma \delta_{gz})/2
\in F(G)
\;\;.
\end{align*}
Then the braiding $c: \kk[G] \tnsr V^\sigma
\simeq V^\sigma \tnsr \kk[G]$
for $\sigma \in \ZZ/2$ is given by
\[
%g \tnsr v =
%(\frac{g + gz}{2} + \frac{g - gz}{2}) \tnsr v
%\overset{c}{\mapsto}
%v \tnsr (\frac{g + gz}{2} + (-1)^\sigma \frac{g - gz}{2})
%=
%v \tnsr gz^\sigma
%\;\;.
g \tnsr v =
(g^0 + g^1) \tnsr v
\overset{c}{\mapsto}
v \tnsr (g^0 + (-1)^\sigma g^1)
=
v \tnsr gz^\sigma
\;\;.
\]
Similarly, $c: F(G) \tnsr V^\sigma
\simeq V^\sigma \tnsr F(G)$ is given by
\[
%\delta_g \tnsr v =
%(\frac{\delta_g + \delta_{gz}}{2} +
%\frac{\delta_g - \delta_{gz}}{2}) \tnsr v
%\overset{c}{\mapsto}
%v \tnsr
%(\frac{\delta_g + \delta_{gz}}{2} +
%(-1)^\sigma \frac{\delta_g - \delta_{gz}}{2})
%=
%v \tnsr \delta_{gz^\sigma}
%\;\;.
\delta_g \tnsr v =
(\delta_g^0 + \delta_g^1) \tnsr v
\overset{c}{\mapsto}
v \tnsr
(\delta_g^0 + (-1)^\sigma \delta_g^1)
=
v \tnsr \delta_{gz^\sigma}
\;\;.
\]
We perform the same computation
as in the proof of \thmref{t:ZA_DG_red},
that is,
$T(Q_{\ga,\mu})$ on $V\tnsr W$;
once again the dashed line in $Q_{\ga,\mu}$
may be replaced by
the regular representation $\kk[G]$,
with which the braiding is given by the nice formulas above.
For $v\in V^\sigma_g, w\in W^\tau_h$,
(sum over $a \in G$ is implicit)
%\begin{align*}
%v \tnsr w \mapsto \frac{1}{|G|} v \tnsr w
%\overset{\coev_{\kk[G]}}{\mapsto} \frac{1}{|G|}
%&a \tnsr \delta_a \tnsr v \tnsr w
%\\
%\overset{c}{\mapsto} \frac{1}{|G|}
%&a \tnsr v \tnsr \delta_{az^\sigma} \tnsr w
%\\
%\overset{\ga,\mu}{\mapsto} \frac{1}{|G|}
%&v \tnsr ga \tnsr w \tnsr h\cdot\delta_{az^\sigma}
%\\
%\overset{c}{\mapsto} \frac{1}{|G|}
%&v \tnsr w \tnsr gaz^\tau \tnsr \delta_{haz^\sigma}
%\\
%\overset{\ev_{\kk[G]}}{\mapsto}
%&\delta_{gz^\tau,hz^\sigma} v \tnsr w
%\;\;.
%\end{align*}
\begin{align*}
v \tnsr w \mapsto \frac{1}{|G|} v \tnsr w
&\mapsto \frac{1}{|G|}
a \tnsr \delta_a \tnsr v \tnsr w
\;\;\;
(\text{from }
\coev : 1 \mapsto g \tnsr \delta_g \in \kk[G] \tnsr F(G)\;)
\\
&\mapsto \frac{1}{|G|}
a \tnsr v \tnsr \delta_{az^\sigma} \tnsr w
\;\;\;
(\text{applying braiding }c)
\\
&\mapsto \frac{1}{|G|}
v \tnsr ga \tnsr w \tnsr h\cdot\delta_{az^\sigma}
\;\;\;\;
(\text{applying half-braidings }\ga, \mu)
\\
&\mapsto \frac{1}{|G|}
v \tnsr w \tnsr gaz^\tau \tnsr \delta_{haz^\sigma}
\;\;\;
(\text{applying braiding }c)
\\
&\mapsto
\delta_{gz^\tau,hz^\sigma} v \tnsr w
\;\;\;
(\text{pairing by } \ev_{\kk[G]})
\;\;.
\end{align*}
The action of $\delta_b$ on $v\tnsr w$,
i.e. projection to its $b$-graded component,
is the composition
\[
v \tnsr w
\mapsto
e \tnsr v \tnsr w
\overset{\ga}{\mapsto}
v \tnsr g \tnsr w
\overset{c}{\mapsto}
v \tnsr w \tnsr gz^\tau
\overset{\delta_b}{\mapsto}
\delta_{gz^\tau,b} \; v \tnsr w
\;\;.
\]
Thus, the projection of $T(Q_{\ga,\mu})$ restricted to
$V^\sigma_g \tnsr W^\tau_h$ has $G$-grading
$gz^\tau$,
so the $b$-graded component of $\im(T(Q_{\ga,\mu}))$
is $\bigoplus V^\sigma_g \tnsr W^\tau_h$,
where the sum is over all $g,h$ with
$gz^\tau = hz^\sigma = b$, which agrees with
\defref{d:symm_red_super}.
\end{proof}

\begin{remark} \label{r:T_symm_super}
Similar to \rmkref{r:T_symm},
we note that both $(\cZ(\Rep(G,z)),\tnsrbar)$
and $(\cD(G)-\mmod,\tnsrz{z})$
are symmetric: the braiding on $\Rep(G,z)$,
being symmetric, is naturally
a symmetric braiding for $(\cZ(\Rep(G,z)),\tnsrbar)$,
and $(\cD(G)-\mmod,\tnsrz{z})$ inherits the
braiding of super vector spaces
$(-1)^{\sigma \cdot \tau}P:
V^\sigma_{gz^\tau} \tnsr W^\tau_{gz^\sigma}
\simeq W^\tau_{gz^\sigma} \tnsr V^\sigma_{gz^\tau}$.
Then the above equivalence $T$ is in fact symmetric
(see \ocite{wassermansymm}*{Theorem 51}).
\rmkend
\end{remark}

We have an analogue of \prpref{p:coprod}:
\begin{proposition} \label{p:coprod_super}
Consider the map
\begin{align*}
\overline{\Delta}_z : \cD(G) &\mapsto \cD(G) \tnsr \cD(G)
\\
g &\mapsto g \tnsr g
\\
\delta_g &\mapsto \bigoplus_{\sigma,\tau}
  \delta^\sigma_{gz^\tau} \tnsr \delta^\tau_{gz^\sigma}
\end{align*}
for all $g\in G$, where
$\delta_g^\sigma = e^\sigma \delta_g$
is projection to the even/odd $g$-graded component.
This is an algebra homomorphism, 
and for $V,W \in \cD(G)-\mmod$,
%and for $V,W \in \Rep(G,z)$,
\[
V \tnsrbar W = \overline{\Delta}_z(1) \cdot (V \tnsr W)
\]
where $1 = e \tnsr \delta_* = e \tnsr \sum_h \delta_h$
is the unit of $\cD(G)$.
\end{proposition}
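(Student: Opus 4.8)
The plan is to establish the two assertions in order: that $\overline{\Delta}_z$ is an algebra homomorphism, and that $\overline{\Delta}_z(1)$ acts on $V \tnsr W$ as the projector whose image is the $z$-twisted fiberwise tensor product of \defref{d:symm_red_super}. Throughout I will use that $z$ is central of order two and that $e^0 = \tfrac{1+z}{2}$, $e^1 = \tfrac{1-z}{2}$ are central orthogonal idempotents, so that $\delta_g^\sigma = e^\sigma \delta_g$ is a projection satisfying the twisted commutation rule $g\,\delta_g^\sigma = \delta_{ghg^{-1}}^\sigma\, g$ already used in the proof of \thmref{t:ZA_DG_red_super}. For the homomorphism claim it suffices to check the three defining relations of $\cD(G)$. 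The group relation $(g_1 \tnsr g_1)(g_2 \tnsr g_2) = g_1 g_2 \tnsr g_1 g_2$ is immediate. For the cross relation $g\,\delta_h = \delta_{ghg^{-1}}\, g$, one expands $(g\tnsr g)\,\overline{\Delta}_z(\delta_h) = \bigoplus_{\sigma,\tau}(g\,\delta^\sigma_{hz^\tau}) \tnsr (g\,\delta^\tau_{hz^\sigma})$ and slides each $g$ to the right using $g\,\delta^\sigma_a = \delta^\sigma_{gag^{-1}}\,g$ together with $g z^\tau g^{-1} = z^\tau$; the result is $\overline{\Delta}_z(\delta_{ghg^{-1}})(g\tnsr g)$. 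The only delicate relation is the idempotent relation $\delta_{h_1}\delta_{h_2} = \delta_{h_1,h_2}\,\delta_{h_1}$: the $(\sigma,\tau,\sigma',\tau')$-summand of $\overline{\Delta}_z(\delta_{h_1})\overline{\Delta}_z(\delta_{h_2})$ is $(\delta^\sigma_{h_1z^\tau}\delta^{\sigma'}_{h_2z^{\tau'}}) \tnsr (\delta^\tau_{h_1z^\sigma}\delta^{\tau'}_{h_2z^{\sigma'}})$, and since $\delta^\sigma_a\delta^{\sigma'}_b = \delta_{\sigma,\sigma'}\delta_{a,b}\,\delta^\sigma_a$, requiring \emph{both} tensor factors to survive forces $\sigma=\sigma'$, $\tau=\tau'$, and $h_1 z^{\tau-\tau'} = h_2$, hence $h_1=h_2$ (this is where order two of $z$ is used); the surviving terms are exactly $\delta_{h_1,h_2}\,\overline{\Delta}_z(\delta_{h_1})$.

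For the tensor product identity, I would first compute $\overline{\Delta}_z(1) = \overline{\Delta}_z(e)\,\overline{\Delta}_z\!\big(\sum_h \delta_h\big) = \sum_h \bigoplus_{\sigma,\tau} \delta^\sigma_{hz^\tau} \tnsr \delta^\tau_{hz^\sigma}$, using $\overline{\Delta}_z(e) = e\tnsr e$ and $e\,\delta^\sigma_a = \delta^\sigma_a$. Acting on $V\tnsr W$, the summand $\delta^\sigma_{hz^\tau}\tnsr\delta^\tau_{hz^\sigma}$ is the projector onto $V^\sigma_{hz^\tau}\tnsr W^\tau_{hz^\sigma}$; since $1$ is idempotent and $\overline{\Delta}_z$ is now a homomorphism, $\overline{\Delta}_z(1)$ is idempotent, and its image is $\bigoplus_{h}\bigoplus_{\sigma,\tau} V^\sigma_{hz^\tau}\tnsr W^\tau_{hz^\sigma}$, which is precisely $\bigoplus_g (V\tnsrz{z} W)_g$. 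Matching module structures on the image is then routine: $h'\in G$ acts through $\overline{\Delta}_z(h') = h'\tnsr h'$, i.e. diagonally, and $\delta_b$ acts through $\overline{\Delta}_z(\delta_b) = \bigoplus_{\sigma,\tau}\delta^\sigma_{bz^\tau}\tnsr\delta^\tau_{bz^\sigma}$, i.e. as projection onto the $b$-graded piece $\bigoplus_{\sigma,\tau} V^\sigma_{bz^\tau}\tnsr W^\tau_{bz^\sigma}$; both agree with \defref{d:symm_red_super} once one observes $h'gz^\tau h'^{-1} = (h'gh'^{-1})z^\tau$. The morphism case is handled by the same identification, since restricting $\ph\tnsr\psi$ to $\operatorname{im}\overline{\Delta}_z(1)$ is the definition there.

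The main obstacle — really the only non-routine step — is the idempotent relation in the first part, where the $\ZZ/2$-twisting could a priori destroy $\delta_{h_1}\delta_{h_2} \sim \delta_{h_1,h_2}\delta_{h_1}$; the resolution is that demanding both tensor factors of the product be nonzero is an \emph{over-determined} pair of conditions which, using that $z^{\tau-\tau'}=e$ exactly when $\tau=\tau'$, pins down $h_1 = h_2$, $\sigma=\sigma'$, $\tau=\tau'$. I would also take care to use the convention $\delta_g^\sigma = e^\sigma\delta_g$ (projection to the even/odd $g$-graded component of $\cD(G)$) rather than the precomposition convention on $F(G)$ that the paper explicitly warns against, since $\overline{\Delta}_z$ is a homomorphism only for the former.
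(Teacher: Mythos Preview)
Your proof is correct and follows the same direct-verification approach as the paper, which simply declares the proposition ``Straightforward'' and records the parity decomposition $\overline{\Delta}_z(\delta_g^0) = \delta_g^0 \tnsr \delta_g^0 + \delta_{gz}^1 \tnsr \delta_{gz}^1$ and $\overline{\Delta}_z(\delta_g^1) = \delta_g^1 \tnsr \delta_{gz}^0 + \delta_{gz}^0 \tnsr \delta_g^1$ as the key observation. Your treatment of the idempotent relation and the identification of $\overline{\Delta}_z(1)\cdot(V\tnsr W)$ with \defref{d:symm_red_super} supplies exactly the details the paper omits.
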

\begin{proof}
Straightforward. We note that by parity consideration,
\begin{align*}
\overline{\Delta}_z(\delta_g^0)
&= \delta_g^0 \tnsr \delta_g^0
+ \delta_{gz}^1 \tnsr \delta_{gz}^1
\\
\overline{\Delta}_z(\delta_g^1)
&= \delta_g^1 \tnsr \delta_{gz}^0
+ \delta_{gz}^0 \tnsr \delta_g^1
\end{align*}
which reflects the observation made
after \defref{d:symm_red_super}.
%that the parity of
%$V^\sigma_{gz^\tau} \tnsrbar W^{\sigma'}_{gz^{\tau'}}$
%is $\sigma+\sigma'$.
\end{proof}

It turns out that these coproducts are the same
up to an algebra automorphism of $\cD(G)$:
\begin{proposition} \label{p:coprod_same}
The map
\begin{align*}
\lmb : \cD(G) &\simeq \cD(G)
\\
g &\mapsto g
\\
\delta_g &\mapsto \delta_g^0 + \delta_{gz}^1
\end{align*}
defines an isomorphism of algebras.
Furthermore, this isomorphism intertwines
the two coproducts $\overline{\Delta}$
and $\overline{\Delta}_z$,
i.e. the following diagram commutes:
\[
\begin{tikzcd}
\cD(G) \ar[r,"\lmb"] \ar[d,"\overline{\Delta}"]
& \cD(G) \ar[d,"\overline{\Delta}_z"]
\\
\cD(G) \tnsr \cD(G) \ar[r,"\lmb \tnsr \lmb"]
& \cD(G) \tnsr \cD(G)
\end{tikzcd}
\;\;\;
\begin{tikzcd}
\textcolor{white}{.}
\\
.
\end{tikzcd}
\]
\end{proposition}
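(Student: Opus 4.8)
The statement bundles two claims --- that $\lmb$ is an algebra automorphism of $\cD(G)$ and that it carries $\overline{\Delta}$ to $\overline{\Delta}_z$ --- and each is a short computation on the algebra generators $g$ and $\delta_g$; the plan is simply to organize these, using throughout that $e^0=\tfrac{1+z}{2}$ and $e^1=\tfrac{1-z}{2}$ are central orthogonal idempotents of $\cD(G)$ with $e^0+e^1=1$, that $z$ is central in $G$, and that $\delta_g^\sigma$ denotes $e^\sigma\delta_g$.

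First I would check that $\lmb$ is a well-defined algebra homomorphism by verifying the three families of defining relations. The group law in $\kk[G]$ is trivial since $\lmb$ is the identity there. For $\delta_a\delta_b=\delta_{a,b}\delta_a$ one uses $e^\sigma e^\tau=\delta_{\sigma,\tau}e^\sigma$ to get $\delta_a^\sigma\delta_b^\tau=\delta_{\sigma,\tau}\,\delta_{a,b}\,\delta_a^\sigma$, so in $\lmb(\delta_a)\lmb(\delta_b)=(\delta_a^0+\delta_{az}^1)(\delta_b^0+\delta_{bz}^1)$ the cross terms vanish and the diagonal terms collapse to $\delta_{a,b}\lmb(\delta_a)$ (note $az=bz\iff a=b$). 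For $g\delta_hg^{-1}=\delta_{ghg^{-1}}$, centrality of $e^\sigma$ gives $g\delta_h^\sigma g^{-1}=e^\sigma g\delta_h g^{-1}=\delta_{ghg^{-1}}^\sigma$, and centrality of $z$ gives $g(hz)g^{-1}=(ghg^{-1})z$, whence $\lmb(g)\lmb(\delta_h)\lmb(g^{-1})=\delta_{ghg^{-1}}^0+\delta_{(ghg^{-1})z}^1=\lmb(\delta_{ghg^{-1}})$. For invertibility I would show $\lmb$ is an involution: $\lmb(\delta_g^0)=e^0\lmb(\delta_g)=\delta_g^0$ and $\lmb(\delta_{gz}^1)=e^1\lmb(\delta_{gz})=\delta_g^1$, so $\lmb^2(\delta_g)=\delta_g^0+\delta_g^1=\delta_g$, while $\lmb^2=\id$ on $\kk[G]$ is clear.

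Then I would verify the square by evaluating $(\lmb\tnsr\lmb)\circ\overline{\Delta}$ and $\overline{\Delta}_z\circ\lmb$ on generators; since all four maps involved are algebra homomorphisms and $\kk[G],F(G)$ generate $\cD(G)$, agreement on generators suffices. On $g$ both composites give $g\tnsr g$. On $\delta_g$, the left side is $\lmb(\delta_g)\tnsr\lmb(\delta_g)=\delta_g^0\tnsr\delta_g^0+\delta_g^0\tnsr\delta_{gz}^1+\delta_{gz}^1\tnsr\delta_g^0+\delta_{gz}^1\tnsr\delta_{gz}^1$. For the right side I would reuse the two identities already recorded in the proof of \prpref{p:coprod_super}, namely $\overline{\Delta}_z(\delta_g^0)=\delta_g^0\tnsr\delta_g^0+\delta_{gz}^1\tnsr\delta_{gz}^1$ and $\overline{\Delta}_z(\delta_g^1)=\delta_g^1\tnsr\delta_{gz}^0+\delta_{gz}^0\tnsr\delta_g^1$; putting $h=gz$ and $z^2=e$ in the second gives $\overline{\Delta}_z(\delta_{gz}^1)=\delta_{gz}^1\tnsr\delta_g^0+\delta_g^0\tnsr\delta_{gz}^1$, so $\overline{\Delta}_z(\lmb(\delta_g))=\overline{\Delta}_z(\delta_g^0)+\overline{\Delta}_z(\delta_{gz}^1)$ is exactly the same four terms.

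There is no real obstacle; the only delicate point is index bookkeeping in the symbol $\delta_g^\sigma$, which (per the warning before \prpref{p:coprod_super}) is the even/odd component $e^\sigma\delta_g$ for \emph{left} multiplication and must not be confused with the translated version $(\delta_g+(-1)^\sigma\delta_{gz})/2$ appearing inside the proof of \thmref{t:ZA_DG_red_super}. It is worth noting the conceptual content: pullback along $\lmb$ sends a $\cD(G)$-module $V=\bigoplus_g V_g$ to the module with fiber $V_g^0\oplus V_{gz}^1$ over $g$ and unchanged $G$-action, i.e.\ it shifts the odd part of each fiber by $z$, and the commuting square expresses precisely that this shift interchanges the two monoidal structures $\tnsrbar$ and $\tnsrz{z}$ on $\cD(G)-\mmod$ --- which, together with \thmref{t:ZA_DG_red} and \thmref{t:ZA_DG_red_super}, accounts for the tensor equivalence of $(\cD(G)-\mmod,\tnsrbar)$ and $(\cD(G)-\mmod,\tnsrz{z})$.
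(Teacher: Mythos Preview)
Your proof is correct and follows the same approach as the paper, which simply says ``Straightforward'' and points to the formulas for $\overline{\Delta}_z(\delta_g^0)$ and $\overline{\Delta}_z(\delta_g^1)$ from the proof of \prpref{p:coprod_super}; you have spelled out exactly the verification the paper leaves implicit. The extra observation that $\lmb$ is an involution and the conceptual remark linking this to the functor $\Lmb$ of \thmref{t:super_same} are correct and helpful additions.
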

\begin{proof}
Straightforward.
The computation simplifies greatly following the
observation in the proof of \prpref{p:coprod_super}.
\end{proof}

The following result is the corresponding categorical
statement for \prpref{p:coprod_same},
which says that the two tensor products $\tnsrbar$
and $\tnsrz{z}$ are essentially the same:

\begin{theorem} \label{t:super_same}
Let $\Lmb : \cD(G)-\mmod \simeq \cD(G)-\mmod$
be the pullback functor induced from
$\lmb: \cD(G) \simeq \cD(G)$ as defined
in \prpref{p:coprod_same}.
More explicitly, for $V\in \cD(G)-\mmod$,
$\Lmb(V)$ is the same as $V$ as $G$-module,
but the $F(G)$-action is changed so gradings are mixed:
\[
\Lmb(V)_g = V_g^0 \oplus V_{gz}^1
\;\;.
\]
Then $\Lmb$ is a pivotal tensor equivalence
\[
\Lmb : (\cD(G)-\mmod,\tnsrz{z}) \simeq_\tnsr
(\cD(G)-\mmod,\tnsrbar)
\;\;.
\]
\end{theorem}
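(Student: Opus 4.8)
The plan is to recognize $\Lmb$ as the pullback functor $\lmb^{*}$ along the algebra isomorphism $\lmb$ of \prpref{p:coprod_same}, and to upgrade the already-established equivalence of abelian categories to a pivotal tensor equivalence using the coproduct descriptions of the two tensor products. Recall from \prpref{p:coprod} and \prpref{p:coprod_super} that both products have the shape ``tensor over $\kk$, then cut down by the idempotent coming from the coproduct'': $V\tnsrbar W=\overline{\Delta}(1)\cdot(V\tnsr W)$ with $\cD(G)$-module structure pulled back along $\overline{\Delta}$, and $V\tnsrz{z}W=\overline{\Delta}_z(1)\cdot(V\tnsr W)$ along $\overline{\Delta}_z$. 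So the whole proof is the categorical shadow of \prpref{p:coprod_same}.

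First I would record the trivial fact $\lmb(1)=1$ (immediate since $\sum_h(\delta_h^0+\delta_{hz}^1)=\sum_h\delta_h$), so $\lmb$ is a \emph{unital} isomorphism. Applying the commuting square of \prpref{p:coprod_same} to $a\in\cD(G)$ gives $(\lmb\tnsr\lmb)(\overline{\Delta}(a))=\overline{\Delta}_z(\lmb(a))$. Taking $a=1$ shows that $\overline{\Delta}(1)$, acting on $V\tnsr_\kk W$ through the $\lmb$-twisted module structures on the two factors, acts exactly as $\overline{\Delta}_z(1)$ does through the untwisted structures; taking general $a$ shows the two resulting $\cD(G)$-actions on that common subspace agree. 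Hence the identity map of $V\tnsr_\kk W$ restricts to an isomorphism of $\cD(G)$-modules $\Lmb(V)\tnsrbar\Lmb(W)\simeq\Lmb(V\tnsrz{z}W)$, natural in $V$ and $W$; this is the tensor structure, with $J_{V,W}:=\id$. Since every associativity constraint in sight is induced from the associator of $\kk$-vector spaces and $J$ is the identity, the pentagon/hexagon coherence is automatic. For the unit, both monoidal structures have $\onebar=\bigoplus_g\kk_g=\kk[G]$ with $G$ acting by conjugation, which is purely even because $z$ is central; hence $\Lmb(\onebar)=\onebar$ and the unit constraints are preserved strictly.

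It then remains to match the rigid and pivotal structures, and this is where a little parity bookkeeping is needed. Since $z$ acts on $(V_g^\sigma)^{*}$ by $(-1)^\sigma$ and $(V^\vee)_g=(V_g)^{*}$ for $\tnsrbar$, one gets $(V^\vee)_g^0=(V_g^0)^{*}$ and $(V^\vee)_g^1=(V_g^1)^{*}$, whence $\Lmb(V^\vee)_g=(V_g^0)^{*}\oplus(V_{gz}^1)^{*}=(\Lmb(V)_g)^{*}=(\Lmb(V)^\vee)_g$; so $\Lmb$ carries $\vee$-duals to $\vee$-duals on the nose, and likewise the right duals $\rvee V$. Under $J=\id$ one then checks that the evaluation and coevaluation maps for $\tnsrz{z}$, which vanish on the odd part of $V^\vee\tnsrz{z}V$ by \defref{d:symm_red_super}, are carried to the honest (co)evaluation maps for $\tnsrbar$, and that the pivotal isomorphism $V_g\simeq V_g^{**}$ is literally the same natural transformation on both sides. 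This exhibits $(\Lmb,\id)$ as a pivotal tensor equivalence. There is no conceptual obstacle here: \prpref{p:coprod_same} has already done the algebra, and the only work left is to keep the non-unital coproducts (i.e.\ the projections $\overline{\Delta}(1),\overline{\Delta}_z(1)$) and the grading shifts straight while transporting everything through the module-pullback picture.
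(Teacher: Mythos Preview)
Your proof is correct and follows essentially the same route as the paper: both use the coproduct descriptions from \prpref{p:coprod} and \prpref{p:coprod_super} together with the intertwining square of \prpref{p:coprod_same} to identify $\Lmb(V)\tnsrbar\Lmb(W)$ with $\Lmb(V\tnsrz{z}W)$ via the identity on underlying vector spaces, note that $\onebar$ is purely even and hence fixed by $\Lmb$, and then observe that the pivotal structures agree. Your treatment of duals is more detailed than the paper's (which simply declares the pivotal structure ``obviously respected''), but this is just a fleshing-out, not a different argument.
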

\begin{proof}
Clearly the unit $\onebar = \bigoplus_{g\in G} \kk_g$
is preserved (all $\kk_g$ are even).
By \prpref{p:coprod} and \prpref{p:coprod_super},
the tensor products $\tnsrbar$ and $\tnsrz{z}$
arise from the coproducts $\overline{\Delta}$
and $\overline{\Delta}_z$.
Since $\lmb$ intertwines these coproducts,
we see that $\Lmb$ respects tensor products:
\begin{align*}
(\Lmb(V) \tnsrbar \Lmb(W))_g
&= \Lmb(V)_g \tnsr \Lmb(W)_g
\\
&= \overline{\Delta}(\delta_g) \cdot
(\Lmb(V) \tnsr \Lmb(W))
\\
&= ((\lmb \tnsr \lmb) \circ \overline{\Delta})(\delta_g)
\cdot (V \tnsr W)
\\
&= (\overline{\Delta}_z \circ \lmb)(\delta_g)
\cdot (V \tnsr W)
\\
&= \Lmb(V \tnsrz{z} W)_g
\;\;.
\end{align*}
Of course, a direct computation works too.
The pivotal structure is obviously respected.
\end{proof}

\begin{remark} \label{r:super_diff}
Thomas Wasserman pointed out that the equivalence
$\Lmb$ above does not respect the symmetric braidings
described in Remarks \ref{r:T_symm} and \ref{r:T_symm_super}.
We may readily see this by considering odd representations
$V^1, W^1$:
in $(\cD(G)-\mmod,\tnsrz{z})$, the braiding is given by
\[
(V^1 \tnsrz{z} W^1)_g
= V_{gz}^1 \tnsr W_{gz}^1
\xrightarrow{c \, = \, (-1)P}
= W_{gz}^1 \tnsr V_{gz}^1
(W^1 \tnsrz{z} V^1)_g
\]
while in $(\cD(G)-\mmod,\tnsrbar)$,
\[
(\Lmb(V^1) \tnsrbar \Lmb(W^1))_g
= \Lmb(V^1)_g \tnsr \Lmb(W^1)_g
= V_{gz}^1 \tnsr W_{gz}^1
\xrightarrow{c \, = \, P}
= W_{gz}^1 \tnsr V_{gz}^1
%= \Lmb(W^1)_g \tnsr \Lmb(V^1)_g
= (\Lmb(W^1) \tnsrbar \Lmb(V^1))_g
\;\;.
\]
Thus, $\Lmb$ is not a braided tensor equivalence.
\rmkend
\end{remark}

Next we study the $\ZZ/2$-action on $\cD(G)-\mmod$.
Here $\cA = \Rep(G,z)$, where we treat the non-super
case as setting $z=e$.

\begin{proposition}
Let $U_S : \cD(G)-\mmod \to \cD(G)-\mmod$
that changes the grading by the inverse map on $G$.
That is,
\[
(U_S(V))_g = V_{g^\inv}
\;\;.
\]
Then $U_S$ is naturally a tensor automorphism,
and $U^2_S = \id$, so $U_S$ generates a $\ZZ/2$-action.
\end{proposition}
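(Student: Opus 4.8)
The plan is to realize $U_S$ as the pullback functor along an involutive algebra automorphism of $\cD(G)$, and then deduce all three assertions --- that $U_S$ is a tensor automorphism, that $U_S^2 = \id$, and hence that $U_S$ generates a $\ZZ/2$-action --- from properties of that automorphism. Let $\phi : \cD(G) \to \cD(G)$ be the linear map with $\phi(g) = g$ for $g \in G$ and $\phi(\delta_h) = \delta_{h^{-1}}$ for $h \in G$. First I would verify that $\phi$ is a (unital) algebra homomorphism: the only nontrivial relation to check is $\phi(g)\phi(\delta_h) = \phi(g\delta_h)$, which unwinds to $g\,\delta_{h^{-1}} = \delta_{(ghg^{-1})^{-1}}\,g$, and this is immediate from the commutation rule $g\delta_k = \delta_{gkg^{-1}}g$ in $\cD(G)$ together with $gh^{-1}g^{-1} = (ghg^{-1})^{-1}$. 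Since $(h^{-1})^{-1} = h$ we get $\phi^2 = \id$, so $\phi$ is an involution. One then notes that $U_S$ is exactly the pullback $\phi^{*}$: on $U_S(V)$ the underlying vector space and the $\kk[G]$-action are unchanged, while $\delta_h$ acts as the original $\delta_{h^{-1}}$, i.e. as projection onto the piece graded $h$; thus $(U_S(V))_g = V_{g^{-1}}$, and $U_S$ is a well-defined endofunctor of $\cD(G)-\mmod$, equal to its own inverse because $\phi^2 = \id$. On morphisms $U_S(\varphi) = \varphi$, and functoriality is automatic.

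For the monoidal structure I would appeal to the coproduct descriptions of the reduced tensor product, \prpref{p:coprod} in the non-super case and \prpref{p:coprod_super} in general, which present it as $\overline{\Delta}(1)\cdot(V\tnsr W)$ (resp. $\overline{\Delta}_z(1)\cdot(V\tnsr W)$). It then suffices to check that $\phi$ intertwines the relevant coproduct, i.e. $(\phi\tnsr\phi)\circ\overline{\Delta} = \overline{\Delta}\circ\phi$ (resp. with $\overline{\Delta}_z$): on $g$ both sides give $g\tnsr g$, and on $\delta_h$ both sides give $\delta_{h^{-1}}\tnsr\delta_{h^{-1}}$ in the non-super case, while in the super case the only subtlety is that $z$ is central, so $(gz^\tau)^{-1} = g^{-1}z^\tau$ and $\phi$ commutes with the parity idempotents $e^\sigma$ (it fixes $\kk[G]$, hence $z$); one checks that the four summands of \defref{d:symm_red_super} are permuted as required. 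Hence $U_S = \phi^{*}$ carries the reduced tensor product to itself with the identity as tensor structure $J_{V,W}$, sends $\onebar$ to $\onebar$ (using $Z(g^{-1}) = Z(g)$ and that the trivial representation is fixed), and all coherence conditions hold strictly since $J = \id$; the pivotal structure $V_g \simeq V_g^{**}$ is visibly preserved. Finally $U_S^2 = (\phi^{2})^{*} = \id$ as a pivotal tensor functor (its tensor structure is again the identity), so the assignment sending the generator of $\ZZ/2$ to $U_S$ is a genuine $\ZZ/2$-action.

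There is no real obstacle here; the only place warranting care is the supergroup case, where one must keep the two meanings of $\delta_g^\sigma$ apart and use centrality of $z$ --- but this is precisely the bookkeeping already in place for \prpref{p:coprod_super} and \thmref{t:ZA_DG_red_super}. As a consistency check one may observe that, under the equivalence $T$ of \prpref{p:ZA_DG}, this $\ZZ/2$-action corresponds to the action $U$ of \prpref{p:aut_ZA} (so the grading flip matches $(X,\ga)\mapsto(X,(\rvee\ga)^\vee)$); there the natural isomorphism $U^2\simeq\id$ is given by $\thetabar$, which is the identity for symmetric $\cA$, in agreement with the strict equality $U_S^2 = \id$ found above.
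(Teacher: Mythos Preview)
Your proposal is correct. The paper's own proof consists of the single word ``Trivial'', so your argument is considerably more detailed than what the author supplies. Your approach --- realizing $U_S$ as pullback along the algebra involution $\phi(g)=g$, $\phi(\delta_h)=\delta_{h^{-1}}$ and then checking that $\phi$ intertwines the coproduct $\overline{\Delta}$ (resp.\ $\overline{\Delta}_z$) of \prpref{p:coprod} and \prpref{p:coprod_super} --- is a clean and systematic way to verify the tensor compatibility, very much in the spirit of the paper's own use of those coproducts in \prpref{p:coprod_same} and \thmref{t:super_same}. The paper presumably regards the result as immediate because the fiberwise tensor product is visibly symmetric under relabeling the base $G$ by $g\mapsto g^{-1}$.

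One small inaccuracy in your closing consistency check: in the super case $\cA=\Rep(G,z)$ the twist $\thetabar$ is not the identity but rather $\pm 1$ on simples (it acts by $-1$ on odd objects). The conclusion you want still holds, though: since $\theta_X$ is a scalar, the identity $\wdtld{\wdtld{\ga}}=(\theta_X\tnsr\id)\circ\ga\circ(\id\tnsr\thetabar_X)$ from the end of the proof of \prpref{p:aut_ZA} collapses to $\wdtld{\wdtld{\ga}}=\ga$, so $U^2=\id$ strictly on $\cZ(\Rep(G,z))$ as well --- this is exactly the observation the paper makes in the proof of the subsequent proposition. This does not affect the correctness of your main argument.
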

\begin{proof}
Trivial.
\end{proof}

\begin{proposition}
The equivalence $T$ of \prpref{p:ZA_DG} is
$\ZZ/2$-equivariant.
\end{proposition}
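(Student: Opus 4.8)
The plan is to exhibit a natural isomorphism $u_S : U_S \circ T \simeq T \circ U$ of functors $\cZ(\Rep(G,z)) \to \cD(G)-\mmod$ and to verify that it is compatible with the natural isomorphism $\thetabar : U^2 \simeq \id$ of \prpref{p:aut_ZA} and with the strict identity $U_S^2 = \id$, so that $T$ becomes a $\ZZ/2$-equivariant functor, exactly in the sense used for $\Kar(G)$ above. Recall the three ingredients: $U(X,\ga) = (X,(\rdual{\ga})^\vee)$; $T$ is the identity on underlying vector spaces and sends a half-braiding $\ga$ to the $F(G)$-action $f \cdot v = \eval{\id_V \tnsr f, \ga_{\kk[G]}(e \tnsr v)}$; and $U_S$ relabels the $G$-grading by $g \mapsto g^\inv$.

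First I would translate $U$ into the $\cD(G)$-module picture. Writing $\ga = P \circ R$ with $R = \sum_{g \in G} g \tnsr \delta_g$, one computes directly from \eqnref{e:brd_duals} how the bending operations $\ga \mapsto \rdual{\ga}$ and $\ga \mapsto \ga^\vee$ act on the associated $F(G)$-module structure. The upshot is that each of these operations replaces the braiding variable $g$ by $g^\inv$ — this is essentially the identity $S(g \tnsr \delta_g) = g^\inv \tnsr \delta_{g^\inv}$ for the antipode of $\cD(G)$, together with the conjugation built into the (co)evaluation — so that the fiber $V_g$ of $T(V,\ga)$ becomes the fiber $V_{g^\inv}$ of $T(U(V,\ga))$, which is precisely $(U_S \circ T)(V,\ga)$. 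When $z = e$ the left and right duals of a half-braiding coincide, so in fact $T \circ U = U_S \circ T$ on the nose and $u_S$ may be taken to be the identity on underlying vector spaces. In the supergroup case the two operations differ by the twist $\theta_X$ (recall $\wdtld{\wdtld{\ga}} = (\theta_X \tnsr \id) \circ \ga \circ (\id \tnsr \thetabar_X)$ from the proof of \prpref{p:aut_ZA}), so $u_S$ should carry a compensating half-twist, in the spirit of the $\thetabar$ appearing in $u_M$ for $\cAA$.

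It then remains to check naturality of $u_S$ in morphisms — immediate, since $U$ and $U_S$ are the identity on underlying spaces and $T$ is faithful — and the equivariance coherence: that $T(\thetabar) \circ (u_S U) \circ (U_S u_S) = \id_T$ as natural transformations $U_S^2 T = T \Rightarrow T U^2 \Rightarrow T$. Here $T(\thetabar_X)$ is multiplication by the ribbon element of $\Rep(G,z)$, i.e. $+1$ on even fibers and $-1$ on odd fibers; it exactly cancels the full twist produced by composing the two half-twists inside $u_S$, so the coherence holds. For $z = e$ this is vacuous, and in general it reduces, as in the $\Kar(G)$ case, to the three-strand compatibility between braiding and twist.

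The main obstacle is the sign and duality bookkeeping in $\Rep(G,z)$: one must pin down the normalization of $u_S$ (the precise half-twist it carries on odd fibers) so that the equivariance square commutes on the nose rather than merely up to an even/odd sign. Once the $\cD(G)$-module description of $U$ is in hand, the identification of fibers, the naturality of $u_S$, and the final coherence are all routine.
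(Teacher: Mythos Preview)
Your overall strategy matches the paper's: verify that $T$ intertwines $U$ with $U_S$ and then deal with the $\ZZ/2$-coherence. But your treatment of the super case misplaces where the twist enters.

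The paper shows by a direct computation of the $\delta_g$-action on $T(V,\wdtld{\ga})$ that it coincides with the $\delta_{g^{-1}}$-action on $T(V,\ga)$, and this computation works uniformly for $\Rep(G,z)$: the two Koszul factors $z^\sigma$ coming from the braidings wrapped around $\ga$ in the definition of $\wdtld{\ga}$ cancel in the final pairing. Hence $T \circ U = U_S \circ T$ strictly and $u_S = \id$ is the correct choice even when $z \neq e$. Your expectation that $u_S$ must carry a compensating half-twist in the super case is drawn from the formula $\wdtld{\wdtld{\ga}} = (\theta_X \tnsr \id)\,\ga\,(\id \tnsr \thetabar_X)$, but that concerns $U^2$, not whether $T$ intertwines a single application of $U$ with $U_S$.

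Where the twist genuinely matters is exactly the coherence square you write down: with $u_S = \id$ it would demand $T(\thetabar) = \id$, which fails on odd simples. The paper does not fix this by altering $u_S$; instead it observes that in the symmetric case the twist is $\pm 1$ on each simple, so $\wdtld{\wdtld{\ga}} = \ga$ and $U^2 = \id$ as functors---in effect taking the identity rather than $\thetabar$ as the structure isomorphism $U^2 \simeq \id$, after which the equivariance is strict. Your alternative of retaining $\thetabar$ and absorbing a square root of the twist into $u_S$ would also work over algebraically closed $\kk$, but it is not the route the paper takes, and it is not needed to obtain $T \circ U = U_S \circ T$.
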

\begin{proof}
Let $(V,\ga) \in \cZ(\Rep(G,z))$.
$U$ on $\cZ(\Rep(G,z))$ only affects the half-braiding,
so $T((V,\ga)) = V = T((V,\wdtld{\ga}))$
are the same as $G$-modules.
It suffices to check that the action of
$\delta_g$ on $T((V,\wdtld{\ga}))$
is the same as the action of
$\delta_{g^\inv}$ on $T((V,\ga))$.
For $v\in T((V,\wdtld{\ga}))^\sigma$, the action
of $\delta_g$ is
\begin{align*}
\begin{tikzpicture}
\node[small_morphism] (ga) at (0,0) {\small $\ga$};
\draw (ga) -- (0,-0.8);
\draw (ga) .. controls +(-30:0.3) and +(-30:0.4cm) ..
  (0,0.3) -- +(150:0.4cm)
  node[small_morphism] {$e$};
\draw
  (ga) .. controls +(150:0.3cm) and +(150:0.4cm) ..
  (0,-0.3) -- +(-30:0.4cm)
  node[small_morphism] {\tiny $\delta_g$};
\draw (ga) -- (0,0.8) node[above] {\small $v$};
\end{tikzpicture}
\;\;\;\;
\begin{split}
v &\mapsto
e \tnsr v
\\ &\mapsto
\sum_h h \tnsr \delta_h \tnsr v \tnsr z^\sigma
\\ &\mapsto
\sum_{h,h'} h \tnsr \delta_{h'} \tnsr h' \delta_h 
  \cdot v \tnsr z^\sigma
  \;\;
  (\text{using }\delta_{h'}\text{ action of }T((V,\ga)))
\\ &\mapsto
\sum_{h,h'} \delta_g(hz^\sigma) 
  \delta_{hh'}(z^\sigma) \delta_{h'} \cdot v
= \delta_{g^\inv} \cdot v
\;\;.
\end{split}
\end{align*}
Thus $T$ commutes with the $\ZZ/2$-actions.
It remains to note that for simple $(V,\ga)$,
$V$ is either all even or odd,
so the twist operator is $\pm 1$,
so $\wdtld{\wdtld{\ga}} = \ga$
(see end of proof of \prpref{p:aut_ZA})
and hence $U^2 = \id$.
\end{proof}

\subsection{$\ZCY(\torus)$ for $\cA$ symmetric}
\label{s:symm_torus}
\par \noindent
\\
Here we describe $\ZCY(\torus)$ when $\cA$ is symmetric.
Recall that we had \corref{c:torus_ZA}, which stated
\[
\ZCY(\torus) \simeq \cZ((\ZA,\tnsrbar))
\;\;.
\]

By \thmref{t:super_same}, it suffices to deal with
the non-super case $\cA = \Rep(G)$.
The following is taken from
\ocite{tham}*{Definition-Proposition 5.2}:
\begin{definition} \label{d:elliptic}
The \emph{elliptic double} of $G$ is the algebra
$\cD^{\text{el}}(G)$
whose underlying vector space is 
\[
\cD^{\text{el}}(G) = \kk[G] \tnsr F(G) \tnsr F(G)
\]
with algebra structure
\[
g \tnsr \delta_{h_1} \tnsr \delta_{h_2}
\cdot
g' \tnsr \delta_{h_1'} \tnsr \delta_{h_2'}
=
gg' \tnsr \delta_{h_1}\delta_{g'^\inv h_1' g'}
\tnsr \delta_{h_2}\delta_{g'^\inv h_2' g'}
\;\;.
\]
Let $\delta_* = \sum_{h\in G} \delta_h$ be the
unit of $F(G)$.
We will denote $g \tnsr \delta_* \tnsr \delta_*$
simply by $g$, $e \tnsr \delta_h \tnsr \delta_*$
by $\delta_h^1$, and
$e \tnsr \delta_* \tnsr \delta_h$ by $\delta_h^2$.
We may also denote $\delta_h^1 \delta_h^2$ 
by $\delta_{(h,h')}$
\defend
\end{definition}

The $*$ notation in the subscript will be used often,
and will generally mean to sum over all possible
entries.

Observe that any orbit $c$ of the diagonal action
of $G$ on $G \times G$ (acting by conjugation
on each factor) gives rise to a central idempotent
$\sum_{(h,h') \in c} \delta_{(h,h')}$.
In particular, the set of commuting pairs
\[
\Omega = \{(h,h') \in G \times G \; | \; [h,h'] = e\}
\]
is a union of conjugacy classes,
so gives rise to the following subalgebra:
\begin{definition}
$\cD_{\torus}(G)$ is the subalgebra of
$\cD^{\text{el}}(G)$
\[
\cD_{\torus}(G) :=
\{
g\delta_{(h,h')} | (h,h') \in \Omega
\}
= \cD^{\text{el}}(G) \cdot \delta_\Omega
\]
where $\delta_\Omega
= \sum_{(h,h')\in \Omega} \delta_{(h,h')}$.
We will use the same notation as in
\defref{d:elliptic}, so for example
$\delta_h^1$ stands for $\delta_h^1 \delta_\Omega
= \sum_{h'\,:\,[h,h'] = e}
  \delta_{(h,h')}$
\defend
\end{definition}
It is easy to see that $\cD_\torus(G)-\mmod$ is the
category of $G$-equivariant bundles over $\Omega$.

\begin{theorem} \label{t:torus_DG}
As abelian categories.
\[
\cZ((\cD(G)-\mmod,\tnsrbar)) \simeq \cD_{\torus}(G)-\mmod
\;\;.
\]
\end{theorem}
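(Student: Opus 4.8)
The plan is to reduce both sides of the claimed equivalence to one and the same description, namely $\bigoplus_{\bar g \in G//G} \cD(Z(g))-\mmod$, by stratifying each according to conjugacy classes; here $Z(g)$ denotes the centralizer of $g$ in $G$.

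First I would analyze the left-hand side. Recall from the discussion after \prpref{p:ZA_DG} that, as an abelian category, $\cD(G)-\mmod = \bigoplus_{\bar g \in G//G} \cD(G)\delta_{\bar g}-\mmod$, and $\cD(G)\delta_{\bar g}-\mmod \simeq \Rep(Z(g))$ by sending a module $V$ to the fibre $V_g$ with its $Z(g)$-action. The essential point is that $\tnsrbar$ is compatible with this stratification: since $(V\tnsrbar W)_g = V_g \tnsr W_g$, the product of two objects supported on distinct conjugacy classes vanishes, and restricted to the block $\bar g$ the fibrewise product is identified with the ordinary tensor product of $\Rep(Z(g))$, with unit the trivial representation (obtained by restricting $\onebar = \bigoplus_g \kk_g$) and associativity/unit constraints those of $\kk$-vector spaces. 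Thus $(\cD(G)-\mmod,\tnsrbar) \simeq \bigoplus_{\bar g} (\Rep(Z(g)),\tnsr)$ as monoidal categories. I would then invoke the standard fact that the Drinfeld center of a finite direct sum of monoidal categories is the direct sum of the centers --- a half-braiding on $V = \bigoplus_{\bar g} V^{(\bar g)}$ automatically decomposes blockwise because $\tnsrbar$ does --- and apply \prpref{p:ZA_DG} to each $Z(g)$, obtaining
\[
\cZ((\cD(G)-\mmod,\tnsrbar)) \simeq \bigoplus_{\bar g \in G//G} \cD(Z(g))-\mmod .
\]

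Next I would identify the right-hand side with the same sum. By construction $\cD_\torus(G)-\mmod$ is the category of $G$-equivariant bundles over $\Omega = \{(h,h') : [h,h']=e\}$, with $G$ acting by simultaneous conjugation. The key geometric observation is that the $G$-equivariant projection $p : \Omega \to G$, $(h,h') \mapsto h$, is surjective, so $\Omega = \bigsqcup_{\bar h \in G//G} p^{-1}(\bar h)$ as a $G$-set, and a direct check shows $p^{-1}(\bar h) \cong G \times_{Z(h)} Z(h)$, with $Z(h)$ acting on the second factor by conjugation. By the usual induction equivalence, $G$-equivariant bundles over $p^{-1}(\bar h)$ are the same as $Z(h)$-equivariant bundles over $Z(h)$ with the conjugation action, i.e.\ $\cD(Z(h))-\mmod$; summing over $\bar h$ gives $\cD_\torus(G)-\mmod \simeq \bigoplus_{\bar h} \cD(Z(h))-\mmod$, which matches the previous display and proves the theorem.

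I expect the only point requiring genuine care --- everything else being routine bookkeeping --- to be the verification in the first step that, once restricted to a single block, $\tnsrbar$ reproduces the \emph{full} monoidal structure of $\Rep(Z(g))$ (associator, unit constraints, and the way a global half-braiding restricts to the block), so that computing the center blockwise is legitimate. Since the theorem only asserts an equivalence of abelian categories, no pivotal or symmetric coherence needs to be tracked, which keeps this step brief; the conjugacy-class decomposition of $\Omega$ on the other side is then an elementary exercise.
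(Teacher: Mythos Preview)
Your proposal is correct and follows essentially the same route as the paper: both sides are stratified by conjugacy classes, each block of $(\cD(G)-\mmod,\tnsrbar)$ is identified with $(\Rep(Z(g)),\tnsr)$ so that the center decomposes as $\bigoplus_{\bar g}\cD(Z(g))-\mmod$, and $\cD_\torus(G)-\mmod$ is decomposed along the projection $\Omega\to G$ and matched via induction with the same sum. Your geometric phrasing of the second step via $p^{-1}(\bar h)\cong G\times_{Z(h)}Z(h)$ is slightly more explicit than the paper's, but the content is identical.
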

\begin{proof}
Observe that a $G$-equivariant bundle over $G$,
$V \in \cD(G)-\mmod$,
naturally decomposes into bundles where each is
supported on a single conjugacy class:
\[
V = \bigoplus_{\kappa \in G//G} V_\kappa,
\;\;
V_\kappa := \bigoplus_{g\in \kappa} V_g
\;\;.
\]
Furthermore, $V_\kappa$ is determined by $V_g$ for some
$g \in \kappa$ together with its restricted $Z(g)$-action.
To recover $V_\kappa$ from $V_g$, simply take the induction
$\kk[G] \tnsr_{Z(g)} V_g$, and specify that the grading of
$h \tnsr v$ is $hgh^\inv$.
Moreover, the fiberwise tensor product
restricts to the standard tensor product of
$Z(g)$-modules.
Thus we have, for choices $g\in \kappa$ for each conjugacy
class $\kappa \in G//G$,
\[
(\cD(G)-\mmod, \tnsrbar) \simeq_\tnsr
\bigoplus_{\kappa \in G//G} \Rep(Z(g))
\]
and hence,
\[
\cZ((\cD(G)-\mmod, \tnsrbar)) \simeq
\bigoplus_{\kappa \in G//G} \cZ(\Rep(Z(g))
\;\;.
\]
As we've seen before,
$\cZ(\Rep(Z(g)) \simeq \cD(Z(g))-\mmod$,
or the category of $Z(g)$-equivariant bundles over
$Z(g)$.

Now consider $W \in \cD_\torus(G)-\mmod$.
It can also be decomposed
\[
W = \bigoplus_{\kappa \in G//G} W_{(\kappa,*)},
\;\;
W_{(\kappa,*)} :=
\bigoplus_{h\in \kappa, (h,h')\in \Omega} W_{(h,h')}
\;\;.
\]
Similarly, $W_{(\kappa,*)}$ is determined by its restriction
to $W_{(h,*)}
=\bigoplus_{h'\,:\,(h,h')\in \Omega} W_{(h,h')}$
for some $h\in \kappa$,
together with its $Z(h)$-action.
Once again, induction $\kk[G] \tnsr_{Z(h)} W_{(h,*)}$
recovers $W_{(\kappa,*)}$,
with grading of $g \tnsr v$ for $v\in W_{(h,h')}$
given by $g\cdot (h,h') = (ghg^\inv,gh'g^\inv)$.
Note that $W_{(h,*)}$ is naturally a $Z(h)$-equivariant
bundle over $Z(h)$, and thus we have,
for choices $g\in \kappa$ for each conjugacy class
$\kappa \in G//G$,
\[
\cD_\torus(G)-\mmod \simeq \bigoplus_{\kappa \in G//G}
\cD(Z(g))-\mmod
\]
and hence we are done.
\end{proof}

\begin{remark}
Let us discuss a more topological
approach to/insight on $\cD_\torus(G)$.
In \ocite{tham}, we defined a category called the
``elliptic Drinfeld center" of $\cA$,
which we showed in \ocite{KT} to be equivalent
to $\ZCY(\punctorus)$,
where $\punctorus$ is the once-punctured torus.
When $\cA = \Rep(G)$, this category is equivalent
to the category of $G$-equivariant bundles over $G \times G$;
the equivalence depends on a choice of meridian and longitude
on $\punctorus$, so that each $G$ factor in $G\times G$
corresponds to one of the curves.
As we seal up the puncture in $\punctorus$ to get $\torus$,
the two $G$ factors are forced to commute,
thus $\CYA$ consists of bundles supported only on $\Omega$.
\end{remark}

%%%%%%%%%%%%%%%%%%%%%%%%%%%%%%%%%%%%%%%%%%%%%%%%%%%%%%%%%%%%%%
\begin{bibdiv}
\begin{biblist}
%%%%%%%%%%%%%%%%%%%%%%%%%%%%%%%%%%%%%%%%%%%%%
\bib{AF}{article}{
  label={AF},
  author={Ayala, David},
  author={Francis, John},
  title={A factorization homology primer},
  date={2019},
  eprint={arXiv:1903.1096},
}
%%%%%%%%%%%%%%%%%%%%%%%%%%%%%%%%%%%%%%%%%%%%%
\bib{AFR}{article}{
  label={AFR},
  author={Ayala, David},
  author={Francis, John},
  author={Rozenblyum, Nick},
  title={Factorization homology I: Higher categories},
  journal={Adv. Math.},
  volume={333},
  date={2018},
  pages={1042--1177},
  issn={0001-8708},
  %review={\MR{3818096}},
  %doi={10.1016/j.aim.2018.05.031},
}
%%%%%%%%%%%%%%%%%%%%%%%%%%%%%%%%%%%%%%%%%%%%%
\bib{BakK}{book}{
  label={BK},
  author={Bakalov, Bojko},
  author={Kirillov, Alexander, Jr.},
  title={Lectures on tensor categories and modular functors},
  series={University Lecture Series},
  volume={21},
  publisher={American Mathematical Society},
  place={Providence, RI},
  date={2001},
  pages={x+221},
  isbn={0-8218-2686-7},
  review={\MR{1797619 (2002d:18003)}},
}
%%%%%%%%%%%%%%%%%%%%%%%%%%%%%%%%%%%%%%%%%%%%%%%%%%%%
\bib{BHLZ}{article}{
  label={BHLZ},
  author={Beliakova, Anna},
  author={Habiro, Kazuo},
  author={Lauda, Aaron D.},
  author={\v{Z}ivkovi\'{c}, Marko},
  title={Trace decategorification of categorified quantum $\mathfrak{sl}_2$},
  journal={Math. Ann.},
  volume={367},
  date={2017},
  number={1-2},
  pages={397--440},
  issn={0025-5831},
  %review={\MR{3606445}},
  %doi={10.1007/s00208-016-1389-y},
  eprint={arXiv:1404.1806},
}
%%%%%%%%%%%%%%%%%%%%%%%%%%%%%%%%%%%%%%%%%%%%%
\bib{BZBJ1}{article}{
  label={BZBJ1},
  author={Ben-Zvi, David},
  author={Brochier, Adrien},
  author={Jordan, David},
  title={Integrating quantum groups over surfaces},
  journal={J. Topol.},
  volume={11},
  date={2018},
  number={4},
  pages={874--917},
  issn={1753-8416},
  review={\MR{3847209}},
  doi={10.1112/topo.12072},
}
%%%%%%%%%%%%%%%%%%%%%%%%%%%%%%%%%%%%%%%%%%%%%
\bib{BZBJ2}{article}{
  label={BZBJ2},
  author={Ben-Zvi, David},
  author={Brochier, Adrien},
  author={Jordan, David},
  title={Quantum character varieties and braided module categories},
  journal={Selecta Math. (N.S.)},
  volume={24},
  date={2018},
  number={5},
  pages={4711--4748},
  issn={1022-1824},
  review={\MR{3874702}},
  doi={10.1007/s00029-018-0426-y},
}
%%%%%%%%%%%%%%%%%%%%%%%%%%%%%%%%%%%%%%%%%%%%%
\bib{Cooke}{article}{
  label={Co},
  author={Cooke, Juliet},
  title={Excision of Skein Categories and Factorisation Homology},
  date={2019},
  eprint={arXiv:1910.02630},
}
%%%%%%%%%%%%%%%%%%%%%%%%%%%%%%%%%%%%%%%%%%%%%
\bib{CY}{article}{
  author={Crane, Louis},
  author={Yetter, David},
  title={A categorical construction of $4$D topological quantum field
  theories},
  conference={
    title={Quantum topology},
  },
  book={
    series={Ser. Knots Everything},
    volume={3},
    publisher={World Sci. Publ., River Edge, NJ},
  },
  date={1993},
  pages={120--130},
  %review={\MR{1273569}},
  %doi={10.1142/9789812796387_0005},
}
%%%%%%%%%%%%%%%%%%%%%%%%%%%%%%%%%%%%%%%%%%%%%
\bib{CKYeval}{article}{
  label={CKY1},
  author={Crane, Louis},
  author={Kauffman, Louis H.},
  author={Yetter, David},
  title={Evaluating the Crane-Yetter invariant},
  conference={
    title={Quantum topology},
  },
  book={
    series={Ser. Knots Everything},
    volume={3},
    publisher={World Sci. Publ., River Edge, NJ},
  },
  date={1993},
  pages={131--138},
  review={\MR{1273570}},
  doi={10.1142/9789812796387\_0006},
}
%%%%%%%%%%%%%%%%%%%%%%%%%%%%%%%%%%%%%%%%%%%%%
\bib{CKY}{article}{
  label={CKY2},
  author={Crane, Louis},
  author={Kauffman, Louis H.},
  author={Yetter, David N.},
  title={State-sum invariants of $4$-manifolds},
  journal={J. Knot Theory Ramifications},
  volume={6},
  date={1997},
  number={2},
  pages={177--234},
  issn={0218-2165},
  review={\MR{1452438}},
  doi={10.1142/S0218216597000145},
}
%%%%%%%%%%%%%%%%%%%%%%%%%%%%%%%%%%%%%%%%%%%%%
\bib{EGNO}{book}{
  label={EGNO},
  author={Etingof, Pavel},
  author={Gelaki, Shlomo},
  author={Nikshych, Dmitri},
  author={Ostrik, Victor},
  title={Tensor categories},
  series={Mathematical Surveys and Monographs},
  volume={205},
  publisher={American Mathematical Society, Providence, RI},
  date={2015},
  pages={xvi+343},
  isbn={978-1-4704-2024-6},
  %review={\MR{3242743}},
  %doi={10.1090/surv/205},
}
%%%%%%%%%%%%%%%%%%%%%%%%%%%%%%%%%%%%%%%%%%%%%
\bib{ENO2005}{article}{
  label={ENO2005},
  author={Etingof, Pavel},
  author={Nikshych, Dmitri},
  author={Ostrik, Viktor},
  title={On fusion categories},
  journal={Ann. of Math. (2)},
  volume={162},
  date={2005},
  number={2},
  pages={581--642},
  issn={0003-486X},
  review={\MR{2183279 (2006m:16051)}},
  doi={10.4007/annals.2005.162.581},
}
%%%%%%%%%%%%%%%%%%%%%%%%%%%%%%%%%%%%%%%%%%%%%
\bib{GNN}{article}{
  label={GNN},
  author={Gelaki, Shlomo},
  author={Naidu, Deepak},
  author={Nikshych, Dmitri},
  title={Centers of graded fusion categories},
  journal={Algebra Number Theory},
  volume={3},
  date={2009},
  number={8},
  pages={959--990},
  issn={1937-0652},
  %review={\MR{2587410}},
  %doi={10.2140/ant.2009.3.959},
}
%%%%%%%%%%%%%%%%%%%%%%%%%%%%%%%%%%%%%%%%%%%%%
\bib{stringnet}{article}{ 
  label={Kir},
  author={Kirillov, Alexander, Jr},
  title={String-net model of Turaev-Viro invariants},
  eprint={arXiv:1106.6033},
}
%%%%%%%%%%%%%%%%%%%%%%%%%%%%%%%%%%%%%%%%%%%%%
\bib{Mu}{article}{
  label={Mu},
  author={M\"{u}ger, Michael},
  title={On the structure of modular categories},
  journal={Proc. London Math. Soc. (3)},
  volume={87},
  date={2003},
  number={2},
  pages={291--308},
  issn={0024-6115},
  review={\MR{1990929}},
  doi={10.1112/S0024611503014187},
}

%%%%%%%%%%%%%%%%%%%%%%%%%%%%%%%%%%%%%%%%%%%%%
\bib{KT}{article}{
  label={KT},
  title={Factorization Homology and 4D TQFT},
  author={Alexander Kirillov Jr and Ying Hong Tham},
  year={2020},
  journal={Quantum Topology (accepted)},
  eprint={2002.08571},
  archivePrefix={arXiv},
  primaryClass={math.QA}
}
%%%%%%%%%%%%%%%%%%%%%%%%%%%%%%%%%%%%%%%%%%%%%
\bib{Ocn}{article}{
  label={O},
  author={Ocneanu, Adrian},
  title={Chirality for operator algebras},
  conference={
    title={Subfactors},
     address={Kyuzeso},
     date={1993},
  },
  book={
     publisher={World Sci. Publ., River Edge, NJ},
  },
  date={1994},
  pages={39--63},
  review={\MR{1317353}},
}
%%%%%%%%%%%%%%%%%%%%%%%%%%%%%%%%%%%%%%%%%%%%%
\bib{RT}{article}{
  label={RT},
  author={Reshetikhin, N. Yu.},
  author={Turaev, V. G.},
  title={Ribbon graphs and their invariants derived from quantum groups},
  journal={Comm. Math. Phys.},
  volume={127},
  date={1990},
  number={1},
  pages={1--26},
  issn={0010-3616},
  review={\MR{1036112}},
}
%%%%%%%%%%%%%%%%%%%%%%%%%%%%%%%%%%%%%%%%%%%%%
\bib{tham_thesis}{article}{
  author={Tham, Ying Hong},
  title={On the Category of Boundary Values in the Extended Crane-Yetter TQFT},
  year={2021},
  eprint={arXiv:2108.13467}
}
%%%%%%%%%%%%%%%%%%%%%%%%%%%%%%%%%%%%%%%%%%%%%
\bib{tham}{article}{
  label={Tham},
  title={The Elliptic Drinfeld Center of a Premodular Category},
  author={Ying Hong Tham},
  year={2019},
  eprint={1904.09511}
}
%%%%%%%%%%%%%%%%%%%%%%%%%%%%%%%%%%%%%%%%%%%%%
\bib{TV}{article}{
  label={TV},
  author={Turaev, V. G.},
  author={Viro, O. Ya.},
  title={State sum invariants of $3$-manifolds and quantum $6j$symbols},
  journal={Topology},
  volume={31},
  date={1992},
  number={4},
  pages={865--902},
  issn={0040-9383},
  review={\MR{1191386}},
  doi={10.1016/0040-9383(92)90015-A},
}
%%%%%%%%%%%%%%%%%%%%%%%%%%%%%%%%%%%%%%%%%%%%%
\bib{wassermansymm}{article}{
  label={Was},
  author={Wasserman, Thomas A.},
  title={The symmetric tensor product on the Drinfeld centre of a symmetric
  fusion category},
  journal={J. Pure Appl. Algebra},
  volume={224},
  date={2020},
  number={8},
  pages={106348},
  issn={0022-4049},
  review={\MR{4074586}},
  doi={10.1016/j.jpaa.2020.106348},
}
%%%%%%%%%%%%%%%%%%%%%%%%%%%%%%%%%%%%%%%%%%%%%
\bib{wasserman2fold}{article}{
  label={Was2},
  author={Wasserman, Thomas A.},
  title={The Drinfeld centre of a symmetric fusion category is 2-fold
  monoidal},
  journal={Adv. Math.},
  volume={366},
  date={2020},
  pages={107090},
  issn={0001-8708},
  review={\MR{4072794}},
  doi={10.1016/j.aim.2020.107090},
}
\end{biblist}
\end{bibdiv}

\end{document}